\documentclass[11pt]{amsart}
\usepackage[a4paper,margin=30mm]{geometry}
\usepackage{amsmath,amssymb,mathtools,mathrsfs}
\usepackage{microtype}
\usepackage[colorlinks=true,linkcolor=blue,citecolor=blue,urlcolor=blue,
pdftitle={Sharp Fractional Hardy--Maz'ya Remainders: Angular Reduction and Energy Stability},
pdfauthor={Shengbing Deng and Dehong Yang},
pdfkeywords={fractional Hardy inequality, Maz'ya remainder, dimensional reduction, spectral stability}]{hyperref}
\numberwithin{equation}{section}

\newtheorem{theorem}{Theorem}[section]
\newtheorem{proposition}[theorem]{Proposition}
\newtheorem{lemma}[theorem]{Lemma}
\newtheorem{corollary}[theorem]{Corollary}
\newtheorem{remark}[theorem]{Remark}
\theoremstyle{definition}
\newtheorem{definition}[theorem]{Definition}

\title[Angular reduction and energy stability]
{Sharp fractional Hardy--Maz'ya remainders on half-spaces:\
angular reduction and energy stability}

\author{Shengbing Deng}
\email{shbdeng@swu.edu.cn}
\author{Dehong Yang}

\subjclass[2020]{Primary 46E35; Secondary 35B38, 35P15, 26D15, 47G30}
\keywords{fractional Hardy inequality, Maz'ya remainder, dimensional reduction, quantitative stability, non-attainment}

\begin{document}

\begin{abstract}
For $d\geq2$, $0<s<1$, $p>1$, and $\tau>0$, we study the sharp
half-space remainder
\[
 [u]_{s,p,+}^p-\mathcal D_{d,s,p}\int_{\mathbb R^d_+}x_d^{-sp}|u|^p\,dx
 \geq\Lambda_{d,s,p,\tau}\int_{\mathbb R^d_+}
 \frac{x_d^{\tau-sp}|u|^p}{(x_{d-1}^2+x_d^2)^{\tau/2}}\,dx.
\]
We first obtain exact dimensional and angular reductions by an $L^p$
contraction. For $p=2$, we identify the planar sharp coefficient with the
simple first eigenvalue of a singular angular operator with compact
resolvent. We prove a ground-state representation and a two-sided
estimate for the sharp deficit. These give quantitative stability and a
Fourier--Mellin energy criterion for extremizing sequences in all
dimensions. For the distance with multiplier $h(\xi)=\min\{\xi^2,1\}$,
the best planar stability coefficient is not attained and is strictly
smaller than the quadratic dispersion coefficient. The minimizing
frequencies are nonzero when $\tau=2$ and $s$ is close to one.
Finally, we prove normalized local limits for the sharp quadratic
constant, each fixed fiber eigenvalue, and the dispersion coefficient.
\end{abstract}

\maketitle

\section{Introduction}

Let $d\geq2$, $0<s<1$, and $1<p<\infty$. On
$\mathbb R^d_+=\{x\in\mathbb R^d:x_d>0\}$, write
\[
 [u]_{s,p,+}^p=\int_{\mathbb R^d_+}\int_{\mathbb R^d_+}
 \frac{|u(x)-u(y)|^p}{|x-y|^{d+sp}}\,dx\,dy.
\]
The sharp Hardy inequality on the half-space admits a second remainder
involving the distance to the codimension-two edge
$\{x_{d-1}=x_d=0\}$. We are concerned with the best coefficient of
this remainder and the behavior of extremizing sequences.

For $\tau>0$, put
\[
 \begin{split}
 \mathcal H_{d,s,p}[u]={}&[u]_{s,p,+}^p
 -\mathcal D_{d,s,p}\int_{\mathbb R^d_+}x_d^{-sp}|u|^p\,dx,\\
 \mathcal R_{d,s,p,\tau}[u]={}&
 \int_{\mathbb R^d_+}
 \frac{x_d^{\tau-sp}|u(x)|^p}
 {(x_{d-1}^2+x_d^2)^{\tau/2}}\,dx,
 \end{split}
\]
and
\[
 \Lambda_{d,s,p,\tau}=
 \inf_{u\in C_c^\infty(\mathbb R^d_+)\setminus\{0\}}
 \frac{\mathcal H_{d,s,p}[u]}{\mathcal R_{d,s,p,\tau}[u]}.
\]
Here $\mathcal D_{d,s,p}$ is the sharp regional half-space Hardy
constant, given in \eqref{eq:hardy-general-p}. Bogdan and Dyda
\cite{BogdanDyda} determined it for $p=2$, and Frank and Seiringer
\cite{FrankSeiringerHalf} treated general powers by the nonlinear
ground-state method of \cite{FrankSeiringer}.

The local second remainder goes back to Maz'ya
\cite{Mazya1972,MazyaBook}. Tidblom proved inequalities for a larger
family of weights \cite{Tidblom}. Maz'ya and Shaposhnikova identified the
sharp quadratic coefficient at $\tau=1$ through an angular eigenvalue
problem \cite{MazyaShaposhnikova}. The fractional inequality with the
same two-plane weight was established by Kijaczko and Szczukiewicz
\cite{KijaczkoSzczukiewicz}. Their result gives a positive coefficient. The optimal coefficient and
the corresponding loss of compactness remain to be determined.

Fractional Hardy--Sobolev--Maz'ya inequalities involve a Sobolev
remainder; see \cite{Dyda2010,DydaFrank,DydaLehrbackVahakangas,Sloane}.
Weighted versions and inequalities with flat singular sets are treated
in \cite{DydaKijaczkoWeighted,KijaczkoSahu,SahuFlat}. Related results
for Sobolev--Bregman forms can be found in
\cite{KijaczkoLenczewska,GutowskiKwasnicki}.

Banerjee, Ganguly, and Sahu \cite{BanerjeeGangulySahu} obtained
full-space Hardy stability estimates by Emden--Fowler and Mellin
methods, using Lorentz or Marcinkiewicz distances to virtual extremals.
For the half-space problem considered here, the second sharp potential
must also be subtracted. The resulting deficit is governed by a singular
angular operator.

Kijaczko and Szczukiewicz proved a positive remainder in the ranges
\cite{KijaczkoSzczukiewicz}
\[
 0<\tau<\begin{cases}
 p(sp+1)/(2(p-1)),&p\geq2,\\
 2\min\{sp,1\},&1<p<2.
 \end{cases}
\]
Since $\mathcal R_{d,s,p,\tau}\leq\mathcal R_{d,s,p,\sigma}$ for
$0<\sigma\leq\tau$, monotonicity gives $\Lambda_{d,s,p,\tau}>0$ for every
$d\geq2$, $0<s<1$, $p>1$, and $\tau>0$.
For $d=p=2$, the estimate used below is
\begin{equation}\label{eq:intro-KS}
 \begin{split}
 [u]_{s,+}^2-\mathcal D_{2,s,2}
 \int_{\mathbb R^2_+}\frac{|u|^2}{x_2^{2s}}\,dx
 \geq c_{s,\tau}
 \int_{\mathbb R^2_+}
 \frac{|u|^2}{x_2^{2s-\tau}|x|^\tau}\,dx,
 \end{split}
\end{equation}
where $c_{s,\tau}>0$ for $0<\tau<1+2s$. We use this estimate to
close the positive angular and cylinder forms.

Our first result reduces the best coefficient to the planar problem.
The dimensional factor follows from a contraction in the transverse
$L^p$ variables.

\begin{theorem}[Dimensional reduction for general powers]
\label{thm:dimension-reduction}
Let $d\geq2$, $0<s<1$, $p>1$, and $\tau>0$. Then
\begin{equation}\label{eq:intro-dimensional-reduction-p}
 \Lambda_{d,s,p,\tau}=A_{d,s,p}\Lambda_{2,s,p,\tau},
 \qquad
 A_{d,s,p}=\frac{\pi^{(d-2)/2}\Gamma(1+sp/2)}
 {\Gamma((d+sp)/2)}.
\end{equation}
For $p=2$, put $A_{d,s}=A_{d,s,2}$,
$\Lambda_{d,s,\tau}=\Lambda_{d,s,2,\tau}$, and
$\Lambda_{s,\tau}=\Lambda_{2,s,2,\tau}$. Then
\begin{equation}\label{eq:intro-dimensional-reduction}
 \Lambda_{d,s,\tau}=A_{d,s}\Lambda_{s,\tau},
 \qquad
 A_{d,s}=\frac{\pi^{(d-2)/2}\Gamma(1+s)}
 {\Gamma((d+2s)/2)}.
\end{equation}
The quadratic quotient has no nonzero extremal in the completion of
$C_c^\infty(\mathbb R^d_+)$ under
$(\mathcal H_{d,s,2}+\mathcal R_{d,s,2,\tau})^{1/2}$.
For $d=2$, the last assertion is Theorem~\ref{thm:A}.
\end{theorem}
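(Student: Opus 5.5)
Write $x=(x',z)$ with $x'\in\mathbb R^{d-2}$ and $z=(x_{d-1},x_d)\in\mathbb R^2_+$. Both weights $x_d^{-sp}$ and $x_d^{\tau-sp}|z|^{-\tau}$ depend only on $z$. The plan is to prove the two inequalities $\Lambda_{d,s,p,\tau}\ge A_{d,s,p}\Lambda_{2,s,p,\tau}$ and $\Lambda_{d,s,p,\tau}\le A_{d,s,p}\Lambda_{2,s,p,\tau}$ separately, and then to read non-attainment off the lower-bound argument.

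\textbf{Lower bound.} Split $x-y=(x'-y',z-w)$ and use
\[
\int_{\mathbb R^{d-2}}\frac{dh}{(|h|^2+|z-w|^2)^{(d+sp)/2}}=\frac{A_{d,s,p}}{|z-w|^{2+sp}}.
\]
Writing $h=x'-y'$, the seminorm becomes
\[
[u]_{s,p,+}^p=\int_{\mathbb R^{d-2}}\int_{\mathbb R^{d-2}}\int\!\!\int\frac{|u(x',z)-u(x'+h,w)|^p}{(|h|^2+|z-w|^2)^{(d+sp)/2}}\,dz\,dw\,dx'\,dh .
\]
Fix $z,w$ and set $F(x')=u(x',z)$, $G(x')=u(x',w)$. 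The $L^p$ contraction (triangle inequality in $L^p(dx')$ together with translation invariance) gives
\[
\int|F(x')-G(x'+h)|^p\,dx'\ \ge\ \bigl|\,\|F\|_p-\|G\|_p\,\bigr|^p .
\]
Put $v(z)=\|u(\cdot,z)\|_{L^p(\mathbb R^{d-2})}$ and integrate in $h$. This yields $[u]_{s,p,+}^p\ge A_{d,s,p}[v]_{s,p,+}^p$, where the right-hand seminorm is the planar one. The weighted integrals factor exactly, $\int x_d^{-sp}|u|^p=\int x_2^{-sp}v^p$, and similarly for $\mathcal R$. The Hardy constants scale the same way: by the same reduction applied to the Hardy quotient, or by the formula \eqref{eq:hardy-general-p}, $\mathcal D_{d,s,p}=A_{d,s,p}\mathcal D_{2,s,p}$. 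Hence $\mathcal H_{d,s,p}[u]\ge A_{d,s,p}\mathcal H_{2,s,p}[v]\ge A_{d,s,p}\Lambda_{2,s,p,\tau}\mathcal R_{2,s,p,\tau}[v]$.

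The main obstacle is that $v$ is only Lipschitz-type and compactly supported, not in $C_c^\infty(\mathbb R^2_+)$. I will handle it by showing that $\Lambda_{2,s,p,\tau}$ is unchanged over compactly supported functions with finite seminorm, via mollification. Mollification converges in the seminorm and in the weighted $L^p$ norms, since the support stays away from $x_2=0$.

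\textbf{Upper bound.} Take a planar near-minimizer $\phi\in C_c^\infty(\mathbb R^2_+)$ and test with $u(x)=\eta_R(x')\phi(z)$, where $\eta_R(x')=R^{-(d-2)/p}\eta(x'/R)$ and $R\to\infty$. The weighted terms equal $\|\eta\|_p^p$ times the planar ones. In the seminorm, split
\[
u(x)-u(y)=\eta_R(x')\bigl(\phi(z)-\phi(w)\bigr)+\phi(w)\bigl(\eta_R(x')-\eta_R(y')\bigr).
\]
The first piece gives $A_{d,s,p}\|\eta\|_p^p[\phi]^p$ up to an error of order $o(1)$, because $\eta_R(x'+h)\approx\eta_R(x')$ on the bulk of the $h$-integration. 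The second piece is $O(R^{-sp})$ since $\phi$ has compact support. For $p\ne2$ the cross terms are handled by the elementary inequality $|a+b|^p\le(1+\varepsilon)|a|^p+C_\varepsilon|b|^p$. Letting $R\to\infty$ and then $\varepsilon\to0$ gives the upper bound.

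\textbf{Non-attainment for $p=2$.} Suppose a nonzero $u$ in the completion attains the infimum. Then every inequality in the lower-bound chain must be an equality, the last one being against $\Lambda_{2,s,2,\tau}$. For $d=2$ this is exactly Theorem~\ref{thm:A}, so assume $d\ge3$. Equality in the contraction step forces, for a.e.\ $z,w,h$, that $u(\cdot,z)$ and $u(\cdot+h,w)$ are nonnegatively proportional. In particular $u(\cdot,z)$ equals a multiple of its own translates for all $h$. An $L^2(\mathbb R^{d-2})$ function with this property is zero, so $u=0$, a contradiction. This argument needs the lower-bound inequalities to extend to the completion, which follows by density and Fatou.
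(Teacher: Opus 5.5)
Your two inequalities follow the paper's argument. The lower bound uses the transverse $L^p$ reverse-triangle contraction, giving $[u]_{s,p,+}^p\geq A_{d,s,p}[v]_{s,p,+}^p$, and then mollifies the compactly supported profile $v$. The upper bound uses product test functions $\eta_R(x')\phi(z)$; your $(1+\varepsilon)$ inequality replaces the paper's Minkowski step. The first piece is in fact exactly $A_{d,s,p}\|\eta\|_p^p[\phi]_{s,p,+}^p$ after integrating out $y'$, so no approximation $\eta_R(x'+h)\approx\eta_R(x')$ is involved.

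\textbf{Non-attainment for $d\geq3$.} The gap is here. You read off a.e.\ equality in $\int|F(x')-G(x'+h)|^2\,dx'\geq(\|F\|_2-\|G\|_2)^2$ from $\mathcal H_{d,s}[u]=A_{d,s}\mathcal H_s[v]$. That requires $\mathcal H_{d,s}[u]-A_{d,s}\mathcal H_s[v]$ to equal the integral of this nonnegative defect, i.e.\ $[u]_{s,2,+}^2-A_{d,s}[v]_{s,+}^2$. On the completion this is not available. There $[u]_{s,2,+}^2$ and $\mathcal D_{d,s,2}\int x_d^{-2s}|u|^2\,dx$ may both be infinite (the paper stresses this already for $d=2$), so the deficit exists only as a closed form. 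Density and Fatou transfer the inequality, but not the defect identity that the equality case needs. A second problem: an a.e.\ statement in $(z,w,h)$ says nothing about the null diagonal $w=z$. So the step ``$u(\cdot,z)$ is a multiple of its own translates'' needs a further argument, for example comparing two values of $h$ at fixed $(z,w)$.

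\textbf{A direct fix.} None of this is needed. Your own chain gives $A_{d,s}\mathcal H_s[v]\leq\mathcal H_{d,s}[u]=A_{d,s}\Lambda_{s,\tau}\mathcal R_{s,\tau}[v]$ with $\mathcal R_{s,\tau}[v]=\mathcal R_{d,s,\tau}[u]>0$. So $v$ would be a nonzero planar extremal, contradicting Theorem~\ref{thm:A} directly, also for $d\geq3$. What must actually be proved is that $v$ lies in the planar completion with $A_{d,s}\mathcal H_s[v]\leq\mathcal H_{d,s}[u]$:
\begin{enumerate}
\item Take core functions $u_j\to u$. The pointwise bound $|v_j(z)-v(z)|\leq\|u_j(\cdot,z)-u(\cdot,z)\|_2$ gives $v_j\to v$ in the weighted $L^2$ space.
\item The $v_j$ are bounded in the planar energy norm.
\item Their weak limit is $v$, because the planar completion embeds injectively into the remainder space (closability, Section~\ref{sec:coercivity}).
\item Weak lower semicontinuity of the norm, together with $\mathcal R_{s,\tau}[v_j]\to\mathcal R_{s,\tau}[v]$, gives the inequality.
\end{enumerate}
You also need that elements of the $d$-dimensional completion are genuine functions in the remainder space, so that $v$ is defined at all. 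The paper gets this from closability of the ground-state difference operator $S$; your proposal does not address it.

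\textbf{The paper's route.} The paper argues differently. It extends the exact linear splitting $\mathcal H_{d,s}[u]=\int\{A_{d,s}\mathcal H_s[\widehat u(\zeta,\cdot)]+\mathcal T_{d,s,\zeta}[\widehat u(\zeta,\cdot)]\}\,d\zeta$ to the completion, then applies Theorem~\ref{thm:A} on each Fourier slice. That identity is reused in Corollary~\ref{cor:lifted-stability} and Theorem~\ref{thm:joint-energy-decomposition}. The repaired contraction route is shorter for non-attainment alone, but because it is nonlinear it yields only an inequality.
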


Theorem~\ref{thm:nonlinear-angular-reduction} further reduces the
planar problem by an $L^p$ contraction in the logarithmic radial
variable. For every $p>1$, it expresses $\Lambda_{2,s,p,\tau}$ as an
angular variational infimum. We next consider $p=2$, for which the
angular problem has a compact spectral realization.

In the planar quadratic case, write
\begin{equation}\label{eq:intro-energy}
 [u]_{s,+}^2=\int_{\mathbb R^2_+}\int_{\mathbb R^2_+}
 \frac{|u(x)-u(y)|^2}{|x-y|^{2+2s}}\,dx\,dy.
\end{equation}
Then
\begin{equation*}
 [u]_{s,+}^2\geq\mathcal D_{2,s,2}
 \int_{\mathbb R^2_+}x_2^{-2s}|u|^2\,dx,
\end{equation*}
where, in this ordered-double-integral normalization,
\begin{equation}\label{eq:intro-hardyc}
 \mathcal D_{2,s,2}=
 \frac{2\sqrt\pi\,\Gamma(s+\frac12)}{\Gamma(1+s)}
 \int_0^1\frac{(1-t^{(2s-1)/2})^2}{(1-t)^{1+2s}}\,dt.
\end{equation}
In particular, $\mathcal D_{2,1/2,2}=0$.

We use
\[
 \mathcal H_{d,s}=\mathcal H_{d,s,2},\qquad
 \mathcal R_{d,s,\tau}=\mathcal R_{d,s,2,\tau}.
\]
For $d=2$, put $\mathcal R_{s,\tau}=\mathcal R_{2,s,\tau}$.
The planar deficit is
\begin{equation}\label{eq:intro-deficit}
 \mathcal H_s[u]=[u]_{s,+}^2-\mathcal D_{2,s,2}
 \int_{\mathbb R^2_+}\frac{|u|^2}{x_2^{2s}}\,dx.
\end{equation}
Both quadratic terms have degree $2s-2$ under Euclidean dilation.
The energy space is the completion of $C_c^\infty(\mathbb R^d_+)$ under
$(\mathcal H_{d,s}+\mathcal R_{d,s,\tau})^{1/2}$. The positive remainder
estimate makes this a norm. In the planar logarithmic polar coordinates,
dilation becomes translation.

Put
\[
 K_s(q,\theta,\varphi)=
 \frac{2^{-1-s}}{\bigl(\cosh q-\cos(\theta-\varphi)\bigr)^{1+s}}
\]
and $a_s=1-s$. For $\xi\in\mathbb R$, define
\begin{equation}\label{eq:intro-fiber}
 \begin{split}
 \mathfrak b_{s,\xi}[\psi]
 ={}&\int_0^\pi\int_0^\pi\int_{\mathbb R}
 K_s(q,\theta,\varphi)\\
 &\quad\times\bigl[\cosh(a_sq)(|\psi(\theta)|^2+|\psi(\varphi)|^2)\\
 &\qquad-2\operatorname{Re}(e^{i\xi q}\psi(\theta)
 \overline{\psi(\varphi)})\bigr]\,dq\,d\theta\,d\varphi\\
 &-\mathcal D_{2,s,2}\int_0^\pi
 \frac{|\psi(\theta)|^2}{(\sin\theta)^{2s}}\,d\theta.
 \end{split}
\end{equation}
The form is first defined on $C_c^\infty(0,\pi)$. Its closed extension is
constructed in Section~\ref{sec:fibers}. Let
\[
 m_{s,\tau}(\theta)=(\sin\theta)^{\tau-2s}
\]
and set
\[
 H_{s,\tau}=L^2((0,\pi),m_{s,\tau}(\theta)\,d\theta).
\]
Let $\mathscr Y_s$ be the common closed angular form domain and
$\mathscr X_s$ the closed cylinder domain constructed in
Sections~\ref{sec:coercivity} and~\ref{sec:fibers}. We use complex forms
for the Mellin transform, with angular norm
\[
 \|\psi\|_{\mathscr Y_s}^2=
 \mathfrak b_{s,0}[\psi]+\|\psi\|_{H_{s,\tau}}^2.
\]
The real conclusions follow by taking real and imaginary parts.
Let $B_s(\xi)$ be the self-adjoint operator associated with
$\mathfrak b_{s,\xi}$ in $H_{s,\tau}$. Lemma~\ref{lem:closed-fibers}
shows that its resolvent is compact. We write
\[
 0<\lambda_1(\xi)\leq\lambda_2(\xi)\leq\cdots,
 \qquad \lambda_j(\xi)\longrightarrow\infty.
\]
In particular,
\[
 \lambda_1(\xi)=
 \inf_{\psi\in\mathscr Y_s\setminus\{0\}}
 \frac{\mathfrak b_{s,\xi}[\psi]}
 {\int_0^\pi m_{s,\tau}|\psi|^2\,d\theta}.
\]

\begin{theorem}[Sharp reduction and non-attainment]\label{thm:A}
Let $0<s<1$ and $\tau>0$. Then
\[
 \Lambda_{s,\tau}=\lambda_1(0)>0.
\]
The eigenvalue $\lambda_1(0)$ is simple and has a positive eigenfunction
$\phi_{s,\tau}$, normalized by
\[
 \int_0^\pi m_{s,\tau}(\theta)\phi_{s,\tau}(\theta)^2\,d\theta=1.
\]
The value $\Lambda_{s,\tau}$ is not attained by a nonzero function in the
completion of $C_c^\infty(\mathbb R^2_+)$ under
$(\mathcal H_s+\mathcal R_{s,\tau})^{1/2}$.
\end{theorem}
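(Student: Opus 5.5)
The plan is to pass to logarithmic polar coordinates and diagonalize dilations with a Mellin (Fourier) transform in the radial variable. For $u\in C_c^\infty(\mathbb R^2_+)$ write $x=e^{t}(\cos\theta,\sin\theta)$ with $t\in\mathbb R$, $\theta\in(0,\pi)$, and set $v(t,\theta)=e^{(1-s)t}u(e^t\omega_\theta)$. Both quadratic terms have degree $2s-2$, so the substitution makes them translation invariant in $t$. The identity $|x-y|^2=2e^{t+t'}(\cosh(t-t')-\cos(\theta-\varphi))$ together with the weight $e^{(1-s)(t+t')}$ turns the Gagliardo kernel into $K_s(t-t',\theta,\varphi)$. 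The diagonal term $|u(x)|^2+|u(y)|^2$ then produces the factor $\cosh(a_s q)$ after symmetrizing in $q=t-t'$. Taking the Fourier transform $\hat v(\xi,\theta)$ in $t$ and applying Plancherel should give
\[
\mathcal H_s[u]=\frac1{2\pi}\int_{\mathbb R}\mathfrak b_{s,\xi}[\hat v(\xi,\cdot)]\,d\xi,\qquad
\mathcal R_{s,\tau}[u]=\frac1{2\pi}\int_{\mathbb R}\|\hat v(\xi,\cdot)\|_{H_{s,\tau}}^2\,d\xi .
\]
The key input is that $\mathcal R_{s,\tau}$ reduces exactly to $m_{s,\tau}$, since $x_2^{\tau-2s}|x|^{-\tau}=|x|^{-2s}(\sin\theta)^{\tau-2s}$. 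Hence $\Lambda_{s,\tau}=\inf_\xi\lambda_1(\xi)$, using the approximating argument in $\xi$ for the upper bound.

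Next I will show $\lambda_1(\xi)\ge\lambda_1(0)$. Since $\operatorname{Re}(e^{i\xi q}\psi(\theta)\overline{\psi(\varphi)})\le|\psi(\theta)||\psi(\varphi)|$ and $K_s>0$, we get $\mathfrak b_{s,\xi}[\psi]\ge\mathfrak b_{s,0}[|\psi|]$. Equality in the $q$-integral forces $\cos(\xi q)=1$ on a set of positive measure times a positive product, so for $\xi\ne0$ and $\psi\ne0$ the inequality is strict. This gives $\Lambda_{s,\tau}=\lambda_1(0)$. Positivity $\lambda_1(0)>0$ comes from monotonicity in $\tau$ together with the Kijaczko--Szczukiewicz estimate \eqref{eq:intro-KS}, which gives $\Lambda_{s,\tau}>0$, equivalently coercivity of the fiber form. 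Existence of a minimizer for $\lambda_1(0)$ follows from the compact resolvent of Lemma~\ref{lem:closed-fibers}. The argument then runs as follows.

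\begin{itemize}
\item \emph{Positivity of the minimizer.} Replacing an eigenfunction $\phi$ by $|\phi|$ does not increase the form, so $|\phi|$ is also a minimizer.
\item \emph{Strict positivity.} The form kernel $K_s(\cdot,\theta,\varphi)$ is strictly positive, so the off-diagonal part is positivity improving. I will show this through the Beurling--Deny/Perron--Frobenius criterion: if $\phi=\phi_+-\phi_-$ with both parts nonzero, then $\mathfrak b_{s,0}[|\phi|]<\mathfrak b_{s,0}[\phi]$ strictly, because the cross term $\int\!\!\int\!\!\int K_s\,\phi_+(\theta)\phi_-(\varphi)$ is positive.
\item \emph{Simplicity.} Every eigenfunction has constant sign. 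Two orthogonal ones cannot both be positive, so $\lambda_1(0)$ is simple.
\end{itemize}

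The step needing most care is making the strict inequality rigorous on the closed domain $\mathscr Y_s$. There $\mathfrak b$ is a difference of two possibly infinite pieces, namely the $\cosh(a_sq)$-weighted diagonal term, which diverges at $q=0$, and the Hardy term. I would therefore write $\mathfrak b_{s,0}[\psi]$ as a convergent integral of $|\psi(\theta)-\psi(\varphi)|^2$ against $K_s$ plus a potential term, following the ground-state representation of Section~\ref{sec:fibers}. With that form, the lattice property is transparent.

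For non-attainment, suppose $u\ne0$ lies in the energy completion with $\mathcal H_s[u]=\lambda_1(0)\mathcal R_{s,\tau}[u]$. The Plancherel identity extends by density to the completion, giving a measurable family $\hat v(\xi,\cdot)\in\mathscr Y_s$ for a.e.\ $\xi$ with
\[
\int\bigl(\mathfrak b_{s,\xi}[\hat v(\xi)]-\lambda_1(0)\|\hat v(\xi)\|^2_{H_{s,\tau}}\bigr)\,d\xi=0 .
\]
The integrand is nonnegative, so it vanishes for a.e.\ $\xi$. By the strict inequality for $\xi\ne0$, this forces $\hat v(\xi,\cdot)=0$ for a.e.\ $\xi\ne0$. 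Then $\hat v=0$ in $L^2(d\xi;H_{s,\tau})$, so $\mathcal R_{s,\tau}[u]=0$ and $u=0$, a contradiction.

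To use the strict inequality I need it in the quantitative form $\lambda_1(\xi)>\lambda_1(0)$ for each $\xi\ne0$. I get this from compactness: a minimizer for $\lambda_1(\xi)$ exists, and equality would force $|\psi|$ to be a multiple of $\phi_{s,\tau}$, which is positive. Equality in the cosine bound then fails, since $\int K_s(1-\cos\xi q)\phi(\theta)\phi(\varphi)>0$.
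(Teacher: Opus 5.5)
Your overall route is the paper's: Mellin fibers, the diamagnetic bound, a logarithmic cutoff for the upper bound, compactness to get $\lambda_1(\xi)>\lambda_1(0)$, and fiberwise vanishing for non-attainment. The step that fails as you describe it is the one you yourself flag as delicate.

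\textbf{The splitting you propose is not available on $\mathscr Y_s$.} You write $\mathfrak b_{s,0}[\psi]$ as a bare jump energy $\iiint K_s|\psi(\theta)-\psi(\varphi)|^2$ plus a potential term. This removes the $\cosh(a_sq)$ divergence but not the Hardy one, because the potential is $W_s-\mathcal D_{2,s,2}(\sin\theta)^{-2s}$ with $W_s$ bounded. The splitting is valid on the core but not on $\mathscr Y_s$. Take $s\neq\frac12$ and $\psi=(\sin\theta)^{s-1/2}\chi$, with $\chi=1$ near $0$ and $\chi=0$ near $\pi$. This $\psi$ lies in $\mathscr Y_s$: its conjugate $z=\chi$ has finite positive energy, and logarithmic cutoffs approximate it because the endpoint has zero capacity for this critical form. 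Yet $\int(\sin\theta)^{-2s}|\psi|^2=\infty$, and its jump energy diverges logarithmically, since $\theta^{s-1/2}$ is scale-critical for it. So on the closed domain your decomposition reads $\infty-\infty$.

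This is why the paper uses that splitting only against compactly supported tests, as in \eqref{eq:localized-angular-polarization}. Otherwise it works with the positive representation \eqref{eq:positive-fiber}, a nonnegative integral in $z=(\sin\theta)^{1/2-s}\psi$. For $A,B>0$ one has the pointwise facts $\bigl|A|z|-B|w|\bigr|\leq|Az-Bw|$ and $|Az-Bw|^2-(A|z|-B|w|)^2=2AB\{|z||w|-\operatorname{Re}(z\overline w)\}$. After kernel truncation and monotone convergence, these give both the lattice inequality and the exact cross term on all of $\mathscr Y_s$. You also need $|\psi|\in\mathscr Y_s$ before you replace $\phi$ by $|\phi|$. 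For a closure this is not automatic; it is the content of Lemma~\ref{lem:modulus-domain}.

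\textbf{Strict positivity.} Your strict inequality $\mathfrak b_{s,0}[|u|]<\mathfrak b_{s,0}[u]$ for sign-changing $u$, applied only to eigenfunctions, gives constant sign and simplicity. It does not give $\phi_{s,\tau}>0$ almost everywhere. For that you must apply it to all sign-changing $u\in\mathscr Y_s$. Take $u=\boldsymbol 1_Ah-\boldsymbol 1_{A^c}h$ with $h\in\mathscr Y_s$, $h>0$, for a putative invariant band $A$; this excludes invariant bands. Then use positivity improvement and Perron--Frobenius. That is exactly what Lemma~\ref{lem:positive-simple} does through the mixed term.

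\textbf{Strictness at $\xi\neq0$.} A minimizer with $|\psi|=c\phi_{s,\tau}$ may carry a phase $e^{i\alpha(\theta)}$. The defect is therefore $\iiint K_s|\psi(\theta)||\psi(\varphi)|\{1-\cos(\xi q+\alpha(\theta)-\alpha(\varphi))\}$, not the phase-free integral you wrote. It is still positive for $\xi\neq0$. This pointwise strictness alone already forces $\widehat v(\xi,\cdot)=0$ in your non-attainment argument, so compactness is not needed there.
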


The homogeneous function
$U_{s,\tau}(r\cos\theta,r\sin\theta)=r^{s-1}\phi_{s,\tau}(\theta)$
solves the weak equation for the full sharp potential.
In Theorem~\ref{thm:sharp-ground-state}, we show that
$\mathcal H_s[u]-\Lambda_{s,\tau}\mathcal R_{s,\tau}[u]$ equals a
positive jump form with weight $U_{s,\tau}(x)U_{s,\tau}(y)$.
This identity extends to the completed energy space. To state the
quantitative estimate, we introduce the following distance.

For $x=e^t(\cos\theta,\sin\theta)$, define
\[
 w(t,\theta)=e^{(1-s)t}u(e^t\cos\theta,e^t\sin\theta).
\]
Let $\widehat w(\xi,\theta)$ be the Fourier transform in $t$. Let
$P_0$ be the orthogonal projection in
$L^2((0,\pi),m_{s,\tau}d\theta)$ onto
$\operatorname{span}\{\phi_{s,\tau}\}$. Set
\begin{equation}\label{eq:intro-distance}
 \begin{split}
 \operatorname{Dist}_{s,\tau}(u)^2={}&
 \int_{\mathbb R}\|(I-P_0)\widehat w(\xi,\cdot)\|_{L^2(m_{s,\tau})}^2\,d\xi+\int_{\mathbb R}\min\{\xi^2,1\}
 \|P_0\widehat w(\xi,\cdot)\|_{L^2(m_{s,\tau})}^2\,d\xi.
 \end{split}
\end{equation}
Its kernel in the energy space is $\{0\}$, since the formal zero mode
$w(t,\theta)=c\phi_{s,\tau}(\theta)$ is not in $L^2$ in $t$.
Since $\xi^2/(1+\xi^2)$ is comparable to
$\min\{\xi^2,1\}$, Plancherel also gives the intrinsic equivalent form
\[
 \operatorname{Dist}_{s,\tau}(u)^2\asymp
 \|(I-P_0)w\|_{L^2(\mathbb R;H_{s,\tau})}^2+
 \|\partial_t(1-\partial_t^2)^{-1/2}P_0w\|_
 {L^2(\mathbb R;H_{s,\tau})}^2.
\]
The first term measures the component orthogonal to the angular
ground state. The second measures variation in the logarithmic radius.

\begin{theorem}[Quantitative stability]\label{thm:B}
Let $0<s<1$ and $\tau>0$. There is
$\gamma_{s,\tau}>0$ such that
\begin{equation}\label{eq:intro-mainB}
 \mathcal H_s[u]-\Lambda_{s,\tau}\mathcal R_{s,\tau}[u]
 \geq \gamma_{s,\tau}\operatorname{Dist}_{s,\tau}(u)^2
\end{equation}
for every $u\in C_c^\infty(\mathbb R^2_+)$. Moreover,
\begin{equation}\label{eq:intro-dispersion}
 \lambda_1(\xi)=\Lambda_{s,\tau}+\kappa_{s,\tau}\xi^2+O(\xi^4)
 \quad\text{as }\xi\to0,
\end{equation}
where
\begin{equation}\label{eq:intro-kappa}
 \kappa_{s,\tau}=
 \int_0^\pi\int_0^\pi\int_{\mathbb R}
 q^2K_s(q,\theta,\varphi)
 \phi_{s,\tau}(\theta)\phi_{s,\tau}(\varphi)
 \,dq\,d\theta\,d\varphi>0.
\end{equation}
\end{theorem}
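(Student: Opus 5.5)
Our plan is to pass to the Fourier--Mellin fibers and reduce both assertions to spectral statements about the family $B_s(\xi)$.

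\textbf{Step 1: fiber decomposition.} For $u\in C_c^\infty(\mathbb R^2_+)$ with $w(t,\theta)=e^{(1-s)t}u(e^t\cos\theta,e^t\sin\theta)$, we first record the Plancherel identities
\[
 \mathcal H_s[u]=\frac1{2\pi}\int_{\mathbb R}\mathfrak b_{s,\xi}[\widehat w(\xi,\cdot)]\,d\xi,
 \qquad
 \mathcal R_{s,\tau}[u]=\frac1{2\pi}\int_{\mathbb R}\|\widehat w(\xi,\cdot)\|_{H_{s,\tau}}^2\,d\xi .
\]
In polar--logarithmic variables one has $|x-y|^2=2e^{t+t'}(\cosh(t-t')-\cos(\theta-\varphi))$, and the Jacobians produce the kernel $K_s$ together with the factor $\cosh(a_sq)$ on the diagonal terms. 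These identities underlie the proof of Theorem~\ref{thm:A}, and we take them as established there. Since the deficit is then $\frac1{2\pi}\int(\mathfrak b_{s,\xi}-\Lambda_{s,\tau})[\widehat w(\xi)]\,d\xi$, estimate \eqref{eq:intro-mainB} reduces to the uniform fiberwise bound
\[
 \mathfrak b_{s,\xi}[\psi]-\lambda_1(0)\|\psi\|^2\geq\gamma\bigl(\|(I-P_0)\psi\|^2+\min\{\xi^2,1\}\|P_0\psi\|^2\bigr),
 \qquad \psi\in\mathscr Y_s .
\]

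\textbf{Step 2: the fiberwise bound.} We use two monotonicity facts.
\begin{itemize}
\item[(a)] $\mathfrak b_{s,\xi}[\psi]\geq\mathfrak b_{s,0}[|\psi|]$, since $\operatorname{Re}(e^{i\xi q}\psi\bar\psi)\leq|\psi||\psi|$ and $K_s>0$.
\item[(b)] More precisely,
\[
 \mathfrak b_{s,\xi}[\psi]-\mathfrak b_{s,0}[\psi]=2\iiint K_s(1-\cos\xi q)\operatorname{Re}(\psi(\theta)\overline{\psi(\varphi)})\,dq\,d\theta\,d\varphi .
\]
\end{itemize}

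For small $|\xi|$, we write $\psi=c\phi+\psi_\perp$, with $\phi=\phi_{s,\tau}$ and $\psi_\perp=(I-P_0)\psi$. Simplicity of $\lambda_1(0)$ (Theorem~\ref{thm:A}) and compactness of the resolvent give a gap $\lambda_2(0)-\lambda_1(0)=:g>0$. Hence $\mathfrak b_{s,0}[\psi]-\lambda_1\|\psi\|^2\geq g\|\psi_\perp\|^2$.

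For the $\xi$-dependent term (b), the contribution of $c\phi$ is $|c|^2\iiint K_s(1-\cos\xi q)\phi\phi$. Using $1-\cos\xi q\geq c_0\min\{\xi^2q^2,1\}$ and positivity of $\phi$, this is $\gtrsim\min\{\xi^2,1\}|c|^2$. We must check that $\iiint K_s\min\{q^2,1\}\phi\phi<\infty$; this holds because $K_s\sim e^{-(1+s)|q|}$ at infinity and $\phi\in\mathscr Y_s$ controls the diagonal singularity. The cross terms and $\psi_\perp$ terms we handle by Cauchy--Schwarz in the positive kernel $K_s(1-\cos\xi q)$. These give errors bounded by $\epsilon\min\{\xi^2,1\}|c|^2+C_\epsilon(\dots)\|\psi_\perp\|_{\mathscr Y_s}^2$. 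The $\mathscr Y_s$-norm of $\psi_\perp$ is then absorbed using the gap, since the gap bound also controls $\mathfrak b_{s,0}[\psi_\perp]$ after interpolating with $\|\psi_\perp\|^2$.

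For $|\xi|\geq\xi_0$ we argue by compactness instead. We have $\lambda_1(\xi)>\lambda_1(0)$ for $\xi\neq0$: equality in (a) would force $e^{i\xi q}\psi(\theta)\overline{\psi(\varphi)}\geq0$ almost everywhere, which is impossible. By continuity of $\xi\mapsto B_s(\xi)$ in norm-resolvent sense, together with the bound $\lambda_1(\xi)\geq\lambda_1(0)+c\min\{\xi^2,1\}$ coming from (b), we get a uniform gap $\lambda_1(\xi)-\lambda_1(0)\geq\delta$ for $|\xi|\geq\xi_0$. This gives the bound with constant $\delta$ on the whole of $\psi$.

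\textbf{Step 3: dispersion.} The family $\xi\mapsto\mathfrak b_{s,\xi}$ is an analytic (type (a)) form family near $0$. The perturbation $2\iiint K_s(1-\cos\xi q)\operatorname{Re}(\psi\bar\psi)$ is even in $\xi$ and is relatively form-bounded with derivatives controlled by moments $\int q^{2k}K_s$, which are finite by the exponential decay. Kato--Rellich perturbation of the simple eigenvalue $\lambda_1(0)$ then gives an even analytic $\lambda_1(\xi)$. The first-order Feynman--Hellmann term is
\[
 \lambda_1''(0)/2=\langle\partial_{\xi}^2\mathfrak b/2\,\phi,\phi\rangle=\iiint q^2K_s\phi\phi=\kappa_{s,\tau}.
\]
The second-order correction vanishes because the first derivative of the form at $\xi=0$ is zero (its integrand is odd in $q$). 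Finally, $\kappa_{s,\tau}>0$ since $\phi>0$ and $K_s>0$, and this yields \eqref{eq:intro-dispersion}.

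\textbf{Main obstacle.} The hardest point is the relative form-boundedness used in Steps 2 and 3. The perturbation must be controlled by $\mathfrak b_{s,0}+\|\cdot\|_{H_{s,\tau}}^2$ even though the weight $m_{s,\tau}$ and the Hardy term degenerate at $\theta=0,\pi$. The first thing we would try is to bound $\iiint K_s\min\{q^2,1\}|\psi(\theta)|^2$ by $C\|\psi\|^2_{\mathscr Y_s}$, using the closed domain description from Section~\ref{sec:fibers} and the positive-remainder estimate \eqref{eq:intro-KS}.
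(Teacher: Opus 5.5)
Your Step~3 is essentially the paper's proof of Proposition~\ref{prop:dispersion}: the type~(a) family from Lemma~\ref{lem:kernel-moments}, Kato perturbation of the simple isolated eigenvalue, and Feynman--Hellmann with $\mathfrak b'_{s,0}=0$.

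Your small-frequency estimate takes a different route from the paper's, and it is sound. The paper uses the moving projection $P(\xi)$ and the bound $\|P(\xi)-P_0\|\leq C\xi^2$ of Lemma~\ref{lem:threshold-estimate}. You instead keep the fixed projection $P_0$. You use \eqref{eq:EL} to decouple $c\phi_{s,\tau}$ from $\psi_\perp$ in the zero fiber, so the gap gives $\mathfrak b_{s,0}[\psi_\perp]-\lambda_1(0)\|\psi_\perp\|^2\geq c\|\psi_\perp\|_{\mathscr Y_s}^2$. You then absorb the cross term $2\operatorname{Re}\,\mathfrak d_{s,\xi}(c\phi_{s,\tau},\psi_\perp)$ by Young's inequality with \eqref{eq:H-kernel-L1} and \eqref{eq:unweighted-control}, which make it $O(\xi^2)|c|\,\|\psi_\perp\|_{\mathscr Y_s}$. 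This needs no analyticity.

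Two corrections to this part:
\begin{enumerate}
\item The pointwise bound $1-\cos(\xi q)\geq c_0\min\{\xi^2q^2,1\}$ is false, since the left side vanishes when $\xi q\in2\pi\mathbb Z$. For $|\xi|\leq\pi$, restrict to $|q|\leq1$, where $1-\cos(\xi q)\geq2\pi^{-2}\xi^2q^2$.
\item The relative form-boundedness you call the main obstacle is routine, by \eqref{eq:q-moment-integral} and \eqref{eq:unweighted-control}.
\end{enumerate}

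\textbf{The genuine gap is the range $|\xi|\geq\xi_0$.} Continuity of $\lambda_1$ (Lemma~\ref{lem:real-frequency-continuity}) and the strict diamagnetic inequality give a positive minimum of $\lambda_1(\xi)-\lambda_1(0)$ only on compact sets $\xi_0\leq|\xi|\leq R$. Nothing in your argument rules out $\lambda_1(\xi)-\lambda_1(0)\to0$ as $|\xi|\to\infty$, and in that case no uniform $\gamma_{s,\tau}$ exists.

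The bound $\lambda_1(\xi)\geq\lambda_1(0)+c\min\{\xi^2,1\}$ that you attribute to (b) is not proved, for three reasons:
\begin{enumerate}
\item Formula (b) gives a quantitative lower bound only along $\phi_{s,\tau}$, namely $\rho_{s,\tau}$. For general $\psi$, the integrand $\operatorname{Re}(\psi(\theta)\overline{\psi(\varphi)})$ has no pointwise sign.
\item The first eigenvector of $B_s(\xi)$ need not stay near $\phi_{s,\tau}$ when $\xi$ is large.
\item Your absorption argument fails there, because $\|H_{s,\xi}\|_{L^1}$ grows like $|\xi|^{2s}$. The cross-term error is then no longer dominated by the fixed zero-fiber gap.
\end{enumerate}

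The paper supplies the missing step in Lemma~\ref{lem:large-frequency}. The interior oscillatory estimate \eqref{eq:interior-frequency-new} compares with the flat symbol $c_s(\xi^2+\zeta^2)^s$ on $I_{2\delta}$. Combined with endpoint tightness from the stronger weight $m_{s,\tau_0}$, it gives $\mathfrak b_{s,\xi}[\psi]\geq(c|\xi|^\beta-C)\|\psi\|_{H_{s,\tau}}^2$. Hence $\lambda_1(\xi)\to\infty$, which is exactly what Lemma~\ref{lem:away-gap} needs.

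A softer substitute would also work:
\begin{enumerate}
\item Suppose core functions $\psi_n$ with $\|\psi_n\|_{H_{s,\tau}}=1$ and frequencies $|\xi_n|\to\infty$ had $\mathfrak b_{s,\xi_n}[\psi_n]\to\lambda_1(0)$.
\item By the diamagnetic inequality, $|\psi_n|$ is bounded in $\mathscr Y_s$. Along a subsequence, $|\psi_n|\to f\neq0$ in $L^2(0,\pi)$.
\item The phase defect $\mathfrak b_{s,\xi_n}[\psi_n]-\mathfrak b_{s,0}[|\psi_n|]$ has a nonnegative integrand, so you may restrict it to $|q|\geq1$. One integration by parts in $q$ in the oscillatory part shows it is at least $(2\inf A-C|\xi_n|^{-1})\|\psi_n\|_{L^1}^2$, where $A(\alpha)=\int_{|q|\geq1}K_s(q,\alpha)\,dq$.
\item This lower bound stays away from zero, while the defect tends to zero. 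That is a contradiction.
\end{enumerate}

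Some such large-frequency argument must be added before the fiberwise bound holds uniformly.
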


For the fixed distance in \eqref{eq:intro-distance}, define the
optimal coefficient
\[
 \Gamma^{\rm hp}_{s,\tau}=
 \inf_{u\neq0}
 \frac{\mathcal H_s[u]-\Lambda_{s,\tau}\mathcal R_{s,\tau}[u]}
 {\operatorname{Dist}_{s,\tau}(u)^2},
\]
where the infimum is taken over the completed energy space. Put
$h(\xi)=\min\{\xi^2,1\}$ and $Q_0=I-P_0$. For $\xi\neq0$, let
\begin{equation}\label{eq:intro-mu}
 \mu_{s,\tau}(\xi)=
 \inf_{\psi\in\mathscr Y_s\setminus\{0\}}
 \frac{\mathfrak b_{s,\xi}[\psi]-\Lambda_{s,\tau}
 \|\psi\|_{L^2(m_{s,\tau})}^2}
 {\|Q_0\psi\|_{L^2(m_{s,\tau})}^2+
 h(\xi)\|P_0\psi\|_{L^2(m_{s,\tau})}^2}.
\end{equation}

\begin{theorem}[Optimal half-plane stability coefficient]
\label{thm:optimal-hp}
Let $0<s<1$ and $\tau>0$.
The number $\Gamma^{\rm hp}_{s,\tau}$ is positive and
\begin{equation}\label{eq:intro-gamma-fiber}
 \Gamma^{\rm hp}_{s,\tau}
 =\mathop{\rm ess\,inf}_{\xi\in\mathbb R\setminus\{0\}}
 \mu_{s,\tau}(\xi).
\end{equation}
For every $\xi\neq0$, the infimum in \eqref{eq:intro-mu} is attained.
The function $\mu_{s,\tau}$ extends continuously to zero, with the value
given in \eqref{eq:intro-mu-zero}, and tends to infinity as $|\xi|\to\infty$. Consequently,
\[
 \Gamma^{\rm hp}_{s,\tau}=\min_{\xi\in\mathbb R}\mu_{s,\tau}(\xi).
\]
Set
\[
 Z_{s,\tau}=\{\xi\neq0:\mu_{s,\tau}(\xi)
 =\Gamma^{\rm hp}_{s,\tau}\}.
\]
The set $Z_{s,\tau}$ is finite, and $\Gamma^{\rm hp}_{s,\tau}$ is
not attained by a nonzero energy function. The variational
characterization above is relative to the fixed multiplier $h$.
At zero,
\begin{equation}\label{eq:intro-mu-zero}
 \lim_{\xi\to0}\mu_{s,\tau}(\xi)
 =\min\{\kappa_{s,\tau},\lambda_2(0)-\lambda_1(0)\}.
\end{equation}
\end{theorem}

If $\mathcal R_{s,\tau}[u_n]=1$, denote the transform of $u_n$ by $w_n$.
Define the axial dispersion function
\begin{equation}\label{eq:intro-rho}
 \begin{split}
 \rho_{s,\tau}(\xi)
 &=\mathfrak b_{s,\xi}[\phi_{s,\tau}]-\Lambda_{s,\tau}\\
 &=2\int_0^\pi\int_0^\pi\int_{\mathbb R}
 (1-\cos(\xi q))K_s(q,\theta,\varphi)
 \phi_{s,\tau}(\theta)\phi_{s,\tau}(\varphi)
 \,dq\,d\theta\,d\varphi.
 \end{split}
\end{equation}
It is positive for $\xi\neq0$ and
$\rho_{s,\tau}(\xi)=\kappa_{s,\tau}\xi^2+O(\xi^4)$ at zero.
Proposition~\ref{prop:strict-stability-comparison} gives
$\Gamma^{\rm hp}_{s,\tau}\leq\rho_{s,\tau}(1)<\kappa_{s,\tau}$.
In particular, the quadratic dispersion coefficient does not give the
best constant for this distance.

\begin{theorem}[Energy decomposition and extremizing sequences]\label{thm:C}
Let $0<s<1$ and $\tau>0$. The canonical decomposition in
\eqref{eq:canonical-decomposition} satisfies the two-sided energy
estimate \eqref{eq:energy-decomposition} on $\mathscr X_s$.
Let $u_n$ belong to the completed energy space
and satisfy
$\mathcal R_{s,\tau}[u_n]=1$.
The following statements are equivalent:
\begin{enumerate}
\item $\mathcal H_s[u_n]\to\Lambda_{s,\tau}$;
\item there are $a_n\in L^2(\mathbb R)$ and $r_n\in\mathscr X_s$ such that
\[
 w_n(t,\theta)=a_n(t)\phi_{s,\tau}(\theta)+r_n(t,\theta),
\]
\begin{equation}\label{eq:intro-rn}
 \|r_n\|_{\mathscr X_s}\to0,
 \qquad \|a_n\|_{L^2(\mathbb R)}\to1,
\end{equation}
and
\begin{equation}\label{eq:intro-an}
 \int_{\mathbb R}\rho_{s,\tau}(\xi)
 |\widehat a_n(\xi)|^2\,d\xi\to0.
\end{equation}
\end{enumerate}
In the implication $(1)\Rightarrow(2)$, the functions $a_n$ may be chosen
by the low-frequency moving projection defined in
\eqref{eq:canonical-decomposition}. Thus the angular defect vanishes in the
energy norm. The remaining mass disperses in the logarithmic radial variable.
\end{theorem}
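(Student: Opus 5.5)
The proof runs through the Fourier--Mellin fiber decomposition. Under the map $u\mapsto w$ and Plancherel in $t$, we have
\[
\mathcal H_s[u]=\int_{\mathbb R}\mathfrak b_{s,\xi}[\widehat w(\xi,\cdot)]\,d\xi,\qquad
\mathcal R_{s,\tau}[u]=\int_{\mathbb R}\|\widehat w(\xi,\cdot)\|_{L^2(m_{s,\tau})}^2\,d\xi .
\]
This identity is what underlies Theorems~\ref{thm:A} and~\ref{thm:B}, and it extends to $\mathscr X_s$ by density. The canonical decomposition \eqref{eq:canonical-decomposition} takes a low-frequency cutoff $\chi(\xi)=\mathbf 1_{|\xi|\le 1}$ (or a smooth version of it) and sets
\[
\widehat a(\xi)=\chi(\xi)\langle\widehat w(\xi,\cdot),\phi_{s,\tau}\rangle_{L^2(m_{s,\tau})},\qquad r=w-a\phi_{s,\tau}.
\]
For the two-sided estimate \eqref{eq:energy-decomposition}, I would prove
\[
\mathcal H_s[u]-\Lambda_{s,\tau}\mathcal R_{s,\tau}[u]\;\asymp\;\|r\|_{\mathscr X_s}^2+\int\rho_{s,\tau}|\widehat a|^2\,d\xi .
\]
This is done fiberwise.

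On $|\xi|\le1$, write $\widehat w=\widehat a\phi+Q_0\widehat w$. The form $\mathfrak b_{s,\xi}$ differs from $\mathfrak b_{s,0}$ only through the term $(1-\cos\xi q)$, which is controlled by $\xi^2 q^2K_s$. Since $\phi$ is the ground state of $B_s(0)$, the cross terms are $O(|\xi|)\,|\widehat a|\,\|Q_0\widehat w\|$. The spectral gap $\lambda_2(0)-\lambda_1(0)>0$ from Theorem~\ref{thm:A} gives
\[
\mathfrak b_{s,0}[Q_0\psi]-\Lambda\|Q_0\psi\|^2\gtrsim\|Q_0\psi\|_{\mathscr Y_s}^2 ,
\]
using equivalence of the shifted form with the $\mathscr Y_s$ norm via a compact-resolvent argument. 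Combined with $\rho_{s,\tau}(\xi)\asymp\xi^2$ near $0$, the cross terms are absorbed by Cauchy--Schwarz, with a weight $\varepsilon\rho|\widehat a|^2+C\varepsilon^{-1}\xi^2\|Q_0\widehat w\|^2$. The only delicate point is that the cross term must be genuinely $O(|\xi|)$ rather than $O(1)$. This holds because $\langle B_s(0)\phi,Q_0\psi\rangle=0$ and because the $\xi$-derivative of the form at $\xi=0$ is odd in $q$, so it vanishes on real pairs. For that reason I would treat the window $|\xi|\le\delta$ with $\delta$ small, and handle $\delta\le|\xi|\le1$ separately.

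On $|\xi|\ge\delta$ there is no projection, since $r=w$ away from the cutoff region. I need
\[
\mathfrak b_{s,\xi}[\psi]-\Lambda\|\psi\|^2\gtrsim\|\psi\|_{\mathscr Y_s}^2 .
\]
This follows from $\lambda_1(\xi)>\Lambda$ for $\xi\ne0$ (strict positivity of $\rho$ together with simplicity, or directly Theorem~\ref{thm:optimal-hp}'s positivity of $\mu_{s,\tau}$), from continuity of $\lambda_1$, and from growth as $|\xi|\to\infty$; an interpolation with the form norm upgrades $L^2(m)$ control to $\mathscr Y_s$ control. The upper bound in \eqref{eq:energy-decomposition} is routine, since each fiber form is bounded by a constant times $\|\cdot\|_{\mathscr Y_s}^2$ uniformly for bounded $\xi$, and the $\mathscr X_s$ norm accounts for large $\xi$ by construction. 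The step I expect to be the main obstacle is uniformity of these constants in $\xi$, since the closed domains for different $\xi$ are only shown to be common in Section~\ref{sec:fibers}.

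With the equivalence in hand, $(1)\Rightarrow(2)$ is immediate. If $\mathcal H_s[u_n]\to\Lambda$ with $\mathcal R=1$, the deficit tends to $0$, hence $\|r_n\|_{\mathscr X_s}\to0$ and $\int\rho|\widehat a_n|^2\to0$. Then
\[
\|a_n\phi\|_{L^2(\mathbb R;H_{s,\tau})}=\|a_n\|_{L^2}\quad\text{and}\quad 1=\|w_n\|=\|a_n\|+o(1),
\]
because $\|r_n\|_{L^2(H)}\le\|r_n\|_{\mathscr X_s}$. For $(2)\Rightarrow(1)$, I would not rely on the canonical choice, since $a_n$ is now arbitrary. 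Instead, expand
\[
\mathcal H_s[u_n]-\Lambda=\int\bigl(\mathfrak b_{s,\xi}-\Lambda\bigr)[\widehat a_n\phi+\widehat r_n]\,d\xi .
\]
The $a_n$–$a_n$ term equals $\int\rho|\widehat a_n|^2\to0$. The $r_n$–$r_n$ term is $O(\|r_n\|_{\mathscr X_s}^2)$ by the upper bound. The cross term is bounded by Cauchy--Schwarz for the nonnegative form $\mathfrak b_{s,\xi}-\Lambda$ (nonnegative because $\lambda_1(\xi)\ge\Lambda$), giving $\le(\int\rho|\widehat a_n|^2)^{1/2}\,O(\|r_n\|_{\mathscr X_s})\to0$. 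Finally, $\mathcal R[u_n]=1$ gives the normalization, so $\mathcal H_s[u_n]\to\Lambda_{s,\tau}$.
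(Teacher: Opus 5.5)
Your analysis is essentially sound, but it is carried out for a decomposition that is not the one the theorem names. In \eqref{eq:canonical-decomposition} the coefficient is taken against the \emph{moving} first eigenvector: $c(\xi)=\langle\widehat w(\xi,\cdot),\phi(\xi)\rangle_{H_{s,\tau}}$ for $|\xi|<2\delta$, and $\widehat a=\chi_0c$, where $\chi_0$ is smooth, even, and supported in the window where $P(\xi)$ is analytic. Only then does one set $\widehat r=\widehat w-\widehat a\phi_{s,\tau}$. The last sentence of the theorem is about exactly this moving projection. You instead project onto the fixed $\phi_{s,\tau}$.

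As written, you therefore prove $(1)\Leftrightarrow(2)$, which only asks for \emph{some} $a_n,r_n$, together with a two-sided estimate for a fixed-projection splitting. You do not prove \eqref{eq:energy-decomposition} for the canonical decomposition, nor that $a_n$ may be chosen by the moving projection.

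The missing step is short. Use the same cutoff $\chi_0$. By \eqref{eq:projection-close}, $\|\phi(\xi)-\phi_{s,\tau}\|_{\mathscr Y_s}\le C\xi^2$, so on $|\xi|<2\delta$ the two coefficients differ by at most $C\xi^2\|\widehat w(\xi)\|_{H_{s,\tau}}$. For either splitting one has $\|\widehat w\|\le|\widehat a|+\|\widehat r\|$ there, and $\rho_{s,\tau}(\xi)\gtrsim\xi^2$. Hence $\xi^4\|\widehat w(\xi)\|^2\lesssim\rho_{s,\tau}(\xi)|\widehat a(\xi)|^2+\|\widehat r(\xi)\|^2$, the two right-hand sides of \eqref{eq:energy-decomposition} are comparable, and your estimate transfers.

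Compared with the paper: the moving projection gives the exact splitting \eqref{eq:exact-low-decomposition} with no cross term. It pays the $O(\xi^2)$ eigenvector error only when replacing $\phi(\xi)$ by $\phi_{s,\tau}$ in $\widehat r$. Your fixed projection must absorb a cross term instead, and for the statement as given you need \eqref{eq:projection-close} anyway.

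On that cross term, its correct size is $O(\xi^2)$, not $O(|\xi|)$. By \eqref{eq:EL} and $\langle\phi_{s,\tau},Q_0\psi\rangle_{H_{s,\tau}}=0$, it equals $\mathfrak d_{s,\xi}(\phi_{s,\tau},Q_0\psi)$. This depends on $\xi$ only through $1-\cos(\xi q)$. By the $q^2$-moment bound of Lemma~\ref{lem:kernel-moments} it is at most $C\xi^2\|\phi_{s,\tau}\|_{L^2(0,\pi)}\|Q_0\psi\|_{L^2(0,\pi)}$. This is what your Young split $\varepsilon\rho_{s,\tau}|\widehat a|^2+C\varepsilon^{-1}\xi^2\|Q_0\widehat w\|^2$ actually uses. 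A genuine $O(|\xi|)$ bound could not be absorbed against $\rho_{s,\tau}\asymp\xi^2$ and the fixed gap unless its constant were small compared with $(\kappa_{s,\tau}(\lambda_2(0)-\lambda_1(0)))^{1/2}$, and shrinking $\delta$ would not help.

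There are two smaller slips:
\begin{itemize}
\item With your cutoff $\mathbf 1_{|\xi|\le1}$, one has $\widehat r=Q_0\widehat w\neq\widehat w$ on $\delta\le|\xi|\le1$. That range is handled, as in the paper's middle range, by $|\widehat a|\le\|\widehat w\|$ and Lemma~\ref{lem:away-gap}.
\item Positivity of $\rho_{s,\tau}$ plus simplicity does not give $\lambda_1(\xi)>\Lambda_{s,\tau}$. It excludes $\phi_{s,\tau}$ itself as a minimizer, but not $e^{i\alpha(\theta)}\phi_{s,\tau}$ with nonconstant phase. The strict case of Lemma~\ref{lem:diamagnetic} is what is needed.
\end{itemize}

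Your proofs of $(1)\Rightarrow(2)$ and of $(2)\Rightarrow(1)$, via Cauchy--Schwarz for the nonnegative sharp deficit, are correct.
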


When $d\geq3$, the partial Fourier transform along the edge produces
an additional nonnegative term. Combining this term with the planar
decomposition gives the necessary and sufficient criterion of
Theorem~\ref{thm:joint-energy-decomposition}. This criterion is stated
in joint Fourier--Mellin energy variables.

Our last main result concerns the limit as the fractional order tends
to one. We use the fixed Hilbert space
\[
 H_{1,\tau}=L^2((0,\pi),(\sin\theta)^{\tau-1}\,d\theta).
\]
Let $\mathfrak l_{1,\xi}$ be the local angular form in
Section~\ref{sec:local-limit}, and let $y_{1,\tau}$ be its positive
normalized zero-frequency minimizer. The number $\Lambda_{1,\tau}$ is
the sharp local half-space remainder defined there.

\begin{theorem}[Sharp local limit]
\label{thm:sharp-local-limit}
Let $\tau>0$. Then
\[
 \lim_{s\uparrow1}\frac{2(1-s)}\pi\Lambda_{s,\tau}
 =\Lambda_{1,\tau}.
\]
Let $\lambda_j^{(1)}(\xi)$ be the eigenvalues of
$\mathfrak l_{1,\xi}$ in $H_{1,\tau}$. For every fixed $j\geq1$ and
$\xi\in\mathbb R$,
\[
 \lim_{s\uparrow1}\frac{2(1-s)}\pi\lambda_j(\xi)
 =\lambda_j^{(1)}(\xi).
\]
If $y_{1,\tau}$ is the positive normalized minimizer in
Lemma~\ref{lem:local-angular-form}, then
\[
 (\sin\theta)^{1/2-s}\phi_{s,\tau}
 \longrightarrow y_{1,\tau}
 \quad\text{strongly in }H_{1,\tau}.
\]
Finally,
\[
 \lim_{s\uparrow1}\frac{2(1-s)}\pi\kappa_{s,\tau}
 =\int_0^\pi\sin\theta\,y_{1,\tau}(\theta)^2\,d\theta.
\]
\end{theorem}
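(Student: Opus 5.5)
The plan is to prove all four limits by a single Mosco/$\Gamma$-convergence argument for the rescaled angular forms. The convergence is set up on the fixed Hilbert space $H_{1,\tau}$, and the desired conclusions are then read off from it.

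\emph{Step 1: transplant to a fixed space.} Given $\psi\in\mathscr Y_s$, set $y=(\sin\theta)^{1/2-s}\psi$. Then
\[
\int_0^\pi m_{s,\tau}|\psi|^2\,d\theta=\int_0^\pi(\sin\theta)^{\tau-1}|y|^2\,d\theta ,
\]
so the map $\psi\mapsto y$ is an isometry of $H_{s,\tau}$ onto $H_{1,\tau}$. Define
\[
\mathfrak l_{s,\xi}[y]=\tfrac{2(1-s)}{\pi}\,\mathfrak b_{s,\xi}\bigl[(\sin\theta)^{s-1/2}y\bigr].
\]
The target is to show that $\mathfrak l_{s,\xi}\to\mathfrak l_{1,\xi}$ in the sense of Mosco as $s\uparrow1$, uniformly for $\xi$ in compact sets. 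In addition I need \emph{uniform compactness}: if $\sup_s\bigl(\mathfrak l_{s,0}[y_s]+\|y_s\|_{H_{1,\tau}}^2\bigr)<\infty$, then $(y_s)$ is precompact in $H_{1,\tau}$.

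Mosco convergence together with uniform compactness gives norm resolvent convergence of the associated operators. Norm resolvent convergence in turn gives convergence of every fixed eigenvalue $\lambda_j(\xi)$, and hence also $\Lambda_{s,\tau}=\lambda_1(0)$ by Theorem~\ref{thm:A}. It also gives convergence of spectral projections. Since $\lambda_1(0)$ is simple (Theorem~\ref{thm:A}), and $y_{1,\tau}$ is simple and positive by Lemma~\ref{lem:local-angular-form}, positivity fixes the sign. The projection convergence therefore yields $(\sin\theta)^{1/2-s}\phi_{s,\tau}\to y_{1,\tau}$ strongly in $H_{1,\tau}$.

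\emph{Step 2: the pointwise limit (limsup inequality).} For $y\in C_c^\infty(0,\pi)$ I take the constant recovery sequence $y_s=y$. The task is to compute $\lim\tfrac{2(1-s)}\pi\mathfrak b_{s,\xi}[(\sin\theta)^{s-1/2}y]$. I will use the ground-state representation for the one-dimensional Hardy part (Bogdan--Dyda/Frank--Seiringer), which rewrites the Hardy-subtracted form as a positive jump form weighted by $(\sin\theta\sin\varphi)^{s-1/2}$, plus the $\cosh(a_sq)-1$ and $\xi$ terms.

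In the jump form, the standard Bourgain--Brezis--Mironescu mechanism applies. The factor $(1-s)$ times the kernel $K_s$ concentrates on $q=0,\ \theta=\varphi$. Expanding to second order, $\cosh q-\cos(\theta-\varphi)\approx\tfrac12\bigl(q^2+(\theta-\varphi)^2\bigr)$, so we get a two-dimensional BBM limit in the variables $(q,\theta)$ with constant $\pi/(2(1-s))$, up to the stated normalisation. This produces the Dirichlet part $\int\sin\theta\,|y'|^2$ and the term $\xi^2\int\sin\theta\,|y|^2$ of $\mathfrak l_{1,\xi}$. The $\cosh(a_sq)$ term, with $a_s=1-s\to0$, together with the boundary behaviour of the Hardy constant $\mathcal D_{2,s,2}$, contributes the remaining potential terms. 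I expect these to match the definition of $\mathfrak l_{1,\xi}$ in Section~\ref{sec:local-limit}.

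The same expansion yields the $\kappa$ limit directly. We have $\kappa_{s,\tau}=\partial_\xi^2\lambda_1(\xi)/2$ at $\xi=0$, and $\kappa_{s,\tau}$ is the $\xi^2$-coefficient of $\mathfrak b_{s,\xi}[\phi_{s,\tau}]$. Under the BBM scaling, the integral $\tfrac{2(1-s)}\pi\iiint q^2K_s\,\phi\phi$ converges to $\int\sin\theta\,y^2$ evaluated at the limit profile. This requires the strong convergence of $(\sin\theta)^{1/2-s}\phi_{s,\tau}$ obtained in Step~1, together with a uniform tail bound in $q$. For the tail bound, note that for $|q|\ge\delta$ the kernel is bounded while $1-s\to0$.

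\emph{Step 3: liminf inequality and compactness (the main obstacle).} Uniformly in $s$ near $1$, I need
\[
\mathfrak l_{s,0}[y]\gtrsim\int\sin\theta\,|y'|^2\text{-type energy}-C\|y\|^2_{H_{1,\tau}}.
\]
The difficulty is near the endpoints, where the weights degenerate and where the Hardy subtraction is sharp. My first approach is to use the ground-state jump form, which is nonnegative, and to compare it with the one-dimensional BBM-type inequality of Ponce. Ponce's inequality gives the liminf of nonlocal energies bounded below by the local energy, and yields compactness in the interior. Near $\theta=0$ and $\theta=\pi$, I would combine this with the remainder \eqref{eq:intro-KS}, which holds with constant $c_{s,\tau}$. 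For this I must check that $(1-s)c_{s,\tau}$ stays bounded below as $s\uparrow1$, and if it does not, replace \eqref{eq:intro-KS} by a direct weighted Hardy estimate. This gives tightness at the endpoints. The liminf for the $\xi$-dependent part follows from Fatou's lemma applied after the weak-to-strong upgrade supplied by compactness. Once these uniform bounds are in place, Mosco convergence, and hence all four limits in the theorem, follow as described in Step~1.
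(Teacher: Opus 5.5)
Your architecture is the same as the paper's: conjugate onto the fixed space $H_{1,\tau}$, prove compact Mosco convergence of $\frac{2(1-s)}{\pi}\mathfrak b_{s,\xi}$, recover on the core, and read off the eigenvalues, the eigenfunction and $\kappa$. But the step that carries the whole theorem is missing, namely \emph{endpoint tightness uniform in $s$}, and the route you propose for it is circular.

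The best constant in \eqref{eq:intro-KS} with weight exponent $\sigma$ is exactly $\Lambda_{s,\sigma}$. So a uniform lower bound on $(1-s)c_{s,\sigma}$, for some $0<\sigma<\tau$, is equivalent to $\liminf_{s\uparrow1}(1-s)\Lambda_{s,\sigma}>0$. That is the lower half of the first limit you are trying to prove, with $\tau$ replaced by $\sigma$. The Kijaczko--Szczukiewicz theorem gives positivity for each fixed $s$ and says nothing about the rate as $s\uparrow1$, so there is nothing to ``check''. Your fallback, a ``direct weighted Hardy estimate'', is precisely the missing ingredient, and you do not supply it.

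The paper obtains it in Lemma~\ref{lem:uniform-endpoint-limit} by a dyadic argument:
\begin{itemize}
\item[(i)] On $I_k\cup I_{k+1}$, with $I_k=(2^{-k-1},2^{-k})$, it rescales the local part of $\mathfrak a_{s,0}$ to a fixed interval. There it proves a Poincar\'e inequality with constant uniform in $s\in(s_0,1)$, by contradiction, using Ponce's compactness theorem for the concentrating kernels $(1-s)|h|^{1-2s}$.
\item[(ii)] It telescopes the dyadic means $a_k$ and absorbs the drift errors $4^{-k}|a_k|^2$ by taking $k_0$ large. This gives $|a_k|^2+v_k\lesssim(1+k)\{\mathfrak a_{s,0}[y]+\|y\|_{H_{1,\tau}}^2\}$.
\item[(iii)] Summing yields $\int_0^\delta\theta^{\sigma-1}|y|^2\,d\theta\lesssim\delta^{\sigma/2}\{\mathfrak a_{s,0}[y]+\|y\|_{H_{1,\tau}}^2\}$ for every $\sigma>0$, uniformly in $s$.
\end{itemize}
Without an estimate of this kind you get neither compactness in $H_{1,\tau}$ nor the liminf half of any eigenvalue limit.

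Two further steps also need repair.

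\emph{The $\kappa$ limit.} Strong convergence in $H_{1,\tau}$ plus a $q$-tail bound is not enough. For $\tau>2$ the limit $\int_0^\pi\sin\theta\,y^2\,d\theta$ is not controlled by the $H_{1,\tau}$ norm. You need two things:
\begin{itemize}
\item[(a)] convergence of $(\sin\theta)^{1/2-s}\phi_{s,\tau}$ in $L^2(\sin\theta\,d\theta)$, which is the endpoint estimate with $\sigma=2$;
\item[(b)] a bound, uniform in $s$, for the second-moment form $\frac{2(1-s)}{\pi}\iiint q^2K_s(\sin\theta\sin\varphi)^{s-1/2}y(\theta)\overline{z(\varphi)}$ on that space.
\end{itemize}
Near the angular endpoints, (b) is the paper's weighted Schur test with $p_*=(\sin\theta)^{-1/2}$, which uses $s>1/2$. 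Your tail bound only handles $|q|\geq\delta$, not the angular endpoints.

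\emph{The sharp Mosco lower bound.} It cannot come from Ponce's inequality plus Fatou applied to a separate $\xi$-part. Ponce's theorem sees only the plain difference quotient, so it gives $\int\sin\theta\,|y'|^2$ minus lower-order errors. The terms $(\frac14+\xi^2)\int\sin\theta\,|y|^2$ come from the drift $e^{\mp q/4}$ and the phase $e^{-i\xi q}$ sitting inside the same square, with sign-indefinite cross terms. You also cannot use the split form with the Hardy term subtracted, because that term is not lower semicontinuous. The paper handles this with a duality argument that identifies the limit of the covariant difference quotient as $-\omega_2y'+(-\frac12+i\xi)\omega_1y$.

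Finally, for $\xi\neq0$ you must pass the endpoint bound through the diamagnetic inequality $\mathfrak a_{s,0}[|y|]\leq\mathfrak a_{s,\xi}[y]$. Your compactness hypothesis is stated only for $\xi=0$.
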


At $\tau=2$, Proposition~\ref{prop:local-benchmark} gives the explicit
local values
\[
 \Lambda_{1,2}=\frac14,\qquad
 \lambda_2^{(1)}(0)-\lambda_1^{(1)}(0)=2,\qquad
 \int_0^\pi\sin\theta\,y_{1,2}^2\,d\theta=1.
\]
The local stability coefficient for the same multiplier $h$ is $1$
and is attained. Theorem~\ref{thm:sharp-local-limit} and
Proposition~\ref{prop:strict-stability-comparison} also imply that
$Z_{s,2}$ contains nonzero frequencies for all $s<1$ sufficiently
close to one. This conclusion concerns the fixed spectral distance
\eqref{eq:intro-distance}. The local limits above are at fixed
frequency and index; the limit of $\Gamma^{\rm hp}_{s,\tau}$ is not
asserted.

\begin{remark}
The critical Hardy term prevents us from treating the angular energy
and potential separately on the closed domain. We instead close the
positive ground-state form. A stronger endpoint weight and interior
fractional regularity then give compactness. Since
$|x|^{s-1}\phi_{s,\tau}$ does not belong to the energy space, the
identity in Theorem~\ref{thm:sharp-ground-state} requires a
regularization of the angular ground state.

For the stability estimate, we separate small frequencies from their
complement and use the analytic first spectral projection near zero.
This also gives the two-sided energy decomposition. To prove
non-attainment of the best stability coefficient, we apply analytic
Fredholm theory to the generalized fiber problem. This argument allows
eigenvalue crossings away from zero. Finally, the local spectral limit
requires a dyadic endpoint estimate uniform as $s\uparrow1$;
convergence on the smooth core alone is insufficient.
\end{remark}

Section~\ref{sec:reductions} proves the dimensional and angular reductions
for general powers. Sections~\ref{sec:transform}--\ref{sec:fibers}
construct the quadratic closed forms and establish their compactness and
frequency estimates. Section~\ref{sec:sharp} identifies the planar sharp
constant and its full ground-state representation.
Section~\ref{sec:stability} proves stability and the energy decomposition.
Section~\ref{sec:dimension} gives the higher-dimensional splitting.
Section~\ref{sec:local-limit} proves the local spectral limits.
The appendices contain the spectral periodic model, angular Ritz
approximation, and subcritical nonlinear coercivity.

\section{Exact reductions for general powers}\label{sec:reductions}

We prove the dimensional reduction first and then the angular
reduction. Both follow from the reverse triangle inequality in $L^p$
and a matching family of test functions.

\begin{lemma}[$L^p$ transverse reduction]\label{lem:lp-dimension}
Let $d\geq3$, $0<s<1$, $p>1$, and $\tau>0$. Then
\[
 \Lambda_{d,s,p,\tau}=A_{d,s,p}\Lambda_{2,s,p,\tau}.
\]
\end{lemma}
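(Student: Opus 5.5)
Write $x=(x',z)$ with $x'\in\mathbb R^{d-2}$ and $z=(x_{d-1},x_d)\in\mathbb R^2_+$. Both weights $x_d^{-sp}$ and $x_d^{\tau-sp}(x_{d-1}^2+x_d^2)^{-\tau/2}$ depend only on $z$. The plan is to prove the two inequalities $\Lambda_{d,s,p,\tau}\ge A_{d,s,p}\Lambda_{2,s,p,\tau}$ and $\Lambda_{d,s,p,\tau}\le A_{d,s,p}\Lambda_{2,s,p,\tau}$ separately. The first follows from a transverse $L^p$ contraction. The second uses test functions that spread out in $x'$.

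\emph{Lower bound.} For $u\in C_c^\infty(\mathbb R^d_+)$, set $h=x'-y'$ and integrate first in $x'$ for fixed $h,z,\zeta$. Put
\[
F_h(z)=\Bigl(\int_{\mathbb R^{d-2}}|u(x',z)-u(x'+h,\zeta)|^p\,dx'\Bigr)^{1/p}.
\]
The reverse triangle inequality in $L^p(\mathbb R^{d-2})$ gives $F_h\ge|U(z)-U(\zeta)|$ with $U(z)=\|u(\cdot,z)\|_{L^p(\mathbb R^{d-2})}$, since translation preserves this norm. We then integrate out $h$ with
\[
\int_{\mathbb R^{d-2}}\frac{dh}{(|h|^2+|z-\zeta|^2)^{(d+sp)/2}}=A_{d,s,p}\,|z-\zeta|^{-2-sp}.
\]
The constant comes from the Beta integral
\[
\pi^{(d-2)/2}\,\frac{\Gamma((2+sp)/2)}{\Gamma((d+sp)/2)},
\]
which matches the stated $A_{d,s,p}$. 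This yields $[u]^p_{s,p,+}\ge A_{d,s,p}[U]^p_{2,s,p,+}$. Both weighted terms are exactly $\int_{\mathbb R^2_+}(\cdot)\,U^p\,dz$. This step also requires $\mathcal D_{d,s,p}=A_{d,s,p}\mathcal D_{2,s,p}$, which I would check from the explicit formula \eqref{eq:hardy-general-p}. Alternatively, it is the same statement with $\tau$ removed, proved by the same argument, because the Frank--Seiringer constant reduces to the one-dimensional one through exactly this kernel integration.

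The function $U$ is only Lipschitz-type with compact support in $\mathbb R^2_+$, not smooth. So I would approximate it by mollification; the quotient is continuous under this, because $U$ is compactly supported away from $z_2=0$. This gives $\mathcal H_{d}[u]\ge A\,\Lambda_{2}\,\mathcal R_d[u]$, hence the lower bound.

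\emph{Upper bound.} Take $v\in C_c^\infty(\mathbb R^2_+)$ with quotient close to $\Lambda_{2,s,p,\tau}$, and set $u_R(x',z)=\eta(x'/R)v(z)$ with a fixed cutoff $\eta\in C_c^\infty(\mathbb R^{d-2})$. Write
\[
u_R(x)-u_R(y)=\eta(x'/R)\bigl(v(z)-v(\zeta)\bigr)+v(\zeta)\bigl(\eta(x'/R)-\eta(y'/R)\bigr).
\]
The first piece, after the $h$-integration, contributes $R^{d-2}\|\eta\|_p^p\,A\,[v]^p$ up to an error from $\eta(x'/R)$ versus $\eta(y'/R)$. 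The cross and second pieces should be $o(R^{d-2})$. This follows from the scaling $|\eta(x'/R)-\eta(y'/R)|\lesssim\min\{1,|h|/R\}$ together with the compact support of $v$ away from the boundary, which makes $\int|v(\zeta)|^p\,dz\,d\zeta$ against the kernel finite. For $p\ne2$ the two pieces are combined using $|a+b|^p\le(1+\varepsilon)|a|^p+C_\varepsilon|b|^p$. The potential terms equal exactly $R^{d-2}\|\eta\|_p^p$ times the planar ones. Letting $R\to\infty$ and then $\varepsilon\to0$ gives the upper bound.

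The main obstacle is this error estimate for the commutator term $v(\zeta)(\eta(x'/R)-\eta(y'/R))$. One must show it is $o(R^{d-2})$ uniformly, in particular the near-diagonal part in $(z,\zeta)$ where the kernel is singular. I would split into $|h|<R$ and $|h|\ge R$, and bound the $h$-integral by $R^{d-2-sp}$ times an integral in $(z,\zeta)$. Since the $z$-integration over the support of $v$ against $|z-\zeta|^{-2-sp+\dots}$ is only locally singular, this integral should be finite. The exponent $R^{-sp}$ then gives the required decay.
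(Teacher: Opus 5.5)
Your proposal is correct and follows essentially the same route as the paper. The lower bound uses the transverse $L^p$ reverse triangle inequality, the Beta-integral kernel identity, $\mathcal D_{d,s,p}=A_{d,s,p}\mathcal D_{2,s,p}$, and mollification of the Lipschitz profile $U$. The upper bound uses spread-out product test functions, split into the same two pieces $\eta(x'/R)(v(z)-v(\zeta))$ and $v(\zeta)(\eta(x'/R)-\eta(y'/R))$.

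The differences are cosmetic. The paper uses an $L^p$-normalized cutoff and Minkowski's inequality, where you use $|a+b|^p\le(1+\varepsilon)|a|^p+C_\varepsilon|b|^p$. The first piece contributes exactly $R^{d-2}\|\eta\|_p^pA_{d,s,p}[v]_{s,p,+}^p$, so there is no error term there. Your near-diagonal concern also disappears if you integrate the planar variable first: this gives $\int_{\mathbb R^2_+}(|z-\zeta|^2+|h|^2)^{-(d+sp)/2}\,dz\le C|h|^{-(d-2)-sp}$, and hence an $O(R^{d-2-sp})$ error directly.
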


\begin{proof}
Put $n=d-2$. Write $x=(z,X)$, where $z\in\mathbb R^n$ and
$X=(x_{d-1},x_d)\in\mathbb R^2_+$. For
$u\in C_c^\infty(\mathbb R^d_+)$, set
\[
 u_X(z)=u(z,X),\qquad
 U(X)=\|u_X\|_{L^p(\mathbb R^n)}.
\]
For every $h\in\mathbb R^n$, translation invariance and the reverse
triangle inequality give
\begin{equation}\label{eq:lp-reverse-triangle}
 \int_{\mathbb R^n}|u(z,X)-u(z-h,Y)|^p\,dz
 \geq |U(X)-U(Y)|^p.
\end{equation}
The beta integral gives
\begin{equation}\label{eq:lp-transverse-kernel}
 \begin{split}
 \int_{\mathbb R^n}\frac{dh}
 {(|X-Y|^2+|h|^2)^{(d+sp)/2}}
 &=A_{d,s,p}|X-Y|^{-2-sp},\\
 A_{d,s,p}&=
 \frac{\pi^{n/2}\Gamma(1+sp/2)}
 {\Gamma((d+sp)/2)}.
 \end{split}
\end{equation}
By Fubini's theorem, \eqref{eq:lp-reverse-triangle}, and
\eqref{eq:lp-transverse-kernel},
\begin{equation}\label{eq:lp-energy-lower}
 [u]_{s,p,+}^p\geq A_{d,s,p}[U]_{s,p,+}^p.
\end{equation}

The explicit sharp half-space Hardy constant is
\begin{equation}\label{eq:hardy-general-p}
 \mathcal D_{d,s,p}=
 \frac{2\pi^{(d-1)/2}\Gamma((1+sp)/2)}
 {\Gamma((d+sp)/2)}
 \int_0^1\frac{|1-t^{(sp-1)/p}|^p}
 {(1-t)^{1+sp}}\,dt.
\end{equation}
Comparison of dimensions $d$ and $2$ in
\eqref{eq:hardy-general-p} gives
\begin{equation}\label{eq:hardy-dimension-factor}
 \mathcal D_{d,s,p}=A_{d,s,p}\mathcal D_{2,s,p}.
\end{equation}
The two weighted integrals satisfy
\begin{equation}\label{eq:lp-potential-reduction}
 \begin{split}
 \int_{\mathbb R^d_+}x_d^{-sp}|u|^p\,dx
 &=\int_{\mathbb R^2_+}X_2^{-sp}U(X)^p\,dX,\\
 \mathcal R_{d,s,p,\tau}[u]
 &=\mathcal R_{2,s,p,\tau}[U].
 \end{split}
\end{equation}
It follows from \eqref{eq:lp-energy-lower}--
\eqref{eq:lp-potential-reduction} that
\begin{equation}\label{eq:lp-deficit-lower}
 \mathcal H_{d,s,p}[u]
 \geq A_{d,s,p}\mathcal H_{2,s,p}[U].
\end{equation}

The function $U$ has compact support in the half-plane and belongs to
$W^{s,p}(\mathbb R^2_+)$ by \eqref{eq:lp-energy-lower}. Zero extension and
mollification give $U_j\in C_c^\infty(\mathbb R^2_+)$, supported in a fixed
compact subset, with $U_j\to U$ in $W^{s,p}$. Both weights are bounded
there, so the weighted integrals also converge. The planar sharp
inequality therefore applies to $U$.
Equation \eqref{eq:lp-deficit-lower} gives
\[
 \mathcal H_{d,s,p}[u]
 \geq A_{d,s,p}\Lambda_{2,s,p,\tau}
 \mathcal R_{d,s,p,\tau}[u].
\]
Hence
$\Lambda_{d,s,p,\tau}\geq
A_{d,s,p}\Lambda_{2,s,p,\tau}$.

For the reverse inequality, choose
$g\in C_c^\infty(\mathbb R^n)$ and
$v\in C_c^\infty(\mathbb R^2_+)$ with $\|g\|_{L^p}=1$. Put
\[
 g_R(z)=R^{-n/p}g(z/R),\qquad
 u_R(z,X)=g_R(z)v(X).
\]
The weighted integrals factor. In particular,
\begin{equation}\label{eq:lp-plateau-remainder}
 \mathcal R_{d,s,p,\tau}[u_R]
 =\mathcal R_{2,s,p,\tau}[v].
\end{equation}
On the measure space with measure
\[
 d\mu=\frac{dz\,dh\,dX\,dY}
 {(|X-Y|^2+|h|^2)^{(d+sp)/2}},
\]
write
\[
 \begin{split}
 u_R(z,X)-u_R(z-h,Y)&=A_R+B_R,\\
 A_R&=g_R(z)(v(X)-v(Y)),\\
 B_R&=v(Y)(g_R(z)-g_R(z-h)).
 \end{split}
\]
Equation \eqref{eq:lp-transverse-kernel} gives
\begin{equation}\label{eq:lp-main-plateau}
 \|A_R\|_{L^p(d\mu)}^p
 =A_{d,s,p}[v]_{s,p,+}^p.
\end{equation}
For $h\neq0$, integration in $X$ gives
\[
 \int_{\mathbb R^2_+}
 \frac{dX}{(|X-Y|^2+|h|^2)^{(d+sp)/2}}
 \leq C_{d,s,p}|h|^{-n-sp}.
\]
Consequently,
\begin{equation}\label{eq:lp-plateau-error}
 \begin{split}
 \|B_R\|_{L^p(d\mu)}^p
 &\leq C_{d,s,p}\|v\|_{L^p(\mathbb R^2_+)}^p
 \int_{\mathbb R^n}\int_{\mathbb R^n}
 \frac{|g_R(z)-g_R(z-h)|^p}{|h|^{n+sp}}\,dh\,dz\\
 &=C_{d,s,p}R^{-sp}\|v\|_{L^p(\mathbb R^2_+)}^p
 [g]_{W^{s,p}(\mathbb R^n)}^p.
 \end{split}
\end{equation}
Minkowski's inequality and
\eqref{eq:lp-main-plateau}--\eqref{eq:lp-plateau-error} give
\[
 \left|[u_R]_{s,p,+}-A_{d,s,p}^{1/p}[v]_{s,p,+}\right|
 \leq \|B_R\|_{L^p(d\mu)}\longrightarrow0.
\]
Together with \eqref{eq:hardy-dimension-factor}, this proves
\[
 \mathcal H_{d,s,p}[u_R]
 \longrightarrow A_{d,s,p}\mathcal H_{2,s,p}[v].
\]
Divide by \eqref{eq:lp-plateau-remainder}, let $R\to\infty$, and minimize
over $v$. This proves the reduction formula.
\end{proof}

\begin{theorem}[Angular reduction for general powers]
\label{thm:nonlinear-angular-reduction}
Let $0<s<1$, $p>1$, and $\tau>0$. Put
\[
 \alpha_{s,p}=\frac{2-sp}{p},\qquad
 \mathcal K_{s,p}(q,\theta,\varphi)
 =\frac{2^{-1-sp/2}}
 {(\cosh q-\cos(\theta-\varphi))^{1+sp/2}}.
\]
For $\psi\in C_c^\infty(0,\pi)$, define
\[
 \begin{split}
 \mathcal A_{s,p}[\psi]
 &=\int_0^\pi\int_0^\pi\int_{\mathbb R}
 \mathcal K_{s,p}(q,\theta,\varphi)
 \left|e^{-\alpha_{s,p}q/2}\psi(\theta)
       -e^{\alpha_{s,p}q/2}\psi(\varphi)\right|^p
 \,dq\,d\theta\,d\varphi,\\
 \mathcal B_{s,p}[\psi]
 &=\mathcal A_{s,p}[\psi]
 -\mathcal D_{2,s,p}\int_0^\pi(\sin\theta)^{-sp}|\psi|^p\,d\theta.
 \end{split}
\]
Then
\begin{equation}\label{eq:nonlinear-angular-constant}
 \Lambda_{2,s,p,\tau}
 =\inf_{\psi\in C_c^\infty(0,\pi)\setminus\{0\}}
 \frac{\mathcal B_{s,p}[\psi]}
 {\displaystyle\int_0^\pi(\sin\theta)^{\tau-sp}|\psi|^p\,d\theta}.
\end{equation}
The infimum is unchanged if $\psi\geq0$ is required.
For $p=2$, one has $\mathcal B_{s,2}=\mathfrak b_{s,0}$ on the angular
core. In particular, \eqref{eq:nonlinear-angular-constant} extends the
zero-frequency variational formula to every $p>1$.
\end{theorem}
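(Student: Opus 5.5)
Write $x=e^{t}(\cos\theta,\sin\theta)$ with $t\in\mathbb R$, $\theta\in(0,\pi)$, so $dx=e^{2t}\,dt\,d\theta$. Set $w(t,\theta)=e^{\alpha_{s,p}t}u(e^t\cos\theta,e^t\sin\theta)$ with $\alpha_{s,p}=(2-sp)/p$. This weight is chosen so that both potentials and the double integral become translation invariant in $t$.

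\textbf{Changing variables.} We have $|x-y|^2=2e^{t+t'}(\cosh(t-t')-\cos(\theta-\varphi))$. Put $q=t-t'$. A direct computation gives
\[
 \int_{\mathbb R^2_+}x_2^{-sp}|u|^p\,dx=\int_{\mathbb R}\int_0^\pi(\sin\theta)^{-sp}|w|^p\,d\theta\,dt ,
\]
and the same for $\mathcal R_{2,s,p,\tau}[u]$ with weight $(\sin\theta)^{\tau-sp}$. For the double integral, the Jacobian $e^{2t+2t'}$, the kernel factor $(2e^{t+t'})^{-1-sp/2}(\cosh q-\cos)^{-1-sp/2}$ and the weights $e^{-\alpha t}$, $e^{-\alpha t'}$ combine to
\[
 [u]_{s,p,+}^p=\int_{\mathbb R}\int\!\!\int\int_{\mathbb R}\mathcal K_{s,p}(q,\theta,\varphi)\,\bigl|e^{-\alpha q/2}w(t,\theta)-e^{\alpha q/2}w(t-q,\varphi)\bigr|^p\,dq\,d\theta\,d\varphi\,dt .
\]
Here I factor $e^{-\alpha(t+t')/2}$ out of the difference, and the exponents cancel precisely because $p\alpha=2-sp$. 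This is routine bookkeeping.

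\textbf{Lower bound.} For fixed $q,\theta,\varphi$, set $W(\theta)=\|w(\cdot,\theta)\|_{L^p(\mathbb R)}$. Translation invariance in $t$ and the reverse triangle inequality in $L^p(dt)$ give
\[
 \int_{\mathbb R}\bigl|e^{-\alpha q/2}w(t,\theta)-e^{\alpha q/2}w(t-q,\varphi)\bigr|^p\,dt\ \ge\ \bigl|e^{-\alpha q/2}W(\theta)-e^{\alpha q/2}W(\varphi)\bigr|^p .
\]
The potentials equal the corresponding angular integrals of $W^p$. Hence $\mathcal H_{2,s,p}[u]\ge\mathcal B_{s,p}[W]$ and $\mathcal R_{2,s,p,\tau}[u]=\int_0^\pi(\sin\theta)^{\tau-sp}W^p\,d\theta$. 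This is exactly the structure of Lemma~\ref{lem:lp-dimension}.

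The function $W\ge0$ is compactly supported in $(0,\pi)$ but is only Lipschitz, not smooth. So an approximation step is needed to apply the infimum over $C_c^\infty(0,\pi)$. I would mollify $W$ with a small radius. The potential terms converge because their weights are bounded on a fixed compact set. The term $\mathcal A_{s,p}$ converges by dominated convergence: for $|q|$ large, $\mathcal K_{s,p}\sim e^{-(1+sp/2)|q|}$ while the bracket is $\lesssim e^{\alpha|q|/2}$ to the $p$-th power, i.e.\ $e^{(2-sp)|q|/2}$, and $1+sp/2>1-sp/2$, so the integral is convergent. Near $q=0,\theta=\varphi$ the singularity is $\sim(q^2+(\theta-\varphi)^2)^{-1-sp/2}$, which is controlled by the $W^{s,p}$ seminorm. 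This yields $\Lambda_{2,s,p,\tau}\ge$ the right-hand side of \eqref{eq:nonlinear-angular-constant}. The infimum is unchanged if $\psi\ge0$ is required, because $\mathcal B_{s,p}[|\psi|]\le\mathcal B_{s,p}[\psi]$ (the kernel is positive and $\bigl||a|-|b|\bigr|\le|a-b|$), and because the lower bound itself was produced by the nonnegative function $W$.

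\textbf{Upper bound.} Take $w_R(t,\theta)=g_R(t)\psi(\theta)$ with $g_R(t)=R^{-1/p}g(t/R)$ and $\|g\|_{L^p}=1$. I split the difference as
\[
 g_R(t)\bigl(e^{-\alpha q/2}\psi(\theta)-e^{\alpha q/2}\psi(\varphi)\bigr)+e^{\alpha q/2}\psi(\varphi)\bigl(g_R(t)-g_R(t-q)\bigr).
\]
The first part contributes exactly $\mathcal A_{s,p}[\psi]$. For the second, I use Minkowski and the bound
\[
 \int_0^\pi\mathcal K_{s,p}(q,\theta,\varphi)\,d\theta\lesssim\min\{|q|^{-1-sp},\,e^{-(1+sp/2)|q|}\}.
\]
This bounds the error by $\|g_R(\cdot)-g_R(\cdot-q)\|_{L^p}^p$ integrated against the weight $e^{\alpha p|q|/2}\min\{|q|^{-1-sp},e^{-(1+sp/2)|q|}\}$. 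The weight is integrable at infinity, since $e^{(1-sp/2)|q|}e^{-(1+sp/2)|q|}=e^{-sp|q|}$. Using $\|g_R-g_R(\cdot-q)\|_p^p\le\min\{2^p,\,C|q|^pR^{-p}\}$ together with the local behavior near $q=0$, the error tends to $0$ as $R\to\infty$. The corresponding $u_R$ lies in $C_c^\infty(\mathbb R^2_+)$, so taking $R\to\infty$ gives the reverse inequality.

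\textbf{The case $p=2$.} I expand $|e^{-q/2\cdot\alpha}a-e^{\alpha q/2}b|^2$ and symmetrize $q\mapsto-q$, $\theta\leftrightarrow\varphi$. The kernel is symmetric under this map, so the diagonal terms give $\cosh(a_sq)(|\psi(\theta)|^2+|\psi(\varphi)|^2)$ with $a_s=\alpha_{s,2}=1-s$, and the cross term is $-2\operatorname{Re}\psi(\theta)\overline{\psi(\varphi)}$. Since $\mathcal K_{s,2}=K_s$, this gives $\mathcal B_{s,2}=\mathfrak b_{s,0}$.

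\textbf{Main obstacle.} The delicate points are the two convergence steps. One is the plateau error, where the exponential growth $e^{\alpha q/2}$ must be beaten by the kernel's decay. The other is justifying the infimum for the merely Lipschitz function $W$ near the diagonal singularity. In both, I would rely on the exponent gap $e^{-sp|q|}$ and on the $W^{s,p}$ approximation.
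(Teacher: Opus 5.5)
Your proposal is correct and follows essentially the paper's route. Both use the same logarithmic-polar cylinder identity, the reverse triangle inequality in $L^p(dt)$ for $W(\theta)=\|w(\cdot,\theta)\|_{L^p(\mathbb R)}$ plus mollification of the Lipschitz function $W$, and plateau test functions $R^{-1/p}g(t/R)\psi(\theta)$ with a Minkowski error bound.

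Your remark that the lower bound is already realized by nonnegative mollifications of $W$ gives the $\psi\geq0$ statement directly. It avoids the paper's separate approximation of $|\psi|$.

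Two small slips need fixing:
\begin{enumerate}
\item $\int_0^\pi\mathcal K_{s,p}\,d\theta$ is bounded by $C|q|^{-1-sp}$ for $|q|<1$ and by $Ce^{-(1+sp/2)|q|}$ for $|q|\geq1$, not by their minimum, which is false near $q=0$.
\item For $sp>2$ one has $\alpha_{s,p}<0$. The bracket is then bounded by $e^{|\alpha_{s,p}||q|/2}(|\psi(\theta)|+|\psi(\varphi)|)$, and after raising to the $p$th power the kernel still leaves net decay $e^{-\min\{2,sp\}|q|}$.
\end{enumerate}
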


\begin{proof}
For $u\in C_c^\infty(\mathbb R^2_+)$, put
\[
 w(t,\theta)=e^{\alpha_{s,p}t}
 u(e^t\cos\theta,e^t\sin\theta),\qquad
 V(\theta)=\|w(\cdot,\theta)\|_{L^p(\mathbb R)}.
\]
The polar distance identity gives
\begin{equation}\label{eq:general-p-cylinder}
 \begin{split}
 [u]_{s,p,+}^p
 =\int_{\mathbb R}\int_0^\pi\int_0^\pi
 \mathcal K_{s,p}(q,\theta,\varphi)
 \int_{\mathbb R}
 \left|e^{-\alpha_{s,p}q/2}w(t,\theta)
 -e^{\alpha_{s,p}q/2}w(t-q,\varphi)\right|^p
 \,dt\,d\theta\,d\varphi\,dq.
 \end{split}
\end{equation}
Both weighted integrals transform without a radial weight:
\[
 \begin{split}
 \int_{\mathbb R^2_+}x_2^{-sp}|u|^p\,dx
 &=\int_0^\pi(\sin\theta)^{-sp}V(\theta)^p\,d\theta,\\
 \mathcal R_{2,s,p,\tau}[u]
 &=\int_0^\pi(\sin\theta)^{\tau-sp}V(\theta)^p\,d\theta.
 \end{split}
\]
Translation invariance of the $L^p(\mathbb R)$ norm and its reverse
triangle inequality imply
\[
 \begin{split}
 &\left\|e^{-\alpha_{s,p}q/2}w(\cdot,\theta)
 -e^{\alpha_{s,p}q/2}w(\cdot-q,\varphi)\right\|_p\geq
 \left|e^{-\alpha_{s,p}q/2}V(\theta)
 -e^{\alpha_{s,p}q/2}V(\varphi)\right|.
 \end{split}
\]
Consequently,
\begin{equation}\label{eq:general-p-angular-contraction}
 \mathcal H_{2,s,p}[u]\geq\mathcal B_{s,p}[V].
\end{equation}

We first check that the angular variational infimum applies to $V$. The map
$\theta\mapsto w(\cdot,\theta)$ is smooth with values in $L^p(\mathbb R)$.
The norm is Lipschitz. Thus $V$ is Lipschitz and has compact support in
some $I\Subset(0,\pi)$. For functions supported in a fixed such interval,
\begin{equation}\label{eq:general-p-local-continuity}
 \mathcal A_{s,p}[f]
 \leq C_I\left(\|f\|_{L^p(\mathbb R)}^p+
 [f]_{W^{s,p}(\mathbb R)}^p\right),
\end{equation}
where $f$ is extended by zero. Indeed, for $|q|<1$, split the drifted
difference into $f(\theta)-f(\varphi)$ and an error bounded by
$C|q|(|f(\theta)|+|f(\varphi)|)$. Integration in $q$ of the first
kernel gives $C|\theta-\varphi|^{-1-sp}$. Integration in the angular
difference of the error kernel gives
$C|q|^{p-1-sp}$, which is integrable since $s<1$. For $|q|\geq1$,
\[
 \mathcal K_{s,p}(q,\theta,\varphi)
 e^{p|\alpha_{s,p}q|/2}
 \leq C e^{-\min\{2,sp\}|q|}.
\]
This proves \eqref{eq:general-p-local-continuity}. The $p$th root of
$\mathcal A_{s,p}$ is the norm of a linear difference map, so the same
bound proves its continuity under $W^{s,p}$ approximation. Mollification
therefore approximates $V$ by angular core functions, with convergence of
$\mathcal A_{s,p}$ and both weighted integrals. The angular infimum in
\eqref{eq:nonlinear-angular-constant}, denoted temporarily by $L$, applies
to $V$. Equation \eqref{eq:general-p-angular-contraction} gives
$\Lambda_{2,s,p,\tau}\geq L$.

For the reverse inequality, fix a nonzero angular core function $\psi$.
Choose $\chi\in C_c^\infty(\mathbb R)$ with $\|\chi\|_p=1$, and put
\[
 \chi_R(t)=R^{-1/p}\chi(t/R),\qquad
 w_R(t,\theta)=\chi_R(t)\psi(\theta).
\]
Its inverse polar transform belongs to $C_c^\infty(\mathbb R^2_+)$.
The Hardy and remainder integrals factor exactly. On the measure space
with measure $\mathcal K_{s,p}\,dt\,dq\,d\theta\,d\varphi$, write
\[
 \begin{split}
 &e^{-\alpha_{s,p}q/2}w_R(t,\theta)
 -e^{\alpha_{s,p}q/2}w_R(t-q,\varphi)=A_R+B_R,\\
 A_R&=\chi_R(t)
 \left(e^{-\alpha_{s,p}q/2}\psi(\theta)
 -e^{\alpha_{s,p}q/2}\psi(\varphi)\right),\\
 B_R&=e^{\alpha_{s,p}q/2}\psi(\varphi)
 (\chi_R(t)-\chi_R(t-q)).
 \end{split}
\]
Then $\|A_R\|_p^p=\mathcal A_{s,p}[\psi]$ and
\[
 \|\chi_R-\chi_R(\cdot-q)\|_p
 \leq C\min\{|q|/R,1\}.
\]
For $|q|<1$, integration of $\mathcal K_{s,p}$ in $\theta$ is bounded
by $C|q|^{-1-sp}$. The preceding exponential estimate controls the tail.
It follows that
\[
 \|B_R\|_p^p
 \leq\frac{C_\psi}{R^p}
 \left(\int_0^1q^{p-1-sp}\,dq+
 \int_1^\infty q^p e^{-\min\{2,sp\}q}\,dq\right)
 \longrightarrow0.
\]
Minkowski's inequality in \eqref{eq:general-p-cylinder} now gives
\[
 \mathcal H_{2,s,p}[u_R]\longrightarrow\mathcal B_{s,p}[\psi].
\]
Divide by the factored remainder and minimize over $\psi$. This proves
the opposite inequality.

Finally, replacing $\psi$ by $|\psi|$ decreases $\mathcal A_{s,p}$ and
preserves the weighted integrals. The modulus is a compactly supported
$W^{1,p}$ function. Nonnegative mollification and
\eqref{eq:general-p-local-continuity} give nonnegative angular core
approximants. This proves the last infimum assertion. For $p=2$, expand
the square and use the evenness of $\mathcal K_{s,2}=K_s$ in $q$.
The resulting expression is \eqref{eq:intro-fiber} with $\xi=0$.
\end{proof}

\begin{remark}
The variational reduction holds for all $p>1$. Attainment in the
nonlinear critical angular completion is left open. The spectral
realization, energy decomposition, and local limits below concern $p=2$.
\end{remark}

\section{Logarithmic polar coordinates and the closed energy}\label{sec:transform}\label{sec:coercivity}

Write $x\in\mathbb R^2_+$ as
\begin{equation}\label{eq:polar}
 x=e^t(\cos\theta,\sin\theta),\qquad
 t\in\mathbb R,\quad 0<\theta<\pi.
\end{equation}
The Jacobian is $dx=e^{2t}\,dt\,d\theta$. For
$y=e^{t'}(\cos\varphi,\sin\varphi)$, we have
\begin{equation}\label{eq:distance-polar}
 |x-y|^2=2e^{t+t'}
 \bigl(\cosh(t-t')-\cos(\theta-\varphi)\bigr).
\end{equation}

\begin{definition}
For $u\in C_c^\infty(\mathbb R^2_+)$, its logarithmic polar transform is
\begin{equation}\label{eq:EF-def}
 w(t,\theta)=e^{(1-s)t}u(e^t\cos\theta,e^t\sin\theta).
\end{equation}
\end{definition}

\begin{lemma}[Exact transform]\label{lem:exact-transform}
Let $u$ and $w$ be related by \eqref{eq:EF-def}. Then
\begin{equation}\label{eq:R-transform}
 \mathcal R_{s,\tau}[u]=
 \int_{\mathbb R}\int_0^\pi
 m_{s,\tau}(\theta)|w(t,\theta)|^2\,d\theta\,dt.
\end{equation}
Moreover,
\begin{equation}\label{eq:H-transform}
 \mathcal H_s[u]=\mathfrak q_s[w],
\end{equation}
where
\begin{equation}\label{eq:q-cylinder}
 \begin{split}
 \mathfrak q_s[w]={}&
 \int_{\mathbb R^2}\int_0^\pi\int_0^\pi
 K_s(t-t',\theta,\varphi)\\
 &\quad\times
 \left|e^{-a_s(t-t')/2}w(t,\theta)
 -e^{a_s(t-t')/2}w(t',\varphi)\right|^2
 \,d\theta\,d\varphi\,dt\,dt'\\
 &-\mathcal D_{2,s,2}
 \int_{\mathbb R}\int_0^\pi
 \frac{|w(t,\theta)|^2}{(\sin\theta)^{2s}}\,d\theta\,dt.
 \end{split}
\end{equation}
\end{lemma}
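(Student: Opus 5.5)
The identity is a direct change of variables. I would prove the two formulas separately, both in the coordinates \eqref{eq:polar}, where $dx=e^{2t}\,dt\,d\theta$.

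\emph{The remainder term.} Here $x_2=e^t\sin\theta$ and $|x|=e^t$, so $x_2^{\tau-2s}|x|^{-\tau}=e^{-2st}(\sin\theta)^{\tau-2s}$. Substituting $|u|^2=e^{-2(1-s)t}|w|^2$ and multiplying by the Jacobian $e^{2t}$, all powers of $e^t$ cancel: $-2s-2(1-s)+2=0$. This gives \eqref{eq:R-transform}. The same computation with $\tau=0$ turns the Hardy potential into $\int\!\int(\sin\theta)^{-2s}|w|^2\,d\theta\,dt$, which is the last term of \eqref{eq:q-cylinder}.

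\emph{The double integral.} Write $x=e^t(\cos\theta,\sin\theta)$ and $y=e^{t'}(\cos\varphi,\sin\varphi)$. By \eqref{eq:distance-polar},
\[
|x-y|^{-2-2s}=2^{-1-s}e^{-(1+s)(t+t')}\bigl(\cosh(t-t')-\cos(\theta-\varphi)\bigr)^{-1-s}.
\]
The Jacobian contributes $e^{2t+2t'}$, so the measure becomes $e^{(1-s)(t+t')}K_s(t-t',\theta,\varphi)$ times $d\theta\,d\varphi\,dt\,dt'$. Next,
\[
u(x)-u(y)=e^{-(1-s)t}w(t,\theta)-e^{-(1-s)t'}w(t',\varphi).
\]
Factor out $e^{-(1-s)(t+t')/2}$. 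What remains is $e^{-a_s(t-t')/2}w(t,\theta)-e^{a_s(t-t')/2}w(t',\varphi)$, since $-(1-s)t+(1-s)(t+t')/2=-a_s(t-t')/2$, and symmetrically for the second term. Squaring the prefactor gives $e^{-(1-s)(t+t')}$, which cancels the weight from the measure exactly. This gives the first term of \eqref{eq:q-cylinder}. Subtracting $\mathcal D_{2,s,2}$ times the Hardy term then yields \eqref{eq:H-transform}.

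\emph{Justification.} Since $u\in C_c^\infty(\mathbb R^2_+)$, $w$ is smooth with compact support in $\mathbb R\times(0,\pi)$. All integrands are nonnegative, so Tonelli's theorem justifies the change of variables and the rearrangement without convergence issues.

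The only delicate point is bookkeeping: the symmetric splitting of the exponential factor, and checking that the normalization $2^{-1-s}$ in $K_s$ matches the ordered double integral \eqref{eq:intro-energy}. There is no analytic obstacle. This is the same polar distance identity used in \eqref{eq:general-p-cylinder} with $p=2$, and one could instead cite that identity, noting that $\alpha_{s,2}=1-s=a_s$ and $\mathcal K_{s,2}=K_s$.
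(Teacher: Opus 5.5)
Your proposal is correct and is essentially the paper's own proof. The matching steps are the cancellation of the powers of $e^{t}$ in the weighted integrals (with $\tau=0$ for the Hardy term), the use of \eqref{eq:distance-polar} to turn the measure into $e^{a_s(t+t')}K_s\,d\theta\,d\varphi\,dt\,dt'$, and factoring $e^{-a_s(t+t')/2}$ symmetrically out of $u(x)-u(y)$. Your Tonelli justification is what is needed: for a test function $u$, each piece of $\mathcal H_s[u]$ is a finite integral of a nonnegative function, so the two pieces can be transformed separately.
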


\begin{proof}
By \eqref{eq:EF-def},
\[
 u(e^t\cos\theta,e^t\sin\theta)=e^{-(1-s)t}w(t,\theta).
\]
Since $x_2=e^t\sin\theta$ and $|x|=e^t$, substitution gives
\[
 \frac{e^{-2(1-s)t}|w|^2e^{2t}}
 {e^{(2s-\tau)t}(\sin\theta)^{2s-\tau}e^{\tau t}}
 =m_{s,\tau}(\theta)|w|^2.
\]
This proves \eqref{eq:R-transform}. The Hardy term is
\begin{equation}\label{eq:hardy-transform}
 \int_{\mathbb R^2_+}\frac{|u|^2}{x_2^{2s}}\,dx
 =\int_{\mathbb R}\int_0^\pi
 \frac{|w|^2}{(\sin\theta)^{2s}}\,d\theta\,dt.
\end{equation}

For the energy, \eqref{eq:distance-polar} gives
\[
 \frac{e^{2t+2t'}}
 {2^{1+s}e^{(1+s)(t+t')}
 (\cosh(t-t')-\cos(\theta-\varphi))^{1+s}}
 =e^{a_s(t+t')}K_s(t-t',\theta,\varphi).
\]
Furthermore,
\[
 e^{a_s(t+t')/2}(u(x)-u(y))=
 e^{-a_s(t-t')/2}w(t,\theta)
 -e^{a_s(t-t')/2}w(t',\varphi).
\]
Thus \eqref{eq:q-cylinder} and \eqref{eq:hardy-transform} imply
\eqref{eq:H-transform}.
\end{proof}

The following positive representation allows us to close the Hardy
deficit. On the completed space, the two terms in
\eqref{eq:intro-deficit} need not be finite separately.

\begin{lemma}[Ground-state representation]
Set
\[
 \omega_s(x)=x_2^{s-1/2},\qquad v(x)=\omega_s(x)^{-1}u(x).
\]
For $u\in C_c^\infty(\mathbb R^2_+)$,
\begin{equation}\label{eq:ground-state}
 \mathcal H_s[u]=
 \int_{\mathbb R^2_+}\int_{\mathbb R^2_+}
 \frac{|v(x)-v(y)|^2\omega_s(x)\omega_s(y)}
 {|x-y|^{2+2s}}\,dx\,dy.
\end{equation}
Consequently, if $q=t-t'$ and
$z(t,\theta)=(\sin\theta)^{1/2-s}w(t,\theta)$, then
\begin{equation}\label{eq:positive-cylinder}
 \begin{split}
 \mathfrak q_s[w]={}&
 \int_{\mathbb R^2}\int_0^\pi\int_0^\pi
 K_s(q,\theta,\varphi)(\sin\theta\sin\varphi)^{s-1/2}\\
 &\quad\times
 \left|e^{-q/4}z(t,\theta)-e^{q/4}z(t',\varphi)\right|^2
 \,d\theta\,d\varphi\,dt\,dt'.
 \end{split}
\end{equation}
Both right-hand sides are nonnegative.
\end{lemma}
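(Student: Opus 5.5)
The plan is to follow the standard Frank--Seiringer ground-state substitution. We need to check two things: that $\omega_s(x)=x_2^{s-1/2}$ is a generalized solution of the Euler--Lagrange equation for the regional Hardy form, and that the resulting cross term vanishes.

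Write $u=\omega_s v$ and expand pointwise:
\[
 |u(x)-u(y)|^2=\omega_s(x)\omega_s(y)|v(x)-v(y)|^2
 +\bigl(\omega_s(x)-\omega_s(y)\bigr)\bigl(\omega_s(x)|v(x)|^2-\omega_s(y)|v(y)|^2\bigr).
\]
This is verified directly by expanding both sides. Integrating against $|x-y|^{-2-2s}$ and symmetrizing, the second term becomes
\[
 2\int_{\mathbb R^2_+}|u(x)|^2\,\omega_s(x)^{-1}\,
 \mathrm{p.v.}\!\int_{\mathbb R^2_+}\frac{\omega_s(x)-\omega_s(y)}{|x-y|^{2+2s}}\,dy\,dx.
\]
So it suffices to show
\[
 2\,\mathrm{p.v.}\!\int_{\mathbb R^2_+}\frac{\omega_s(x)-\omega_s(y)}{|x-y|^{2+2s}}\,dy=\mathcal D_{2,s,2}\,x_2^{-2s}\omega_s(x).
\]

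This identity reduces to one dimension. Integrating out $y_1$ produces the one-dimensional kernel $c|x_2-y_2|^{-1-2s}$, with $c=\sqrt\pi\,\Gamma(s+\tfrac12)/\Gamma(1+s)$; this is the $n=1$ case of \eqref{eq:lp-transverse-kernel}. Scaling in $x_2$ then gives a constant multiple of $x_2^{-2s}\cdot x_2^{s-1/2}$. The constant is computed by the usual splitting of $(0,\infty)$ into $(0,x_2)$ and $(x_2,\infty)$, followed by the substitution $t=y_2/x_2$ or $t=x_2/y_2$. With exponent $\alpha=s-1/2$, this is the Bogdan--Dyda computation, and the two pieces combine into $\int_0^1 (1-t^{(2s-1)/2})^2(1-t)^{-1-2s}\,dt$ times $2c$, which is \eqref{eq:intro-hardyc}.

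The main obstacle is justifying the principal value and the interchange of integrals, since $\omega_s$ is not integrable near the boundary in the relevant way when $s<1/2$. I would avoid this by working with truncated kernels $\{|x-y|>\varepsilon\}$. For $u\in C_c^\infty$ the support of $u$ is at positive distance from $\partial\mathbb R^2_+$, and the $y$-integral converges absolutely at infinity because $\omega_s$ grows like $y_2^{s-1/2}$ against a decay of $|y|^{-2-2s}$. The truncated inner integral converges locally uniformly on the support to the principal-value expression, because $\omega_s$ is smooth there and its first-order Taylor term cancels by symmetry. The case $s=1/2$ is trivial, since then $\omega_s\equiv1$ and $\mathcal D_{2,1/2,2}=0$. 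Alternatively, one can quote the Bogdan--Dyda and Frank--Seiringer ground-state identity, which is exactly \eqref{eq:ground-state}.

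For \eqref{eq:positive-cylinder}, substitute polar logarithmic coordinates as in Lemma~\ref{lem:exact-transform}. We have $v(x)=x_2^{1/2-s}u(x)=e^{(1/2-s)t}(\sin\theta)^{1/2-s}e^{-(1-s)t}w=e^{-t/2}z(t,\theta)$ and $\omega_s(x)\omega_s(y)=e^{(s-1/2)(t+t')}(\sin\theta\sin\varphi)^{s-1/2}$. Combining the Jacobian with $|x-y|^{-2-2s}$ gives $e^{a_s(t+t')}K_s$, as in that lemma. The total exponential prefactor is $e^{(s-1/2)(t+t')+(1-s)(t+t')}=e^{(t+t')/2}$, and
\[
 e^{(t+t')/4}\bigl(e^{-t/2}z(t,\theta)-e^{-t'/2}z(t',\varphi)\bigr)=e^{-q/4}z(t,\theta)-e^{q/4}z(t',\varphi).
\]
Together with \eqref{eq:H-transform}, this yields \eqref{eq:positive-cylinder}. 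Both integrands are nonnegative, which gives the final assertion.
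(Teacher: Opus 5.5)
Your proposal is correct and follows the paper's proof essentially step for step. Both arguments use the same pointwise identity for $|u(x)-u(y)|^2$ in terms of $v$, truncation at $|x-y|>\varepsilon$ with uniform convergence of the truncated potentials on $\operatorname{supp}u$ via cancellation of the linear Taylor term, the tangential beta integral followed by the split-and-invert computation giving $\mathcal D_{2,s,2}$, and the same substitution into logarithmic polar coordinates. The paper additionally makes explicit two points you leave implicit: that the Euclidean principal value coincides with the one-dimensional symmetric one after integrating out $y_1$, and that the inversion $r\mapsto 1/r$ changes the cutoff only by $O(\varepsilon^{2-2s})$. Both follow from the linear-term cancellation you already invoke. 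One small citation point: the tangential integral is the same beta integral as in the transverse-kernel formula but with exponent $1+s$, so it is not literally that formula's $n=1$ case.
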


\begin{proof}
We first prove that $\omega_s$ satisfies
\begin{equation}\label{eq:ground-state-equation}
 2\,\operatorname{p.v.}\int_{\mathbb R^2_+}
 \frac{\omega_s(x)-\omega_s(y)}{|x-y|^{2+2s}}\,dy
 =\mathcal D_{2,s,2}x_2^{-2s}\omega_s(x).
\end{equation}
Integration in the tangential variable gives
\[
 \int_{\mathbb R}\frac{dz}{(z^2+a^2)^{1+s}}
 =\frac{\sqrt\pi\,\Gamma(s+\frac12)}{\Gamma(1+s)}
 |a|^{-1-2s}.
\]
To justify this reduction at the singularity, fix $r_0<x_2/2$ and
subtract $\nabla\omega_s(x)\cdot(y-x)$ on $B_{r_0}(x)$. The remainder
is $O(|x-y|^2)$, and its integral over $B_\varepsilon(x)$ is
$O(\varepsilon^{2-2s})$. The linear term vanishes under both the
Euclidean cutoff and the symmetric one-dimensional cutoff. Outside
$B_{r_0}(x)$, the integrals converge absolutely. Hence the two principal
values agree.
Put $b=s-\frac12$ and
\[
 C_s=\frac{\sqrt\pi\,\Gamma(s+\frac12)}{\Gamma(1+s)}.
\]
After integration and the substitution $y_2=x_2r$, the left-hand side
of \eqref{eq:ground-state-equation}, divided by two, equals
\[
 C_sx_2^{b-2s}\operatorname{p.v.}\int_0^\infty
 \frac{1-r^b}{|1-r|^{1+2s}}\,dr.
\]
Split the integral at one and invert the variable in the second part.
Since $2s-1=2b$, we obtain
\begin{equation*}
 \begin{split}
 \operatorname{p.v.}\int_0^\infty
 \frac{1-r^b}{|1-r|^{1+2s}}\,dr
 &=\int_0^1\frac{1-2r^b+r^{2b}}
 {(1-r)^{1+2s}}\,dr\\
 &=\int_0^1\frac{(1-r^b)^2}{(1-r)^{1+2s}}\,dr.
 \end{split}
\end{equation*}
The cutoff error caused by inversion is $O(\varepsilon^{2-2s})$.
The numerator has a double zero at one, and $2b=2s-1>-1$.
Thus the last integral is finite. Formula \eqref{eq:intro-hardyc}
now gives \eqref{eq:ground-state-equation}.

For the truncated energies, use the identity
\[
 \begin{split}
 &|\omega_s(x)v(x)-\omega_s(y)v(y)|^2
 -\omega_s(x)\omega_s(y)|v(x)-v(y)|^2\\
 &\quad=\omega_s(x)(\omega_s(x)-\omega_s(y))|v(x)|^2
 +\omega_s(y)(\omega_s(y)-\omega_s(x))|v(y)|^2.
 \end{split}
\]
Interchanging $x$ and $y$ in the second term, the difference of the
truncated energies becomes
\[
 2\int_{\mathbb R^2_+}|v(x)|^2\omega_s(x)
 \int_{\substack{y\in\mathbb R^2_+\\|x-y|>\varepsilon}}
 \frac{\omega_s(x)-\omega_s(y)}{|x-y|^{2+2s}}\,dy\,dx.
\]
Let $K=\operatorname{supp}u$ and choose
$r_0<\frac12\operatorname{dist}(K,\partial\mathbb R^2_+)$. For
$x\in K$, put $h=y-x$. For $\varepsilon<r_0$, oddness gives
\[
 \begin{split}
 \int_{\varepsilon<|h|<r_0}
 \frac{\omega_s(x)-\omega_s(x+h)}{|h|^{2+2s}}\,dh =-\int_{\varepsilon<|h|<r_0}
 \frac{\omega_s(x+h)-\omega_s(x)-\nabla\omega_s(x)\cdot h}
 {|h|^{2+2s}}\,dh.
 \end{split}
\]
The last integral is bounded uniformly on $K$ by
\[
 C\int_0^{r_0}r^{1-2s}\,dr<\infty.
\]
On $|x-y|\geq r_0$, absolute integrability follows from
$s-\frac12>-1$ at the boundary and $s-\frac12<2s$ at infinity,
uniformly for $x\in K$. The same Taylor estimate shows that the
truncated potentials converge uniformly on $K$. We may therefore let
$\varepsilon\downarrow0$ in the preceding identity. By
\eqref{eq:ground-state-equation}, the potential term converges to the
Hardy term; the positive energy converges by monotone convergence.
This proves \eqref{eq:ground-state}.

Finally,
\[
 v(e^t\cos\theta,e^t\sin\theta)
 =e^{-t/2}(\sin\theta)^{1/2-s}w(t,\theta)=e^{-t/2}z(t,\theta).
\]
Furthermore,
\[
 \frac{\omega_s(x)\omega_s(y)\,dx\,dy}{|x-y|^{2+2s}}
 =e^{(t+t')/2}K_s(q,\theta,\varphi)
 (\sin\theta\sin\varphi)^{s-1/2}
 \,dt\,dt'\,d\theta\,d\varphi.
\]
Factoring $e^{-(t+t')/4}$ from $v(x)-v(y)$ cancels the radial weight
and gives \eqref{eq:positive-cylinder}.
\end{proof}

\begin{lemma}[Local kernel comparison]\label{lem:kernel-local}
If $|q|\leq1$ and $|\theta-\varphi|\leq1$, then
\begin{equation}\label{eq:kernel-comparison}
 c(q^2+|\theta-\varphi|^2)
 \leq \cosh q-\cos(\theta-\varphi)
 \leq C(q^2+|\theta-\varphi|^2).
\end{equation}
If $|q|\geq1$, then
\begin{equation}\label{eq:kernel-tail}
 \begin{aligned}
 K_s(q,\theta,\varphi)&\leq Ce^{-(1+s)|q|},\\
 K_s(q,\theta,\varphi)e^{(1-s)|q|}
 &\leq Ce^{-2s|q|},\\
 K_s(q,\theta,\varphi)e^{|q|/2}
 &\leq Ce^{-(s+1/2)|q|}.
 \end{aligned}
\end{equation}
Consequently, if $I'\Subset I\Subset(0,\pi)$, then
\begin{equation}\label{eq:local-cylinder-control}
 [w]_{H^s(\mathbb R\times I')}^2
 \leq C_{I',I}\bigl(\mathfrak q_s[w]
 +\|w\|_{L^2(\mathbb R\times I)}^2\bigr)
\end{equation}
for $w\in C_c^\infty(\mathbb R\times I)$. Here
\[
 [w]_{H^s(\mathbb R\times I')}^2=
 \int_{\mathbb R\times I'}\int_{\mathbb R\times I'}
 \frac{|w(t,\theta)-w(t',\varphi)|^2}
 {\bigl((t-t')^2+(\theta-\varphi)^2\bigr)^{1+s}}
 \,dt\,d\theta\,dt'\,d\varphi.
\]
\end{lemma}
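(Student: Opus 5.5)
The kernel bounds come straight from Taylor expansion and exponential asymptotics; the cylinder estimate \eqref{eq:local-cylinder-control} then follows from the positive representation \eqref{eq:positive-cylinder} plus a comparison of kernels on a local strip.

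\textbf{Kernel bounds.} For \eqref{eq:kernel-comparison}, write
\[
 \cosh q-\cos(\theta-\varphi)=2\sinh^2(q/2)+2\sin^2((\theta-\varphi)/2).
\]
On $|x|\leq1$ we have $c x^2\leq\sinh^2 x$ and $\sin^2 x\leq Cx^2$ (and conversely with the roles swapped). This gives both inequalities at once.

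For \eqref{eq:kernel-tail}, if $|q|\geq1$ then
\[
 \cosh q-\cos(\theta-\varphi)\geq\cosh q-1\geq c\,e^{|q|}.
\]
Hence $K_s\leq Ce^{-(1+s)|q|}$. Multiplying by $e^{(1-s)|q|}$ or $e^{|q|/2}$ gives the other two lines directly.

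\textbf{Cylinder estimate.} Fix $I'\Subset I$ and set $\delta=\min\{1,\operatorname{dist}(I',\partial I)\}/2$. On $I$, $\sin\theta$ is bounded above and below by positive constants. So $z=(\sin\theta)^{1/2-s}w$ and the weight $(\sin\theta\sin\varphi)^{s-1/2}$ are comparable to $w$ and $1$, with smooth bounded multipliers.

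From \eqref{eq:positive-cylinder} I restrict to the region $|q|\leq1$, $\theta,\varphi\in I$, $|\theta-\varphi|\leq1$. This gives
\[
 \mathfrak q_s[w]\geq c\iint_{|q|\leq1}\frac{|e^{-q/4}z(t,\theta)-e^{q/4}z(t',\varphi)|^2}{(q^2+|\theta-\varphi|^2)^{1+s}},
\]
using \eqref{eq:kernel-comparison}. Next I replace the drifted difference by $z(t,\theta)-z(t',\varphi)$. The error is bounded by $C|q|(|z(t,\theta)|+|z(t',\varphi)|)$, and its squared contribution is bounded by
\[
 C\|z\|_{L^2}^2\int_{|q|\le1}\int q^2(q^2+\eta^2)^{-1-s}\,d\eta\,dq\leq C\int_0^1 q^{1-2s}\,dq<\infty.
\]
Thus it is absorbed into $\|w\|_{L^2(\mathbb R\times I)}^2$.

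This controls the $H^s$ seminorm of $z$ on pairs with $|t-t'|\leq1$ and $|\theta-\varphi|\leq1$ in $I$. Pairs at distance $\geq\delta$, or with $|q|>1$, are bounded trivially by
\[
 |z(x)-z(y)|^2\leq 2|z(x)|^2+2|z(y)|^2,
\]
since the kernel is integrable away from the diagonal. This gives $[z]_{H^s(\mathbb R\times I')}^2\leq C(\mathfrak q_s[w]+\|w\|^2)$.

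To pass from $z$ to $w=(\sin\theta)^{s-1/2}z$, I multiply by a smooth function bounded with bounded derivative on $I'$. The product estimate
\[
 |gz(x)-gz(y)|\leq\|g\|_\infty|z(x)-z(y)|+\|g'\|_\infty|x-y||z(y)|
\]
with $s<1$ yields \eqref{eq:local-cylinder-control}.

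\textbf{Main obstacle.} The only real subtlety is the drift error near $q=0$. The needed integrability comes from $\int q^2(q^2+\eta^2)^{-1-s}\,d\eta\sim |q|^{1-2s}$, which is integrable for $s<1$.
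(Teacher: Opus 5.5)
Your proposal is correct and follows essentially the paper's route. Both arguments use the positive cylinder representation, the local kernel comparison, perturbative removal of the $e^{\pm q/4}$ drift with an error integrable because of $|q|^{1-2s}$, and the trivial bound away from the diagonal. Handling the sine conjugation through $z$ and a final multiplier estimate, instead of absorbing it into the square as the paper does, is cosmetic.

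One small fix is needed in that last step. Use $|g(\theta)-g(\varphi)|\le C\min\{|x-y|,1\}$, or reuse your near/far split. The bare Lipschitz bound leaves $\iint|x-y|^{-2s}|z(y)|^2$, which diverges along the strip $\mathbb R\times I'$ when $s\le 1/2$.
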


\begin{proof}
Taylor's formula gives
\[
 \cosh q-1=\frac{q^2}{2}+O(q^4),
 \qquad 1-\cos\alpha=\frac{\alpha^2}{2}+O(\alpha^4).
\]
Since
$\cosh q-\cos\alpha=(\cosh q-1)+(1-\cos\alpha)$,
\eqref{eq:kernel-comparison} follows. For $|q|\geq1$,
$\cosh q-\cos\alpha\geq ce^{|q|}$. This proves
\eqref{eq:kernel-tail}, including the estimate with the drift factor.

For $\theta,\varphi\in I'$, absorb the angular weight in
\eqref{eq:positive-cylinder} into the square. The coefficients are smooth
and bounded above and below. For $|q|<1$, the mean-value theorem gives
\[
 \begin{split}
 &|e^{-q/4}(\sin\varphi/\sin\theta)^{(s-1/2)/2}w(t,\theta)
 -e^{q/4}(\sin\theta/\sin\varphi)^{(s-1/2)/2}
 w(t-q,\varphi)|^2\\
 &\quad\geq\frac12|w(t,\theta)-w(t-q,\varphi)|^2
 -C(q^2+|\theta-\varphi|^2)
 (|w(t,\theta)|^2+|w(t-q,\varphi)|^2).
 \end{split}
\]
By \eqref{eq:kernel-comparison}, the error is integrable near the
origin, since
\[
 \int_{q^2+\alpha^2<1}(q^2+\alpha^2)^{-s}\,dq\,d\alpha<\infty.
\]
Outside this neighborhood, the flat seminorm is bounded by an
$L^2(\mathbb R\times I)$ term. Restricting the positive cylinder integral
to $\mathbb R^2\times(I')^2$ proves
\eqref{eq:local-cylinder-control}.
\end{proof}

\subsection*{The closed deficit space}

For $\rho>0$, the normalized dilation and its transform are
\[
 u^\rho(x)=\rho^{1-s}u(\rho x),\qquad
 w^\rho(t,\theta)=w(t+\log\rho,\theta).
\]
Both $\mathcal H_s$ and $\mathcal R_{s,\tau}$ are invariant under this
normalization.

\begin{lemma}[Core coercivity]
Let $\mathscr C=C_c^\infty(\mathbb R\times(0,\pi))$. The image under
\eqref{eq:EF-def} of $C_c^\infty(\mathbb R^2_+)$ is $\mathscr C$. The form
$\mathfrak q_s$ is nonnegative and closable. For every $\sigma>0$,
\begin{equation}\label{eq:cylinder-coercive}
 \mathfrak q_s[w]\geq c_{s,\sigma}
 \int_{\mathbb R}\int_0^\pi
 (\sin\theta)^{\sigma-2s}|w|^2\,d\theta\,dt
\end{equation}
with $c_{s,\sigma}>0$.
\end{lemma}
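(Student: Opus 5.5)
The plan has three parts: the image statement, nonnegativity and closability, and coercivity. Only the last needs real input, namely the Kijaczko--Szczukiewicz estimate \eqref{eq:intro-KS} pulled back to the cylinder.

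\emph{Image.} The map $(t,\theta)\mapsto e^t(\cos\theta,\sin\theta)$ is a diffeomorphism of $\mathbb R\times(0,\pi)$ onto $\mathbb R^2_+$. The factor $e^{(1-s)t}$ is smooth and positive. Compact subsets of $\mathbb R^2_+$ correspond to compact subsets of $\mathbb R\times(0,\pi)$, since staying away from $\partial\mathbb R^2_+$ and from infinity means $\theta$ stays in a compact subinterval and $t$ is bounded. Hence \eqref{eq:EF-def} is a bijection of $C_c^\infty(\mathbb R^2_+)$ onto $\mathscr C$.

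\emph{Nonnegativity and closability.} Nonnegativity is immediate from the positive cylinder representation \eqref{eq:positive-cylinder}. For closability I would work in a weighted space such as $L^2(\mathbb R\times(0,\pi),(\sin\theta)^{\sigma-2s}d\theta\,dt)$, or in $H_{s,\tau}$-valued $L^2$. Formula \eqref{eq:positive-cylinder} writes $\mathfrak q_s[w]$ as the squared $L^2(\mu)$-norm of a linear map
\[
Tz(t,t',\theta,\varphi)=e^{-q/4}z(t,\theta)-e^{q/4}z(t',\varphi),
\]
with $\mu$ the positive measure $K_s(\sin\theta\sin\varphi)^{s-1/2}$. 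Take $w_n\in\mathscr C$ with $w_n\to0$ in the weighted $L^2$ space and $\mathfrak q_s[w_n-w_m]\to0$. Then $Tz_n$ is Cauchy in $L^2(\mu)$ and has a limit $F$. Passing to a subsequence, $z_n\to0$ a.e., so $Tz_n\to0$ $\mu$-a.e. Hence $F=0$ and $\mathfrak q_s[w_n]\to0$. This is the standard Fatou argument; it shows the form is lower semicontinuous and hence closable.

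\emph{Coercivity \eqref{eq:cylinder-coercive}.} Transport \eqref{eq:intro-KS} through Lemma~\ref{lem:exact-transform}. For $0<\sigma<1+2s$, the right-hand side of \eqref{eq:intro-KS} is exactly $\mathcal R_{s,\sigma}[u]$. By \eqref{eq:R-transform} this equals
\[
\int_{\mathbb R}\int_0^\pi(\sin\theta)^{\sigma-2s}|w|^2\,d\theta\,dt .
\]
The left-hand side is $\mathcal H_s[u]=\mathfrak q_s[w]$ by \eqref{eq:H-transform}. This gives \eqref{eq:cylinder-coercive} with $c_{s,\sigma}=c_{s,\sigma}$ from \eqref{eq:intro-KS}. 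For $\sigma\ge 1+2s$, fix any admissible $\sigma_0<1+2s$. Since $\sin\theta\le1$, we have $(\sin\theta)^{\sigma-2s}\le(\sin\theta)^{\sigma_0-2s}$, and the estimate follows by this monotonicity, as noted in the introduction.

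The main (mild) obstacle is the positivity statement itself: it rests entirely on the external estimate \eqref{eq:intro-KS}. The remaining steps are bookkeeping: the exact transform identities and the observation that \eqref{eq:positive-cylinder} is a squared norm of a linear difference map. That observation is what makes closability and the later closure of the form available even though the Hardy term and the energy need not be finite separately.
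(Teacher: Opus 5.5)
Your proposal is correct and follows the paper's proof essentially step for step. The image statement comes from the polar diffeomorphism, nonnegativity from \eqref{eq:positive-cylinder}, and closability from the linear difference map and almost-everywhere subsequences, which uses that product-null sets are null for the absolutely continuous kernel measure. The coercivity is \eqref{eq:intro-KS} transported by Lemma~\ref{lem:exact-transform}, together with monotonicity of the weight in $\sigma$. The only detail the paper adds is that, for complex $w$, one applies the real inequality separately to the real and imaginary parts.
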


\begin{proof}
The map in \eqref{eq:polar} is a diffeomorphism from
$\mathbb R\times(0,\pi)$ onto $\mathbb R^2_+$ and preserves compact
support in both directions. Nonnegativity follows from
\eqref{eq:positive-cylinder}.

Fix $\sigma>0$ and choose
$0<\sigma_0<\min\{\sigma,1+2s\}$. By
\cite[Theorem~1]{KijaczkoSzczukiewicz}, \eqref{eq:intro-KS} holds with
$c_{s,\sigma_0}>0$ in the ordered-integral normalization of
\eqref{eq:intro-energy}. Since $m_{s,\sigma}\leq m_{s,\sigma_0}$,
Lemma~\ref{lem:exact-transform} gives \eqref{eq:cylinder-coercive}.
For complex $w$, apply the inequality to its real and imaginary parts.

Let $d\nu_s$ be the positive measure in
\eqref{eq:positive-cylinder}, including the kernel and angular factors,
and put
\[
 (Tw)(t,t',\theta,\varphi)=
 e^{-(t-t')/4}(\sin\theta)^{1/2-s}w(t,\theta)
 -e^{(t-t')/4}(\sin\varphi)^{1/2-s}w(t',\varphi).
\]
Then $\mathfrak q_s[w]=\|Tw\|_{L^2(d\nu_s)}^2$ on $\mathscr C$.
Suppose that $w_n\to0$ in $L^2(m_{s,\tau})$ and $Tw_n\to F$ in
$L^2(d\nu_s)$. The measure $d\nu_s$ is $\sigma$-finite and absolutely
continuous off the null diagonal. Passing to subsequences gives
$w_n\to0$ almost everywhere and $Tw_n\to F$ $d\nu_s$-almost everywhere.
Product-null sets are $d\nu_s$-null. The formula for $T$ therefore gives
$F=0$. Thus $T$, and hence $\mathfrak q_s$, is closable.
\end{proof}

Denote the closed form domain by $\mathscr X_s$. It is the completion of
$\mathscr C$ under
\[
 \|w\|_{\mathscr X_s}^2=
 \mathfrak q_s[w]+\int_{\mathbb R}\int_0^\pi
 m_{s,\tau}|w|^2\,d\theta\,dt.
\]
By \eqref{eq:cylinder-coercive}, this norm is equivalent to
$\mathfrak q_s^{1/2}$ for every $\tau>0$. Thus the abstract completion
is independent of $\tau$. Its representatives also agree: a common core
sequence converges in the stronger weight corresponding to any
$0<\sigma<\min\{\tau_1,\tau_2\}$. Consequently,
$\mathscr X_s\hookrightarrow L^2(m_{s,\tau})$ continuously for every
$\tau>0$. By Lemma~\ref{lem:exact-transform} and density, the logarithmic
polar map extends to an isometric isomorphism from the energy completion
in Theorem~\ref{thm:A} onto $\mathscr X_s$.

\section{Mellin fibers and angular compactness}\label{sec:fibers}\label{sec:kernel}

For $w\in\mathscr C$, take the unitary Fourier transform in $t$:
\[
 \widehat w(\xi,\theta)=\frac1{\sqrt{2\pi}}
 \int_{\mathbb R}e^{-it\xi}w(t,\theta)\,dt.
\]

Put $H_{s,\tau}=L^2((0,\pi),m_{s,\tau}d\theta)$. For
$\psi\in C_c^\infty(0,\pi)$, the positive fiber form is
\begin{equation}\label{eq:positive-fiber}
 \begin{split}
 \mathfrak g_{s,\xi}[\psi]={}&
 \int_0^\pi\int_0^\pi\int_{\mathbb R}
 K_s(q,\theta,\varphi)(\sin\theta\sin\varphi)^{s-1/2}\\
 &\quad\times\left|
 e^{-q/4}(\sin\theta)^{1/2-s}\psi(\theta)
 -e^{q/4-i\xi q}(\sin\varphi)^{1/2-s}\psi(\varphi)
 \right|^2\,dq\,d\theta\,d\varphi.
 \end{split}
\end{equation}
We compare the two expressions with a common kernel cutoff. The
calculation below uses \eqref{eq:ground-state-equation} and gives
\begin{equation}\label{eq:two-fiber-representations}
 \mathfrak g_{s,\xi}[\psi]=\mathfrak b_{s,\xi}[\psi].
\end{equation}
Put $b=s-\frac12$, and let
$K_s^\varepsilon$ be $K_s$ multiplied by the indicator of
$q^2+|\theta-\varphi|^2>\varepsilon^2$. Expansion of the positive square
gives
\[
 \begin{split}
 \mathfrak g_{s,\xi}^\varepsilon[\psi]
 =\int K_s^\varepsilon\bigg\{&e^{-q/2}
 \left(\frac{\sin\varphi}{\sin\theta}\right)^b|\psi(\theta)|^2
 +e^{q/2}\left(\frac{\sin\theta}{\sin\varphi}\right)^b
 |\psi(\varphi)|^2\\
 &-2\operatorname{Re}\left(e^{i\xi q}\psi(\theta)
 \overline{\psi(\varphi)}\right)\bigg\}
 \,dq\,d\theta\,d\varphi.
 \end{split}
\]
The transformed ground-state equation reads
\begin{equation}\label{eq:angular-ground-state-cancellation}
 \begin{split}
 2\lim_{\varepsilon\downarrow0}\int_0^\pi\int_{\mathbb R}
 K_s^\varepsilon(q,\theta,\varphi)\bigg[
 e^{-q/2}\left(\frac{\sin\varphi}{\sin\theta}\right)^b
 -\cosh(a_sq)\bigg]\,dq\,d\varphi
 =-\mathcal D_{2,s,2}(\sin\theta)^{-2s}.
 \end{split}
\end{equation}
Indeed, take $x=(\cos\theta,\sin\theta)$ in
\eqref{eq:ground-state-equation} and put
$y=e^{-q}(\cos\varphi,\sin\varphi)$. Before symmetrization in $q$, the
two terms in brackets are $e^{-q/2}(\sin\varphi/\sin\theta)^b$ and
$e^{-a_sq}$. The evenness of $K_s$ changes the latter into
$\cosh(a_sq)$. This proves
\eqref{eq:angular-ground-state-cancellation}.

The angular cutoff agrees with the Euclidean principal value in
\eqref{eq:ground-state-equation}. Indeed, put
$\alpha=\theta-\varphi$ and $r=(q^2+\alpha^2)^{1/2}$. For fixed
$\theta\in(0,\pi)$,
\[
 |x-y|^2=2e^{-q}(\cosh q-\cos\alpha)
 =r^2+O(r^3).
\]
The numerator in the transformed principal-value integral is a linear
function of $(q,\alpha)$ plus $O(r^2)$. The linear part has zero integral
over every disk centered at the origin because $K_s$ is even in $q$ and
$\alpha$. The symmetric difference between the pulled-back Euclidean
cutoff and the disk $r>\varepsilon$ is contained in an annulus of width
$O(\varepsilon^2)$. Its contribution from the linear part is
$O(\varepsilon^{2-2s})$. The quadratic remainder on $r<2\varepsilon$ is also
$O(\varepsilon^{2-2s})$. Both tend to zero because $s<1$.
Thus the two principal values agree.

In the first two diagonal terms of
$\mathfrak g_{s,\xi}^\varepsilon$, interchange
$(q,\theta,\varphi)$ with $(-q,\varphi,\theta)$ in one copy. Their sum has
twice the first displayed coefficient. Apply
\eqref{eq:angular-ground-state-cancellation}. The cross term already
equals the cross term in \eqref{eq:intro-fiber}. Letting
$\varepsilon\downarrow0$ proves
\eqref{eq:two-fiber-representations}. Thus
\eqref{eq:intro-fiber} is understood as the limit of the complete
truncated expression. Formula \eqref{eq:positive-fiber} is used on the
completed domain.

\begin{proposition}[Direct-integral identity]\label{prop:direct-integral}
For $w\in\mathscr C$,
\begin{equation}\label{eq:direct-integral}
 \mathfrak q_s[w]=\int_{\mathbb R}
 \mathfrak b_{s,\xi}[\widehat w(\xi,\cdot)]\,d\xi.
\end{equation}
Also,
\begin{equation}\label{eq:plancherel-weight}
 \int_{\mathbb R}\int_0^\pi m_{s,\tau}|w|^2
 =\int_{\mathbb R}\int_0^\pi m_{s,\tau}|\widehat w|^2.
\end{equation}
\end{proposition}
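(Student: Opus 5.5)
The weighted identity \eqref{eq:plancherel-weight} is immediate. For each fixed $\theta$ the function $w(\cdot,\theta)$ lies in $C_c^\infty(\mathbb R)$, so Plancherel in $t$ applies fiberwise. We then multiply by $m_{s,\tau}(\theta)$ and integrate over $\theta\in(0,\pi)$ by Tonelli.

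For \eqref{eq:direct-integral} we work with the positive representations, since everything there is absolutely integrable. By \eqref{eq:positive-cylinder}, $\mathfrak q_s[w]$ is an integral over $q=t-t'$, $\theta$ and $\varphi$ of $K_s(\sin\theta\sin\varphi)^{s-1/2}$ times
\[
 \int_{\mathbb R}\bigl|e^{-q/4}z(t,\theta)-e^{q/4}z(t-q,\varphi)\bigr|^2\,dt,
\]
where $z=(\sin\theta)^{1/2-s}w$ and we have substituted $t'=t-q$. Tonelli permits this reordering because the integrand is nonnegative.

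For fixed $(q,\theta,\varphi)$, the inner $t$-integral is the $L^2(\mathbb R)$ norm of a linear combination of $z(\cdot,\theta)$ and a translate of $z(\cdot,\varphi)$. Since $\widehat{f(\cdot-q)}(\xi)=e^{-i\xi q}\widehat f(\xi)$, Plancherel turns it into
\[
 \int_{\mathbb R}\bigl|e^{-q/4}\widehat z(\xi,\theta)-e^{q/4-i\xi q}\widehat z(\xi,\varphi)\bigr|^2\,d\xi .
\]
Here $\widehat z(\xi,\theta)=(\sin\theta)^{1/2-s}\widehat w(\xi,\theta)$. Applying Tonelli again to move the $\xi$ integral outside, the integrand at each $\xi$ is exactly the integrand of \eqref{eq:positive-fiber} with $\psi=\widehat w(\xi,\cdot)$. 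This gives
\[
 \mathfrak q_s[w]=\int_{\mathbb R}\mathfrak g_{s,\xi}[\widehat w(\xi,\cdot)]\,d\xi .
\]

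It remains to replace $\mathfrak g_{s,\xi}$ by $\mathfrak b_{s,\xi}$ using \eqref{eq:two-fiber-representations}. The catch is that $\psi=\widehat w(\xi,\cdot)$ lies in $C_c^\infty(0,\pi)$ with support in a fixed compact $\theta$-set, and it depends smoothly on $\xi$. It is not a fixed core function, but the cutoff argument proving \eqref{eq:two-fiber-representations} used only smoothness and compact support in $\theta$. That argument therefore applies for each $\xi$, and $\mathfrak g_{s,\xi}[\widehat w(\xi,\cdot)]=\mathfrak b_{s,\xi}[\widehat w(\xi,\cdot)]$ pointwise in $\xi$. Both sides are then finite and integrable in $\xi$, because the left side is nonnegative with finite integral. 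This yields \eqref{eq:direct-integral}.

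The main obstacle is justifying the interchanges. In the positive form they are Tonelli and involve no difficulty. The delicate point would arise only if one tried to Fourier transform the expanded form \eqref{eq:intro-fiber} directly, since the separate pieces (the $\cosh(a_sq)$ diagonal term and the Hardy term) diverge near the diagonal. Routing the proof through $\mathfrak g_{s,\xi}$ and the pointwise identity \eqref{eq:two-fiber-representations} avoids this.
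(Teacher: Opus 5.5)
Your proof is correct and follows the paper's route: use the positive cylinder representation, apply Plancherel in $t$ at fixed $(q,\theta,\varphi)$, then invoke \eqref{eq:two-fiber-representations} slice by slice. Each $\widehat w(\xi,\cdot)$ already lies in $C_c^\infty(0,\pi)$, so your worry that it is ``not a fixed core function'' is unnecessary. The only difference is technical: the paper truncates the kernel near the diagonal, checks absolute integrability, applies Fubini, and removes the cutoff by monotone convergence, whereas you apply Tonelli directly to the nonnegative integrand, which is equally valid and slightly shorter.
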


\begin{proof}
Let $q=t-t'$. Delete
$q^2+|\theta-\varphi|^2<\varepsilon^2$ from the positive form
\eqref{eq:positive-cylinder}. Denote the result by
$\mathfrak q_s^\varepsilon[w]$. Denote by
$\mathfrak g_{s,\xi}^\varepsilon$ the same truncation of
\eqref{eq:positive-fiber}. The truncated kernel is locally bounded, and
its $q$-tail is exponentially integrable. Because $w\in\mathscr C$, its
angular support is separated from the endpoints. In a diagonal term, only
one angular variable can leave this support. The remaining factor is a
power $(\sin\theta)^{s-1/2}$. It is integrable because $s-1/2>-1$.
Estimate \eqref{eq:kernel-tail} controls the factors $e^{\pm q/2}$.
Thus the truncated expression is absolutely integrable. Fubini's theorem
and Plancherel's theorem in $t$ give
\[
 \mathfrak q_s^\varepsilon[w]
 =\int_{\mathbb R}\mathfrak g_{s,\xi}^\varepsilon
 [\widehat w(\xi,\cdot)]\,d\xi.
\]
By monotone convergence,
\[
 \mathfrak q_s[w]
 =\int_{\mathbb R}\mathfrak g_{s,\xi}
 [\widehat w(\xi,\cdot)]\,d\xi.
\]
Use \eqref{eq:two-fiber-representations} to obtain
\eqref{eq:direct-integral}. Weighted Plancherel gives
\eqref{eq:plancherel-weight}.
\end{proof}

\begin{lemma}[Closed angular forms]\label{lem:closed-fibers}
For every $\xi\in\mathbb R$, the closure of $\mathfrak b_{s,\xi}$ is a closed form in
$H_{s,\tau}$. All fibers have the same form domain,
denoted by $\mathscr Y_s$. There is $c>0$, independent of $\xi$, such that
\begin{equation}\label{eq:fiber-positive}
 \mathfrak b_{s,\xi}[\psi]\geq c
 \int_0^\pi m_{s,\tau}|\psi|^2\,d\theta.
\end{equation}
The embedding
\begin{equation}\label{eq:compact-embedding}
 \mathscr Y_s\hookrightarrow
 L^2((0,\pi),m_{s,\tau}d\theta)
\end{equation}
is compact.
\end{lemma}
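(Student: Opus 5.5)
The plan rests on the positive representation \eqref{eq:positive-fiber} and the identity \eqref{eq:two-fiber-representations}. Together they write $\mathfrak b_{s,\xi}[\psi]=\|T_\xi\psi\|_{L^2(d\nu)}^2$, where $d\nu$ is the positive measure $K_s(\sin\theta\sin\varphi)^{s-1/2}\,dq\,d\theta\,d\varphi$ and
\[
 (T_\xi\psi)(q,\theta,\varphi)=e^{-q/4}(\sin\theta)^{1/2-s}\psi(\theta)
 -e^{q/4-i\xi q}(\sin\varphi)^{1/2-s}\psi(\varphi).
\]

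\textbf{Closability.} I will copy the argument of the core coercivity lemma. If $\psi_n\to0$ in $H_{s,\tau}$ and $T_\xi\psi_n\to F$ in $L^2(d\nu)$, then along a subsequence both converge almost everywhere. Hence $F=0$ $d\nu$-a.e., so each $\mathfrak b_{s,\xi}$ is closable.

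\textbf{Uniform positivity.} To get \eqref{eq:fiber-positive} with $c$ independent of $\xi$, I will use a cutoff test function. Take $w(t,\theta)=\chi_R(t)e^{i\xi t}\psi(\theta)$ with $\chi_R(t)=R^{-1/2}\chi(t/R)$ and $\|\chi\|_{L^2}=1$. By the direct-integral identity \eqref{eq:direct-integral} and the plateau argument of Theorem~\ref{thm:nonlinear-angular-reduction} (the modulation does not matter), $\mathfrak q_s[w]\to\mathfrak b_{s,\xi}[\psi]$ as $R\to\infty$. The weighted norm is preserved exactly. Inequality \eqref{eq:cylinder-coercive} with $\sigma=\tau$ then gives $\mathfrak b_{s,\xi}[\psi]\ge c_{s,\tau}\|\psi\|_{H_{s,\tau}}^2$. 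A slightly simpler alternative is to note that the fiber is $\xi$-independent in the following sense: the map $\widehat w$ shifted in $\xi$ corresponds to $w\mapsto e^{i\xi_0t}w$, and the direct integral together with a concentration argument gives $\operatorname{ess\,inf}_\xi$ of the fiber quotients $\ge c_{s,\tau}$. Continuity of $\mathfrak b_{s,\xi}[\psi]$ in $\xi$ for core $\psi$ removes the "ess". I will also take a smaller $\sigma<\tau$, say $\sigma=\tau/2$, which yields the stronger bound
\[
 \mathfrak b_{s,\xi}[\psi]\ge c\int_0^\pi(\sin\theta)^{\sigma-2s}|\psi|^2\,d\theta,
\]
needed below.

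\textbf{Common domain.} I will show that the forms are mutually comparable:
\[
 |\mathfrak b_{s,\xi}[\psi]-\mathfrak b_{s,0}[\psi]|\le C(1+|\xi|^{2s'})\|\psi\|_{H}^2
\]
for some weighted norm $H$ controlled by $\mathfrak b_{s,0}$. The difference equals
\[
 2\int K_s(1-\cos\xi q)\operatorname{Re}\bigl(\psi(\theta)\overline{\psi(\varphi)}\bigr)\,dq\,d\theta\,d\varphi,
\]
up to the imaginary part, which vanishes by evenness of $K_s$ in $q$. Bound $1-\cos\xi q\le\min\{\xi^2q^2,2\}$ and apply Schur's test in $(\theta,\varphi)$. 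For $|q|\le1$, integrating $K_s\min\{\xi^2q^2,2\}$ in $q$ gives a kernel of size $\lesssim|\theta-\varphi|^{-1-2s}\min\{\xi^2|\theta-\varphi|^2,1\}$. This is bounded by $C_\xi|\theta-\varphi|^{-1-2s+2\epsilon}$, weaker than the energy kernel, so it is dominated by $\delta\,\mathfrak b_{s,0}+C_\delta\|\cdot\|^2$. Near the endpoints the potential produced is at most $(\sin\theta)^{-2s+2\epsilon}$, which is controlled by the stronger $\sigma$-weight bound above. Combining this with the uniform lower bound gives $\mathfrak b_{s,\xi}\asymp_\xi\mathfrak b_{s,0}+\|\cdot\|_{H_{s,\tau}}^2$ on the core, so all closures share the domain $\mathscr Y_s$.

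\textbf{Compactness (main obstacle).} Take a bounded sequence in $\mathscr Y_s$. On each $I'\Subset(0,\pi)$, the angular analogue of \eqref{eq:local-cylinder-control} holds: restricting the positive form to $(I')^2$ and $|q|<1$ controls the $H^s(I')$ seminorm plus an $L^2$ term. Rellich's theorem then gives strong compactness on $I'$. Near the endpoints, mass must be small uniformly. For this I use the stronger weight:
\[
 \int_{\sin\theta<\delta}m_{s,\tau}|\psi|^2\le\delta^{\tau-\sigma}\int(\sin\theta)^{\sigma-2s}|\psi|^2\le C\delta^{\tau-\sigma}\mathfrak b_{s,0}[\psi].
\]
A diagonal argument then yields compactness of \eqref{eq:compact-embedding}.

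The delicate point is the comparability step near the endpoints, where the critical Hardy term prevents splitting $\mathfrak b_{s,0}$ into separately finite pieces. I expect to handle it by working only with the positive form and the $\sigma$-coercivity.
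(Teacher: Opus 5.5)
Your outline follows the paper's proof step for step. Closability comes from a.e.\ subsequences, and the uniform lower bound comes from modulated logarithmic cutoffs together with \eqref{eq:cylinder-coercive}, also for small weights. The common domain comes from comparing fibers, and compactness from interior $H^s$ control, Rellich, and endpoint tightness. Proving $\mathfrak q_s[w_R]\to\mathfrak b_{s,\xi}[\psi]$ directly by the $A_R+B_R$ splitting in the positive representation is fine. It even spares you the growth and continuity bounds in $\eta$ that the paper needs for dominated convergence in the direct integral.

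The step that does not go through as written is the common domain. The difference $\mathfrak b_{s,\xi}[\psi]-\mathfrak b_{s,0}[\psi]$ is not a Gagliardo-type difference form. It equals $2\iint k_\xi(\theta-\varphi)\operatorname{Re}(\psi(\theta)\overline{\psi(\varphi)})\,d\theta\,d\varphi$ with $k_\xi(\alpha)=\int_{\mathbb R}(1-\cos(\xi q))K_s(q,\alpha)\,dq$. Your claim that a kernel of size $|\theta-\varphi|^{-1-2s+2\epsilon}$ is ``dominated by $\delta\,\mathfrak b_{s,0}+C_\delta\|\cdot\|^2$'' presupposes that $\mathfrak b_{s,0}$ controls a flat fractional seminorm of $\psi$ up to the endpoints. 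That is exactly what the critical Hardy term rules out, as you note yourself, and the proposal leaves it unresolved.

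The missing observation is that no energy comparison is needed. The kernel $k_\xi$ lies in $L^1(-\pi,\pi)$, because $\int_{-\pi}^{\pi}K_s(q,\alpha)\,d\alpha\le C|q|^{-1-2s}$ for $|q|<1$, $1-\cos(\xi q)\le \xi^2q^2$, and $\int_0^1q^{1-2s}\,dq<\infty$. Your pointwise bound $|\alpha|^{-1-2s}\min\{\xi^2\alpha^2,1\}$ is off for $s\le 1/2$, where the near-diagonal size is bounded or logarithmic, but integrability survives. Young's inequality then gives $|\mathfrak b_{s,\xi}[\psi]-\mathfrak b_{s,0}[\psi]|\le C_\xi\|\psi\|_{L^2(0,\pi)}^2$, with no endpoint potential at all.

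To control the unweighted norm, use your small-weight coercivity with $0<\sigma<\min\{\tau,2s\}$. Then $m_{s,\sigma}\ge1$, so $\|\psi\|_{L^2(0,\pi)}^2\le C\,\mathfrak b_{s,\eta}[\psi]$ for every $\eta$. Your choice $\sigma=\tau/2$ fails this when $\tau>4s$. Using the bound at $\eta=0$ and at $\eta=\xi$ gives two-sided equivalence of the graph norms directly, with no small relative bound required. The same $\sigma$ also gives your endpoint tightness.

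Finally, say explicitly how the coercivity and interior $H^s$ estimates pass from the core to $\mathscr Y_s$. Either apply Fatou along a.e.\ convergent core approximants, or replace a bounded sequence in $\mathscr Y_s$ by nearby core functions.
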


\begin{proof}
Let $T_\xi\psi$ denote the expression between absolute-value signs in
\eqref{eq:positive-fiber}. Suppose that $\psi_n\to0$ in $H_{s,\tau}$ and
$T_\xi\psi_n\to F$ in the target kernel space. Pass to subsequences which
converge almost everywhere. The target measure is $\sigma$-finite and is
absolutely continuous off the null diagonal. The pointwise formula for
$T_\xi$ gives $F=0$. Thus $T_\xi$ is closable. The Cauchy-sequence
criterion then shows that $\mathfrak b_{s,\xi}=\|T_\xi\cdot\|^2$ is
closable.

Fix
$\xi_0\in\mathbb R$ and $\psi\in C_c^\infty(0,\pi)$. Let
$\chi\in C_c^\infty(\mathbb R)$ satisfy $\|\chi\|_2=1$. Put
\[
 w_R(t,\theta)=R^{-1/2}\chi(t/R)e^{i\xi_0t}\psi(\theta).
\]
Then $w_R\in\mathscr C$ and
\[
 \widehat w_R(\xi,\theta)=R^{1/2}
 \widehat\chi(R(\xi-\xi_0))\psi(\theta).
\]
Apply \eqref{eq:cylinder-coercive} with $\sigma=\tau$.
Proposition~\ref{prop:direct-integral} yields
\[
 \int_{\mathbb R}R|\widehat\chi(R(\xi-\xi_0))|^2
 \mathfrak b_{s,\xi}[\psi]\,d\xi
 \geq c_{s,\tau}\int_0^\pi m_{s,\tau}|\psi|^2.
\]
By Lemma~\ref{lem:kernel-local}, subtraction of the zero fiber and
integration in $\theta-\varphi$ give
\[
 |\mathfrak b_{s,\eta}[\psi]-\mathfrak b_{s,0}[\psi]|
 \leq C_\psi
 \begin{cases}
  |\eta|^2,&|\eta|\leq1,\\
  |\eta|^{2s},&|\eta|>1.
 \end{cases}
\]
The local integral is
$\int_0^1\min\{\eta^2q^2,1\}q^{-1-2s}\,dq$, and the tail is
exponential. The same cancellation proves continuity in $\eta$. Thus
\[
 \mathfrak b_{s,\xi_0+\zeta/R}[\psi]
 \leq C_{\psi,\xi_0}(1+|\zeta|^{2s}),\qquad R\geq1.
\]
Change variables to $\zeta=R(\xi-\xi_0)$ and apply dominated
convergence against $|\widehat\chi(\zeta)|^2$. This proves
\eqref{eq:fiber-positive} on the core. With any $\sigma>0$, it gives
\begin{equation}\label{eq:all-weight-coercivity}
 \mathfrak b_{s,\xi}[\psi]\geq c_{s,\sigma}
 \int_0^\pi m_{s,\sigma}|\psi|^2\,d\theta.
\end{equation}

Choose $0<\tau_0<\min\{\tau,2s\}$. Since
$m_{s,\tau_0}\geq1$ on $(0,\pi)$, we have
\begin{equation}\label{eq:unweighted-control}
 \|\psi\|_{L^2(0,\pi)}^2
 \leq \int_0^\pi m_{s,\tau_0}|\psi|^2\,d\theta
 \leq C\mathfrak b_{s,\xi}[\psi].
\end{equation}
For $\alpha\in(-\pi,\pi)$, set
\[
 H_{s,\xi}(\alpha)=2\int_{\mathbb R}
 (1-\cos(\xi q))K_s(q,\alpha)\,dq,
\]
where $K_s(q,\alpha)=2^{-1-s}(\cosh q-\cos\alpha)^{-1-s}$.
For $|q|<1$, Lemma~\ref{lem:kernel-local} gives
\[
 \int_{-\pi}^{\pi}K_s(q,\alpha)\,d\alpha
 \leq C|q|^{-1-2s}.
\]
For $|q|\geq1$, the kernel has an exponential tail. It follows that
\begin{equation}\label{eq:H-kernel-L1}
 \|H_{s,\xi}\|_{L^1(-\pi,\pi)}
 \leq C\begin{cases}
 \xi^2,& |\xi|\leq1,\\
 |\xi|^{2s},& |\xi|\geq1.
 \end{cases}
\end{equation}
Young's inequality therefore gives
\begin{equation}\label{eq:fiber-difference-bound}
 |\mathfrak b_{s,\xi}[\psi]-\mathfrak b_{s,0}[\psi]|
 \leq\|H_{s,\xi}\|_1\|\psi\|_{L^2(0,\pi)}^2.
\end{equation}
Use \eqref{eq:unweighted-control} first with $\xi=0$ and then with the
given $\xi$. The graph norms of the closures of
$\mathfrak b_{s,\xi}$ and $\mathfrak b_{s,0}$ are equivalent for each
fixed $\xi$. Hence all closed fibers have one domain $\mathscr Y_s$.

For a core sequence $\psi_n\to\psi$ in $\mathscr Y_s$,
\eqref{eq:all-weight-coercivity} makes $\psi_n$ Cauchy in
$L^2(m_{s,\sigma})$. Its limit equals $\psi$ almost everywhere.
Thus \eqref{eq:all-weight-coercivity} and
\eqref{eq:unweighted-control} hold on the closed domain.

For two base exponents $\tau_1,\tau_2>0$, choose
$0<\sigma<\min\{\tau_1,\tau_2,2s\}$. By
\eqref{eq:all-weight-coercivity}, both completions embed into
$L^2(m_{s,\sigma})$ with the same representatives. The form controls
both base norms, so the completions coincide. Hence $\mathscr Y_s$ is
independent of $\tau$.

For $I'\Subset I\Subset(0,\pi)$ and
$\psi\in C_c^\infty(0,\pi)$, choose
$\eta\in C_c^\infty(I)$ with $\eta=1$ on $I'$. The angular multipliers in
\eqref{eq:positive-fiber} are smooth and bounded above and below on $I$.
For $|q|<1$ and $\theta,\varphi\in I$,
\[
 \begin{split}
 |\psi(\theta)-\psi(\varphi)|^2
 \leq{}&C\left|e^{-q/4}(\sin\theta)^{1/2-s}\psi(\theta)
 -e^{q/4}(\sin\varphi)^{1/2-s}\psi(\varphi)\right|^2\\
 &+C(q^2+|\theta-\varphi|^2)
 (|\psi(\theta)|^2+|\psi(\varphi)|^2).
 \end{split}
\]
Put $b=s-1/2$. In the positive square, write
\[
 \eta(\theta)A(\theta)-\eta(\varphi)B(\varphi)
 =\eta(\theta)(A(\theta)-B(\varphi))
 +(\eta(\theta)-\eta(\varphi))B(\varphi).
\]
The first term is controlled by $C\mathfrak g_{s,0}[\psi]$. For the
second term, split $|\theta-\varphi|<c_I$ and its complement. On the first
set, a nonzero cutoff difference forces both angular variables into a fixed
compact subinterval of $(0,\pi)$. The Lipschitz estimate for $\eta$ and
\eqref{eq:kernel-comparison} give an error bounded by $C\|\psi\|_2^2$.
On the complement, the angular kernel is bounded after integration in
$q$. The sine factors in \eqref{eq:positive-fiber} give the bound
\[
 C\int_0^\pi\{1+(\sin\varphi)^{-(s-1/2)_+}\}
 |\psi(\varphi)|^2\,d\varphi.
\]
Set $\sigma_* =\min\{2s,s+1/2\}$. Then
$m_{s,\sigma_*}=(\sin\varphi)^{-(s-1/2)_+}$. Estimate
\eqref{eq:all-weight-coercivity}, with $\sigma=\sigma_*$, yields
\[
 \mathfrak g_{s,0}[\eta\psi]
 \leq C_{I',I}\mathfrak b_{s,0}[\psi].
\]
The error in the preceding pointwise comparison is integrable because
\[
 \int_{q^2+\alpha^2<1}
 (q^2+\alpha^2)^{-s}\,dq\,d\alpha<\infty.
\]
Finally,
\[
 \int_{-1}^1(q^2+\alpha^2)^{-1-s}\,dq
 \geq c|\alpha|^{-1-2s}
\]
for $0<|\alpha|<1/2$. Therefore
\begin{equation}\label{eq:local-angular}
 \|\psi\|_{H^s(I')}^2
 \leq C_{I',I}\left(
 \mathfrak b_{s,0}[\psi]+\|\psi\|_{L^2(0,\pi)}^2\right).
\end{equation}
The same estimates applied to a core sequence show that
$\eta\psi_n\to\eta\psi$ in $\mathscr Y_s$. Lower semicontinuity gives
\eqref{eq:local-angular} for every $\psi\in\mathscr Y_s$.

For compactness, estimate
\eqref{eq:all-weight-coercivity} with $\sigma=\tau_0$ gives
\[
 \int_0^\pi(\sin\theta)^{\tau_0-2s}|\psi|^2\,d\theta
 \leq C\mathfrak b_{s,0}[\psi].
\]
Let $\{\psi_n\}$ be bounded in $\mathscr Y_s$. On
$[\delta,\pi-\delta]$, \eqref{eq:local-angular} gives a uniform
$H^s$ bound. The Rellich theorem yields a subsequence convergent
in $L^2([\delta,\pi-\delta])$. At the endpoints,
\begin{equation}\label{eq:endpoint-tightness}
 \begin{split}
 \int_{(0,\delta)\cup(\pi-\delta,\pi)}
 m_{s,\tau}|\psi_n|^2\,d\theta
 &\leq C\delta^{\tau-\tau_0}
 \int_0^\pi(\sin\theta)^{\tau_0-2s}|\psi_n|^2\,d\theta\\
 &\leq C\delta^{\tau-\tau_0}.
 \end{split}
\end{equation}
Together with interior convergence, this uniform endpoint bound proves
\eqref{eq:compact-embedding} by a diagonal argument.
\end{proof}

The angular space is the closure of the core for the nonnegative
ground-state form. We always use this closure, hence its Friedrichs
operator realization. In particular, no unweighted boundary trace
condition is imposed separately. The two terms in the Hardy difference
need not be finite on $\mathscr Y_s$.

The associated self-adjoint operator is defined by
\begin{equation}\label{eq:angular-operator-domain}
 \begin{split}
 \mathcal D(B_s(\xi))
 =\{\psi\in\mathscr Y_s:\;&\text{there exists }f\in H_{s,\tau}
 \text{ such that}\\
 &\mathfrak b_{s,\xi}(\psi,\eta)
 =\langle f,\eta\rangle_{H_{s,\tau}}
 \text{ for every }\eta\in\mathscr Y_s\},\\
 B_s(\xi)\psi&=f.
 \end{split}
\end{equation}
Density of the form domain makes $f$ unique. This is the operator used in
all eigenvalue, resolvent, and spectral projection statements below.

\begin{lemma}[Continuity of the angular resolvents]
\label{lem:real-frequency-continuity}
The map $\xi\mapsto(B_s(\xi)+1)^{-1}$ is continuous from $\mathbb R$
to $\mathcal L(H_{s,\tau},\mathscr Y_s)$. In particular it is continuous
in operator norm on $H_{s,\tau}$, and every ordered eigenvalue
$\lambda_j(\xi)$ is continuous and even on $\mathbb R$.
\end{lemma}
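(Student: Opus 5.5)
We compare each form $\mathfrak b_{s,\xi}$ with the fixed reference form $\mathfrak b_{s,0}$ on the common domain $\mathscr Y_s$. The comparison is made through the difference kernel $H_{s,\xi}$ from the proof of Lemma~\ref{lem:closed-fibers}.

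\textbf{Form differences as bounded operators.} On the core, expanding the positive square in \eqref{eq:positive-fiber}, or using \eqref{eq:intro-fiber} directly, gives
\[
 \mathfrak b_{s,\xi}(\psi,\eta)-\mathfrak b_{s,\eta'}(\psi,\eta)
 =\int_0^\pi\int_0^\pi\int_{\mathbb R}
 K_s(q,\theta-\varphi)\bigl(e^{i\eta' q}-e^{i\xi q}\bigr)
 \psi(\theta)\overline{\eta(\varphi)}\,dq\,d\theta\,d\varphi
\]
up to symmetrization. We then estimate the multiplier as $\int|e^{i\xi q}-e^{i\eta' q}|K_s\,dq$. The bound \eqref{eq:H-kernel-L1} rests on $\int_{-\pi}^{\pi}K_s\,d\alpha\le C|q|^{-1-2s}$ near zero and on an exponential tail. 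With it, the $L^1(-\pi,\pi)$ norm of this kernel is at most $C\int\min\{|\xi-\eta'|\,|q|,1\}\,|q|^{-1-2s}e^{-|q|}\,dq$. This quantity tends to zero as $\eta'\to\xi$: if $2s<1$ it is $O(|\xi-\eta'|^{2s})$, and in general it vanishes by dominated convergence, since $s<1$ makes $\min\{\delta|q|,1\}|q|^{-1-2s}$ integrable only when $2s<1$. For $s\ge1/2$ we use $|e^{i\xi q}-e^{i\eta'q}|\le|1-\cos((\xi-\eta')q)|+|\sin((\xi-\eta')q)|$ together with oddness of the sine part in $q$ against the even kernel. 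Symmetrizing in $q$ removes the sine term exactly, leaving $1-\cos$, which is $O(\delta^2q^2)$. So the kernel norm is $\omega(|\xi-\eta'|)\to0$. Young's inequality, as in \eqref{eq:fiber-difference-bound}, then yields
\[
 |\mathfrak b_{s,\xi}(\psi,\eta)-\mathfrak b_{s,\eta'}(\psi,\eta)|
 \le\omega(|\xi-\eta'|)\|\psi\|_{L^2(0,\pi)}\|\eta\|_{L^2(0,\pi)}.
\]
This extends to $\mathscr Y_s$ by density and \eqref{eq:unweighted-control}. Combining it with \eqref{eq:unweighted-control} for both fibers, uniformly in $\xi$ because the constant there is independent of $\xi$, we obtain
\[
 |\mathfrak b_{s,\xi}(\psi,\eta)-\mathfrak b_{s,\eta'}(\psi,\eta)|\le C\,\omega\,\|\psi\|_{\mathscr Y_s}\|\eta\|_{\mathscr Y_s}.
\]
The fiber forms are also uniformly equivalent to $\|\cdot\|_{\mathscr Y_s}^2$ on bounded frequency sets. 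The lower bound comes from \eqref{eq:fiber-positive}; the upper bound comes from \eqref{eq:fiber-difference-bound} with \eqref{eq:H-kernel-L1}.

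\textbf{Resolvent identity.} Let $R_\xi=(B_s(\xi)+1)^{-1}$. By Lax--Milgram, $R_\xi f$ is the unique $\psi\in\mathscr Y_s$ with $\mathfrak b_{s,\xi}(\psi,\eta)+\langle\psi,\eta\rangle=\langle f,\eta\rangle$. Moreover $\|R_\xi f\|_{\mathscr Y_s}\le C\|f\|_{H_{s,\tau}}$, with $C$ uniform for $\xi$ in compact sets. Subtracting the equations for $\xi$ and $\eta'$ and testing with $\eta=R_\xi f-R_{\eta'}f$ gives
\[
 \|R_\xi f-R_{\eta'}f\|_{\mathscr Y_s}^2\le C\,|(\mathfrak b_{s,\eta'}-\mathfrak b_{s,\xi})(R_{\eta'}f,\eta)|\le C\omega\|R_{\eta'}f\|_{\mathscr Y_s}\|\eta\|_{\mathscr Y_s}.
\]
Hence $\|R_\xi-R_{\eta'}\|_{\mathcal L(H_{s,\tau},\mathscr Y_s)}\le C\omega(|\xi-\eta'|)$. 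Norm continuity on $H_{s,\tau}$ follows from the continuous embedding \eqref{eq:compact-embedding}.

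\textbf{Eigenvalues and evenness.} The resolvents are compact and self-adjoint, and their ordered eigenvalues $(\lambda_j(\xi)+1)^{-1}$ depend continuously on them. This follows from min--max, since $|\mu_j(A)-\mu_j(B)|\le\|A-B\|$. Therefore each $\lambda_j$ is continuous. For evenness, complex conjugation $\psi\mapsto\bar\psi$ maps the core onto itself and satisfies $\mathfrak b_{s,-\xi}[\bar\psi]=\mathfrak b_{s,\xi}[\psi]$, because the cross term $\operatorname{Re}(e^{-i\xi q}\bar\psi(\theta)\psi(\varphi))$ equals the conjugate of $\operatorname{Re}(e^{i\xi q}\psi(\theta)\overline{\psi(\varphi)})$. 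Conjugation is an antiunitary map preserving the form domain and the $H_{s,\tau}$ norm, so the Rayleigh quotients coincide and $\lambda_j(-\xi)=\lambda_j(\xi)$. The main point needing care is the first step for $s\ge1/2$, where the symmetrization in $q$ must remove the sine part. The remaining steps are routine.
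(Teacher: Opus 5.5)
Your overall route is the paper's. You obtain an $L^2(0,\pi)$ modulus of continuity for the polarized form differences from the kernel behind \eqref{eq:H-kernel-L1} and Young's inequality. You then use uniform graph-norm equivalence on compact frequency sets and a Lax--Milgram comparison of resolvents; your test with $R_\xi f-R_{\eta'}f$ is the paper's identity $\mathcal A_\xi^{-1}-\mathcal A_\eta^{-1}=-\mathcal A_\xi^{-1}(\mathcal A_\xi-\mathcal A_\eta)\mathcal A_\eta^{-1}$ written out. Weyl's inequality and min--max then give the eigenvalues. Your evenness argument via the antiunitary map $\psi\mapsto\overline\psi$ is correct. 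The paper gets evenness more directly: after symmetrization in $q$ the form depends on $\xi$ only through $\cos(\xi q)$, so $\mathfrak b_{s,-\xi}=\mathfrak b_{s,\xi}$ identically.

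The step you flag as delicate, however, is done incorrectly.
\begin{itemize}
\item Once you pass to $|e^{i\xi q}-e^{i\eta'q}|$, there is no oddness left to exploit. For $2s\geq1$ the integral $\int\min\{\delta|q|,1\}|q|^{-1-2s}e^{-|q|}\,dq$ is infinite for every $\delta>0$, so no dominated-convergence argument is available.
\item If you symmetrize before taking absolute values, the multiplier is $\cos(\eta'q)-\cos(\xi q)$, not $1-\cos((\xi-\eta')q)$. With $\delta=\eta'-\xi$, write $e^{i\eta'q}-e^{i\xi q}=e^{i\xi q}(e^{i\delta q}-1)$. The factor $e^{i\xi q}$ is not even in $q$, and the even part is $\cos(\xi q)(\cos(\delta q)-1)-\sin(\xi q)\sin(\delta q)\approx-(\xi\delta+\delta^2/2)q^2$ near $q=0$.
\item So your claimed bound $O(\delta^2q^2)$ holds only when one of the two frequencies is zero.
\end{itemize}

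The correct estimate, which is the one the paper uses, has two parts. For $|q|\leq1$ and $\xi,\eta'$ in a compact interval $K$, the mean-value bound gives $|\cos(\xi q)-\cos(\eta'q)|\leq C_K|\xi-\eta'|q^2$. For $|q|\geq1$, bound the difference by $\min\{|\xi-\eta'||q|,2\}$ against the exponential tail $e^{-(1+s)|q|}$. This yields a modulus $\omega_K(|\xi-\eta'|)\to0$ whose constant grows with $|\xi|+|\eta'|$. Your $\omega(|\xi-\eta'|)$ should therefore be $\omega_K$; local uniformity is all the resolvent step needs.

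One further correction: the lower half of the uniform graph-norm equivalence does not come from \eqref{eq:fiber-positive}, which controls only the $H_{s,\tau}$ norm. It comes from \eqref{eq:fiber-difference-bound} combined with \eqref{eq:unweighted-control} applied at frequency $\xi$. With these repairs, the rest of your argument goes through.
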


\begin{proof}
Fix a compact frequency interval $K$. The kernel estimate leading to
\eqref{eq:H-kernel-L1} also gives
\[
 |\mathfrak b_{s,\xi}(u,v)-\mathfrak b_{s,\eta}(u,v)|
 \leq\omega_K(|\xi-\eta|)\|u\|_{L^2(0,\pi)}\|v\|_{L^2(0,\pi)},
 \qquad\omega_K(r)\longrightarrow0.
\]
Indeed, for $\xi,\eta\in K$ and $|q|<1$, the mean-value formula gives
\[
 |\cos(\xi q)-\cos(\eta q)|
 \leq C_K|\xi-\eta|q^2.
\]
For $|q|\geq1$, the difference is bounded by $2$. Hence
\[
 \begin{split}
 &\int_{-\pi}^{\pi}\int_{\mathbb R}
 |\cos(\xi q)-\cos(\eta q)|K_s(q,\alpha)\,dq\,d\alpha\\
 &\quad\leq C_K|\xi-\eta|\int_0^1q^{1-2s}\,dq
 +C\int_1^\infty
 \min\{|\xi-\eta|q,2\}e^{-(1+s)q}\,dq\longrightarrow0.
 \end{split}
\]
Young's inequality applied to the polarized cosine difference proves
the asserted modulus of continuity.

Let $\mathcal A_\xi:\mathscr Y_s\to\mathscr Y_s^*$ be the map of
$\mathfrak b_{s,\xi}+\langle\cdot,\cdot\rangle_{H_{s,\tau}}$.
The graph norms are uniformly equivalent on $K$ by
\eqref{eq:fiber-difference-bound} and \eqref{eq:unweighted-control},
applied also at the frequency $\xi$. Thus $\mathcal A_\xi$ is a
coercive isomorphism, with inverse uniformly bounded on $K$.
The preceding estimate and \eqref{eq:unweighted-control} imply
\[
 \|\mathcal A_\xi-\mathcal A_\eta\|
 _{\mathscr Y_s\to\mathscr Y_s^*}\longrightarrow0
 \quad\text{as }\xi-\eta\to0,\qquad \xi,\eta\in K.
\]
The inverse identity
\[
 \mathcal A_\xi^{-1}-\mathcal A_\eta^{-1}
 =-\mathcal A_\xi^{-1}(\mathcal A_\xi-\mathcal A_\eta)
 \mathcal A_\eta^{-1}
\]
gives continuity from $H_{s,\tau}$ to $\mathscr Y_s$ after restriction.
By \eqref{eq:angular-operator-domain}, the restricted inverse is
$(B_s(\xi)+1)^{-1}$. Compactness of the resolvents and the min--max
principle give continuity of their ordered eigenvalues, hence of
$\lambda_j(\xi)$. The cosine formula makes $B_s(-\xi)=B_s(\xi)$,
which proves evenness.
\end{proof}

\begin{proposition}[Closed direct integral]
The closure of $\mathfrak q_s$ has domain
\[
 \left\{w\in L^2(\mathbb R;H_{s,\tau}):
 \widehat w(\xi,\cdot)\in\mathscr Y_s\text{ for a.e. }\xi,
 \quad\int_{\mathbb R}\mathfrak b_{s,\xi}
 [\widehat w(\xi,\cdot)]\,d\xi<\infty\right\}.
\]
For every function in this domain,
\begin{equation}\label{eq:closed-direct-integral}
 \mathfrak q_s[w]=\int_{\mathbb R}
 \mathfrak b_{s,\xi}[\widehat w(\xi,\cdot)]\,d\xi.
\end{equation}
\end{proposition}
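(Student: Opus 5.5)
The closed form will be identified with the direct integral of the closed fibers, using the core identity of Proposition~\ref{prop:direct-integral} and closedness of a direct integral of closed forms. Let $\mathscr Z$ denote the set displayed in the statement. Equip it with
\[
 \|w\|_{\mathscr Z}^2=\int_{\mathbb R}\bigl(\mathfrak b_{s,\xi}[\widehat w(\xi,\cdot)]+\|\widehat w(\xi,\cdot)\|_{H_{s,\tau}}^2\bigr)\,d\xi .
\]
Measurability of $\xi\mapsto\mathfrak b_{s,\xi}[\widehat w(\xi,\cdot)]$ comes from Lemma~\ref{lem:real-frequency-continuity}. Write $\mathfrak b_{s,\xi}[\psi]=\sup_N\bigl(N\|\psi\|^2-N^2\langle(B_s(\xi)+N)^{-1}\psi,\psi\rangle\bigr)$, with the value $+\infty$ off $\mathscr Y_s$. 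For each fixed $N$ this is continuous in $\xi$ in operator norm, and the supremum is monotone in $N$. So the map is a Borel function of $(\xi,\widehat w)$.

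First, $\mathscr Z$ with this norm is a Hilbert space, and the right side of \eqref{eq:closed-direct-integral} is a closed form on $L^2(\mathbb R;H_{s,\tau})$. The argument is lower semicontinuity. If $w_n\to w$ in $L^2(\mathbb R;H_{s,\tau})$, pass to a subsequence with $\widehat w_n(\xi,\cdot)\to\widehat w(\xi,\cdot)$ in $H_{s,\tau}$ for a.e.\ $\xi$. Each closed fiber form is lower semicontinuous in $H_{s,\tau}$. Fatou's lemma then gives $\int\mathfrak b_{s,\xi}[\widehat w]\le\liminf\int\mathfrak b_{s,\xi}[\widehat w_n]$. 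Applying this to $w_n-w_m$ for a Cauchy sequence gives completeness. Next, Proposition~\ref{prop:direct-integral} says the form on $\mathscr C$ agrees with this direct integral. The closure of $\mathfrak q_s$ is therefore the restriction of the closed direct-integral form to the $\|\cdot\|_{\mathscr Z}$-closure of $\widehat{\mathscr C}$. On that closure, \eqref{eq:closed-direct-integral} holds by continuity. The statement thus reduces to density of $\mathscr C$ in $\mathscr Z$.

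For density, take $w\in\mathscr Z$ orthogonal to $\mathscr C$ in the $\mathscr Z$ inner product, and show $w=0$. The test functions are $g(t)\psi(\theta)$ with $g\in C_c^\infty(\mathbb R)$ and $\psi\in C_c^\infty(0,\pi)$. Orthogonality gives
\[
 \int_{\mathbb R}\overline{\widehat g(\xi)}\,\bigl[\mathfrak b_{s,\xi}(\widehat w(\xi,\cdot),\psi)+\langle\widehat w(\xi,\cdot),\psi\rangle_{H_{s,\tau}}\bigr]\,d\xi=0 .
\]
The bracket is in $L^1_{\rm loc}$ in $\xi$, because by the frequency bound \eqref{eq:fiber-difference-bound}, $\mathfrak b_{s,\xi}[\psi]$ grows at most like $|\xi|^{2s}$. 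The functions $\widehat g$ with $g\in C_c^\infty$ are dense enough among test functions, for instance by Paley--Wiener approximation of functions in $C_c^\infty(\mathbb R)$ in a weighted sup norm. Hence the bracket vanishes for a.e.\ $\xi$, for every $\psi$ in a countable dense subset of $C_c^\infty(0,\pi)$. That subset is dense in $\mathscr Y_s$ for the graph norm, and the graph norms are locally uniformly equivalent (Lemma~\ref{lem:real-frequency-continuity}). So for a.e.\ $\xi$, $\widehat w(\xi,\cdot)$ is orthogonal to all of $\mathscr Y_s$ in the $\xi$-graph inner product. This forces $\widehat w(\xi,\cdot)=0$, hence $w=0$.

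A more constructive alternative is available for the density step. Cut off in frequency with $\mathbf 1_{|\xi|\le M}$, which converges in $\mathscr Z$ by dominated convergence. Approximate the fibers by finite sums $\sum_k\widehat g_k(\xi)\psi_k$, using local uniform equivalence of the fiber norms on $|\xi|\le M$. Then replace band-limited $g_k$ by compactly supported ones, controlling the error through the growth of $\mathfrak b_{s,\xi}[\psi_k]$.

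The main obstacle is making the density step rigorous. The approximation needs a uniform control of the fiber graph norms in $\xi$, not just a fiberwise one. Only local equivalence is available, with constants growing like $1+|\xi|^{2s}$, so the frequency truncation must be done first. The measurability issue is secondary and is settled by the resolvent continuity of Lemma~\ref{lem:real-frequency-continuity}.
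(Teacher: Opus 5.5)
Your proof is correct, but it takes a different route from the paper in all three of its components.

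\textbf{Closedness.} The paper proves closedness of the integrated form constructively. From a graph-Cauchy sequence it extracts a subsequence with increments at most $2^{-n}$, and uses Cauchy--Schwarz on $[-N,N]$ and Fubini to get pointwise summability of the fiber graph increments. It then builds the limit fiber by fiber using completeness of $\mathscr Y_s$, and concludes with Fatou. You instead observe that each closed fiber form is lower semicontinuous on $H_{s,\tau}$ and integrate with Fatou. Since a nonnegative form is closed exactly when it is lower semicontinuous, this is shorter and avoids constructing limits fiberwise. One small fix: first pass to a subsequence realizing the $\liminf$, and only then extract an a.e.\ convergent one.

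\textbf{Density of the core.} The paper approximates explicitly in four stages: frequency truncation, $\mathscr Y_s$-valued simple functions, replacement of values by angular core functions and coefficients by $C_c^\infty(\mathbb R_\xi)$ functions, and finally the cutoff $\chi(t/R)\check g$. The last stage is controlled by $\mathfrak b_{s,\xi}[\psi]+\|\psi\|^2\le C_\psi(1+|\xi|^{2s})$. Your orthogonal-complement argument uses that same growth bound only to make
\[
F_\psi(\xi)=\mathfrak b_{s,\xi}(\widehat w(\xi),\psi)+\langle\widehat w(\xi),\psi\rangle_{H_{s,\tau}}
\]
tempered; Fourier uniqueness then does the work. $L^1_{\rm loc}$ is not the right property to state here. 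What you need, and what Cauchy--Schwarz in the $\xi$-graph inner product gives, is $(1+|\xi|)^{-s}F_\psi\in L^2(\mathbb R)$. This makes $\int\overline{\widehat g}\,F_\psi\,d\xi$ absolutely convergent for Schwartz $\widehat g$. Density of $\{\widehat g:g\in C_c^\infty(\mathbb R)\}$ in $\mathcal S(\mathbb R)$ (which is what your Paley--Wiener step amounts to) then forces $F_\psi=0$ a.e. The countable dense family of $\psi$ and fixed-$\xi$ graph-norm equivalence (Lemma~\ref{lem:closed-fibers}) finish the argument correctly.

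\textbf{Measurability.} Your monotone resolvent formula, with operator-norm continuity of $(B_s(\xi)+N)^{-1}$, is a valid substitute for the paper's countable-dense measurable-field construction.

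\textbf{Comparison.} The paper's route produces explicit product approximants with compact $t$-support. Yours is shorter and avoids the multi-stage approximation. The obstacle you flag about non-uniform graph equivalence affects only your constructive alternative, which is essentially the paper's own proof.
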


\begin{proof}
Let the right-hand side define $\widetilde{\mathfrak q}_s$.
On $|\xi|\leq N$, \eqref{eq:H-kernel-L1},
\eqref{eq:fiber-difference-bound}, and \eqref{eq:unweighted-control} give
\begin{equation}\label{eq:uniform-graph-equivalence}
 C_N^{-1}\bigl(\mathfrak b_{s,0}[v]+\|v\|_{H_{s,\tau}}^2\bigr)
 \leq\mathfrak b_{s,\xi}[v]+\|v\|_{H_{s,\tau}}^2
 \leq C_N\bigl(\mathfrak b_{s,0}[v]+\|v\|_{H_{s,\tau}}^2\bigr).
\end{equation}

The common form domain is separable. By
\eqref{eq:fiber-difference-bound} and polarization,
$\xi\mapsto\mathfrak b_{s,\xi}(u,v)$ is continuous for
$u,v\in\mathscr Y_s$. A countable dense subset of the angular core
therefore defines a measurable field of form spaces. On each bounded
frequency interval, \eqref{eq:uniform-graph-equivalence} identifies this
field with the constant space $\mathscr Y_s$. Finite-energy sections are
strongly measurable, and common-core simple sections are dense.

Let $\{f_n\}$ be Cauchy for the integrated graph norm. Choose a subsequence,
still denoted by $\{f_n\}$, such that
\[
 \|f_{n+1}-f_n\|_{\rm graph}\leq2^{-n}.
\]
On $[-N,N]$, Cauchy--Schwarz gives
\[
 \int_{-N}^N
 \bigl(\mathfrak b_{s,\xi}[f_{n+1}(\xi)-f_n(\xi)]
 +\|f_{n+1}(\xi)-f_n(\xi)\|_{H_{s,\tau}}^2\bigr)^{1/2}\,d\xi
 \leq (2N)^{1/2}2^{-n}.
\]
Fubini's theorem shows that the sum of the pointwise fiber graph
increments is finite for almost every $\xi\in[-N,N]$. Diagonalize in
$N$. Fiber completeness gives a pointwise limit $f(\xi)\in\mathscr Y_s$
for almost every $\xi$. Fatou's lemma yields
\[
 \|f_n-f\|_{\rm graph}^2
 \leq\liminf_{m\to\infty}\|f_n-f_m\|_{\rm graph}^2.
\]
The right-hand side tends to zero. Thus the integrated form is closed.

To identify a core, truncate a finite-energy section to $|\xi|\leq N$.
By \eqref{eq:uniform-graph-equivalence}, Bochner simple functions with
values in $\mathscr Y_s$ are dense.
Since $C_c^\infty(0,\pi)$ is the common form core, their values may be
replaced by angular core functions. By
\eqref{eq:uniform-graph-equivalence}, their scalar coefficients may next
be approximated in $L^2(-N,N)$ by functions in
$C_c^\infty(\mathbb R_\xi)$.

It remains to obtain compact support in $t$. Consider one term
$g(\xi)\psi(\theta)$, where $g\in C_c^\infty(\mathbb R)$ and
$\psi\in C_c^\infty(0,\pi)$. Its inverse Fourier transform
$\check g$ is a Schwartz function. Choose
$\chi\in C_c^\infty(\mathbb R)$ with $\chi=1$ near zero. Then
$\chi(t/R)\check g(t)\to\check g(t)$ in $H^s(\mathbb R)$. Indeed, use
dominated convergence for the $L^2$ term. For the difference quotient,
split $|t-t'|<1$ and $|t-t'|\geq1$. The Lipschitz bound for
$\chi(t/R)$ controls the first part. The Schwartz decay of $\check g$
controls both parts. For the fixed angular core function,
\eqref{eq:H-kernel-L1} and \eqref{eq:fiber-difference-bound} give
\[
 \mathfrak b_{s,\xi}[\psi]+\|\psi\|_{H_{s,\tau}}^2
 \leq C_\psi(1+|\xi|^{2s}).
\]
Plancherel's theorem therefore gives convergence in the integrated graph
norm. Each product
$\chi(t/R)\check g(t)\psi(\theta)$ belongs to $\mathscr C$. Thus
$\mathscr C$ is a form core for $\widetilde{\mathfrak q}_s$.
Proposition~\ref{prop:direct-integral} shows that the two forms agree on
this core. Their closures are equal.
\end{proof}

We now establish the modulus inequality and the positivity of the first eigenfunction.

\begin{lemma}[Modulus stability of the closed angular form]
\label{lem:modulus-domain}
For every $\psi\in\mathscr Y_s$, one has $|\psi|\in\mathscr Y_s$ and
\[
 \mathfrak b_{s,0}[|\psi|]\leq\mathfrak b_{s,0}[\psi].
\]
Moreover, there are $\eta_j\in C_c^\infty(0,\pi)$ such that
$\eta_j\geq0$ and $\eta_j\to|\psi|$ in $\mathscr Y_s$.
\end{lemma}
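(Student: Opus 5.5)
The plan is to work with the positive representation \eqref{eq:positive-fiber} at $\xi=0$, which by \eqref{eq:two-fiber-representations} equals $\mathfrak b_{s,0}$ on the core. That is, $\mathfrak b_{s,0}[\psi]=\|T_0\psi\|^2_{L^2(d\nu)}$, where $d\nu$ is the positive measure $K_s(\sin\theta\sin\varphi)^{s-1/2}\,dq\,d\theta\,d\varphi$ and
\[
 T_0\psi=e^{-q/4}(\sin\theta)^{1/2-s}\psi(\theta)-e^{q/4}(\sin\varphi)^{1/2-s}\psi(\varphi).
\]
The two coefficients $e^{\mp q/4}(\sin\cdot)^{1/2-s}$ are positive, so the elementary inequality $\bigl||a|A-|b|B\bigr|\le|aA-bB|$ for $A,B>0$ gives $|T_0|\psi||\le|T_0\psi|$ pointwise. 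Hence $\mathfrak g_{s,0}[|\psi|]\le\mathfrak g_{s,0}[\psi]$ whenever the right side is finite. This settles the inequality on the core, but $|\psi|$ is only Lipschitz there, not smooth. The work is therefore to show that $|\psi|$ lies in the closed domain and that its closed form value is the positive integral.

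First take $\psi\in C_c^\infty(0,\pi)$. Then $|\psi|$ is Lipschitz with support in some $I\Subset(0,\pi)$. Mollify with nonnegative mollifiers to get $\eta_j=\rho_j*|\psi|\ge0$ in $C_c^\infty(I')$, converging to $|\psi|$ in $W^{1,2}$ and hence in $H^s$. I will then use a local continuity estimate, the analogue of \eqref{eq:general-p-local-continuity} for $p=2$: for $f$ supported in a fixed $I'\Subset(0,\pi)$,
\[
 \mathfrak g_{s,0}[f]\le C_{I'}\bigl(\|f\|_{L^2}^2+[f]_{H^s(\mathbb R)}^2\bigr).
\]
To prove it, split $|q|<1$ from the tail, use the kernel comparison \eqref{eq:kernel-comparison} and the drift Taylor error $C(|q|+|\theta-\varphi|)$, and use \eqref{eq:kernel-tail} for $|q|\ge1$. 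When one angle leaves $I'$, the sine factor is integrable since $s-1/2>-1$. Since $\|T_0\cdot\|$ is a seminorm, this bound shows $\eta_j$ is Cauchy in $\mathscr Y_s$. It also converges to $|\psi|$ in $H_{s,\tau}$, so $|\psi|\in\mathscr Y_s$, and its closed form value equals $\lim\mathfrak g_{s,0}[\eta_j]=\mathfrak g_{s,0}[|\psi|]$, the latter by the same continuity. This gives the claim for core functions.

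For general $\psi\in\mathscr Y_s$, take core $\psi_n\to\psi$ in $\mathscr Y_s$. I first identify the closed form with the positive integral, $\mathfrak b_{s,0}[\psi]=\|T_0\psi\|^2$. By closability, $T_0\psi_n$ converges in $L^2(d\nu)$, and along an a.e.\ subsequence (as in Lemma~\ref{lem:closed-fibers}) the limit is $T_0\psi$. Next, $|\psi_n|\in\mathscr Y_s$ is bounded in $\mathscr Y_s$ and converges to $|\psi|$ in $H_{s,\tau}$. A closed form is weakly lower semicontinuous, so a weakly convergent subsequence gives $|\psi|\in\mathscr Y_s$ with
\[
 \mathfrak b_{s,0}[|\psi|]\le\liminf\mathfrak b_{s,0}[|\psi_n|]\le\lim\mathfrak b_{s,0}[\psi_n]=\mathfrak b_{s,0}[\psi].
\]

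The step I expect to be the main obstacle is the nonnegative core approximation, because weak convergence alone does not give norm approximants. I would try to upgrade to strong convergence: $T_0|\psi_n|\to T_0|\psi|$ pointwise a.e., and it is dominated by $|T_0\psi_n|$, which converges in $L^2(d\nu)$. The generalized dominated convergence theorem then gives $T_0|\psi_n|\to T_0|\psi|$ in $L^2(d\nu)$. Together with $H_{s,\tau}$ convergence, this shows $|\psi_n|\to|\psi|$ strongly in $\mathscr Y_s$. Each $|\psi_n|$ is approximated by nonnegative smooth $\eta$ from the first step, so a diagonal choice gives $\eta_j\ge0$, $\eta_j\in C_c^\infty(0,\pi)$, with $\eta_j\to|\psi|$ in $\mathscr Y_s$.
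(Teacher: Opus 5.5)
Your proof is correct. Up to the last step it follows the paper closely. Both use the same local upper bound for $\mathfrak g_{s,0}$ on functions supported in a fixed $I'\Subset(0,\pi)$, nonnegative mollification of $|\psi|$ for core $\psi$, the pointwise inequality $\bigl||a|A-|b|B\bigr|\le|aA-bB|$, and weak lower semicontinuity for general $\psi$. One small point on the local bound: split by distance to the diagonal, as the paper does with $|q|+|\theta-\varphi|<\delta_I$, rather than by whether an angle leaves $I'$. Then pairs straddling $\partial I'$ near the diagonal are absorbed by the $H^s(\mathbb R)$ seminorm of the zero extension.

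The genuine difference is the nonnegative core approximation.
\begin{itemize}
\item The paper has only $|\psi_n|\rightharpoonup|\psi|$ weakly in $\mathscr Y_s$. It uses Mazur's lemma to get strongly convergent convex combinations of the $|\psi_n|$, and then mollifies each term.
\item You identify the closed form with $\|T_0\psi\|_{L^2(d\nu)}^2$ on all of $\mathscr Y_s$, via closability and a.e.\ subsequences. You then use the domination $|T_0|\psi_n||\le|T_0\psi_n|$ together with $T_0\psi_n\to T_0\psi$ in $L^2(d\nu)$. Generalized dominated convergence (Pratt's lemma), applied to $|T_0|\psi_n|-T_0|\psi||^2\le 2|T_0\psi_n|^2+2|T_0|\psi||^2$, gives $|\psi_n|\to|\psi|$ strongly in $\mathscr Y_s$ along an a.e.-convergent subsequence.
\end{itemize}

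Your route costs the extra identification of the closed form on the whole domain, which you supply. In return it gives more:
\begin{itemize}
\item the modulus map is continuous on $\mathscr Y_s$;
\item the approximants are just mollified $|\psi_n|$, with no convex combinations;
\item the weak-compactness step becomes redundant.
\end{itemize}
On the last point, the same domination shows that $\{|\psi_n|\}$ is Cauchy in $\mathscr Y_s$. The inequality then follows directly from $\|T_0|\psi|\|\le\|T_0\psi\|$.

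The paper's Mazur argument is softer. It needs only weak lower semicontinuity, convexity, and the identification of the closed form at $|\psi|$ for core $\psi$. It does not need the pointwise representation of the closed form on all of $\mathscr Y_s$.
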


\begin{proof}
Let $I\Subset I_1\Subset(0,\pi)$. If $f\in C_c^\infty(I)$ and $f$ is
extended by zero to $\mathbb R$, then
\[
 \mathfrak b_{s,0}[f]\leq C_I\left\{\|f\|_{L^2(\mathbb R)}^2+
 \int_{\mathbb R}\int_{\mathbb R}
 \frac{|f(\theta)-f(\varphi)|^2}
 {|\theta-\varphi|^{1+2s}}\,d\theta\,d\varphi\right\}.
\]
To prove this, use \eqref{eq:positive-fiber} and put $b=s-1/2$. On
$|q|+|\theta-\varphi|<\delta_I$, a nonzero term forces both angular
variables into $I_1$. The functions $(\sin\theta)^{-b}$ and
$e^{-q/4}$ are smooth there. The mean-value theorem gives
\[
 \begin{split}
 &|e^{-q/4}(\sin\theta)^{-b}f(\theta)
 -e^{q/4}(\sin\varphi)^{-b}f(\varphi)|^2\\
 &\quad\leq C_I|f(\theta)-f(\varphi)|^2
 +C_I(q^2+|\theta-\varphi|^2)
 (|f(\theta)|^2+|f(\varphi)|^2).
 \end{split}
\]
The sine factor outside the square is bounded on $I_1$. By
Lemma~\ref{lem:kernel-local}, integration in $q$ of the first term gives
$C_I|\theta-\varphi|^{-1-2s}$. The second term is integrable because
\[
 \int_{q^2+\alpha^2<1}(q^2+\alpha^2)^{-s}\,dq\,d\alpha<\infty.
\]
On the complementary set, expand the square. At least one function value
has its angular variable in $I$. If $|q|\geq\delta_I/2$, the exponential
tail in \eqref{eq:kernel-tail} absorbs $e^{|q|/2}$. If
$|\theta-\varphi|\geq\delta_I/2$, the kernel is bounded after integration
in $q$. The remaining endpoint factor is a fixed power of sine. Its worst
power is $(\sin\vartheta)^{s-1/2}$, which is integrable because
$s-1/2>-1$. This bounds the complementary part by
$C_I\|f\|_2^2$. The local upper estimate follows.

Now let $\psi\in C_c^\infty(0,\pi)$. The function $|\psi|$, extended by
zero, belongs to $H^1(\mathbb R)$. Indeed, the modulus map is Lipschitz and
$|(|\psi|)'|\leq|\psi'|$ almost everywhere. Let $\rho_\varepsilon$ be a
nonnegative Friedrichs mollifier and put
$\eta_\varepsilon=\rho_\varepsilon*|\psi|$. For small $\varepsilon$,
$\eta_\varepsilon$ is nonnegative and belongs to
$C_c^\infty(0,\pi)$. It converges to $|\psi|$ in $H^s(\mathbb R)$. Apply
the local upper estimate to
$\eta_\varepsilon-\eta_\delta$. The family
$\{\eta_\varepsilon\}$ is Cauchy in $\mathscr Y_s$. Its limit in
$H_{s,\tau}$ is $|\psi|$. Hence $|\psi|\in\mathscr Y_s$.

For $A,B>0$ and $z,w\in\mathbb C$,
$|A|z|-B|w||\leq|Az-Bw|$. Apply this to
\eqref{eq:positive-fiber}. A subsequence of $\eta_\varepsilon$ and its
image under the closed difference operator converges almost everywhere,
so the closed form agrees with that integral at $|\psi|$. Hence
\[
 \mathfrak b_{s,0}[|\psi|]\leq\mathfrak b_{s,0}[\psi]
\]
for every core function $\psi$. The preceding mollifiers also give its
nonnegative core approximation.

Finally, let $\psi\in\mathscr Y_s$. Choose core functions
$\psi_n\to\psi$ in $\mathscr Y_s$. Then
$|\psi_n|\to|\psi|$ in $H_{s,\tau}$. The core inequality shows that
$\{|\psi_n|\}$ is bounded in $\mathscr Y_s$. A weakly convergent
subsequence has weak limit $|\psi|$, because its strong
$H_{s,\tau}$ limit is $|\psi|$. Weak lower semicontinuity gives
\[
 \mathfrak b_{s,0}[|\psi|]
 \leq\liminf_{n\to\infty}\mathfrak b_{s,0}[|\psi_n|]
 \leq\lim_{n\to\infty}\mathfrak b_{s,0}[\psi_n]
 =\mathfrak b_{s,0}[\psi].
\]
Mazur's lemma gives convex combinations of the nonnegative functions
$|\psi_n|$ which converge strongly to $|\psi|$ in $\mathscr Y_s$.
Approximate the finitely many terms in each combination by the nonnegative
smooth approximants constructed above. The resulting functions are
nonnegative, belong to $C_c^\infty(0,\pi)$, and converge to $|\psi|$ in
$\mathscr Y_s$.
\end{proof}

The next lemma gives simplicity and positivity at zero frequency.

\begin{lemma}[Positive simple ground state]\label{lem:positive-simple}
The eigenvalue $\lambda_1(0)$ is simple. Its eigenfunction can be chosen
strictly positive almost everywhere in $(0,\pi)$.
\end{lemma}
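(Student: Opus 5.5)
The argument is the standard Beurling--Deny/Perron--Frobenius scheme for the closed form $\mathfrak b_{s,0}$ on $H_{s,\tau}$. We combine the modulus stability of Lemma~\ref{lem:modulus-domain}, the compact resolvent from Lemma~\ref{lem:closed-fibers}, and a strict positivity property coming from the positive representation \eqref{eq:positive-fiber}.

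\emph{Existence of a nonnegative minimizer.} By Lemma~\ref{lem:closed-fibers}, $\lambda_1(0)$ is the minimum of the Rayleigh quotient over $\mathscr Y_s$ and is attained. If $\psi$ is a minimizer, Lemma~\ref{lem:modulus-domain} gives $\mathfrak b_{s,0}[|\psi|]\le\mathfrak b_{s,0}[\psi]$, while the $H_{s,\tau}$ norms agree. Hence $|\psi|$ is also a minimizer, with equality in the modulus inequality.

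\emph{Strict positivity.} This is the step I expect to be the main obstacle. Let $\phi\ge0$ be a nonzero minimizer. Then $\phi$ is an eigenfunction, so $\mathfrak b_{s,0}(\phi,\eta)=\lambda_1(0)\langle\phi,\eta\rangle$ for all $\eta\in\mathscr Y_s$. I would prove that $\phi>0$ a.e. by analysing equality in $|A|z|-B|w||\le|Az-Bw|$ instead of using the weak equation directly. For real $\psi$ with $\psi_\pm\ne0$, compare $\mathfrak b_{s,0}[\psi]$ with $\mathfrak b_{s,0}[|\psi|]$ through \eqref{eq:positive-fiber}, in the closed sense obtained from a.e.-convergent subsequences as in Lemma~\ref{lem:modulus-domain}. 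The difference is
\[
4\int K_s(\sin\theta\sin\varphi)^{s-1/2}e^{0}(\sin\theta)^{1/2-s}(\sin\varphi)^{1/2-s}\psi_+(\theta)\psi_-(\varphi)\,dq\,d\theta\,d\varphi.
\]
The exponential factors $e^{-q/4}e^{q/4}$ cancel. This integral is strictly positive because $K_s>0$ everywhere. Consequently every real minimizer has constant sign, since otherwise $|\psi|$ would have strictly smaller energy.

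To upgrade $\phi\ge0$ to $\phi>0$ a.e., I would use the Euler--Lagrange equation in its cutoff form. Testing with $\eta\ge0$ supported in $Z=\{\phi=0\}$, the diagonal terms vanish on $Z$ and only the cross term survives, giving
\[
-2\int K_s\,\phi(\varphi)\eta(\theta)\,dq\,d\theta\,d\varphi=\lambda_1(0)\langle\phi,\eta\rangle=0.
\]
Since $K_s>0$ and $\phi\not\equiv0$, this forces $|Z|=0$. Justifying this test requires that indicator-type cutoffs of $\phi$, or smooth $\eta$ combined with a truncation $\min\{\phi,\varepsilon\eta\}$, lie in $\mathscr Y_s$. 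If this proves delicate, I would instead apply the sign argument above to $\phi-\varepsilon\eta$-type perturbations. An alternative route is to show that the resolvent $(B_s(0)+c)^{-1}$ is positivity improving: via Beurling--Deny, the semigroup is positivity preserving, and irreducibility follows because the form couples every pair of angular sets through the everywhere-positive kernel $K_s$. That gives ergodicity directly, and Perron--Frobenius (Reed--Simon XIII.43) then yields both simplicity and positivity.

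\emph{Simplicity.} If the eigenspace had dimension at least two, it would contain a real eigenfunction orthogonal to $\phi$ in $H_{s,\tau}$. Since $\phi>0$ a.e. and $m_{s,\tau}>0$, such a function must change sign, which contradicts the constant-sign property. For complex eigenfunctions, the real and imaginary parts are themselves real eigenfunctions, because $B_s(0)$ commutes with conjugation by the cosine formula.
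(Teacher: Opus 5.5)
Most of your outline is sound. The nonnegative minimizer is fine, and so is the identity
\[
\mathfrak b_{s,0}[\psi]-\mathfrak b_{s,0}[|\psi|]=8\int K_s\,\psi_+(\theta)\psi_-(\varphi)\,dq\,d\theta\,d\varphi
\]
for real $\psi\in\mathscr Y_s$; your constant is off by a factor two, which is harmless. This gives the constant sign of every real minimizer, and the reduction of simplicity to real eigenfunctions via conjugation invariance is also correct.

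The gap is the step $\phi\ge0\Rightarrow\phi>0$ a.e.\ as you actually wrote it. Testing the Euler--Lagrange equation with a nonzero $\eta\ge0$ in $\mathscr Y_s$ that vanishes off $Z=\{\phi=0\}$ presupposes that such an $\eta$ exists, and nothing you have provides it. $Z$ is only a measurable set of positive measure, possibly with empty interior, so no $C_c^\infty$ function is supported in it. Also $\mathbf 1_Zh$ need not lie in $\mathscr Y_s$. For $s>1/2$ it never does when $Z$ has empty interior and $h>0$, because elements of $\mathscr Y_s$ are locally $H^s$ by \eqref{eq:local-angular}, hence continuous. Your $\min\{\phi,\varepsilon\eta\}$ vanishes on $Z$ and does not help.

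The ergodicity alternative has the same hole. Positivity of $K_s$ becomes an irreducibility proof only once you know $\mathbf 1_Ah\in\mathscr Y_s$ for the putative invariant set $A$. The paper obtains exactly this from the invariant-ideal criterion under the contradiction hypothesis and constructs an a.e.\ positive $h\in\mathscr Y_s$. It then shows $\mathfrak b_{s,0}(\mathbf 1_Ah,\mathbf 1_{A^c}h)<0$ through the truncated positive representation, since the two Hardy terms need not be finite separately. A Neumann series upgrades positivity preserving plus irreducible to positivity improving, and Perron--Frobenius is applied to the compact resolvent.

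Your passing remark about $\phi-\varepsilon\eta$ closes the gap more cheaply, if you carry it out. Take $\eta\in C_c^\infty(0,\pi)$ with $\eta\ge0$ and put $\psi_\varepsilon=\phi-\varepsilon\eta$. The first variation $\mathfrak b_{s,0}(\phi,\eta)=\lambda_1(0)\langle\phi,\eta\rangle_{H_{s,\tau}}$ gives
\[
\mathfrak b_{s,0}[\psi_\varepsilon]-\lambda_1(0)\|\psi_\varepsilon\|_{H_{s,\tau}}^2=C_\eta\varepsilon^2,\qquad C_\eta=\mathfrak b_{s,0}[\eta]-\lambda_1(0)\|\eta\|_{H_{s,\tau}}^2.
\]
The function $|\psi_\varepsilon|\in\mathscr Y_s$ has the same norm and Rayleigh quotient at least $\lambda_1(0)$, so your modulus identity yields $8\int K_s(\psi_\varepsilon)_+(\theta)(\psi_\varepsilon)_-(\varphi)\le C_\eta\varepsilon^2$. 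For $\varphi\in Z$ one has $(\psi_\varepsilon)_-(\varphi)=\varepsilon\eta(\varphi)$. Restrict to $\varphi\in Z$, divide by $\varepsilon$, and let $\varepsilon\downarrow0$. Monotone convergence of $(\phi-\varepsilon\eta)_+\uparrow\phi$ gives
\[
\int_Z\eta(\varphi)\int_0^\pi J_s(\theta,\varphi)\phi(\theta)\,d\theta\,d\varphi=0,\qquad J_s(\theta,\varphi)=2\int_{\mathbb R}K_s(q,\theta,\varphi)\,dq>0 .
\]
Thus $\eta=0$ a.e.\ on $Z$, and choosing $\eta>0$ on $[\delta,\pi-\delta]$ gives $|Z|=0$.

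This variational route needs only Lemma~\ref{lem:modulus-domain}, the first variation, and $J_s>0$. It bypasses the invariant-ideal criterion, the construction of $h$, and Perron--Frobenius. The paper's route yields the stronger statement that every shifted resolvent is positivity improving.

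Simplicity does not even require strict positivity. For two real eigenfunctions $\psi_1,\psi_2\ge0$, choose $t$ with $\langle\psi_1-t\psi_2,\psi_1+\psi_2\rangle_{H_{s,\tau}}=0$. Applying the constant-sign property to $\psi_1-t\psi_2$ forces it to vanish.
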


\begin{proof}
Lemma~\ref{lem:modulus-domain} permits replacement of every form-domain
function by its modulus. Fix $\alpha>0$. The shifted resolvent of $B_s(0)$
is positivity preserving.
Indeed, for $F\geq0$, the vector
$(B_s(0)+\alpha)^{-1}F$ uniquely minimizes
\[
 \mathfrak b_{s,0}[v]+\alpha\|v\|_{H_{s,\tau}}^2
 -2\operatorname{Re}\langle F,v\rangle_{H_{s,\tau}}.
\]
Replacing $v$ by $|v|$ cannot increase this functional. Uniqueness gives
a nonnegative minimizer.

For irreducibility, the jump kernel is
\[
 J_s(\theta,\varphi)=2\int_{\mathbb R}
 K_s(q,\theta,\varphi)\,dq>0
\]
for $\theta\neq\varphi$. Suppose that a measurable set $A$ and its
complement both have positive measure and that the ideal $L^2(A)$ is
invariant. We first construct $h\in\mathscr Y_s$ with $h>0$ almost
everywhere. Choose nonnegative core functions $h_n$ which are positive on
$[1/n,\pi-1/n]$. Choose $c_n>0$ so rapidly decreasing that
$\sum_nc_n\|h_n\|_{\mathscr Y_s}<\infty$. Then
$h=\sum_nc_nh_n$ has the required properties. The invariant-ideal
criterion for closed forms gives
$f=\boldsymbol 1_Ah\in\mathscr Y_s$,
$g=\boldsymbol 1_{A^c}h\in\mathscr Y_s$, and
$\mathfrak b_{s,0}(f,g)=0$. Both functions are nonzero. We compute the mixed term using the positive
representation \eqref{eq:positive-fiber}, rather than separating the
possibly infinite Hardy terms. Let $\mathfrak g_{s,0}^\varepsilon$ be its
kernel truncation. Since $fg=0$ almost everywhere, expansion gives
\[
 \mathfrak g_{s,0}^\varepsilon[f+g]
 -\mathfrak g_{s,0}^\varepsilon[f]
 -\mathfrak g_{s,0}^\varepsilon[g]
 =-4\int_A\int_{A^c}\int_{\mathbb R}
 K_s^\varepsilon(q,\theta,\varphi)f(\theta)g(\varphi)
 \,dq\,d\theta\,d\varphi.
\]
Each positive energy on the left converges to its finite closed-form
value. Monotone convergence on the nonnegative integral on the right
therefore proves that this integral is finite. Polarization gives
\[
 \mathfrak b_{s,0}(f,g)
 =-\int_A\int_{A^c}J_s(\theta,\varphi)
 f(\theta)g(\varphi)\,d\theta\,d\varphi<0.
\]
This contradicts the assumed decomposition. Thus the form is irreducible.

For completeness, the invariant-ideal criterion used above states that, for a
closed symmetric positivity-preserving form, an orthogonal band projection
$P_A$ commutes with one shifted resolvent if and only if
$P_A\mathscr Y_s\subset\mathscr Y_s$ and
$\mathfrak b_{s,0}(P_Av,(I-P_A)v)=0$ for every $v\in\mathscr Y_s$.
The forward implication follows by inserting resolvent vectors in the weak
resolvent equation; these vectors are form-dense. The reverse implication
follows by splitting that equation into the two bands. Thus the preceding
calculation excludes every nontrivial invariant band of each shifted
resolvent.

Write $R_\alpha=(B_s(0)+\alpha)^{-1}$. Fix $\beta>0$ and choose
$\alpha>\beta$. All these resolvents preserve positivity, and
$\|R_\alpha\|\leq\alpha^{-1}$. The resolvent identity therefore gives
the norm-convergent Neumann series
\[
 R_\beta=\sum_{n=1}^\infty
 (\alpha-\beta)^{n-1}R_\alpha^n.
\]
For $F\geq0$, $F\neq0$, let $A$ be the union, up to null sets, of the
sets where $R_\alpha^nF>0$, $n\geq1$. Choose positive coefficients
$c_n$ so that $H=\sum_{n\geq1}c_nR_\alpha^nF$ converges in
$H_{s,\tau}$. Then $H>0$ almost everywhere on $A$, and
$R_\alpha H$ vanishes outside $A$. For every $u\geq0$ supported in
$A$, the functions $\min\{u,jH\}$ converge to $u$ in $H_{s,\tau}$.
Their images under $R_\alpha$ vanish outside $A$, since they are
bounded above by $jR_\alpha H$. Hence $L^2(A,m_{s,\tau}d\theta)$ is
invariant under $R_\alpha$. Self-adjointness makes it a reducing band.
Irreducibility and $R_\alpha F\neq0$ imply that $A$ has full measure.
Consequently, for every $G\geq0$, $G\neq0$, there is $n\geq1$ with
$\langle G,R_\alpha^nF\rangle_{H_{s,\tau}}>0$. The Neumann series gives
$\langle G,R_\beta F\rangle_{H_{s,\tau}}>0$. Thus $R_\beta F>0$
almost everywhere, and every shifted resolvent is positivity improving.

The compact embedding
\eqref{eq:compact-embedding} makes $R_\alpha$ compact. The
Perron--Frobenius theorem for a compact positivity-improving self-adjoint
operator now gives a strictly positive, simple eigenvector at its spectral
radius. This vector is the first eigenfunction of $B_s(0)$.

\end{proof}

\begin{lemma}[Diamagnetic comparison]\label{lem:diamagnetic}
For $\psi\in\mathscr Y_s$,
\begin{equation}\label{eq:diamagnetic}
 \mathfrak b_{s,\xi}[\psi]\geq
 \mathfrak b_{s,0}[|\psi|].
\end{equation}
Consequently,
\begin{equation}\label{eq:fiber-minimum}
 \lambda_1(\xi)\geq\lambda_1(0).
\end{equation}
The inequality in \eqref{eq:fiber-minimum} is strict when $\xi\neq0$.
\end{lemma}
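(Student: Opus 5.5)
The approach is the pointwise inequality $\bigl|A|z|-B|w|\bigr|\leq|Az-Bw|$ applied inside the positive representation \eqref{eq:positive-fiber}. The phase $e^{-i\xi q}$ appears only in the second term of that square. For $\psi\in C_c^\infty(0,\pi)$, put $A=e^{-q/4}(\sin\theta)^{1/2-s}$, $z=\psi(\theta)$, $B=e^{q/4}(\sin\varphi)^{1/2-s}$ and $w=e^{-i\xi q}\psi(\varphi)$. Then $|w|=|\psi(\varphi)|$, so the integrand of $\mathfrak g_{s,\xi}[\psi]$ dominates pointwise the integrand of $\mathfrak g_{s,0}[|\psi|]$. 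The outer weight $K_s(\sin\theta\sin\varphi)^{s-1/2}$ is positive. Integrating and using \eqref{eq:two-fiber-representations} gives \eqref{eq:diamagnetic} on the core. Lemma~\ref{lem:modulus-domain} guarantees $|\psi|\in\mathscr Y_s$ and that the closed form at $|\psi|$ equals the positive integral.

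To extend \eqref{eq:diamagnetic} to $\psi\in\mathscr Y_s$, take core functions $\psi_n\to\psi$ in $\mathscr Y_s$. All fibers share this domain with equivalent graph norms (Lemma~\ref{lem:closed-fibers}), so $\mathfrak b_{s,\xi}[\psi_n]\to\mathfrak b_{s,\xi}[\psi]$. The functions $|\psi_n|$ converge to $|\psi|$ in $H_{s,\tau}$ and are bounded in $\mathscr Y_s$ by the core inequality, since $\mathfrak b_{s,0}[|\psi_n|]\le\mathfrak b_{s,\xi}[\psi_n]$. Weak lower semicontinuity, exactly as in Lemma~\ref{lem:modulus-domain}, then gives $\mathfrak b_{s,0}[|\psi|]\le\liminf\mathfrak b_{s,0}[|\psi_n|]\le\mathfrak b_{s,\xi}[\psi]$. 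Since $\||\psi|\|_{H_{s,\tau}}=\|\psi\|_{H_{s,\tau}}$, the min--max formula for $\lambda_1$ yields \eqref{eq:fiber-minimum}.

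For strictness, which I expect to be the main obstacle, fix $\xi\ne0$. Compact resolvent gives a minimizer $\psi$ of $\lambda_1(\xi)$. Suppose $\lambda_1(\xi)=\lambda_1(0)$. Then $|\psi|$ attains $\lambda_1(0)$, so by Lemma~\ref{lem:positive-simple} $|\psi|=c\phi_{s,\tau}>0$ a.e., and equality holds throughout. Since the weights are positive, equality in the integrated pointwise inequality forces, for a.e. $(q,\theta,\varphi)$, that $\psi(\theta)$ and $e^{-i\xi q}\psi(\varphi)$ have the same argument. This needs care on the closed domain: I would pass to an a.e.-convergent subsequence of the truncated integrals $\mathfrak g^\varepsilon$, as in the irreducibility proof. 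The truncated difference of the two nonnegative integrands has finite integral tending to zero, so the integrand vanishes a.e. Writing $\psi=e^{i\beta}|\psi|$, this gives $e^{i(\beta(\theta)-\beta(\varphi))}=e^{-i\xi q}$ for a.e. $q\in\mathbb R$ and a.e. $\theta,\varphi$. The left side does not depend on $q$, while $q\mapsto e^{-i\xi q}$ is nonconstant on every set of positive measure because $\xi\neq0$. This is a contradiction, so $\lambda_1(\xi)>\lambda_1(0)$.
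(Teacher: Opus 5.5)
Your proposal is correct and follows essentially the paper's argument. The pointwise inequality $\bigl|A|z|-B|w|\bigr|\leq|Az-Bw|$ in the positive representation is the same bound as the paper's $\operatorname{Re}(e^{i\xi q}\psi(\theta)\overline{\psi(\varphi)})\leq|\psi(\theta)||\psi(\varphi)|$ on truncated kernels, and both are followed by core approximation. Your strictness argument is exactly the paper's: take a minimizer at $\xi$, obtain $|\psi|=c\phi_{s,\tau}>0$ a.e.\ by simplicity, deduce a.e.\ equality of phases, and use Fubini to fix $(\theta,\varphi)$, where $q\mapsto e^{-i\xi q}$ is nonconstant. The only cosmetic difference is that you compare the two nonnegative integrands directly, once the closed forms are identified with their positive integrals; this makes the kernel truncation unnecessary.
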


\begin{proof}
For a core function, truncate the kernel. The mixed term satisfies
\[
 \operatorname{Re}(e^{i\xi q}\psi(\theta)
 \overline{\psi(\varphi)})
 \leq |\psi(\theta)||\psi(\varphi)|.
\]
The diagonal terms are unchanged by the modulus. Integration, monotone
convergence in the positive representations, and core approximation give
\eqref{eq:diamagnetic}. Taking the infimum proves
\eqref{eq:fiber-minimum}.

Assume equality for some $\xi\neq0$. Compactness gives a minimizer $\psi$.
Then $|\psi|$ minimizes the zero fiber. By
Lemma~\ref{lem:positive-simple}, $|\psi|>0$ almost everywhere. Equality in
the truncated phase inequality, followed by monotone convergence, implies
\[
 \operatorname{Re}\bigl(e^{i\xi q}\psi(\theta)
 \overline{\psi(\varphi)}\bigr)
 =|\psi(\theta)||\psi(\varphi)|
\]
for almost every $(q,\theta,\varphi)$. The defect is nonnegative. Since $|\psi|>0$ almost everywhere, Fubini's
theorem gives a pair $(\theta,\varphi)$ for which the equality holds for
almost every $q$. It forces $e^{i\xi q}$ to have a fixed phase, which
is impossible when $\xi\neq0$.
\end{proof}

\begin{lemma}[Interior oscillatory estimate]
Let $0<\tau_0<\min\{\tau,2s\}$. For $0<\delta<1/8$, set
$I_\delta=(\delta,\pi-\delta)$. Then
\begin{equation}\label{eq:interior-frequency-new}
 |\xi|^{2s}\int_{I_{2\delta}}|\psi|^2\,d\theta
 \leq C\delta^{-2}\left(
 \mathfrak b_{s,\xi}[\psi]
 +\int_0^\pi m_{s,\tau_0}|\psi|^2\,d\theta\right)
\end{equation}
for every $\psi\in\mathscr Y_s$. The constant is independent of
$\delta$, $\xi$, and $\psi$.
\end{lemma}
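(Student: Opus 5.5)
We localize $\psi$ to the interior with a cutoff and then compare the fiber form with the flat one-dimensional $H^s$ seminorm. The factor $|\xi|^{2s}$ should come from the oscillating phase $e^{-i\xi q}$ in the positive representation \eqref{eq:positive-fiber}.

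\emph{Reduction to large frequencies.} If $|\xi|\leq C_0\delta^{-1}$, we have $|\xi|^{2s}\leq C\delta^{-2}$ because $2s<2$. Since $m_{s,\tau_0}\geq 1$, the claim then follows at once. So we may assume $|\xi|\geq C_0\delta^{-1}$. By density, and by lower semicontinuity on both sides, it suffices to take $\psi\in C_c^\infty(0,\pi)$.

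\emph{Localization.} Fix $\eta\in C_c^\infty(I_\delta)$ with $\eta=1$ on $I_{2\delta}$ and $|\eta'|\leq C\delta^{-1}$. On $I_\delta$ the sine factors lie between $c\delta^{|b|}$ and $1$, with $b=s-\tfrac12$. Their logarithmic derivatives are $O(\delta^{-1})$. Arguing as in the proof of \eqref{eq:local-angular}, we split the positive square as $\eta(\theta)(A-B)+(\eta(\theta)-\eta(\varphi))B$. The cutoff error is then bounded by $C\delta^{-2}\int m_{s,\tau_0}|\psi|^2$, using the Lipschitz bound $\delta^{-1}$ squared, integrability of $(q^2+\alpha^2)^{-s}$, and \eqref{eq:all-weight-coercivity} for the endpoint contributions. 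This gives
\[
 \mathfrak g_{s,\xi}[\eta\psi]\leq C\delta^{-2}\Bigl(\mathfrak b_{s,\xi}[\psi]+\int_0^\pi m_{s,\tau_0}|\psi|^2\Bigr).
\]
The point of this step is that every constant is either uniform or a power of $\delta^{-1}$ at most $2$. I will track this with care.

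\emph{Lower bound by frequency.} Set $f=(\sin\theta)^{1/2-s}\eta\psi$, which is supported in $I_\delta$. Restrict \eqref{eq:positive-fiber} to $|\theta-\varphi|<|\xi|^{-1}\leq\delta$ and $|q|\leq 1$. There the kernel is comparable to $(q^2+\alpha^2)^{-1-s}$ by Lemma~\ref{lem:kernel-local}, and the weights $e^{\pm q/4}$ can be absorbed up to an $O(q^2)$ error. Writing $f(\theta)-e^{-i\xi q}f(\varphi)=(f(\theta)-f(\varphi))+(1-e^{-i\xi q})f(\varphi)$, we obtain
\[
 |1-e^{-i\xi q}|^2|f(\varphi)|^2\leq 2|f(\theta)-e^{-i\xi q}f(\varphi)|^2+2|f(\theta)-f(\varphi)|^2.
\]
We integrate this over $|\xi q|\in(1,2)$ and $|\alpha|<|\xi|^{-1}$. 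On that region the kernel is $\gtrsim|\xi|^{2+2s}$ and the measure of the $(q,\alpha)$ set is $\sim|\xi|^{-2}$. The left side therefore contributes $c|\xi|^{2s}\|f\|_2^2$.

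The flat difference term is still to be controlled. I expect this to be the main obstacle, since it is not bounded by $\mathfrak g_{s,\xi}$ alone. I would handle it by averaging in $\varphi$ over a window of width $|\xi|^{-1}$. Alternatively, apply Plancherel in $\theta$: after extending $f$ by zero, the restricted form is bounded below by $\int|\hat f(k)|^2 M(k,\xi)\,dk$, where the symbol is
\[
 M(k,\xi)=\iint_{|q|\le1}(q^2+\alpha^2)^{-1-s}|1-e^{-i(\xi q+k\alpha)}|^2\,dq\,d\alpha\gtrsim(\xi^2+k^2)^{s}-C.
\]
This Fourier-symbol route avoids the difference term entirely, and I would try it first. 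The full-line integral is exactly homogeneous of degree $2s$ in $(\xi,k)$ and positive away from the origin. The $|q|>1$ and $|\alpha|>1$ parts contribute $O(1)$, and the drift correction is absorbed by $C\|f\|_2^2$. Combining this with the localization bound, and with $|f|\geq c\delta^{|b|}|\psi|$ on $I_{2\delta}$, gives \eqref{eq:interior-frequency-new}. The power of $\delta$ from the sine factor is harmless: $|b|<1/2$, and the $\delta^{-2}$ budget absorbs it.
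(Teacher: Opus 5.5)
Your architecture (interior cutoff, comparison of the positive fiber form with the flat kernel $(q^2+\alpha^2)^{-1-s}$, Plancherel in $\theta$ with the symbol $c_s(\xi^2+k^2)^s$) is the same as the paper's. The gap is your claim that the sine-factor loss is ``harmless'' and ``absorbed by the $\delta^{-2}$ budget''; it is not.

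You apply Plancherel to $f=(\sin\theta)^{1/2-s}\eta\psi$. To do so you must bound the outer weight $(\sin\theta\sin\varphi)^{s-1/2}$ below by a constant on $I_\delta$, and then pass from $\|f\|_2^2$ back to $\int_{I_{2\delta}}|\psi|^2$. Together these two steps cost a factor $\delta^{-2|b|}$:
\begin{itemize}
\item for $s>\frac12$ it comes from the outer weight;
\item for $s<\frac12$ it comes from $|f|^2\geq c\delta^{2|b|}|\psi|^2$.
\end{itemize}
This factor multiplies everything your localization step produces. With your stated localization bound $C\delta^{-2}(\cdots)$, you end with $\delta^{-2-2|b|}$, not $\delta^{-2}$. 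Doing the localization sharply does not rescue it: on a $\delta$-scale ball the cutoff error is only $O(\delta^{-2s})\|\psi\|_2^2$, but the $L^2$ coefficient is then $\delta^{-2s-2|b|}$. For $s>\frac12$ this is $\delta^{-(4s-1)}$, which is worse than $\delta^{-2}$ once $s>\frac34$. A weaker power of $\delta$ would still feed into Lemma~\ref{lem:large-frequency}, but it does not prove the estimate as stated.

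The missing idea is not to conjugate by the sine power. Take $f=\eta\psi$ with $\eta\in C_c^\infty(I_{3\delta/2})$, and absorb $(\sin\theta\sin\varphi)^{b}$ into the square. The square then reads $|e^{-q/4}(\sin\varphi/\sin\theta)^{b/2}\psi(\theta)-e^{q/4-i\xi q}(\sin\theta/\sin\varphi)^{b/2}\psi(\varphi)|^2$.
\begin{itemize}
\item For $\theta,\varphi\in I_{5\delta/4}$ and $q^2+\alpha^2<(\delta/4)^2$, the mean-value theorem makes both ratios $1+O(\delta^{-1}(|q|+|\alpha|))$. Hence $|\psi(\theta)-e^{-i\xi q}\psi(\varphi)|^2$ is bounded by an absolute constant times this weighted square, plus $C\delta^{-2}(q^2+\alpha^2)(|\psi(\theta)|^2+|\psi(\varphi)|^2)$.
\item That error integrates to $C\delta^{-2}\int_0^{\delta/4}r^{1-2s}\,dr\,\|\psi\|_2^2=O(\delta^{-2s})\|\psi\|_2^2$. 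The cutoff term on the ball is also $O(\delta^{-2s})\|\psi\|_2^2$.
\item The whole flat integral off the ball is likewise $O(\delta^{-2s})\|\psi\|_2^2$. Note that the region to discard is the complement of this $\delta$-ball, not $\{|q|>1\}\cup\{|\alpha|>1\}$ as in your symbol route. Off the ball the sine ratios degenerate and, for the zero extension, $\varphi$ may leave $(0,\pi)$, so that part costs $\delta^{-2s}$, not $O(1)$.
\end{itemize}
Plancherel applied to $\eta\psi$ then gives $c_s|\xi|^{2s}\|\eta\psi\|_2^2$ with no power of $\delta$ lost, and $\|\eta\psi\|_2^2\geq\int_{I_{2\delta}}|\psi|^2$. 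The result is a uniform constant on $\mathfrak b_{s,\xi}[\psi]$ and $\delta^{-2s}\leq\delta^{-2}$ on the $L^2$ term. This also makes both your large-frequency reduction and the $|1-e^{-i\xi q}|^2$ route unnecessary.
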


\begin{proof}
For $\psi\in C_c^\infty(0,\pi)$, put
$b=s-\frac12$. Choose $\eta_\delta\in C_c^\infty(I_{3\delta/2})$ such that
$\eta_\delta=1$ on $I_{2\delta}$ and
$|\eta_\delta'|\leq C\delta^{-1}$. Extend
$f=\eta_\delta\psi$ by zero to $\mathbb R$.

Absorbing the angular weight in \eqref{eq:positive-fiber} into the
square gives
\[
 \mathcal S_{s,\xi}(q,\theta,\varphi;\psi):=
 \left|e^{-q/4}\left(\frac{\sin\varphi}{\sin\theta}\right)^{b/2}
 \psi(\theta)-e^{q/4-i\xi q}
 \left(\frac{\sin\theta}{\sin\varphi}\right)^{b/2}
 \psi(\varphi)\right|^2.
\]
If $\theta,\varphi\in I_{5\delta/4}$ and
$q^2+|\theta-\varphi|^2<(\delta/4)^2$, the sine ratios are bounded above
and below by absolute constants. The mean-value theorem gives
\[
 \begin{split}
 &\left|e^{-q/4}\left(\frac{\sin\varphi}{\sin\theta}\right)^{b/2}-1\right|
 +\left|e^{q/4}\left(\frac{\sin\theta}{\sin\varphi}\right)^{b/2}-1\right| \leq C_s\delta^{-1}
 (|q|+|\theta-\varphi|).
 \end{split}
\]
It follows from $|A+B|^2\leq2|A|^2+2|B|^2$ that
\begin{equation}\label{eq:flat-weighted-comparison}
 \begin{split}
 |\psi(\theta)-e^{-i\xi q}\psi(\varphi)|^2
 \leq{}&C_s\mathcal S_{s,\xi}(q,\theta,\varphi;\psi)\\
 &+C_s\delta^{-2}(q^2+|\theta-\varphi|^2)
 (|\psi(\theta)|^2+|\psi(\varphi)|^2).
 \end{split}
\end{equation}
Use also the cutoff identity
\[
 \begin{split}
 \eta_\delta(\theta)\psi(\theta)
 -e^{-i\xi q}\eta_\delta(\varphi)\psi(\varphi)
 ={}&\eta_\delta(\theta)
 [\psi(\theta)-e^{-i\xi q}\psi(\varphi)]+e^{-i\xi q}[\eta_\delta(\theta)-\eta_\delta(\varphi)]
 \psi(\varphi)
 \end{split}
\]
and $|\eta_\delta(\theta)-\eta_\delta(\varphi)|\leq
C\delta^{-1}|\theta-\varphi|$. On the ball
$q^2+|\theta-\varphi|^2<(\delta/4)^2$, any pair for which the cutoff
term is nonzero lies in $I_{5\delta/4}^2$. Thus
\eqref{eq:kernel-comparison}, \eqref{eq:positive-fiber}, and
\eqref{eq:flat-weighted-comparison} apply. The error integral is bounded by
\[
 C\delta^{-2}\left(\int_0^{\delta/4}r^{1-2s}\,dr\right)
 \|\psi\|_2^2
 \leq C\delta^{-2s}\|\psi\|_2^2.
\]
On the complement of this ball, use
$|f(\theta)-e^{-i\xi q}f(\varphi)|^2
\leq2(|f(\theta)|^2+|f(\varphi)|^2)$. Direct integration gives another
$C\delta^{-2s}\|\psi\|_2^2$ bound. Since
$\delta^{-2s}\leq\delta^{-2}$, we have proved
\begin{equation}\label{eq:localized-symbol-bound}
 \begin{split}
 &\int_{\mathbb R^2}\int_{\mathbb R}
 \frac{|f(\theta)-e^{-i\xi q}f(\varphi)|^2}
 {(q^2+|\theta-\varphi|^2)^{1+s}}
 \,dq\,d\theta\,d\varphi\leq C\delta^{-2}\left(
 \mathfrak b_{s,\xi}[\psi]
 +\|\psi\|_{L^2(0,\pi)}^2\right).
 \end{split}
\end{equation}

Let $\widehat f$ be the unitary Fourier transform on $\mathbb R$. Put
$\alpha=\theta-\varphi$ and apply Plancherel in $\theta$. We use the identity
\[
 \int_{\mathbb R^2}
 \frac{1-\cos(\xi q+\zeta\alpha)}
 {(q^2+\alpha^2)^{1+s}}\,dq\,d\alpha
 =c_s(\xi^2+\zeta^2)^s,
\]
where $c_s>0$. Plancherel's theorem gives, after absorbing the harmless
factor two into $c_s$,
\[
 \begin{split}
 &\int_{\mathbb R^2}\int_{\mathbb R}
 \frac{|f(\theta)-e^{-i\xi q}f(\varphi)|^2}
 {(q^2+|\theta-\varphi|^2)^{1+s}}
 \,dq\,d\theta\,d\varphi =c_s\int_{\mathbb R}(\xi^2+\zeta^2)^s
 |\widehat f(\zeta)|^2\,d\zeta
 \geq c_s|\xi|^{2s}\|f\|_2^2.
 \end{split}
\]
Since $\eta_\delta=1$ on $I_{2\delta}$,
$\|f\|_2^2\geq\int_{I_{2\delta}}|\psi|^2$. Finally,
\eqref{eq:unweighted-control} controls the last $L^2$ term in
\eqref{eq:localized-symbol-bound}. This proves
\eqref{eq:interior-frequency-new} for core functions. Approximate an
arbitrary $\psi\in\mathscr Y_s$ by core functions $\psi_n$. Apply
\eqref{eq:localized-symbol-bound} to $\psi_n-\psi_m$. It shows that
$\eta_\delta\psi_n$ is Cauchy in the flat Fourier form. Its $L^2$ limit is
$\eta_\delta\psi$. Passing to the limit in the Fourier identity and using
lower semicontinuity proves the general case.
\end{proof}

\begin{lemma}[Large-frequency coercivity]\label{lem:large-frequency}
There are $c,C>0$ and $\beta>0$ such that
\begin{equation}\label{eq:large-frequency}
 \mathfrak b_{s,\xi}[\psi]
 \geq (c|\xi|^\beta-C)
 \int_0^\pi m_{s,\tau}|\psi|^2\,d\theta
\end{equation}
for every $\psi\in\mathscr Y_s$. Hence
\begin{equation}\label{eq:lambda-infinity}
 \lambda_1(\xi)\longrightarrow\infty
 \quad\text{as }|\xi|\to\infty.
\end{equation}
\end{lemma}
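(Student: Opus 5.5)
The plan is to combine the interior oscillatory estimate \eqref{eq:interior-frequency-new} with the endpoint control from \eqref{eq:all-weight-coercivity}, choosing the cutoff $\delta$ as a negative power of $|\xi|$.

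\emph{Step 1: endpoint mass.} Fix $0<\tau_0<\min\{\tau,2s\}$. As in \eqref{eq:endpoint-tightness}, for every $\psi\in\mathscr Y_s$ we have
\[
 \int_{(0,2\delta)\cup(\pi-2\delta,\pi)}m_{s,\tau}|\psi|^2\,d\theta
 \leq C\delta^{\tau-\tau_0}\int_0^\pi m_{s,\tau_0}|\psi|^2\,d\theta .
\]
By \eqref{eq:all-weight-coercivity} with $\sigma=\tau_0$, which is uniform in $\xi$, the right side is at most $C\delta^{\tau-\tau_0}\mathfrak b_{s,\xi}[\psi]$. The same uniform bound also gives $\int m_{s,\tau_0}|\psi|^2\leq C\mathfrak b_{s,\xi}[\psi]$.

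\emph{Step 2: interior mass.} On $I_{2\delta}$ the weight satisfies $m_{s,\tau}\leq C\max\{1,\delta^{\tau-2s}\}\leq C\delta^{-2}$ once $\delta<1/8$; any fixed negative power of $\delta$ is enough here. Then \eqref{eq:interior-frequency-new} together with Step~1 gives
\[
 \int_{I_{2\delta}}m_{s,\tau}|\psi|^2
 \leq C\delta^{-4}|\xi|^{-2s}\bigl(\mathfrak b_{s,\xi}[\psi]
 +\textstyle\int m_{s,\tau_0}|\psi|^2\bigr)
 \leq C\delta^{-4}|\xi|^{-2s}\mathfrak b_{s,\xi}[\psi].
\]

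\emph{Step 3: choice of $\delta$.} Adding the two pieces yields
\[
 \|\psi\|_{H_{s,\tau}}^2\leq C\bigl(\delta^{\tau-\tau_0}+\delta^{-4}|\xi|^{-2s}\bigr)\mathfrak b_{s,\xi}[\psi].
\]
Take $\delta=\frac1{16}|\xi|^{-\gamma}$ with $\gamma=2s/(4+\tau-\tau_0)$, valid for $|\xi|\geq1$. Both terms are then $O(|\xi|^{-\beta})$ with
\[
 \beta=\gamma(\tau-\tau_0)>0 ,
\]
which gives $\mathfrak b_{s,\xi}[\psi]\geq c|\xi|^\beta\|\psi\|^2_{H_{s,\tau}}$ for $|\xi|\geq1$. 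For $|\xi|<1$, \eqref{eq:fiber-positive} gives $\mathfrak b_{s,\xi}\geq0\geq(c|\xi|^\beta-C)\|\psi\|^2$ once $C\geq c$. This proves \eqref{eq:large-frequency}.

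\emph{Step 4: the eigenvalue limit.} The min--max characterization of $\lambda_1(\xi)$ on $\mathscr Y_s$ then gives $\lambda_1(\xi)\geq c|\xi|^\beta-C$, which is \eqref{eq:lambda-infinity}.

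\emph{Main obstacle.} The delicate point is that all constants must be uniform in $\xi$. This holds for \eqref{eq:all-weight-coercivity}, since the lower bound there came from the cylinder coercivity and is independent of $\xi$, and for the $\delta$-dependence in \eqref{eq:interior-frequency-new}, where the constant is explicitly independent of $\delta$ and $\xi$. Both estimates already hold on the closed domain, so no further density argument is needed.
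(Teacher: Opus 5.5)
Your proposal is correct and follows essentially the paper's argument: endpoint mass is controlled through the $m_{s,\tau_0}$ coercivity, interior mass through \eqref{eq:interior-frequency-new}, and $\delta$ is chosen as a negative power of $|\xi|$. The differences are cosmetic and only change the value of $\beta$. You apply \eqref{eq:all-weight-coercivity} directly at frequency $\xi$, which is valid since its constant is uniform in $\xi$, where the paper passes through the diamagnetic inequality. You also use the cruder interior bound $m_{s,\tau}\leq C\delta^{-2}$ instead of $C\delta^{-|\tau-2s|}$.
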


\begin{proof}
Choose $0<\tau_0<\min\{\tau,2s\}$. By
\eqref{eq:all-weight-coercivity} and \eqref{eq:diamagnetic},
\[
 \int_0^\pi m_{s,\tau_0}|\psi|^2\,d\theta
 \leq C\mathfrak b_{s,0}[|\psi|]
 \leq C\mathfrak b_{s,\xi}[\psi].
\]
On the boundary part,
\begin{equation}\label{eq:boundary-frequency}
 \int_{(0,2\delta)\cup(\pi-2\delta,\pi)}
 m_{s,\tau}|\psi|^2\,d\theta
 \leq C\delta^{\tau-\tau_0}\mathfrak b_{s,\xi}[\psi].
\end{equation}
On $I_{2\delta}$, $m_{s,\tau}\leq
C\delta^{-|\tau-2s|}$. Combine
\eqref{eq:interior-frequency-new} and \eqref{eq:boundary-frequency}. We obtain
\[
 \int_0^\pi m_{s,\tau}|\psi|^2\,d\theta
 \leq C\left(
 |\xi|^{-2s}\delta^{-2-|\tau-2s|}
 +\delta^{\tau-\tau_0}\right)
 \mathfrak b_{s,\xi}[\psi].
\]
Put $\eta=\tau-\tau_0$ and $M=|\tau-2s|$. Choose
\[
 \varepsilon=\frac{2s}{2+M+\eta},\qquad
 \delta=|\xi|^{-\varepsilon}.
\]
For large $|\xi|$, we have $\delta<1/8$. Both
coefficients in the preceding estimate are bounded by
$C|\xi|^{-\beta}$, where
\[
 \beta=\frac{2s(\tau-\tau_0)}
 {2+|\tau-2s|+\tau-\tau_0}>0.
\]
Thus
\[
 \mathfrak b_{s,\xi}[\psi]\geq
 c|\xi|^\beta\|\psi\|_{H_{s,\tau}}^2
\]
for large $|\xi|$. Adjusting the lower bound on a bounded frequency
interval gives \eqref{eq:large-frequency}. Taking the infimum gives \eqref{eq:lambda-infinity}.
\end{proof}
\subsection*{Kernel moments and frequency growth}

\begin{lemma}[Moment bounds]\label{lem:kernel-moments}
For every integer $j\geq1$, the form
\begin{equation}\label{eq:moment-form}
 \int_0^\pi\int_0^\pi\int_{\mathbb R}
 |q|^{2j}K_s(q,\theta,\varphi)
 |\psi(\theta)||\psi(\varphi)|
 \,dq\,d\theta\,d\varphi
\end{equation}
is finite on $\mathscr Y_s$ and is bounded by
$C_j(\mathfrak b_{s,0}[\psi]+\|\psi\|_{L^2(m_{s,\tau})}^2)$.
For $\psi,\chi\in\mathscr Y_s$ and
$|\operatorname{Im}z|<1+s$, define
\begin{equation*}
 \begin{split}
 \mathfrak d_{s,z}(\psi,\chi)
 =\int_0^\pi\int_0^\pi\int_{\mathbb R}
 &(1-\cos(zq))K_s(q,\theta,\varphi)\bigl(\psi(\theta)\overline{\chi(\varphi)}
 +\psi(\varphi)\overline{\chi(\theta)}\bigr)
 \,dq\,d\theta\,d\varphi
 \end{split}
\end{equation*}
and $\mathfrak b_{s,z}=\mathfrak b_{s,0}+\mathfrak d_{s,z}$.
Then $z\mapsto\mathfrak d_{s,z}$ is a holomorphic family of bounded forms
on $\mathscr Y_s$ in this strip. It is even. For $|z|$ sufficiently
small, $\mathfrak b_{s,z}$ is a closed sectorial form with common domain
$\mathscr Y_s$. The resulting family is of type (a). Moreover,
\begin{equation}\label{eq:analytic-form-expansion}
 \mathfrak b_{s,z}
 =\mathfrak b_{s,0}+z^2\mathfrak c_{s,2}+O_{\mathscr Y_s}(z^4)
 \quad\text{as }z\to0.
\end{equation}
Here
\begin{equation}\label{eq:second-form-coefficient}
 \begin{split}
 \mathfrak c_{s,2}(\psi,\chi)
 =\frac12\int_0^\pi\int_0^\pi\int_{\mathbb R}
 q^2K_s(q,\theta,\varphi)
 \bigl(\psi(\theta)\overline{\chi(\varphi)}
 +\psi(\varphi)\overline{\chi(\theta)}\bigr)
 \,dq\,d\theta\,d\varphi.
 \end{split}
\end{equation}
\end{lemma}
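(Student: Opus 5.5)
The argument reduces every assertion to one weighted $L^1$ estimate for the angular kernel, combined with the bilinear Schur/Young bound used in \eqref{eq:fiber-difference-bound} and the control \eqref{eq:unweighted-control} of $\|\psi\|_{L^2(0,\pi)}$ by $\mathfrak b_{s,0}[\psi]+\|\psi\|_{H_{s,\tau}}^2$. Put $\alpha=\theta-\varphi$ and $K_s(q,\alpha)=2^{-1-s}(\cosh q-\cos\alpha)^{-1-s}$. The kernel depends only on $\alpha$, so Young's inequality on $(-\pi,\pi)$ gives, for any weight $W(q)\ge0$,
\[
 \int_0^\pi\int_0^\pi\int_{\mathbb R}W(q)K_s(q,\theta,\varphi)|\psi(\theta)||\chi(\varphi)|\,dq\,d\theta\,d\varphi
 \le\Big\|\int_{\mathbb R}W(q)K_s(q,\cdot)\,dq\Big\|_{L^1(-\pi,\pi)}\|\psi\|_{L^2(0,\pi)}\|\chi\|_{L^2(0,\pi)}.
\]
I will therefore only need to check that the $L^1$ norm on the right is finite for the relevant weights.

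\textbf{Moment bounds.} For the moment form \eqref{eq:moment-form}, take $W=q^{2j}$. Near $q=0$, Lemma~\ref{lem:kernel-local} gives $\int_{-\pi}^{\pi}K_s(q,\alpha)\,d\alpha\le C|q|^{-1-2s}$, so the small-$q$ contribution is $\int_0^1q^{2j-1-2s}\,dq<\infty$ because $j\ge1>s$. For $|q|\ge1$, \eqref{eq:kernel-tail} bounds the contribution by $\int_1^\infty q^{2j}e^{-(1+s)q}\,dq$. The claimed bound $C_j(\mathfrak b_{s,0}[\psi]+\|\psi\|^2_{H_{s,\tau}})$ then follows from \eqref{eq:unweighted-control}. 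Finiteness on all of $\mathscr Y_s$ comes from Fatou along core approximants, or directly, since \eqref{eq:unweighted-control} already holds on the closed domain.

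\textbf{Holomorphy and evenness.} For $|\operatorname{Im}z|\le1+s-\epsilon$, I use $|1-\cos(zq)|\le C\min\{|z|^2q^2,1\}\,e^{|\operatorname{Im}z||q|}$. Near zero this bound is at most $C|z|^2q^2$, which is integrable against $|q|^{-1-2s}$. In the tail it is at most $e^{(1+s-\epsilon)|q|}$, which is absorbed by $e^{-(1+s)|q|}$. So $\mathfrak d_{s,z}$ is a bounded form on $L^2(0,\pi)$, uniformly on compact subsets of the strip, and hence bounded on $\mathscr Y_s$. For holomorphy, I apply Morera's theorem to $z\mapsto\mathfrak d_{s,z}(\psi,\chi)$, exchanging the integrals by dominated convergence with the majorant just described; uniform local boundedness then upgrades weak holomorphy to holomorphy in the norm of bounded forms. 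Evenness is immediate from $\cos(-zq)=\cos(zq)$.

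\textbf{Sectoriality and type (a).} Since $\|\mathfrak d_{s,z}\|_{\mathscr Y_s}\to0$ as $z\to0$ (the $L^1$ norm above is $O(|z|^2)$), $\mathfrak d_{s,z}$ is a small perturbation relative to $\mathfrak b_{s,0}+1$. For $|z|$ small, $\mathfrak b_{s,z}$ is then closed and sectorial on $\mathscr Y_s$ by Kato's perturbation theorem for forms. Because the domain is fixed and the dependence on $z$ is holomorphic, the family is of type (a).

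\textbf{Expansion.} Expand $1-\cos(zq)=\tfrac{z^2q^2}{2}+R(z,q)$. The term $\tfrac{z^2q^2}{2}$ produces exactly $z^2\mathfrak c_{s,2}$ as in \eqref{eq:second-form-coefficient}. For the remainder, for small complex $z$ (say $|z|\le\tfrac{1+s}{4}$) one has $|R(z,q)|\le C|z|^4q^4e^{|z||q|}$. Near zero this is controlled by the $j=2$ moment, and in the tail the factor $e^{|z||q|}$ is still absorbed by $e^{-(1+s)|q|}$. This gives the $O_{\mathscr Y_s}(z^4)$ remainder in \eqref{eq:analytic-form-expansion}. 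The step needing the most care is the uniform complex tail bound for $1-\cos(zq)$ across the strip; it is elementary once the kernel decay $e^{-(1+s)|q|}$ from \eqref{eq:kernel-tail} is used.
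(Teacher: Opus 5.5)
Your proposal is correct and follows essentially the same route as the paper. Both arguments reduce every assertion to $L^1(-\pi,\pi)$ bounds on the $q$-integrated kernel, splitting at $|q|=1$ with Lemma~\ref{lem:kernel-local} near zero and \eqref{eq:kernel-tail} in the tail, then apply Young's inequality, \eqref{eq:unweighted-control}, Kato's relative-form-bound perturbation theorem, and a fourth-moment bound on the Taylor remainder. The differences are minor. You integrate in $\alpha$ before $q$ in the moment bound. You also get norm holomorphy from Morera's theorem plus local boundedness, rather than by differentiating under the integral with the paper's derivative bounds; both are valid.
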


\begin{proof}
For $|q|\geq1$, use \eqref{eq:kernel-tail} and
$\|\psi\|_{L^1(0,\pi)}^2\leq\pi\|\psi\|_{L^2(0,\pi)}^2$. Put
$\alpha=\theta-\varphi$. For $|q|<1$ and $|\alpha|\leq1$, use
\eqref{eq:kernel-comparison}. On $1\leq|\alpha|\leq\pi$, the kernel
is uniformly bounded because $\cosh q-\cos\alpha\geq1-\cos1>0$.
Integration in $q$ of the local bound gives
\begin{equation}\label{eq:q-moment-integral}
 \int_{-1}^1\frac{|q|^{2j}}
 {(q^2+\alpha^2)^{1+s}}\,dq
 \leq C_j
 \begin{cases}
  1+|\alpha|^{2j-1-2s},&2j-1-2s<0,\\
  1+|\log|\alpha||,&2j-1-2s=0,\\
  1,&2j-1-2s>0.
 \end{cases}
\end{equation}
Indeed, split at $|q|=|\alpha|$ and integrate
$q^{2j-2-2s}$ on the outer part. All three kernels are integrable in
$\alpha$, because $2j-1-2s>-1$. Young's inequality and
\eqref{eq:unweighted-control} give
\[
 \text{the left-hand side of \eqref{eq:moment-form}}
 \leq C_j\|\psi\|_{L^2(0,\pi)}^2
 \leq C_j\mathfrak b_{s,0}[\psi].
\]

For analyticity, let $\mathcal K$ be a compact subset of
$\{z:|\operatorname{Im}z|<1+s\}$. Put
\[
 r=\max_{z\in\mathcal K}|\operatorname{Im}z|<1+s.
\]
For $z\in\mathcal K$ and $|q|\leq1$,
\[
 |1-\cos(zq)|\leq C_{\mathcal K}q^2.
\]
For $|q|\geq1$,
\[
 |1-\cos(zq)|\leq C_{\mathcal K}e^{r|q|}.
\]
The first bound is controlled by \eqref{eq:q-moment-integral} with $j=1$.
The second is integrable because
$K_s(q,\theta,\varphi)\leq Ce^{-(1+s)|q|}$. For every integer
$\ell\geq1$, the tail of the $\ell$th $z$-derivative is bounded by
$C_{\mathcal K,\ell}|q|^\ell e^{r|q|}$, uniformly on $\mathcal K$.
After multiplication by $K_s$, the polynomial factor is absorbed by
the positive exponential margin $1+s-r$, giving an integrable tail.
Near $q=0$, even derivatives of positive order contain
$q^\ell\cos(zq)$, whereas odd derivatives contain
$q^\ell\sin(zq)=O(|q|^{\ell+1})$. Each has at least quadratic
cancellation. Differentiation under the integral is therefore valid in
bounded-form norm on $\mathscr Y_s$. For $z$ near zero,
the moment estimate and
$|1-\cos(zq)|\leq C|z|^2q^2e^{|\operatorname{Im}z||q|}$ give
\[
 |\mathfrak d_{s,z}[\psi]|
 \leq C|z|^2\bigl(\mathfrak b_{s,0}[\psi]
 +\|\psi\|_{H_{s,\tau}}^2\bigr)
 \leq C'|z|^2\mathfrak b_{s,0}[\psi].
\]
By \eqref{eq:fiber-positive}, the relative bound is less than one
for small $|z|$. Hence $\mathfrak b_{s,z}$ is closed and sectorial on
$\mathscr Y_s$, of type (a); the associated operators are of type (B).

For $z$ near zero, Taylor's formula gives
\[
 1-\cos(zq)=\frac{z^2q^2}{2}+O\bigl(
 |z|^4q^4e^{|\operatorname{Im}z||q|}\bigr).
\]
Use \eqref{eq:q-moment-integral} with $j=2$ for $|q|\leq1$ and the
exponential tail for $|q|\geq1$. This proves
\eqref{eq:analytic-form-expansion} with
\eqref{eq:second-form-coefficient}. Evenness follows directly from the
cosine.
\end{proof}

\begin{lemma}[Two-sided fiber growth]\label{lem:kernel-growth}
There is $C\geq1$ such that
\begin{equation}\label{eq:fiber-growth}
 \mathfrak b_{s,\xi}[\psi]\leq
 C(1+|\xi|^{2s})\bigl(
 \mathfrak b_{s,0}[\psi]+\|\psi\|_{L^2(m_{s,\tau})}^2\bigr).
\end{equation}
For $\psi=\phi_{s,\tau}$,
\begin{equation}\label{eq:ground-fiber-growth}
 c(1+|\xi|^{2s})-C
 \leq \mathfrak b_{s,\xi}[\phi_{s,\tau}]
 \leq C(1+|\xi|^{2s}).
\end{equation}
\end{lemma}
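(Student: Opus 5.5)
The upper bound \eqref{eq:fiber-growth} follows from the decomposition $\mathfrak b_{s,\xi}=\mathfrak b_{s,0}+\mathfrak d_{s,\xi}$ used in Lemma~\ref{lem:kernel-moments}. For real $\xi$ the difference form is given by the kernel $H_{s,\xi}$. The bound \eqref{eq:H-kernel-L1} gives $\|H_{s,\xi}\|_{L^1}\le C\min\{\xi^2,1\}$ for $|\xi|\le1$ and $C|\xi|^{2s}$ for $|\xi|\ge1$. In both cases this is at most $C(1+|\xi|^{2s})$. Young's inequality, as in \eqref{eq:fiber-difference-bound}, then bounds $|\mathfrak b_{s,\xi}[\psi]-\mathfrak b_{s,0}[\psi]|$ by $C(1+|\xi|^{2s})\|\psi\|_{L^2(0,\pi)}^2$. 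On core functions, \eqref{eq:unweighted-control} controls $\|\psi\|_{L^2(0,\pi)}^2$ by $C\mathfrak b_{s,0}[\psi]$. This proves \eqref{eq:fiber-growth} on the core. It extends to $\mathscr Y_s$ because the fibers share the closed domain, the difference form is bounded on $L^2(0,\pi)$, and $\mathscr Y_s\hookrightarrow L^2(0,\pi)$. Applying \eqref{eq:fiber-growth} to $\phi_{s,\tau}$, with $\mathfrak b_{s,0}[\phi_{s,\tau}]=\Lambda_{s,\tau}$ and unit norm, gives the upper half of \eqref{eq:ground-fiber-growth}.

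The lower bound for $\phi_{s,\tau}$ is the real content. It cannot come from Lemma~\ref{lem:large-frequency}, since that lemma gives only the exponent $\beta<2s$. We therefore use the interior oscillatory estimate \eqref{eq:interior-frequency-new} at a \emph{fixed} $\delta$, chosen depending only on $\phi_{s,\tau}$. Lemma~\ref{lem:positive-simple} says $\phi_{s,\tau}>0$ almost everywhere, so $\phi_{s,\tau}\not\equiv0$ and we may fix $\delta\in(0,1/8)$ with $c_0:=\int_{I_{2\delta}}\phi_{s,\tau}^2\,d\theta>0$. Estimate \eqref{eq:interior-frequency-new} then gives
\[
 c_0|\xi|^{2s}\le C\delta^{-2}\Bigl(\mathfrak b_{s,\xi}[\phi_{s,\tau}]+\int_0^\pi m_{s,\tau_0}\phi_{s,\tau}^2\,d\theta\Bigr).
\]
By \eqref{eq:all-weight-coercivity} with $\sigma=\tau_0$, the last integral is at most $C\mathfrak b_{s,0}[\phi_{s,\tau}]=C\Lambda_{s,\tau}$, a finite constant. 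Rearranging gives $\mathfrak b_{s,\xi}[\phi_{s,\tau}]\ge c|\xi|^{2s}-C$, and hence $\mathfrak b_{s,\xi}[\phi_{s,\tau}]\ge c(1+|\xi|^{2s})-C'$ after enlarging the constant.

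The only delicate points are two. First, \eqref{eq:interior-frequency-new} and \eqref{eq:fiber-growth} must hold on all of $\mathscr Y_s$ rather than just on the core; this was addressed in the preceding lemmas and by the closed-domain extension above. Second, the constants must not depend on $\xi$. This holds because $\delta$ is fixed by the ground state alone, and the constant in \eqref{eq:interior-frequency-new} is independent of $\xi$.
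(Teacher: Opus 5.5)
Your proposal is correct and follows essentially the same route as the paper. The upper bound comes from \eqref{eq:H-kernel-L1}, Young's inequality \eqref{eq:fiber-difference-bound} and \eqref{eq:unweighted-control}, and the lower bound for $\phi_{s,\tau}$ comes from the interior oscillatory estimate \eqref{eq:interior-frequency-new} at a fixed $\delta$ chosen so that $\int_{I_{2\delta}}\phi_{s,\tau}^2\,d\theta>0$, with the $m_{s,\tau_0}$ term controlled by \eqref{eq:all-weight-coercivity}.
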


\begin{proof}
By \eqref{eq:H-kernel-L1}, \eqref{eq:fiber-difference-bound}, and
\eqref{eq:unweighted-control},
\[
 |\mathfrak b_{s,\xi}[\psi]-\mathfrak b_{s,0}[\psi]|
 \leq C\min\{\xi^2,|\xi|^{2s}\}\|\psi\|_{L^2(0,\pi)}^2
 \leq C(1+|\xi|^{2s})\mathfrak b_{s,0}[\psi].
\]
This proves \eqref{eq:fiber-growth} and the upper bound in
\eqref{eq:ground-fiber-growth}.

For the lower bound, choose $0<\tau_0<\min\{\tau,2s\}$ and fix
$0<\delta<1/8$ so small that
\[
 \int_{I_{2\delta}}\phi_{s,\tau}^2\,d\theta>0.
\]
This is possible because $\phi_{s,\tau}>0$ almost everywhere. Apply
\eqref{eq:interior-frequency-new} to $\phi_{s,\tau}$. The term with
$m_{s,\tau_0}$ is finite by \eqref{eq:all-weight-coercivity}. Rearranging
gives
\[
 \mathfrak b_{s,\xi}[\phi_{s,\tau}]
 \geq C^{-1}\delta^2|\xi|^{2s}
 \int_{I_{2\delta}}\phi_{s,\tau}^2\,d\theta
 -\int_0^\pi m_{s,\tau_0}\phi_{s,\tau}^2\,d\theta.
\]
Hence $\mathfrak b_{s,\xi}[\phi_{s,\tau}]
\geq c|\xi|^{2s}-C$. Positivity handles bounded frequencies.
\end{proof}
\section{The sharp constant and its ground-state representation}\label{sec:sharp}

\begin{lemma}[Logarithmic cutoff]\label{lem:log-cutoff}
Let $\chi\in C_c^\infty(\mathbb R)$ satisfy $\|\chi\|_2=1$. Let
$\psi\in\mathscr Y_s$ and
\[
 w_R(t,\theta)=R^{-1/2}\chi(t/R)\psi(\theta).
\]
Then
\begin{equation*}
 \|w_R\|_{L^2(m_{s,\tau})}^2=\|\psi\|_{L^2(m_{s,\tau})}^2,
 \qquad
 \mathfrak q_s[w_R]\longrightarrow\mathfrak b_{s,0}[\psi].
\end{equation*}
If $\psi\in C_c^\infty(0,\pi)$, then $w_R$ is the transform of a function
in $C_c^\infty(\mathbb R^2_+)$.
\end{lemma}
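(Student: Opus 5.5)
The $L^2(m_{s,\tau})$ identity is immediate from Fubini and $\|\chi(\cdot/R)R^{-1/2}\|_{L^2(\mathbb R)}=\|\chi\|_2=1$. If $\psi\in C_c^\infty(0,\pi)$, then $w_R\in\mathscr C$. By the core coercivity lemma, $\mathscr C$ is exactly the image of $C_c^\infty(\mathbb R^2_+)$, which settles the last assertion.

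For the energy we use the closed direct integral \eqref{eq:closed-direct-integral}. Since $\widehat w_R(\xi,\theta)=R^{1/2}\widehat\chi(R\xi)\psi(\theta)$, we get
\[
 \mathfrak q_s[w_R]=\int_{\mathbb R}R|\widehat\chi(R\xi)|^2\,
 \mathfrak b_{s,\xi}[\psi]\,d\xi
 =\int_{\mathbb R}|\widehat\chi(\zeta)|^2\,
 \mathfrak b_{s,\zeta/R}[\psi]\,d\zeta .
\]
We must first check that $w_R$ lies in the closed cylinder domain. Its fibers lie in $\mathscr Y_s$, and finiteness of the integral follows from the bound below. The weight $|\widehat\chi|^2$ is a probability density, because Plancherel gives $\|\widehat\chi\|_2=1$.

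It remains to pass to the limit under the integral. For each fixed $\zeta$, $\mathfrak b_{s,\zeta/R}[\psi]\to\mathfrak b_{s,0}[\psi]$. This follows from \eqref{eq:fiber-difference-bound} together with \eqref{eq:H-kernel-L1}:
\[
 |\mathfrak b_{s,\zeta/R}[\psi]-\mathfrak b_{s,0}[\psi]|\le C\min\{\zeta^2R^{-2},|\zeta/R|^{2s}\}\|\psi\|_{L^2(0,\pi)}^2 .
\]
Here $\|\psi\|_{L^2(0,\pi)}^2\le C\mathfrak b_{s,0}[\psi]$ is finite on $\mathscr Y_s$ by \eqref{eq:unweighted-control}. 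The same estimate gives, for $R\ge1$, the domination $\mathfrak b_{s,\zeta/R}[\psi]\le C(1+|\zeta|^{2s})\mathfrak b_{s,0}[\psi]$, which is \eqref{eq:fiber-growth}. Since $\chi$ is Schwartz, $(1+|\zeta|^{2s})|\widehat\chi(\zeta)|^2$ is integrable, so dominated convergence gives $\mathfrak q_s[w_R]\to\mathfrak b_{s,0}[\psi]$.

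The main point needing care is that \eqref{eq:fiber-difference-bound} was derived on the core. It extends to the closed domain $\mathscr Y_s$ by continuity, since both sides are continuous in the graph norm. The domination bound above then also shows that $w_R$ has finite integrated energy, so $w_R$ belongs to the closed domain.
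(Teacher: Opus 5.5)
Your proposal is correct and follows the paper's proof essentially step for step. Both arguments use the closed direct-integral formula, the rescaling $\zeta=R\xi$, the domination $C(1+|\zeta|^{2s})$ obtained from \eqref{eq:fiber-difference-bound} and \eqref{eq:H-kernel-L1}, and dominated convergence. Your added remarks, that the fiber-difference bound extends to $\mathscr Y_s$ by continuity and that $w_R$ lies in the closed cylinder domain, make explicit what the paper leaves implicit.
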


\begin{proof}
The change of variables $t=Rr$ gives the norm identity. Also,
\[
 \widehat w_R(\xi,\theta)=R^{1/2}\widehat\chi(R\xi)
 \psi(\theta).
\]
By \eqref{eq:closed-direct-integral},
\[
 \mathfrak q_s[w_R]=
 \int_{\mathbb R}R|\widehat\chi(R\xi)|^2
 \mathfrak b_{s,\xi}[\psi]\,d\xi.
\]
Putting $\zeta=R\xi$, we have
\[
 \mathfrak q_s[w_R]=\int_{\mathbb R}
 |\widehat\chi(\zeta)|^2\mathfrak b_{s,\zeta/R}[\psi]\,d\zeta.
\]
For $R\geq1$, \eqref{eq:fiber-difference-bound} and
\eqref{eq:H-kernel-L1} give
\[
 \mathfrak b_{s,\zeta/R}[\psi]
 \leq C(1+|\zeta|^{2s})
 \bigl(\mathfrak b_{s,0}[\psi]+\|\psi\|_{L^2(m_{s,\tau})}^2\bigr).
\]
Since $\widehat\chi$ is rapidly decreasing and the fiber form is
continuous at zero, dominated convergence proves the energy limit.
If $\psi\in C_c^\infty(0,\pi)$, the inverse transform has compact
support in the half-plane by \eqref{eq:polar}.
\end{proof}

\begin{proof}[Proof of Theorem~\ref{thm:A}]
Let $u\in C_c^\infty(\mathbb R^2_+)$ and let $w$ be its transform.
Proposition~\ref{prop:direct-integral} and
Lemma~\ref{lem:diamagnetic} yield
\[
 \mathcal H_s[u]=\int\mathfrak b_{s,\xi}[\widehat w]\,d\xi
 \geq\lambda_1(0)
 \int\|\widehat w(\xi,\cdot)\|_{L^2(m_{s,\tau})}^2\,d\xi
 =\lambda_1(0)\mathcal R_{s,\tau}[u].
\]
Hence $\Lambda_{s,\tau}\geq\lambda_1(0)$. By the definition of
$\mathscr Y_s$, choose normalized $\psi_j\in C_c^\infty(0,\pi)$ with
\[
 \psi_j\longrightarrow\phi_{s,\tau}\quad\hbox{in }\mathscr Y_s.
\]
For each $j$, the cutoffs in Lemma~\ref{lem:log-cutoff} correspond to
half-plane test functions. Letting first $R\to\infty$ and then
$j\to\infty$, we obtain
\[
 \Lambda_{s,\tau}\leq\lim_{j\to\infty}
 \mathfrak b_{s,0}[\psi_j]=\lambda_1(0).
\]
The positive lower bound follows from \eqref{eq:fiber-positive}.
Lemmas~\ref{lem:closed-fibers} and \ref{lem:positive-simple} give
the unique normalized positive angular minimizer.

Suppose that a nonzero $w\in\mathscr X_s$ attains the quotient. Then
\[
 0=\int_{\mathbb R}
 \left(\mathfrak b_{s,\xi}[\widehat w(\xi,\cdot)]
 -\lambda_1(0)\|\widehat w(\xi,\cdot)\|_{L^2(m_{s,\tau})}^2\right)d\xi.
\]
For almost every $\xi$, the integrand is at least
\[
 (\lambda_1(\xi)-\lambda_1(0))
 \|\widehat w(\xi,\cdot)\|_{L^2(m_{s,\tau})}^2\geq0.
\]
By the strict inequality in Lemma~\ref{lem:diamagnetic},
$\widehat w(\xi,\cdot)=0$ for almost every $\xi\neq0$.
Since $\widehat w$ is an $L^2$ function, it follows that $w=0$.
This contradiction proves non-attainment.
\end{proof}

\begin{corollary}[Angular Euler--Lagrange equation]
For every $\psi\in\mathscr Y_s$,
\begin{equation}\label{eq:EL}
 \mathfrak b_{s,0}(\phi_{s,\tau},\psi)
 =\Lambda_{s,\tau}
 \int_0^\pi m_{s,\tau}\phi_{s,\tau}\overline{\psi}\,d\theta,
\end{equation}
where $\mathfrak b_{s,0}(\cdot,\cdot)$ is the polarization of
\eqref{eq:intro-fiber}. The same equation characterizes
$\phi_{s,\tau}$ among normalized nonnegative functions in $\mathscr Y_s$.
\end{corollary}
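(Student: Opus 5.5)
The plan is to derive \eqref{eq:EL} from the variational characterization of $\lambda_1(0)$ on the closed domain $\mathscr Y_s$. By Theorem~\ref{thm:A}, $\Lambda_{s,\tau}=\lambda_1(0)$, and $\phi_{s,\tau}$ is the normalized eigenfunction of the self-adjoint operator $B_s(0)$ defined through \eqref{eq:angular-operator-domain}. Therefore $\phi_{s,\tau}\in\mathcal D(B_s(0))$ with $B_s(0)\phi_{s,\tau}=\lambda_1(0)\phi_{s,\tau}$. By the definition of the operator domain, this means
\[
 \mathfrak b_{s,0}(\phi_{s,\tau},\psi)=\lambda_1(0)\langle\phi_{s,\tau},\psi\rangle_{H_{s,\tau}}
\]
for every $\psi\in\mathscr Y_s$, which is exactly \eqref{eq:EL}.

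One point needs checking: the polarization of \eqref{eq:intro-fiber} must agree with the closed form on all of $\mathscr Y_s$, and not only on $C_c^\infty(0,\pi)$. The closed form is defined as the continuous extension of the core form. On the core, \eqref{eq:two-fiber-representations} identifies \eqref{eq:intro-fiber} with the positive expression \eqref{eq:positive-fiber}. So we read $\mathfrak b_{s,0}(\cdot,\cdot)$ in \eqref{eq:EL} as the polarization of this closed form, as the paper does after \eqref{eq:two-fiber-representations}. If one wants an independent argument, a first-variation computation works equally well: for $\psi\in\mathscr Y_s$ and real $\varepsilon$, the quotient $\mathfrak b_{s,0}[\phi+\varepsilon\psi]/\|\phi+\varepsilon\psi\|^2_{H_{s,\tau}}$ is minimized at $\varepsilon=0$. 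Differentiating gives the real part of \eqref{eq:EL}, and replacing $\psi$ by $i\psi$ gives the imaginary part.

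For the characterization, let $\phi\in\mathscr Y_s$ be nonnegative and normalized, and suppose it satisfies \eqref{eq:EL}. Then $\phi\in\mathcal D(B_s(0))$ is an eigenfunction with eigenvalue $\lambda_1(0)$. By the simplicity in Lemma~\ref{lem:positive-simple}, $\phi=c\,\phi_{s,\tau}$. Normalization forces $|c|=1$, and nonnegativity together with $\phi_{s,\tau}>0$ a.e.\ forces $c=1$.

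There is a subtle variant. If \eqref{eq:EL} were posed with an unknown eigenvalue $\mu$ in place of $\Lambda_{s,\tau}$, one would use that eigenfunctions for distinct eigenvalues are orthogonal in $H_{s,\tau}$. Two nonnegative nonzero functions, one of them strictly positive a.e., cannot be orthogonal, so $\mu=\lambda_1(0)$. With $\Lambda_{s,\tau}$ fixed as stated, this step is not needed.

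The only real obstacle is this bookkeeping: making sure the form in \eqref{eq:EL} is the closed form, since the Hardy term and the energy term need not be finite separately on $\mathscr Y_s$. Everything else follows directly from the operator definition and Lemma~\ref{lem:positive-simple}.
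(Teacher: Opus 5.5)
Your proof is correct and is essentially the paper's argument. Your fallback first-variation computation (real part from $r\in\mathbb R$, imaginary part from $i\psi$) is exactly what the paper does, and your primary route of reading \eqref{eq:EL} off the definition \eqref{eq:angular-operator-domain} of $B_s(0)$ is an equivalent repackaging of it. For the converse, the paper tests the equation against $\phi$ itself to see that $\phi$ is a minimizer, whereas you read it directly as $B_s(0)\phi=\lambda_1(0)\phi$. Both routes then conclude by the simplicity in Lemma~\ref{lem:positive-simple}, with nonnegativity fixing the phase.
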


\begin{proof}
For $\psi\in\mathscr Y_s$ and $r\in\mathbb R$, minimality gives
\[
 \mathfrak b_{s,0}[\phi_{s,\tau}+r\psi]
 -\Lambda_{s,\tau}
 \|\phi_{s,\tau}+r\psi\|_{H_{s,\tau}}^2\geq0.
\]
Differentiating at $r=0$ gives the real part of \eqref{eq:EL}.
Replacing $\psi$ by $i\psi$ gives the imaginary part.
Conversely, testing the same equation for a normalized nonnegative
$\phi$ against $\phi$ shows that $\phi$ is a minimizer.
Lemma~\ref{lem:positive-simple} then gives $\phi=\phi_{s,\tau}$.
\end{proof}

\begin{theorem}[Ground-state representation at the sharp level]
\label{thm:sharp-ground-state}
Let $0<s<1$ and $\tau>0$. For
$x=r(\cos\theta,\sin\theta)\in\mathbb R^2_+$, put
\[
 U_{s,\tau}(x)=r^{s-1}\phi_{s,\tau}(\theta).
\]
This positive homogeneous function satisfies the weak equation
\begin{equation}\label{eq:sharp-ground-state-equation}
 \begin{split}
 &\int_{\mathbb R^2_+}\int_{\mathbb R^2_+}
 \frac{(U_{s,\tau}(x)-U_{s,\tau}(y))
 (\overline{\eta(x)}-\overline{\eta(y)})}
 {|x-y|^{2+2s}}\,dx\,dy\\
 &\quad=\int_{\mathbb R^2_+}
 \left(\mathcal D_{2,s,2}x_2^{-2s}
 +\Lambda_{s,\tau}x_2^{\tau-2s}|x|^{-\tau}\right)
 U_{s,\tau}(x)\overline{\eta(x)}\,dx
 \end{split}
\end{equation}
for $\eta\in C_c^\infty(\mathbb R^2_+)$. For every core function $u$,
\begin{equation}\label{eq:sharp-ground-state-representation}
 \begin{split}
 &\mathcal H_s[u]-\Lambda_{s,\tau}\mathcal R_{s,\tau}[u]=\int_{\mathbb R^2_+}\int_{\mathbb R^2_+}
 \frac{U_{s,\tau}(x)U_{s,\tau}(y)}{|x-y|^{2+2s}}
 \left|\frac{u(x)}{U_{s,\tau}(x)}
       -\frac{u(y)}{U_{s,\tau}(y)}\right|^2\,dx\,dy.
 \end{split}
\end{equation}
The identity extends to the completed energy space, with the left-hand
side interpreted as the closed sharp deficit. In cylinder coordinates it reads
\begin{equation}\label{eq:sharp-cylinder-ground-state}
 \begin{split}
 &\mathfrak q_s[w]-\Lambda_{s,\tau}
 \|w\|_{L^2(\mathbb R;H_{s,\tau})}^2\\
 &\quad=\int_{\mathbb R^2}\int_0^\pi\int_0^\pi
 K_s(t-t',\theta,\varphi)\phi_{s,\tau}(\theta)
 \phi_{s,\tau}(\varphi)
 \left|\frac{w(t,\theta)}{\phi_{s,\tau}(\theta)}
       -\frac{w(t',\varphi)}{\phi_{s,\tau}(\varphi)}\right|^2
 \,d\theta\,d\varphi\,dt\,dt'.
 \end{split}
\end{equation}
\end{theorem}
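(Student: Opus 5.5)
I would work throughout in cylinder coordinates, where $U_{s,\tau}$ corresponds to the $t$-independent profile $w_0(t,\theta)=\phi_{s,\tau}(\theta)$. Indeed, $e^{(1-s)t}U_{s,\tau}=\phi_{s,\tau}$. The weak equation \eqref{eq:sharp-ground-state-equation} will follow by transplanting the angular Euler--Lagrange equation \eqref{eq:EL} along the logarithmic variable. For $\eta\in C_c^\infty(\mathbb R^2_+)$ with transform $\zeta\in\mathscr C$, the left side of \eqref{eq:sharp-ground-state-equation} is computed with the same change of variables as in Lemma~\ref{lem:exact-transform}. This leads to a bilinear cylinder expression between $\phi_{s,\tau}$ and $\zeta$. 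Since $\phi_{s,\tau}$ does not depend on $t$, integrating in $t'$ (equivalently in $q=t-t'$) turns it into $\int_{\mathbb R}\mathfrak b_{s,0}(\phi_{s,\tau},\zeta(t,\cdot))\,dt$ minus the Hardy potential term, i.e.\ the zero-frequency fiber paired against $\int\zeta\,dt$. Equation \eqref{eq:EL}, tested with $\psi=\int_{\mathbb R}\zeta(t,\cdot)\,dt\in C_c^\infty(0,\pi)$, then gives the right side. The principal-value issue is handled as in the proof of \eqref{eq:two-fiber-representations}: use truncated kernels $K_s^\varepsilon$, use the cancellation identity \eqref{eq:angular-ground-state-cancellation} so that only the positive form appears, and let $\varepsilon\downarrow0$.

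The difficulty is that $\phi_{s,\tau}\in\mathscr Y_s$ only in the closed sense, and absolute convergence of the mixed terms near the endpoints is not clear. So I would first approximate $\phi_{s,\tau}$ by nonnegative core functions $\eta_j\to\phi_{s,\tau}$ in $\mathscr Y_s$, using Lemma~\ref{lem:modulus-domain}. For a core pair, the identity between the half-plane bilinear form and the fiber pairing is a finite computation. Passage to the limit uses continuity of $\mathfrak b_{s,0}(\cdot,\psi)$ on $\mathscr Y_s$ for fixed core $\psi$, together with absolute integrability of the tail from the estimates \eqref{eq:kernel-tail}.

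For the representation \eqref{eq:sharp-cylinder-ground-state} on core functions $w\in\mathscr C$, I would follow the proof of \eqref{eq:ground-state} verbatim, with $\omega_s$ replaced by $U_{s,\tau}$. The first step is to write $u=U v$ and expand the algebraic identity
\[
|Uv(x)-Uv(y)|^2-U(x)U(y)|v(x)-v(y)|^2=U(x)(U(x)-U(y))|v(x)|^2+U(y)(U(y)-U(x))|v(y)|^2
\]
under an $\varepsilon$-truncation of the kernel. The potential term then becomes $2\int|v|^2U(x)\,\mathcal L^\varepsilon U(x)\,dx$, where $\mathcal L^\varepsilon$ is the truncated operator. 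Now $\mathcal L^\varepsilon U\to(\mathcal D x_2^{-2s}+\Lambda x_2^{\tau-2s}|x|^{-\tau})U$ as $\varepsilon\downarrow0$. Here $U$ is neither smooth nor explicitly known, so I cannot use the Taylor argument pointwise, and this is where I expect the main obstacle.

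To get around this, I would instead work fiberwise. By Proposition~\ref{prop:direct-integral}, it suffices to prove for each frequency $\xi$ and each $\psi=\phi_{s,\tau}g$ with $g$ a bounded core-type quotient that
\[
\mathfrak b_{s,\xi}[\psi]-\Lambda\|\psi\|^2=\iint\!\!\int K_s\,\phi(\theta)\phi(\varphi)\,|g(\theta)-e^{-i\xi q}g(\varphi)|^2.
\]
This reduces to testing \eqref{eq:EL} with $\phi|g|^2$. That test function lies in $\mathscr Y_s$ when $g$ is bounded and Lipschitz, and proving this is a lemma of the same type as Lemma~\ref{lem:modulus-domain}. Integrating in $\xi$ and applying Plancherel then gives \eqref{eq:sharp-cylinder-ground-state} for $w=\phi_{s,\tau} G$ with $G$ smooth and compactly supported. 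Since $w\in\mathscr C$ has compact angular support and $\phi_{s,\tau}>0$ a.e., I would need $w/\phi_{s,\tau}$ to be reasonable. If local positivity bounds for $\phi_{s,\tau}$ are not available, I would instead approximate in the quotient variable and argue by Fatou and lower semicontinuity in both directions.

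Finally, for the extension to $\mathscr X_s$, the left side of \eqref{eq:sharp-cylinder-ground-state} is continuous in the $\mathscr X_s$ norm. The right side is the square of a closable difference operator into a weighted $L^2$ space, closable by the same argument as for $T$ in the core coercivity lemma. Taking a core sequence, the right sides form a Cauchy sequence, and the limits agree a.e. along a subsequence, so the identity passes to the completion. Equation \eqref{eq:sharp-ground-state-representation} is then just the pullback under the polar map, using $u/U=w/\phi_{s,\tau}$ and the kernel identity from Lemma~\ref{lem:exact-transform}.
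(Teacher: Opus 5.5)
Your derivation of the weak equation and your closability argument for the completion are sound and close to the paper's. The paper tests \eqref{eq:EL} directly with $\eta_0=\int_{\mathbb R}v\,dt$, using the splitting \eqref{eq:localized-angular-polarization} of $\mathfrak b_{s,0}(f,\eta)$ into a bare jump pairing plus the potential $V_s=W_s-\mathcal D_{2,s,2}(\sin\theta)^{-2s}$, which is locally bounded since $W_s=2\int_0^\pi\int_{\mathbb R}(\cosh(a_sq)-1)K_s\,dq\,d\varphi$ is bounded. Your approximation of $\phi_{s,\tau}$ also works, once \eqref{eq:local-angular} gives $H^s_{\rm loc}$ convergence on the physical side. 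You also have the paper's algebraic mechanism: testing \eqref{eq:EL} fiberwise with $\phi|g|^2$ becomes, after integration in $\xi$, testing with $N/\phi$, where $N(\theta)=\int_{\mathbb R}|w(t,\theta)|^2\,dt$.

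\textbf{The gap.} You never get from $w=\phi_{s,\tau}G$ to an actual core function $w\in\mathscr C$. The theorem asserts exactly this, and it is the technical heart of the proof. Only $\phi_{s,\tau}>0$ a.e.\ and $\phi_{s,\tau}\in H^s_{\rm loc}$ are available, with no local lower bound. So $N/\phi_{s,\tau}$ need not lie in $\mathscr Y_s$, and density of $\{\phi_{s,\tau}G\}$ in $\mathscr X_s$ is the same difficulty restated. Your fallback does not close it. If $\phi_{s,\tau}G_n$ approximate $w$, Fatou bounds the right side $I_0$ of \eqref{eq:sharp-cylinder-ground-state} by the $\liminf$ of the approximants' ground-state integrals, which equal their deficits. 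Lower semicontinuity bounds the deficit of $w$ by the same $\liminf$. Both inequalities point the same way. Equality needs either energy-norm convergence of the approximants, which requires control of $w/\phi_{s,\tau}$, or an integrable majorant for the ground-state integrands, and you supply neither.

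\textbf{How the paper closes it.} It regularizes the weight rather than the function. Keep $w\in\mathscr C$ fixed and put $\Phi_\varepsilon=\phi_{s,\tau}+\varepsilon$ and $\eta_\varepsilon=N/\Phi_\varepsilon$.
\begin{itemize}
\item Since $a\mapsto(a+\varepsilon)^{-1}$ is Lipschitz on $[0,\infty)$, $\eta_\varepsilon$ is a compactly supported $H^s$ function, hence admissible in \eqref{eq:EL}.
\item Since $\Phi_\varepsilon(\theta)-\Phi_\varepsilon(\varphi)=\phi_{s,\tau}(\theta)-\phi_{s,\tau}(\varphi)$, the identity $|z-z'|^2-AA'|z/A-z'/A'|^2=(A-A')(|z|^2/A-|z'|^2/A')$ with $A=\Phi_\varepsilon(\theta)$, $A'=\Phi_\varepsilon(\varphi)$ gives $I_\varepsilon=\mathfrak j_s[w]+\int_0^\pi(V_s-\Lambda_{s,\tau}m_{s,\tau})\frac{\phi_{s,\tau}}{\phi_{s,\tau}+\varepsilon}N\,d\theta$. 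Here $I_\varepsilon$ is the ground-state integral with weight $\Phi_\varepsilon$, and $\mathfrak j_s$ is the bare jump form.
\item The right side tends to the sharp deficit by dominated convergence, because $V_s-\Lambda_{s,\tau}m_{s,\tau}$ is bounded on $\operatorname{supp}N$. Fatou then gives $I_0<\infty$.
\item The majorant comes from convexity of $F(t)=e^t|z|^2+e^{-t}|z'|^2-2\operatorname{Re}(z\overline{z'})$, with $z=w(t,\theta)$ and $z'=w(t',\varphi)$. The integrand of $I_\varepsilon$ is $F(\log r_\varepsilon)$ with $r_\varepsilon=\Phi_\varepsilon(\varphi)/\Phi_\varepsilon(\theta)$. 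Moreover $\log r_\varepsilon=\gamma_\varepsilon\log(\phi_{s,\tau}(\varphi)/\phi_{s,\tau}(\theta))$ with $\gamma_\varepsilon\in[0,1]$.
\item Hence the integrand of $I_\varepsilon$ is bounded by the integrand of $I_0$ plus $|z-z'|^2$, whose integral is $I_0+\mathfrak j_s[w]$. Dominated convergence gives $I_\varepsilon\to I_0$.
\end{itemize}
You need some device of this kind before your closability argument can extend the identity to $\mathscr X_s$. The alternative would be a proof of local positive lower bounds for $\phi_{s,\tau}$, which the paper does not have.
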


\begin{proof}
Write $\phi=\phi_{s,\tau}$, $\Lambda=\Lambda_{s,\tau}$, and
$a_s=1-s$. All double integrals below are ordered integrals. Put
\[
 \begin{split}
 \mathcal J_s(\theta,\varphi)&=\int_{\mathbb R}
 K_s(q,\theta,\varphi)\,dq,\\
 W_s(\theta)&=2\int_0^\pi\int_{\mathbb R}
 (\cosh(a_sq)-1)K_s(q,\theta,\varphi)\,dq\,d\varphi,\\
 V_s(\theta)&=W_s(\theta)
 -\mathcal D_{2,s,2}(\sin\theta)^{-2s}.
 \end{split}
\]
For $|q|<1$, integration in $\varphi$ gives
\[
 \int_0^\pi K_s(q,\theta,\varphi)\,d\varphi
 \leq C_s|q|^{-1-2s},\qquad
 |\cosh(a_sq)-1|\leq C_sq^2.
\]
For $|q|\geq1$, the product is at most $C_se^{-2s|q|}$.
Consequently,
\[
 0\leq W_s(\theta)\leq C_s\left(
 \int_0^1q^{1-2s}\,dq+\int_1^\infty e^{-2sq}\,dq\right)<\infty
\]
uniformly in $\theta$. In particular,
$V_s-\Lambda m_{s,\tau}\in L^\infty(I)$ for every
$I\Subset(0,\pi)$.

For $w\in\mathscr C$, define
\[
 \mathfrak j_s[w]=\int_{\mathbb R^2}\int_0^\pi\int_0^\pi
 K_s(t-t',\theta,\varphi)
 |w(t,\theta)-w(t',\varphi)|^2
 \,d\theta\,d\varphi\,dt\,dt'.
\]
Near the diagonal, the smooth difference is bounded by
$C(|t-t'|+|\theta-\varphi|)$. Its energy is therefore bounded by
$C\int_0^1r^{1-2s}\,dr$. Off the diagonal, the compact support of $w$
and the exponential kernel tail give an integrable majorant. Thus
$\mathfrak j_s[w]<\infty$. To expand \eqref{eq:q-cylinder}, first use
\[
 K_s^\delta(q,\theta,\varphi)
 =K_s(q,\theta,\varphi)
 \boldsymbol 1_{\{q^2+|\theta-\varphi|^2>\delta^2\}}.
\]
In the coefficient of $|w(t,\theta)|^2$, symmetry in $q$ changes
$e^{-a_sq}$ into $\cosh(a_sq)$. Interchanging
$(t,\theta)$ and $(t',\varphi)$ in the other diagonal term gives
\[
 \begin{split}
 &\int K_s^\delta
 \left|e^{-a_s(t-t')/2}w(t,\theta)
       -e^{a_s(t-t')/2}w(t',\varphi)\right|^2\\
 &\quad=\int K_s^\delta|w(t,\theta)-w(t',\varphi)|^2
 +\int_{\mathbb R}\int_0^\pi W_s^\delta(\theta)|w(t,\theta)|^2
 \,d\theta\,dt,
 \end{split}
\]
where $W_s^\delta$ has the same cutoff. The preceding bounds permit
$\delta\downarrow0$ and prove
\begin{equation}\label{eq:bare-cylinder-jump}
 \mathfrak q_s[w]=\mathfrak j_s[w]
 +\int_{\mathbb R}\int_0^\pi V_s(\theta)|w(t,\theta)|^2
 \,d\theta\,dt.
\end{equation}
Integrals without displayed limits in this proof are over the four
cylinder variables, unless their variables indicate otherwise.

We next record the localized angular polarization:
\begin{equation}\label{eq:localized-angular-polarization}
 \begin{split}
 \mathfrak b_{s,0}(f,\eta)
 ={}&\int_0^\pi\int_0^\pi\mathcal J_s(\theta,\varphi)
 (f(\theta)-f(\varphi))
 (\overline{\eta(\theta)}-\overline{\eta(\varphi)})
 \,d\theta\,d\varphi\\
 &+\int_0^\pi V_sf\overline\eta\,d\theta,
 \end{split}
\end{equation}
for $f\in\mathscr Y_s$ and $\eta\in H^s(0,\pi)$ with compact support.
For core functions, expand \eqref{eq:intro-fiber} at $\xi=0$ with
$K_s^\delta$ and use the preceding symmetry. To pass to the stated
domains, choose intervals
$\operatorname{supp}\eta\subset I\Subset I_1\Subset(0,\pi)$.
The local kernel bound gives
\[
 \mathcal J_s(\theta,\varphi)
 \leq C_s|\theta-\varphi|^{-1-2s}.
\]
Hence the part with both variables in $I_1$ is bounded in absolute value by
\[
 C_{I_1}\|f\|_{H^s(I_1)}\|\eta\|_{H^s(I_1)}.
\]
If one variable is outside $I_1$ and the integrand is nonzero, the other
belongs to $I$; the variables are separated. This part is bounded by
\[
 C_{I,I_1}\bigl(\|f\|_{L^2(0,\pi)}+\|f\|_{L^2(I)}\bigr)
 \|\eta\|_{L^2(I)}.
\]
The potential term is bounded by
$\|V_s\|_{L^\infty(I)}\|f\|_{L^2(I)}\|\eta\|_{L^2(I)}$.
Use \eqref{eq:local-angular} and core approximation for $f$.
For $\eta$, zero extension and mollification converge in $H^s$ with
support in $I_1$; the local upper estimate in
Lemma~\ref{lem:modulus-domain} gives convergence in $\mathscr Y_s$.
This proves \eqref{eq:localized-angular-polarization}, including absolute
convergence of its mixed integral. We do not split the angular form this
way for two arbitrary elements of $\mathscr Y_s$.

Fix $w\in\mathscr C$ and put
\[
 N(\theta)=\int_{\mathbb R}|w(t,\theta)|^2\,dt,
 \qquad \Phi_\varepsilon=\phi+\varepsilon,
 \qquad \eta_\varepsilon=\frac{N}{\Phi_\varepsilon},
 \quad\varepsilon>0.
\]
Here $N\in C_c^\infty(0,\pi)$. On a compact interval containing its
support, $\phi\in H^s$ and $\phi\geq0$. For $a,b\geq0$,
\[
 \left|\frac1{a+\varepsilon}-\frac1{b+\varepsilon}\right|
 \leq\varepsilon^{-2}|a-b|,\qquad
 \frac1{a+\varepsilon}\leq\varepsilon^{-1}.
\]
Multiplication by $N$ therefore gives
\[
 \|\eta_\varepsilon\|_{H^s(\mathbb R)}
 \leq C_N(\varepsilon^{-1}
 +\varepsilon^{-2}\|\phi\|_{H^s(I_1)}),
\]
where $\eta_\varepsilon$ is extended by zero. Thus
$\eta_\varepsilon\in\mathscr Y_s$ and is an admissible real test in
\eqref{eq:EL}. By \eqref{eq:localized-angular-polarization},
\[
 \int_0^\pi\int_0^\pi\mathcal J_s
 (\phi(\theta)-\phi(\varphi))
 (\eta_\varepsilon(\theta)-\eta_\varepsilon(\varphi))
 \,d\theta\,d\varphi
 =-\int_0^\pi(V_s-\Lambda m_{s,\tau})
 \frac{\phi N}{\phi+\varepsilon}\,d\theta.
\]
For $A,A'>0$ and $z,z'\in\mathbb C$, direct expansion gives
\[
 |z-z'|^2-AA'|z/A-z'/A'|^2
 =(A-A')\left(\frac{|z|^2}{A}-\frac{|z'|^2}{A'}\right).
\]
Apply this identity with $A=\Phi_\varepsilon(\theta)$,
$A'=\Phi_\varepsilon(\varphi)$ and integrate against $K_s^\delta$.
Integration in $t,t'$ of the right-hand side gives exactly
\[
 \int_0^\pi\int_0^\pi\mathcal J_s^\delta
 (\phi(\theta)-\phi(\varphi))
 (\eta_\varepsilon(\theta)-\eta_\varepsilon(\varphi))
 \,d\theta\,d\varphi.
\]
Indeed, in the first summand put $q=t-t'$ and integrate $|w(t,\theta)|^2$
in $t$; in the second integrate $|w(t',\varphi)|^2$ in $t'$.
The mixed angular integral converges absolutely by the estimates above.
Monotone convergence applies to each of the two nonnegative cylinder
energies. Letting $\delta\downarrow0$ yields
\begin{equation}\label{eq:epsilon-sharp-transform}
 \begin{split}
 I_\varepsilon:={}&\int K_s\Phi_\varepsilon(\theta)
 \Phi_\varepsilon(\varphi)
 \left|\frac{w(t,\theta)}{\Phi_\varepsilon(\theta)}
 -\frac{w(t',\varphi)}{\Phi_\varepsilon(\varphi)}\right|^2\\
 ={}&\mathfrak j_s[w]+\int_0^\pi(V_s-\Lambda m_{s,\tau})
 \frac{\phi}{\phi+\varepsilon}N\,d\theta.
 \end{split}
\end{equation}
Since $\phi>0$ almost everywhere and the potential is bounded on
$\operatorname{supp}N$, dominated convergence on the right and \eqref{eq:bare-cylinder-jump} give
\[
 \lim_{\varepsilon\downarrow0}I_\varepsilon
 =\mathfrak j_s[w]+\int_0^\pi(V_s-\Lambda m_{s,\tau})N\,d\theta
 =\mathfrak q_s[w]-\Lambda\|w\|_{L^2(m_{s,\tau})}^2.
\]
Denote the integral on the right of
\eqref{eq:sharp-cylinder-ground-state} by $I_0$. Fatou's lemma gives
$I_0\leq\liminf_{\varepsilon\downarrow0}I_\varepsilon<\infty$.
To prove convergence to $I_0$, put $r=A'/A$ and
$r_\varepsilon=(A'+\varepsilon)/(A+\varepsilon)$. There is
$\gamma_\varepsilon\in[0,1]$ such that
$\log r_\varepsilon=\gamma_\varepsilon\log r$.
The function
\[
 F(t)=e^t|z|^2+e^{-t}|z'|^2
 -2\operatorname{Re}(z\overline{z'})
\]
is nonnegative and convex. Hence
\[
 \begin{split}
 &(A+\varepsilon)(A'+\varepsilon)
 |z/(A+\varepsilon)-z'/(A'+\varepsilon)|^2\\
 &\quad=F(\log r_\varepsilon)
 \leq\gamma_\varepsilon F(\log r)
 +(1-\gamma_\varepsilon)F(0)
 \leq AA'|z/A-z'/A'|^2+|z-z'|^2.
 \end{split}
\]
The majorant is integrable against $K_s$, since its integral is
$I_0+\mathfrak j_s[w]$. Dominated convergence gives
$I_\varepsilon\to I_0$ in \eqref{eq:epsilon-sharp-transform} and proves
\eqref{eq:sharp-cylinder-ground-state}.

We now verify the weak equation. For $v\in\mathscr C$, set
$\eta_0(\theta)=\int_{\mathbb R}v(t,\theta)\,dt$.
Using a common cutoff in the stationary-cylinder pairing, Fubini gives
\[
 \begin{split}
 &\int K_s(\phi(\theta)-\phi(\varphi))
 (\overline{v(t,\theta)}-\overline{v(t',\varphi)})
 +\int_{\mathbb R}\int_0^\pi V_s\phi\overline v\,d\theta\,dt\\
 &\quad=\mathfrak b_{s,0}(\phi,\eta_0)
 =\Lambda\int_{\mathbb R}\int_0^\pi
 m_{s,\tau}\phi\overline v\,d\theta\,dt.
 \end{split}
\]
Absolute convergence follows locally from the $H^s$ bound and
Cauchy--Schwarz; on separated angular pairs it follows from
$\phi\in L^2(0,\pi)$. The same bounds justify removal of the cutoff.
For a physical test $\eta\in C_c^\infty(\mathbb R^2_+)$, take
\[
 v(t,\theta)=e^{(1-s)t}\eta(e^t\cos\theta,e^t\sin\theta).
\]
The stationary profile corresponds to
$U(x)=r^{s-1}\phi(\theta)$. Polarization of the calculation proving
\eqref{eq:H-transform}, with the same cutoff, identifies the preceding
pairing with the physical energy pairing minus the Hardy term. Also,
\[
 \begin{split}
 \int_{\mathbb R^2_+}x_2^{-2s}U\overline\eta\,dx
 &=\int_{\mathbb R}\int_0^\pi
 (\sin\theta)^{-2s}\phi\overline v\,d\theta\,dt,\\
 \int_{\mathbb R^2_+}x_2^{\tau-2s}|x|^{-\tau}U\overline\eta\,dx
 &=\int_{\mathbb R}\int_0^\pi
 m_{s,\tau}\phi\overline v\,d\theta\,dt.
 \end{split}
\]
For completeness, the physical mixed energy is absolutely convergent.
On a compact neighborhood of $\operatorname{supp}\eta$, the smooth polar
change of variables and $\phi\in H^s_{\rm loc}$ give $U\in H^s$.
Cauchy--Schwarz bounds the local mixed integral. If $x$ is in the test
support and $|y|$ is small or large, its remaining part is bounded by
\[
 C_\eta\left[1+\|\phi\|_{L^1(0,\pi)}
 \left(\int_0^1r^s\,dr+\int_1^\infty r^{-2-s}\,dr\right)\right]<\infty.
\]
The separated bounded region is controlled by $U\in L^1_{\rm loc}$.
Thus the cutoff may also be removed in physical coordinates. This proves
\eqref{eq:sharp-ground-state-equation}.

Finally, $u/U=w/\phi$, and \eqref{eq:distance-polar} gives
\[
 \frac{U(x)U(y)}{|x-y|^{2+2s}}\,dx\,dy
 =K_s(t-t',\theta,\varphi)\phi(\theta)\phi(\varphi)
 \,dt\,dt'\,d\theta\,d\varphi.
\]
Substitution in \eqref{eq:sharp-cylinder-ground-state} proves
\eqref{eq:sharp-ground-state-representation} on the core.

To pass to the completion, let $w_n\to w$ in $\mathscr X_s$ and define
\[
 (Dw)(t,t',\theta,\varphi)=
 \sqrt{\phi(\theta)\phi(\varphi)}
 \left(\frac{w(t,\theta)}{\phi(\theta)}
 -\frac{w(t',\varphi)}{\phi(\varphi)}\right).
\]
Use the kernel measure $K_s\,dt\,dt'\,d\theta\,d\varphi$.
The core identity gives
\[
 \|D(w_n-w_m)\|_{L^2(K_s)}^2
 =\mathfrak q_s[w_n-w_m]
 -\Lambda\|w_n-w_m\|_{L^2(m_{s,\tau})}^2
 \leq\mathfrak q_s[w_n-w_m]\longrightarrow0.
\]
Thus $Dw_n\to F$ in the kernel space. A subsequence converges almost
everywhere both in the base space and in the kernel space. Since
$\phi>0$ almost everywhere and the kernel measure is absolutely
continuous off the null diagonal, its pointwise limit is $Dw$.
Therefore $F=Dw$. Taking norms and using continuity of the closed form,
\[
 \|Dw\|_{L^2(K_s)}^2
 =\lim_{n\to\infty}\|Dw_n\|_{L^2(K_s)}^2
 =\mathfrak q_s[w]-\Lambda\|w\|_{L^2(m_{s,\tau})}^2.
\]
This proves both completed-space identities.
\end{proof}

The next proposition identifies the angular stability constant with the
spectral gap. The coefficient in \eqref{eq:intro-mainB} also depends on
variation in the logarithmic radial variable.

\begin{proposition}[Attainment of the angular stability constant]
\label{prop:angular-stability}
Define
\[
 \Gamma^{\rm ang}_{s,\tau}
 =\inf_{\substack{\psi\in\mathscr Y_s\setminus\{0\}\\
 \int_0^\pi m_{s,\tau}\psi\phi_{s,\tau}\,d\theta=0}}
 \frac{\mathfrak b_{s,0}[\psi]
 -\Lambda_{s,\tau}\|\psi\|_{L^2(m_{s,\tau})}^2}
 {\|\psi\|_{L^2(m_{s,\tau})}^2}.
\]
Then
\[
 \Gamma^{\rm ang}_{s,\tau}
 =\lambda_2(0)-\lambda_1(0)>0.
\]
The infimum is attained. Equality in
\begin{equation}\label{eq:angular-stability-inequality}
 \mathfrak b_{s,0}[\psi]-\Lambda_{s,\tau}\|\psi\|_{H_{s,\tau}}^2
 \geq\Gamma^{\rm ang}_{s,\tau}
 \operatorname{dist}_{H_{s,\tau}}
 (\psi,\operatorname{span}\{\phi_{s,\tau}\})^2
\end{equation}
holds precisely when the orthogonal component of $\psi$ belongs to the
second eigenspace of $B_s(0)$.
\end{proposition}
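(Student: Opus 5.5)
The plan is to treat this as a standard spectral statement for the self-adjoint operator $B_s(0)$ in $H_{s,\tau}$. By Lemma~\ref{lem:closed-fibers}, $B_s(0)$ has compact resolvent. Hence $H_{s,\tau}$ has an orthonormal basis of eigenfunctions $e_1,e_2,\dots$ with eigenvalues $\lambda_1(0)\le\lambda_2(0)\le\cdots$, all lying in $\mathscr Y_s$. Theorem~\ref{thm:A} gives $\lambda_1(0)=\Lambda_{s,\tau}$ with $e_1=\phi_{s,\tau}$, and Lemma~\ref{lem:positive-simple} gives simplicity. So $\lambda_2(0)>\lambda_1(0)$ strictly, and positivity of $\Gamma^{\rm ang}_{s,\tau}$ will follow once the identification is proved.

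For the identification, I would use the spectral representation of the closed form on its domain. For $\psi\in\mathscr Y_s$, write $\psi=\sum_j c_je_j$ with $c_j=\langle\psi,e_j\rangle_{H_{s,\tau}}$. Then
\[
 \mathfrak b_{s,0}[\psi]=\sum_j\lambda_j(0)|c_j|^2,
\]
since $\mathscr Y_s=\mathcal D((B_s(0))^{1/2})$ for the Friedrichs realization fixed in \eqref{eq:angular-operator-domain}. The orthogonality constraint in the definition of $\Gamma^{\rm ang}_{s,\tau}$ means $c_1=0$. For such $\psi$,
\[
 \mathfrak b_{s,0}[\psi]-\Lambda_{s,\tau}\|\psi\|^2=\sum_{j\ge2}(\lambda_j(0)-\lambda_1(0))|c_j|^2\ge(\lambda_2(0)-\lambda_1(0))\|\psi\|^2 .
\]
Equality holds iff $c_j=0$ whenever $\lambda_j(0)>\lambda_2(0)$, that is, iff $\psi$ lies in the second eigenspace. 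Taking $\psi=e_2$ shows that the infimum equals $\lambda_2(0)-\lambda_1(0)$ and is attained. Alternatively, one can avoid the spectral theorem by minimizing directly: a minimizing sequence normalized in $H_{s,\tau}$ is bounded in $\mathscr Y_s$, the compact embedding \eqref{eq:compact-embedding} and weak lower semicontinuity give a minimizer, and the constraint passes to the limit. I would mention this only as a check.

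For \eqref{eq:angular-stability-inequality}, decompose a general $\psi\in\mathscr Y_s$ as $\psi=c\phi_{s,\tau}+\psi^\perp$ with $\psi^\perp=Q_0\psi\in\mathscr Y_s$. The Euler--Lagrange equation \eqref{eq:EL} applied to the test $\psi^\perp$ shows that the cross term $\mathfrak b_{s,0}(\phi_{s,\tau},\psi^\perp)-\Lambda_{s,\tau}\langle\phi_{s,\tau},\psi^\perp\rangle$ vanishes. The $\phi_{s,\tau}$ component also contributes zero to the deficit. Hence
\[
 \mathfrak b_{s,0}[\psi]-\Lambda_{s,\tau}\|\psi\|^2=\mathfrak b_{s,0}[\psi^\perp]-\Lambda_{s,\tau}\|\psi^\perp\|^2 ,
\]
and $\operatorname{dist}(\psi,\operatorname{span}\{\phi_{s,\tau}\})=\|\psi^\perp\|$. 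The inequality and its equality case then reduce to the constrained statement already proved. The case $\psi^\perp=0$ gives equality trivially, and $0$ belongs to the eigenspace, consistent with the claim.

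The only delicate point is justifying the spectral expansion of the closed form on the full domain $\mathscr Y_s$, as opposed to the core, given the remark that the Hardy pieces need not be separately finite. I would handle this abstractly. The form is closed and nonnegative, so by Kato's representation theorem its domain equals $\mathcal D(B_s(0)^{1/2})$ and $\mathfrak b_{s,0}[\psi]=\|B_s(0)^{1/2}\psi\|^2$. Only the closed form is used, never its split into the energy and Hardy terms.
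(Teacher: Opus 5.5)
Your proposal is correct and follows essentially the same route as the paper. You decompose $\psi=c\phi_{s,\tau}+\psi^\perp$, use \eqref{eq:EL} to remove the cross term, expand $\psi^\perp$ in an orthonormal eigenbasis of $B_s(0)$, and get positivity of the gap from the simplicity in Lemma~\ref{lem:positive-simple}; your appeal to Kato's representation theorem ($\mathscr Y_s=\mathcal D(B_s(0)^{1/2})$) to justify the eigenbasis expansion on the whole closed form domain is a point the paper leaves implicit.
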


\begin{proof}
Write
\[
 \psi=c\phi_{s,\tau}+\psi^\perp,\qquad
 \int_0^\pi m_{s,\tau}\psi^\perp\phi_{s,\tau}\,d\theta=0.
\]
By \eqref{eq:EL}, the left-hand side of
\eqref{eq:angular-stability-inequality} equals
\[
 \mathfrak b_{s,0}[\psi^\perp]
 -\lambda_1(0)\|\psi^\perp\|_{H_{s,\tau}}^2.
\]
The min--max principle and \eqref{eq:compact-embedding} give the
attained value $\lambda_2(0)-\lambda_1(0)$. It is positive by
Lemma~\ref{lem:positive-simple}. Expanding
$\psi^\perp=\sum_{j\geq2}c_je_j$ in an orthonormal eigenbasis, we have
\[
 \mathfrak b_{s,0}[\psi^\perp]
 -\lambda_1(0)\|\psi^\perp\|^2
 =\sum_{j\geq2}(\lambda_j(0)-\lambda_1(0))|c_j|^2.
\]
Equality holds if and only if $c_j=0$ whenever
$\lambda_j(0)>\lambda_2(0)$.
\end{proof}
\section{Spectral stability and energy decomposition}
\label{sec:stability}\label{sec:sequences}

\begin{proposition}[Dispersion at the threshold]\label{prop:dispersion}
The first eigenvalue is even and real analytic for $|\xi|$ small. It satisfies
\[
 \lambda_1'(0)=0,
 \qquad \frac12\lambda_1''(0)=\kappa_{s,\tau},
\]
where $\kappa_{s,\tau}$ is given by \eqref{eq:intro-kappa}.
\end{proposition}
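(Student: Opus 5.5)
The plan is to run Kato's analytic perturbation theory on the holomorphic type (a) family $z\mapsto\mathfrak b_{s,z}$ from Lemma~\ref{lem:kernel-moments}. For $|z|$ small, $\mathfrak b_{s,z}$ is closed and sectorial on the fixed domain $\mathscr Y_s$. The associated operators $B_s(z)$ therefore form a holomorphic family of type (B) in $H_{s,\tau}$, and for real $z=\xi$ they coincide with $B_s(\xi)$, since $1-\cos(\xi q)$ reproduces the cross term of \eqref{eq:intro-fiber}. By Lemma~\ref{lem:closed-fibers} the resolvent of $B_s(0)$ is compact. By Lemma~\ref{lem:positive-simple} and Proposition~\ref{prop:angular-stability}, $\lambda_1(0)$ is simple and isolated, with gap $\lambda_2(0)-\lambda_1(0)>0$. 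Kato's theorem then gives, for $|z|<r_0$, a holomorphic Riesz projection $P(z)$ onto a one-dimensional space and a holomorphic simple eigenvalue $\lambda(z)$ with $\lambda(0)=\lambda_1(0)$.

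First I check that $\lambda(\xi)=\lambda_1(\xi)$ for small real $\xi$. Lemma~\ref{lem:real-frequency-continuity} makes the ordered eigenvalues continuous. For $|\xi|$ small, exactly one eigenvalue of $B_s(\xi)$ lies in a small disk around $\lambda_1(0)$, and all others stay near $[\lambda_2(0),\infty)$, so this isolated eigenvalue is $\lambda_1(\xi)$. For real $\xi$ the form is symmetric, so $\lambda_1(\xi)$ is real and real analytic. Evenness comes from $\mathfrak b_{s,-z}=\mathfrak b_{s,z}$, and then $\lambda_1'(0)=0$ follows.

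For the second derivative I would use the standard second-order formula for a type (B) family. Because the expansion \eqref{eq:analytic-form-expansion} has no first-order term, the first-order eigenvector correction vanishes. Concretely, normalize the eigenvector by $\phi_z=P(z)\phi_{s,\tau}$. Then $\lambda(z)=\mathfrak b_{s,z}(\phi_z,\phi_{s,\tau})/\langle\phi_z,\phi_{s,\tau}\rangle$ in $H_{s,\tau}$. By evenness $\phi_z=\phi_{s,\tau}+O_{\mathscr Y_s}(z^2)$, since $P(z)$ is even and holomorphic into $\mathcal L(H_{s,\tau},\mathscr Y_s)$. Inserting the expansion and applying \eqref{eq:EL} gives
\[
\lambda(z)=\Lambda_{s,\tau}+z^2\mathfrak c_{s,2}[\phi_{s,\tau}]+O(z^4).
\]
The $O(z^2)$ terms from $\phi_z$ cancel because $\mathfrak b_{s,0}(\cdot,\phi_{s,\tau})=\Lambda_{s,\tau}\langle\cdot,\phi_{s,\tau}\rangle$. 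By \eqref{eq:second-form-coefficient},
\[
\mathfrak c_{s,2}[\phi_{s,\tau}]=\int\!\!\int\!\!\int q^2K_s\phi_{s,\tau}(\theta)\phi_{s,\tau}(\varphi)=\kappa_{s,\tau},
\]
using the normalization and the real positivity of $\phi_{s,\tau}$.

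I expect the main obstacle to be justifying that $P(z)$ is holomorphic as a map into $\mathscr Y_s$, not merely into $H_{s,\tau}$. This is what makes $\mathfrak b_{s,z}(\phi_z,\cdot)$ meaningful and the $O(z^4)$ remainder controlled. I would handle it through the resolvent representation $P(z)=-\frac{1}{2\pi i}\oint(B_s(z)-\zeta)^{-1}d\zeta$. The maps $\mathcal A_z-\zeta:\mathscr Y_s\to\mathscr Y_s^*$ depend holomorphically on $z$, because $\mathfrak d_{s,z}$ is holomorphic in bounded-form norm. Their inverses on the contour are uniformly bounded for small $z$, by the same Neumann-series argument as in Lemma~\ref{lem:real-frequency-continuity}. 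Positivity $\kappa_{s,\tau}>0$ is not part of this statement, but it follows from the diamagnetic strictness in Lemma~\ref{lem:diamagnetic} together with analyticity, or directly from $\phi_{s,\tau}>0$.
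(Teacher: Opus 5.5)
Your proposal is correct and follows essentially the paper's route: Kato's type (a)/(B) perturbation of the simple isolated eigenvalue, evenness from $\mathfrak b_{s,-z}=\mathfrak b_{s,z}$, and a Feynman--Hellmann computation in which \eqref{eq:EL} removes the eigenvector correction. Your exact identity $\lambda(z)-\Lambda_{s,\tau}=\mathfrak d_{s,z}(P(z)\phi_{s,\tau},\phi_{s,\tau})/\langle P(z)\phi_{s,\tau},\phi_{s,\tau}\rangle_{H_{s,\tau}}$ is a slightly cleaner packaging of the paper's twice-differentiated weak eigenvalue equation. One small caveat: strict diamagnetic inequality plus analyticity only gives $\kappa_{s,\tau}\geq0$, so positivity must come from your second argument, namely $K_s>0$ and $\phi_{s,\tau}>0$ almost everywhere.
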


\begin{proof}
By Lemma~\ref{lem:kernel-moments}, the forms $\mathfrak b_{s,z}$ are a
holomorphic family of type (a) on the common domain $\mathscr Y_s$.
The associated operators form a family of type (B)
\cite[Chapter~VII, Section~4]{Kato}. Since $\lambda_1(0)$ is simple and
isolated, a Riesz projection of rank one gives analytic branches
$\lambda_1(z)$ and $\phi(z)$ near zero. For real $\xi$, normalize by
\[
 \|\phi(\xi)\|_{H_{s,\tau}}=1,\qquad
 \langle\phi(\xi),\phi_{s,\tau}\rangle_{H_{s,\tau}}>0.
\]
The real form depends on $\xi$ through $\cos(\xi q)$. Uniqueness of the
simple branch gives $\lambda_1(-\xi)=\lambda_1(\xi)$ and
$\phi(-\xi)=\phi(\xi)$.

Equation \eqref{eq:analytic-form-expansion} gives
$\mathfrak b'_{s,0}=0$ in $\mathcal L(\mathscr Y_s,\mathscr Y_s^*)$ and
$\mathfrak b''_{s,0}=2\mathfrak c_{s,2}$. The form version of the
Feynman--Hellmann formula therefore yields
\[
 \lambda_1'(0)=0,\qquad
 \lambda_1''(0)=\mathfrak b''_{s,0}(\phi_{s,\tau},\phi_{s,\tau}).
\]
Here the derivative of the eigenvector can be checked directly.
Differentiating the weak eigenvalue equation at zero gives
\[
 (B_s(0)-\Lambda_{s,\tau})\phi'(0)=0
 \quad\text{in }\mathscr Y_s^*.
\]
The real normalization and phase condition give
$\langle\phi'(0),\phi_{s,\tau}\rangle_{H_{s,\tau}}=0$.
Simplicity implies $\phi'(0)=0$. Differentiating a second time and
pairing with $\phi_{s,\tau}$ cancels the terms containing $\phi''(0)$ by
\eqref{eq:EL}. Thus only $\mathfrak b''_{s,0}(\phi_{s,\tau},\phi_{s,\tau})$
remains. By \eqref{eq:second-form-coefficient},
\[
 \lambda_1''(0)=
 2\int_0^\pi\int_0^\pi\int_{\mathbb R}
 q^2K_s(q,\theta,\varphi)
 \phi_{s,\tau}(\theta)\phi_{s,\tau}(\varphi)
 \,dq\,d\theta\,d\varphi.
\]
This integral is finite by Lemma~\ref{lem:kernel-moments} and positive
because $K_s>0$ and $\phi_{s,\tau}>0$ almost everywhere. Even analyticity
gives the $O(\xi^4)$ remainder.
\end{proof}

\begin{lemma}[Axial dispersion function]
The function $\rho_{s,\tau}$ in \eqref{eq:intro-rho} is even and real
analytic. It satisfies
\begin{equation}\label{eq:rho-expansion}
 \rho_{s,\tau}(\xi)
 =\kappa_{s,\tau}\xi^2
 -\frac{\xi^4}{12}
 \int_0^\pi\int_0^\pi\int_{\mathbb R}
 q^4K_s(q,\theta,\varphi)
 \phi_{s,\tau}(\theta)\phi_{s,\tau}(\varphi)
 \,dq\,d\theta\,d\varphi+O(\xi^6)
\end{equation}
at zero. Moreover,
\begin{equation}\label{eq:rho-positive-growth}
 \rho_{s,\tau}(\xi)>0\quad(\xi\neq0),\qquad
 c|\xi|^{2s}-C\leq\rho_{s,\tau}(\xi)
 \leq C(1+|\xi|^{2s}).
\end{equation}
\end{lemma}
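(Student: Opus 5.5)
The plan is to work directly with the integral formula in \eqref{eq:intro-rho}. First we check that this integral representation agrees with $\mathfrak b_{s,\xi}[\phi_{s,\tau}]-\Lambda_{s,\tau}$. The tool is Lemma~\ref{lem:kernel-moments}: on $\mathscr Y_s$ we have $\mathfrak b_{s,\xi}=\mathfrak b_{s,0}+\mathfrak d_{s,\xi}$. Since $\phi_{s,\tau}$ is real and normalized with $\mathfrak b_{s,0}[\phi_{s,\tau}]=\Lambda_{s,\tau}$ (Theorem~\ref{thm:A}), we obtain $\rho_{s,\tau}(\xi)=\mathfrak d_{s,\xi}[\phi_{s,\tau}]$. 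This equals $2\int\!\!\int\!\!\int(1-\cos(\xi q))K_s\phi_{s,\tau}(\theta)\phi_{s,\tau}(\varphi)$, which is exactly \eqref{eq:intro-rho}. The integral converges absolutely because $|1-\cos(\xi q)|\le \min\{\xi^2q^2,2\}$ and the moment bound \eqref{eq:moment-form} with $j=1$ applies.

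Evenness is immediate from the cosine. For real analyticity we use the holomorphy of $z\mapsto\mathfrak d_{s,z}$ in the strip $|\operatorname{Im}z|<1+s$, again from Lemma~\ref{lem:kernel-moments}, evaluated at the fixed vector $\phi_{s,\tau}$. This gives a holomorphic scalar function whose restriction to the real line is $\rho_{s,\tau}$.

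For \eqref{eq:rho-expansion}, expand $1-\cos(\xi q)=\xi^2q^2/2-\xi^4q^4/24+R_6(\xi q)$, where $|R_6(x)|\le x^6/720$. The term $2\cdot\tfrac12\int q^2K_s\phi\phi$ is exactly $\kappa_{s,\tau}$ from \eqref{eq:intro-kappa}. The term $-2\xi^4/24$ produces the $-\xi^4/12$ coefficient. The remainder is bounded by $C\xi^6\int q^6K_s\phi\phi$, which is finite by \eqref{eq:moment-form} with $j=3$. Analyticity also gives the remainder directly, but this elementary bound suffices.

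Positivity for $\xi\neq0$ follows because the integrand $(1-\cos(\xi q))K_s\phi_{s,\tau}(\theta)\phi_{s,\tau}(\varphi)$ is nonnegative. It is strictly positive on a set of positive measure: $\phi_{s,\tau}>0$ almost everywhere, $K_s>0$, and $1-\cos(\xi q)>0$ off a discrete set of $q$. Alternatively, Lemma~\ref{lem:diamagnetic} gives $\rho_{s,\tau}(\xi)\ge\lambda_1(\xi)-\lambda_1(0)>0$.

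The growth bounds come straight from \eqref{eq:ground-fiber-growth} of Lemma~\ref{lem:kernel-growth}, after subtracting the constant $\Lambda_{s,\tau}$ and enlarging $C$. The only slightly delicate point is the first step, namely identifying the closed form value with the absolutely convergent integral. That identification is already supplied by the splitting $\mathfrak b_{s,z}=\mathfrak b_{s,0}+\mathfrak d_{s,z}$ on the whole domain $\mathscr Y_s$, so I expect no real obstacle.
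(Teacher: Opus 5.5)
Your proposal is correct and follows essentially the same route as the paper. You identify $\rho_{s,\tau}(\xi)$ with $\mathfrak d_{s,\xi}[\phi_{s,\tau}]$, Taylor-expand $2(1-\cos(\xi q))$ under the integral using the moment bounds of Lemma~\ref{lem:kernel-moments}, obtain positivity from the almost-everywhere positive integrand, and read off the growth from \eqref{eq:ground-fiber-growth}. Your use of the holomorphy of $z\mapsto\mathfrak d_{s,z}$ on the strip $|\operatorname{Im}z|<1+s$ gives real analyticity on all of $\mathbb R$ somewhat more directly than the paper's brief remark based on the expansion at zero.
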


\begin{proof}
First use truncated kernels in \eqref{eq:intro-rho}. The integrand is
nonnegative and, for $\xi\neq0$, positive for almost every
$(q,\theta,\varphi)$. This proves strict positivity.
By Lemma~\ref{lem:kernel-moments}, Taylor expansion under the integral is
valid.
Since
\[
 2(1-\cos(\xi q))
 =\xi^2q^2-\frac{\xi^4q^4}{12}+O(\xi^6q^6),
\]
we obtain \eqref{eq:rho-expansion} and real analyticity. Finally,
\[
 \rho_{s,\tau}(\xi)
 =\mathfrak b_{s,\xi}[\phi_{s,\tau}]
 -\mathfrak b_{s,0}[\phi_{s,\tau}].
\]
The two estimates in \eqref{eq:rho-positive-growth} follow from
\eqref{eq:ground-fiber-growth} and
$\mathfrak b_{s,0}[\phi_{s,\tau}]=\Lambda_{s,\tau}$.
\end{proof}

Let $P(\xi)$ be the first spectral projection for $|\xi|$ small.

\begin{lemma}[Uniform threshold estimate]\label{lem:threshold-estimate}
There are $\xi_0,c_0,c_1>0$ such that
\begin{equation}\label{eq:small-frequency-gap}
 \begin{split}
 \mathfrak b_{s,\xi}[\psi]-\Lambda_{s,\tau}
 \|\psi\|_{L^2(m_{s,\tau})}^2
 \geq{}&c_0\xi^2\|P(\xi)\psi\|_{L^2(m_{s,\tau})}^2+c_1\|(I-P(\xi))\psi\|_{L^2(m_{s,\tau})}^2
 \end{split}
\end{equation}
for $|\xi|\leq\xi_0$. Moreover,
\begin{equation}\label{eq:projection-close}
 \|P(\xi)-P_0\|_{H_{s,\tau}\to H_{s,\tau}}
 +\|P(\xi)-P_0\|_{H_{s,\tau}\to\mathscr Y_s}
 \leq C\xi^2.
\end{equation}
\end{lemma}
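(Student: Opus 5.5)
The plan is to use the analytic perturbation structure from Lemma~\ref{lem:kernel-moments} and Proposition~\ref{prop:dispersion}. Take a fixed circle $\Gamma$ in $\mathbb C$ centred at $\lambda_1(0)$ with radius $g/2$, where $g=\lambda_2(0)-\lambda_1(0)>0$ (Proposition~\ref{prop:angular-stability}). For $|z|$ small, define $P(z)$ as the Riesz projection
\[
 P(z)=\frac{1}{2\pi i}\oint_\Gamma(\zeta-B_s(z))^{-1}\,d\zeta .
\]
Because the family is of type (B), $z\mapsto(\zeta-B_s(z))^{-1}$ is holomorphic, uniformly in $\zeta\in\Gamma$, as a map into $\mathcal L(\mathscr Y_s^*,\mathscr Y_s)$. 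Concretely, we invert $\mathcal A_z-\zeta$ for $\mathcal A_z:\mathscr Y_s\to\mathscr Y_s^*$ by a Neumann series around $z=0$, using $\|\mathfrak b_{s,z}-\mathfrak b_{s,0}\|_{\mathscr Y_s\to\mathscr Y_s^*}\leq C|z|^2$, which follows from \eqref{eq:analytic-form-expansion}.

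For real $\xi$, the operator $B_s(\xi)$ is self-adjoint, so $P(\xi)$ is the orthogonal rank-one projection onto $\operatorname{span}\{\phi(\xi)\}$. The second resolvent identity then gives
\[
 P(\xi)-P_0=\frac{1}{2\pi i}\oint_\Gamma(\zeta-B_s(\xi))^{-1}(\mathcal A_\xi-\mathcal A_0)(\zeta-B_s(0))^{-1}\,d\zeta .
\]
The middle factor is $O(\xi^2)$ in $\mathcal L(\mathscr Y_s,\mathscr Y_s^*)$, and the resolvents are bounded from $H_{s,\tau}$ into $\mathscr Y_s$ and from $\mathscr Y_s^*$ into $\mathscr Y_s$. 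Combined with the continuous embedding $\mathscr Y_s\hookrightarrow H_{s,\tau}$, this yields \eqref{eq:projection-close} in both norms.

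For \eqref{eq:small-frequency-gap}, decompose $\psi=P(\xi)\psi+(I-P(\xi))\psi$ for $\psi\in\mathscr Y_s$. Since the spectral projection reduces the form, the two pieces are $\mathfrak b_{s,\xi}$-orthogonal and $H_{s,\tau}$-orthogonal. Hence
\[
 \mathfrak b_{s,\xi}[\psi]-\Lambda\|\psi\|^2=(\lambda_1(\xi)-\Lambda)\|P(\xi)\psi\|^2+\bigl(\mathfrak b_{s,\xi}[(I-P)\psi]-\Lambda\|(I-P)\psi\|^2\bigr).
\]
We then bound each piece.
\begin{itemize}
\item \emph{First piece.} Proposition~\ref{prop:dispersion} gives $\lambda_1(\xi)-\Lambda=\kappa_{s,\tau}\xi^2+O(\xi^4)\geq\tfrac12\kappa_{s,\tau}\xi^2$ for $|\xi|\leq\xi_0$, so we may take $c_0=\kappa_{s,\tau}/2$.
\item \emph{Second piece.} The range of $I-P(\xi)$ is spanned by eigenvectors with eigenvalues $\lambda_j(\xi)$, $j\geq2$. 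By Lemma~\ref{lem:real-frequency-continuity}, $\lambda_2(\xi)\geq\lambda_2(0)-g/4$ for small $|\xi|$, so the min--max principle gives the lower bound $(\lambda_2(\xi)-\Lambda)\|(I-P)\psi\|^2\geq\tfrac34 g\|(I-P)\psi\|^2$. This gives $c_1=3g/4$.
\end{itemize}

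The main technical point is the uniform resolvent bound on $\Gamma$ in $\mathcal L(\mathscr Y_s^*,\mathscr Y_s)$ at $z=0$. It holds because $\Gamma$ avoids the spectrum and the graph norm of $\mathfrak b_{s,0}$ is equivalent to the $\mathscr Y_s$ norm. Concretely, for $\zeta\in\Gamma$ we solve $\mathfrak b_{s,0}(v,\cdot)-\zeta\langle v,\cdot\rangle=\ell$ spectrally. This gives $\|v\|_{\mathscr Y_s}\leq C\operatorname{dist}(\zeta,\sigma)^{-1}(1+|\zeta|)\|\ell\|_{\mathscr Y_s^*}$, using the eigenbasis expansion with weights $(1+\lambda_j)$. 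Once this is in hand, the $O(\xi^2)$ rate comes directly from the evenness of the analytic family.
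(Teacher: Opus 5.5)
Your proposal is correct and follows essentially the paper's argument. The gap estimate uses the spectral theorem together with the dispersion expansion and continuity of $\lambda_2$. The projection estimate uses the Riesz projection, a Neumann series in $\mathcal L(\mathscr Y_s,\mathscr Y_s^*)$ driven by the $O(\xi^2)$ form perturbation from \eqref{eq:analytic-form-expansion}, and the second resolvent identity.

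The one minor difference is how you get the uniform bound for the zero-frequency resolvent $\mathscr Y_s^*\to\mathscr Y_s$ on $\Gamma$. You obtain it directly from the eigenbasis expansion of the self-adjoint $B_s(0)$, whereas the paper obtains it from a Fredholm index-zero argument using the compact embedding plus injectivity off the spectrum. Both routes work.
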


\begin{proof}
Put $g_0=\lambda_2(0)-\lambda_1(0)>0$. By continuity and
Proposition~\ref{prop:dispersion}, choose $\xi_0>0$ such that
\[
 \lambda_1(\xi)-\Lambda_{s,\tau}\geq
 \frac12\kappa_{s,\tau}\xi^2,
 \qquad
 \lambda_2(\xi)-\Lambda_{s,\tau}\geq\frac12g_0.
\]
The spectral theorem gives \eqref{eq:small-frequency-gap}. The Riesz
formula
\[
 P(\xi)=-\frac1{2\pi i}\int_\Gamma(B_s(\xi)-z)^{-1}\,dz
\]
around the first eigenvalue gives $P'(0)=0$, since the family is even.
To estimate this integral in the form norm, define
$\mathcal A_\xi(z):\mathscr Y_s\to\mathscr Y_s^*$ by
\[
 \langle\mathcal A_\xi(z)u,v\rangle
 =\mathfrak b_{s,\xi}(u,v)-z\langle u,v\rangle_{H_{s,\tau}}.
\]
For $c>0$, the map $\mathcal A_0(-c)$ is a coercive isomorphism.
The map $J$ induced by the $H_{s,\tau}$ inner product is compact by
\eqref{eq:compact-embedding}, and
\[
 \mathcal A_0(z)=\mathcal A_0(-c)
 \{I-(z+c)\mathcal A_0(-c)^{-1}J\}.
\]
Thus $\mathcal A_0(z)$ is Fredholm of index zero. For $z\in\Gamma$, its
kernel is zero, so it is invertible with a bound uniform on $\Gamma$.
Equation \eqref{eq:analytic-form-expansion} and a Neumann series give the
same bound for $R_\xi(z)=\mathcal A_\xi(z)^{-1}$ when $|\xi|$ is small.
The restriction of $R_\xi(z):\mathscr Y_s^*\to\mathscr Y_s$ to
$H_{s,\tau}$ is the operator resolvent. Moreover,
\[
 \|\mathcal A_\xi(z)-\mathcal A_0(z)\|
 _{\mathscr Y_s\to\mathscr Y_s^*}\leq C\xi^2.
\]
The resolvent identity in these spaces is
\[
 R_\xi(z)-R_0(z)
 =-R_\xi(z)(\mathcal A_\xi(z)-\mathcal A_0(z))R_0(z).
\]
The right-hand side is $O(\xi^2)$ from $H_{s,\tau}$ to $\mathscr Y_s$,
uniformly on $\Gamma$. Integration proves \eqref{eq:projection-close}.
For real $\xi$ near zero, take
\[
 \phi(\xi)=\frac{P(\xi)\phi_{s,\tau}}
 {\|P(\xi)\phi_{s,\tau}\|_{H_{s,\tau}}}.
\]
Consequently,
$\|\phi(\xi)-\phi_{s,\tau}\|_{\mathscr Y_s}\leq C\xi^2$.
\end{proof}

\begin{lemma}[Gap away from zero]\label{lem:away-gap}
For every $\xi_0>0$, there is $c(\xi_0)>0$ such that
\begin{equation}\label{eq:away-gap}
 \mathfrak b_{s,\xi}[\psi]-\Lambda_{s,\tau}
 \|\psi\|_{L^2(m_{s,\tau})}^2
 \geq c(\xi_0)\|\psi\|_{L^2(m_{s,\tau})}^2
\end{equation}
when $|\xi|\geq\xi_0$.
\end{lemma}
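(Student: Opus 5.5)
The plan is to reduce \eqref{eq:away-gap} to a uniform positive lower bound for $\lambda_1(\xi)-\lambda_1(0)$ on $\{|\xi|\geq\xi_0\}$. By the variational characterization of $\lambda_1(\xi)$ and $\Lambda_{s,\tau}=\lambda_1(0)$ from Theorem~\ref{thm:A}, the left-hand side of \eqref{eq:away-gap} is at least
\[
 (\lambda_1(\xi)-\lambda_1(0))\|\psi\|_{L^2(m_{s,\tau})}^2
\]
for every $\psi\in\mathscr Y_s$. This holds on the closed domain because $\lambda_1(\xi)$ is the bottom of the spectrum of the Friedrichs operator $B_s(\xi)$. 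It therefore suffices to show that
\[
 c(\xi_0):=\inf_{|\xi|\geq\xi_0}\bigl(\lambda_1(\xi)-\lambda_1(0)\bigr)>0 .
\]

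I will split the frequency range into a large-frequency part and a compact part. By Lemma~\ref{lem:large-frequency}, specifically \eqref{eq:lambda-infinity}, there is $M>\xi_0$ such that $\lambda_1(\xi)\geq\lambda_1(0)+1$ for $|\xi|\geq M$. On the compact set $\{\xi_0\leq|\xi|\leq M\}$, the function $\xi\mapsto\lambda_1(\xi)$ is continuous by Lemma~\ref{lem:real-frequency-continuity}. By the strict inequality in Lemma~\ref{lem:diamagnetic}, it satisfies $\lambda_1(\xi)>\lambda_1(0)$ at each point $\xi\neq0$. A continuous, strictly positive function on a compact set has a positive minimum, so $\lambda_1(\xi)-\lambda_1(0)\geq m>0$ there. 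Taking $c(\xi_0)=\min\{m,1\}$ completes the argument.

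The only step that needs care is the strict inequality $\lambda_1(\xi)>\lambda_1(0)$ for $\xi\neq0$. It requires both attainment of $\lambda_1(\xi)$ and positivity of the zero-frequency ground state, and these are already provided by the compact embedding \eqref{eq:compact-embedding} and Lemma~\ref{lem:positive-simple} inside Lemma~\ref{lem:diamagnetic}. Given the earlier lemmas, no new estimate is needed. The argument is a compactness-plus-continuity statement, with the large-frequency lemma supplying the needed tail control.
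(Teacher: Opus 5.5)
Your proposal is correct and matches the paper's proof: both get a positive minimum of $\lambda_1(\xi)-\Lambda_{s,\tau}$ on the compact band $\xi_0\leq|\xi|\leq R$ from Lemma~\ref{lem:real-frequency-continuity} and the strict inequality in Lemma~\ref{lem:diamagnetic}. Both then handle $|\xi|\geq R$ with Lemma~\ref{lem:large-frequency} and conclude from the variational characterization of $\lambda_1(\xi)$ on $\mathscr Y_s$.
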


\begin{proof}
By Lemmas~\ref{lem:real-frequency-continuity} and~\ref{lem:diamagnetic},
$\lambda_1(\xi)-\Lambda_{s,\tau}$ has a positive minimum on
$\xi_0\leq|\xi|\leq R$. For large $R$,
Lemma~\ref{lem:large-frequency} gives a positive bound on $|\xi|\geq R$.
The variational definition of $\lambda_1(\xi)$ proves \eqref{eq:away-gap}.
\end{proof}

\begin{proof}[Proof of Theorem~\ref{thm:B}]
Apply Lemma~\ref{lem:threshold-estimate} to
$\widehat w(\xi,\cdot)$ for $|\xi|\leq\xi_0$. By
\eqref{eq:projection-close}, the two squares with $P(\xi)$ control
\[
 \xi^2\|P_0\widehat w\|_{L^2(m_{s,\tau})}^2
 +\|(I-P_0)\widehat w\|_{L^2(m_{s,\tau})}^2.
\]
Indeed, put $P=P(\xi)$ and $Q=I-P$. Then
\[
 \begin{split}
 \xi^2\|P_0f\|^2
 &\leq C\xi^2\|Pf\|^2+C\xi^6\|Qf\|^2,\\
 \|Q_0f\|^2
 &\leq C\|Qf\|^2+C\xi^4\|Pf\|^2.
 \end{split}
\]
Take $\xi_0\leq1$. Adding these inequalities gives
\[
 \xi^2\|P_0f\|^2+\|Q_0f\|^2
 \leq C\bigl(\xi^2\|P(\xi)f\|^2
 +\|(I-P(\xi))f\|^2\bigr)
 \leq C D_\xi[f],
\]
where $D_\xi[f]=\mathfrak b_{s,\xi}[f]-\Lambda_{s,\tau}\|f\|^2$.
For $|\xi|\geq\xi_0$, Lemma~\ref{lem:away-gap} gives
\[
 h(\xi)\|P_0f\|^2+\|Q_0f\|^2
 \leq\|f\|^2\leq c(\xi_0)^{-1}D_\xi[f].
\]
Integrate with $f=\widehat w(\xi,\cdot)$ and use
\eqref{eq:closed-direct-integral} and \eqref{eq:intro-distance}.
This proves \eqref{eq:intro-mainB}; density extends the estimate from
the core to $\mathscr X_s$. Proposition~\ref{prop:dispersion} gives
\eqref{eq:intro-dispersion}.
\end{proof}

\begin{proof}[Proof of Theorem~\ref{thm:optimal-hp}]
\smallskip
\noindent\emph{Step 1: fixed-frequency minimizers.}
For $\xi\neq0$, write
\[
 \begin{split}
 \mathfrak d_\xi[\psi]
 &=\mathfrak b_{s,\xi}[\psi]
 -\Lambda_{s,\tau}\|\psi\|_{H_{s,\tau}}^2,\\
 \mathfrak n_\xi[\psi]
 &=\|Q_0\psi\|_{H_{s,\tau}}^2
 +h(\xi)\|P_0\psi\|_{H_{s,\tau}}^2.
 \end{split}
\]
For fixed $\xi\neq0$, the form $\mathfrak n_\xi$ is equivalent to the
squared $H_{s,\tau}$ norm. The proof of Theorem~\ref{thm:B} gives
$\mathfrak d_\xi\geq c\mathfrak n_\xi$. A minimizing sequence for
\eqref{eq:intro-mu}, normalized by $\mathfrak n_\xi[\psi_j]=1$, satisfies
\[
 \mathfrak b_{s,\xi}[\psi_j]
 =\mathfrak d_\xi[\psi_j]
 +\Lambda_{s,\tau}\|\psi_j\|_{H_{s,\tau}}^2
\]
with a bounded right-hand side. Hence a subsequence converges weakly in
$\mathscr Y_s$ and strongly in $H_{s,\tau}$ by
\eqref{eq:compact-embedding}. Lower semicontinuity gives a minimizer, and
$\mu_{s,\tau}(\xi)\geq c>0$.

\smallskip
\noindent\emph{Step 2: continuity away from zero and coercivity at
infinity.}
Let $\xi_j\to\xi\neq0$. Testing with a minimizer at $\xi$ gives upper
semicontinuity. For the lower bound, normalize minimizers at $\xi_j$ by
$\mathfrak n_{\xi_j}[\psi_j]=1$. These norms are uniformly equivalent to
$\|\cdot\|_{H_{s,\tau}}^2$ near $\xi$. A fixed trial function bounds
$\mu_{s,\tau}(\xi_j)$, so $\psi_j$ is bounded in $\mathscr Y_s$.
For every compact frequency interval $K$, the proof of
\eqref{eq:fiber-difference-bound} gives
\[
 |\mathfrak b_{s,\xi}[\psi]-\mathfrak b_{s,\eta}[\psi]|
 \leq\omega_K(|\xi-\eta|)\|\psi\|_{L^2(0,\pi)}^2,
 \qquad \omega_K(r)\longrightarrow0.
\]
After passage to a subsequence,
$\psi_j\rightharpoonup\psi$ in $\mathscr Y_s$ and
$\psi_j\to\psi$ in $H_{s,\tau}$. Continuity of $h$ gives
$\mathfrak n_\xi[\psi]=1$. The two-frequency estimate and
\eqref{eq:unweighted-control} give
\[
 |\mathfrak b_{s,\xi_j}[\psi_j]
 -\mathfrak b_{s,\xi}[\psi_j]|\longrightarrow0.
\]
Weak lower semicontinuity now yields
\[
 \mu_{s,\tau}(\xi)\leq\mathfrak d_\xi[\psi]
 \leq\liminf_{j\to\infty}\mu_{s,\tau}(\xi_j).
\]
Thus $\mu_{s,\tau}$ is continuous away from zero. For $|\xi|\geq1$,
$\mathfrak n_\xi[\psi]=\|\psi\|_{H_{s,\tau}}^2$, and
\[
 \mu_{s,\tau}(\xi)=\lambda_1(\xi)-\Lambda_{s,\tau}
 \longrightarrow\infty
\]
as $|\xi|\to\infty$.

\smallskip
\noindent\emph{Step 3: the direct-integral minimum.}
By the closed direct-integral identity,
\begin{equation*}
 \frac{\mathfrak q_s[w]-\Lambda_{s,\tau}\|w\|_{L^2(m_{s,\tau})}^2}
 {\operatorname{Dist}_{s,\tau}(u)^2}
 =
 \frac{\int_{\mathbb R}\mathfrak d_\xi[\widehat w(\xi)]\,d\xi}
 {\int_{\mathbb R}\mathfrak n_\xi[\widehat w(\xi)]\,d\xi}.
\end{equation*}
This quotient is at least $\operatorname*{ess\,inf}\mu_{s,\tau}$.
Conversely, choose $\xi_*\neq0$ with
$\mu_{s,\tau}(\xi_*)<\operatorname*{ess\,inf}\mu_{s,\tau}+\varepsilon$.
Let $\psi_*$ be its minimizing vector, normalized by
$\mathfrak n_{\xi_*}[\psi_*]=1$. Choose
$\chi\in C_c^\infty((-1,1))$ with $\|\chi\|_2=1$ and set
\[
 \widehat w_\delta(\xi,\theta)
 =\delta^{-1/2}\chi((\xi-\xi_*)/\delta)\psi_*(\theta),
 \qquad 0<\delta<|\xi_*|/2.
\]
Uniform graph equivalence on its bounded frequency support shows that
$w_\delta\in\mathscr X_s$. Changing variables to
$\xi=\xi_*+\delta r$ gives
\[
 \begin{split}
 \int\mathfrak d_\xi[\widehat w_\delta(\xi)]\,d\xi
 &=\int_{-1}^1|\chi(r)|^2
 \mathfrak d_{\xi_*+\delta r}[\psi_*]\,dr
 \longrightarrow\mu_{s,\tau}(\xi_*),\\
 \int\mathfrak n_\xi[\widehat w_\delta(\xi)]\,d\xi
 &\longrightarrow1.
 \end{split}
\]
Both limits follow from the form continuity in Step~2. For sufficiently
small $\delta$ the quotient is less than
$\operatorname*{ess\,inf}\mu_{s,\tau}+2\varepsilon$.
The distance is bounded by the base norm, so core approximation preserves
this quotient whenever its denominator is nonzero. If real test
functions are required, use the sum of this packet and its reflected
complex conjugate at $-\xi_*$; their frequency supports are disjoint.
This proves \eqref{eq:intro-gamma-fiber}; positivity follows from
Theorem~\ref{thm:B}.

\smallskip
\noindent\emph{Step 4: exclusion of flat minimizing bands.}
If a nonzero $w$ attains the global quotient, then
\[
 \mathfrak d_\xi[\widehat w(\xi)]
 -\Gamma^{\rm hp}_{s,\tau}\mathfrak n_\xi[\widehat w(\xi)]
\]
is nonnegative and has zero integral. Thus $\widehat w$ is supported in
$Z_{s,\tau}$, which must have positive measure. We prove that this set is
finite.

Write $H=H_{s,\tau}$, $Y=\mathscr Y_s$, $P=P_0$, $Q=I-P$, and
$\Lambda=\Lambda_{s,\tau}$. For $u,v\in Y$, put
\[
 \langle Ju,v\rangle=\langle u,v\rangle_H,\qquad
 \langle J_Pu,v\rangle=\langle Pu,v\rangle_H,\qquad
 \langle J_Qu,v\rangle=\langle Qu,v\rangle_H.
\]
These maps are compact by $Y\hookrightarrow H$.
We also have $Y\hookrightarrow L^2(0,\pi)$ compactly. Indeed, choose
$0<\tau_0<\min\{\tau,2s\}$ and a bounded sequence $v_n$ in $Y$.
Equation \eqref{eq:local-angular} and Rellich's theorem give compactness
on $[\delta,\pi-\delta]$. For
$E_\delta=(0,\delta)\cup(\pi-\delta,\pi)$,
\eqref{eq:all-weight-coercivity} gives
\[
 \begin{split}
 \int_{E_\delta}|v_n|^2\,d\theta
 \leq C\delta^{2s-\tau_0}
 \int_{E_\delta}m_{s,\tau_0}|v_n|^2\,d\theta\leq C\delta^{2s-\tau_0}.
 \end{split}
\]
A diagonal argument gives compactness.
Let $\mathcal S=\{z\in\mathbb C:|\operatorname{Im}z|<1+s\}$.
For $z\in\mathcal S$ and almost every $\alpha\in(-\pi,\pi)$, define
\[
 D_z(\alpha)=\int_{\mathbb R}(1-\cos(zq))K_s(q,\alpha)\,dq,
\]
and define $B(z):Y\to Y^*$ by
\[
 \begin{split}
 \langle(B(z)-B(0))u,v\rangle
 =\int_0^\pi\int_0^\pi D_z(\theta-\varphi)
 \bigl(&u(\theta)\overline{v(\varphi)}+u(\varphi)\overline{v(\theta)}\bigr)
 \,d\theta\,d\varphi.
 \end{split}
\]
For real $z=\xi$, this is the form map of $\mathfrak b_{s,\xi}$.
For $\mathcal K\Subset\mathcal S$, put
$r=\sup_{z\in\mathcal K}|\operatorname{Im}z|<1+s$. For every integer
$k\geq0$,
\[
 |\partial_z^k(1-\cos(zq))|
 \leq C_{\mathcal K,k}q^2\quad(|q|\leq1),
 \qquad
 |\partial_z^k(1-\cos(zq))|
 \leq C_{\mathcal K,k}(1+|q|^k)e^{r|q|}\quad(|q|\geq1).
\]
Since
\[
 \int_{q^2+\alpha^2<1}
 \frac{q^2}{(q^2+\alpha^2)^{1+s}}\,dq\,d\alpha<\infty,
\]
the exponential tail of $K_s$ gives holomorphy of
$z\mapsto D_z$ in $L^1(-\pi,\pi)$. Young's inequality yields
\[
 |\langle(B(z)-B(0))u,v\rangle|
 \leq2\|D_z\|_{L^1(-\pi,\pi)}
 \|u\|_{L^2(0,\pi)}\|v\|_{L^2(0,\pi)}.
\]
Thus $B(z)$ is holomorphic in $\mathcal L(Y,Y^*)$, and $B(z)-B(0)$ is
compact by the preceding $L^2$ embedding. Fix $c>0$ and put
\[
 \begin{split}
 \mathcal F_{<}(z)&=B(z)-\Lambda J-c(J_Q+z^2J_P),\\
 \mathcal F_{>}(z)&=B(z)-(\Lambda+c)J.
 \end{split}
\]
By the form norm and Lax--Milgram, $B(0)+J:Y\to Y^*$ is an isomorphism.
Moreover,
\[
 \begin{split}
 (B(0)+J)^{-1}\mathcal F_{<}(z)
 &=I+(B(0)+J)^{-1}
 \{B(z)-B(0)-(\Lambda+1)J-cJ_Q-cz^2J_P\},\\
 (B(0)+J)^{-1}\mathcal F_{>}(z)
 &=I+(B(0)+J)^{-1}
 \{B(z)-B(0)-(\Lambda+c+1)J\}.
 \end{split}
\]
Both pencils are Fredholm of index zero. By the analytic Fredholm theorem
\cite[Chapter~VII]{Kato}, each singular set is discrete in $\mathcal S$
provided the pencil has one invertible point.
By \eqref{eq:lambda-infinity}, choose $R>1$ with
\[
 \lambda_1(R)-\Lambda>c.
\]
The variational principle gives $\ker\mathcal F_{>}(R)=\{0\}$, hence
$\mathcal F_{>}(R)$ is invertible.
For $R>1$, $\mathcal F_{<}(R)$ is the analytic continuation of the
low-frequency pencil; the actual multiplier in the stability quotient
is $h(R)=1$.
Let $A_R$ be associated with $\mathfrak d_R$ and set $T_R=A_R-cQ$.
Its form on $Y$ is
$\mathfrak t_R[u,v]=\mathfrak d_R(u,v)-c\langle Qu,v\rangle_H$, and
\[
 \mathfrak t_R[v]\geq
 (\lambda_1(R)-\Lambda-c)\|v\|_H^2.
\]
Thus $T_R$ is positive and invertible. Since $\mathcal F_{<}(R)$ is the
form map of $T_R-cR^2P$, the rank-one resolvent identity gives
\[
 \ker\mathcal F_{<}(R)\neq\{0\}
 \quad\Longleftrightarrow\quad
 cR^2\langle\phi_{s,\tau},T_R^{-1}\phi_{s,\tau}\rangle_H=1.
\]
Indeed, a nonzero kernel vector $v$ satisfies
\[
 v=cR^2\langle v,\phi_{s,\tau}\rangle_H
 T_R^{-1}\phi_{s,\tau}.
\]
The coefficient $\langle v,\phi_{s,\tau}\rangle_H$ is nonzero. Taking
the scalar product with $\phi_{s,\tau}$ proves necessity; inserting
$T_R^{-1}\phi_{s,\tau}$
proves sufficiency. Since $\phi_{s,\tau}\in Y$, Cauchy--Schwarz gives
\[
 \begin{split}
 1=|\langle T_R^{-1/2}\phi_{s,\tau},
 T_R^{1/2}\phi_{s,\tau}\rangle_H|^2\leq
 \langle\phi_{s,\tau},T_R^{-1}\phi_{s,\tau}\rangle_H
 \mathfrak t_R[\phi_{s,\tau}].
 \end{split}
\]
Since $Q\phi_{s,\tau}=0$ and $\|\phi_{s,\tau}\|_H=1$,
\[
 \mathfrak t_R[\phi_{s,\tau}]
 =\mathfrak d_R[\phi_{s,\tau}]
 =\rho_{s,\tau}(R).
\]
By \eqref{eq:rho-positive-growth},
\[
 cR^2\langle\phi_{s,\tau},T_R^{-1}\phi_{s,\tau}\rangle_H
 \geq\frac{cR^2}{\rho_{s,\tau}(R)}
 \geq\frac{cR^2}{C(1+R^{2s})}\longrightarrow\infty.
\]
For larger $R$, the left-hand side exceeds one. The rank-one criterion
and index zero imply that $\mathcal F_{<}(R)$ is invertible.

Take $c=\Gamma^{\rm hp}_{s,\tau}$. A fiber minimizer at a point of
$Z_{s,\tau}$ belongs to $\ker\mathcal F_{<}(\xi)$ for $0<|\xi|<1$,
and to $\ker\mathcal F_{>}(\xi)$ for $|\xi|>1$.
By Step~2, $\mu_{s,\tau}(\xi)\longrightarrow\infty$ as
$|\xi|\to\infty$, so $Z_{s,\tau}$ lies in a compact real frequency
segment. Both singular sets have no accumulation point in
$\mathcal S$, and hence each meets this compact segment in only finitely
many points. Adding the junction points $\pm1$ therefore still gives a
finite set. Thus $Z_{s,\tau}$ is finite and has measure zero. The
necessary condition above proves nonattainment.

\smallskip
\noindent\emph{Step 5: the zero-frequency endpoint.}
It remains to prove \eqref{eq:intro-mu-zero}. Put
$g_0=\lambda_2(0)-\lambda_1(0)$. For small $\xi$, decompose
$\psi=P(\xi)\psi+(I-P(\xi))\psi=p+q$. The spectral theorem gives
\[
 \mathfrak d_\xi[\psi]\geq
 (\kappa_{s,\tau}\xi^2+O(\xi^4))\|p\|^2
 +(g_0+o(1))\|q\|^2.
\]
By \eqref{eq:projection-close},
\[
 \begin{split}
 &(1-C|\xi|)\bigl(\xi^2\|p\|^2+\|q\|^2\bigr)
 \leq\mathfrak n_\xi[p+q]\leq
 (1+C|\xi|)\bigl(\xi^2\|p\|^2+\|q\|^2\bigr).
 \end{split}
\]
Indeed, $Q_0p=O(\xi^2)\|p\|$ and
$P_0q=O(\xi^2)\|q\|$. The mixed term is bounded by
\[
 C\xi^2\|p\|\|q\|
 \leq C|\xi|(\xi^2\|p\|^2+\|q\|^2).
\]
The remaining errors are smaller. Hence
\[
 \liminf_{\xi\to0}\mu_{s,\tau}(\xi)
 \geq\min\{\kappa_{s,\tau},g_0\}.
\]
For the first eigenvector, \eqref{eq:projection-close} and
Proposition~\ref{prop:dispersion} give
\[
 \mathfrak n_\xi[\phi(\xi)]=\xi^2+O(\xi^4),
 \qquad
 \mathfrak d_\xi[\phi(\xi)]
 =\kappa_{s,\tau}\xi^2+O(\xi^4).
\]
Its quotient tends to $\kappa_{s,\tau}$. Let $e_2$ be a normalized second
eigenvector of the zero fiber. Then $P_0e_2=0$, so
$\mathfrak n_\xi[e_2]=1$. The form expansion gives
\[
 \mathfrak d_\xi[e_2]=g_0+O(\xi^2).
\]
Its quotient tends to $g_0$. This gives the second upper bound and proves
\eqref{eq:intro-mu-zero}.
Together with Step~2, this defines a continuous extension of
$\mu_{s,\tau}$ to $\mathbb R$ which is coercive at infinity. Its infimum
is therefore a minimum. Continuity also turns the essential infimum in
\eqref{eq:intro-gamma-fiber} into this same minimum.
\end{proof}

\begin{proposition}[Strict comparison and nonzero minimizing frequencies]
\label{prop:strict-stability-comparison}
For every $0<s<1$ and $\tau>0$,
\begin{equation}\label{eq:strict-stability-comparison}
 0<\Gamma^{\rm hp}_{s,\tau}
 \leq\lambda_1(1)-\Lambda_{s,\tau}
 \leq\rho_{s,\tau}(1)<\kappa_{s,\tau}.
\end{equation}
If $\lambda_2(0)-\lambda_1(0)\geq\kappa_{s,\tau}$, then
$Z_{s,\tau}$ is nonempty and contains a pair of opposite nonzero
frequencies. In particular, this conclusion holds for $\tau=2$ and
all $s<1$ sufficiently close to one.
\end{proposition}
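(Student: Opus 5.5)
The chain of inequalities in \eqref{eq:strict-stability-comparison} will be read off term by term, starting from the frequency $\xi=1$. Positivity of $\Gamma^{\rm hp}_{s,\tau}$ is contained in Theorem~\ref{thm:optimal-hp}. Since $h(1)=1$, the denominator in \eqref{eq:intro-mu} at $\xi=1$ is $\|\psi\|_{H_{s,\tau}}^2$, so $\mu_{s,\tau}(1)=\lambda_1(1)-\Lambda_{s,\tau}$. Then $\Gamma^{\rm hp}_{s,\tau}=\min\mu_{s,\tau}\leq\mu_{s,\tau}(1)$. Testing the Rayleigh quotient for $\lambda_1(1)$ with the normalized $\phi_{s,\tau}$ gives $\lambda_1(1)\leq\mathfrak b_{s,1}[\phi_{s,\tau}]=\Lambda_{s,\tau}+\rho_{s,\tau}(1)$, by \eqref{eq:intro-rho}.

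The substantive step is $\rho_{s,\tau}(1)<\kappa_{s,\tau}$. Use the integral representation \eqref{eq:intro-rho} together with \eqref{eq:intro-kappa}. Pointwise we have $2(1-\cos q)\leq q^2$, with equality only at $q=0$. The measure $K_s\phi_{s,\tau}(\theta)\phi_{s,\tau}(\varphi)\,dq\,d\theta\,d\varphi$ is positive almost everywhere, because $\phi_{s,\tau}>0$ a.e. by Lemma~\ref{lem:positive-simple}. Both integrals are finite by Lemma~\ref{lem:kernel-moments}. The strict pointwise inequality off a null set therefore integrates to
\[
\kappa_{s,\tau}-\rho_{s,\tau}(1)=\int\bigl(q^2-2(1-\cos q)\bigr)K_s\phi_{s,\tau}\phi_{s,\tau}>0 .
\]
If the $q$-integral is only defined through truncated kernels, I would do the comparison on truncated kernels and pass to the limit by monotone convergence, since both integrands are nonnegative.

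For the second assertion, assume $g_0:=\lambda_2(0)-\lambda_1(0)\geq\kappa_{s,\tau}$. By \eqref{eq:intro-mu-zero}, the continuous extension satisfies $\mu_{s,\tau}(0)=\min\{\kappa_{s,\tau},g_0\}=\kappa_{s,\tau}$. The chain above gives $\mu_{s,\tau}(1)<\kappa_{s,\tau}$. Theorem~\ref{thm:optimal-hp} says the minimum of $\mu_{s,\tau}$ on $\mathbb R$ is attained, because $\mu_{s,\tau}$ is continuous and coercive at infinity. Any minimizer $\xi_*$ satisfies $\mu_{s,\tau}(\xi_*)\leq\mu_{s,\tau}(1)<\mu_{s,\tau}(0)$, so $\xi_*\neq0$ and $\xi_*\in Z_{s,\tau}$. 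Evenness gives $-\xi_*\in Z_{s,\tau}$ as well. Indeed, $B_s(-\xi)=B_s(\xi)$ by Lemma~\ref{lem:real-frequency-continuity}, and both $h$ and the projection $P_0$ are independent of the sign of $\xi$, so $\mu_{s,\tau}$ is even.

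For $\tau=2$ and $s$ near one, I would invoke Theorem~\ref{thm:sharp-local-limit} and Proposition~\ref{prop:local-benchmark}. After multiplication by $2(1-s)/\pi$, the fiber eigenvalue limits give $\frac{2(1-s)}\pi g_0\to\lambda_2^{(1)}(0)-\lambda_1^{(1)}(0)=2$. The dispersion limit gives $\frac{2(1-s)}\pi\kappa_{s,2}\to1$. Hence $g_0>\kappa_{s,2}$ for all $s$ sufficiently close to one, and the previous paragraph applies.

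I expect the main obstacle to be the rigorous justification of the strict inequality $\rho_{s,\tau}(1)<\kappa_{s,\tau}$ at the level of the closed form. One has to confirm that \eqref{eq:intro-rho} is a genuine absolutely convergent integral for $\phi_{s,\tau}\in\mathscr Y_s$, rather than only a limit of truncations. The moment bound in Lemma~\ref{lem:kernel-moments} for $j=1$, combined with $1-\cos q\leq q^2/2$, should supply this domination. The remaining steps are direct consequences of earlier results.
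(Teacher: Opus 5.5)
Your proposal is correct and follows the paper's proof essentially step by step: you use $h(1)=1$ to get $\mu_{s,\tau}(1)=\lambda_1(1)-\Lambda_{s,\tau}$, and you test with $\phi_{s,\tau}$. You then integrate the strict inequality $2(1-\cos q)<q^2$ against the positive measure $K_s\phi_{s,\tau}(\theta)\phi_{s,\tau}(\varphi)$, with finiteness from the $j=1$ moment bound. The remaining steps also match: $\mu_{s,\tau}(0)=\kappa_{s,\tau}>\mu_{s,\tau}(1)$ forces a nonzero minimizer, evenness supplies its opposite, and the local limits at $\tau=2$ verify the gap condition.
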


\begin{proof}
Since $h(1)=1$, the variational formula gives
$\mu_{s,\tau}(1)=\lambda_1(1)-\Lambda_{s,\tau}$. Testing its Rayleigh
quotient with $\phi_{s,\tau}$ gives the second upper bound.
The strict scalar inequality $2(1-\cos q)<q^2$ for $q\neq0$, together
with positivity of $K_s$ and $\phi_{s,\tau}$, gives
\[
 \rho_{s,\tau}(1)
 <\int_0^\pi\int_0^\pi\int_{\mathbb R}
 q^2K_s(q,\theta,\varphi)\phi_{s,\tau}(\theta)
 \phi_{s,\tau}(\varphi)\,dq\,d\theta\,d\varphi
 =\kappa_{s,\tau}.
\]
The moment integral is finite. This proves
\eqref{eq:strict-stability-comparison}.

Under the gap assumption, \eqref{eq:intro-mu-zero} gives
$\mu_{s,\tau}(0)=\kappa_{s,\tau}>\Gamma^{\rm hp}_{s,\tau}$.
Continuity and coercivity of $\mu_{s,\tau}$ force a minimizer away from
zero. Evenness gives its opposite, and
Theorem~\ref{thm:optimal-hp} makes the set finite.
For $\tau=2$, Proposition~\ref{prop:local-benchmark} and
Theorem~\ref{thm:sharp-local-limit} give
\[
 \frac{2(1-s)}\pi\{\lambda_2(0)-\lambda_1(0)\}\longrightarrow2,
 \qquad
 \frac{2(1-s)}\pi\kappa_{s,2}\longrightarrow1.
\]
The gap assumption therefore holds for $s$ sufficiently close to one.
\end{proof}

\begin{remark}
The minimizing frequencies refer to the fixed distance with multiplier
$h(\xi)=\min\{\xi^2,1\}$. The strict inequality
\eqref{eq:strict-stability-comparison} shows that its global coefficient
is smaller than the quadratic dispersion coefficient. It does not locate
the minimizing frequencies explicitly.
\end{remark}

\begin{corollary}[Sharp axial order]
Let $0<\alpha<2$. There is no $c>0$ such that the left-hand side of
\eqref{eq:intro-mainB} controls
\[
 \int_{\mathbb R}\min\{|\xi|^\alpha,1\}
 \|P_0\widehat w(\xi,\cdot)\|_{H_{s,\tau}}^2\,d\xi
\]
for every test function. Thus the quadratic power in the axial term of
\eqref{eq:intro-distance} is optimal.
\end{corollary}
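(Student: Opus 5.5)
The plan is to test against axial wave packets concentrated at low frequency, built on the angular ground state. This way both the deficit and the proposed control term are read off fiberwise through the closed direct-integral identity \eqref{eq:closed-direct-integral}. Fix $\chi\in C_c^\infty(\mathbb R)$ with $\|\chi\|_2=1$, take $\psi_j\in C_c^\infty(0,\pi)$ with $\psi_j\to\phi_{s,\tau}$ in $\mathscr Y_s$, and consider
\[
 w_{R}(t,\theta)=R^{-1/2}\chi(t/R)\phi_{s,\tau}(\theta),\qquad
 \widehat w_R(\xi,\theta)=R^{1/2}\widehat\chi(R\xi)\phi_{s,\tau}(\theta).
\]
I would first work with $w_R\in\mathscr X_s$. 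At the end I would pass to core functions $R^{-1/2}\chi(t/R)\psi_j(\theta)$ by a diagonal choice of $j=j(R)$. This is legitimate because both sides are continuous in $\mathscr X_s$: the left side is the closed form, and the right side is bounded by the $L^2(m_{s,\tau})$ norm.

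For the left-hand side, use $\mathfrak b_{s,\xi}[\phi_{s,\tau}]-\Lambda_{s,\tau}=\rho_{s,\tau}(\xi)$. This gives
\[
 \mathcal H_s-\Lambda_{s,\tau}\mathcal R_{s,\tau}
 =\int_{\mathbb R}|\widehat\chi(\zeta)|^2\rho_{s,\tau}(\zeta/R)\,d\zeta.
\]
By \eqref{eq:rho-expansion} and \eqref{eq:rho-positive-growth} we have $\rho_{s,\tau}(\xi)\le C\min\{\xi^2,1+|\xi|^{2s}\}\le C\xi^2(1+|\xi|^{2s-2})$, and in particular $\rho_{s,\tau}(\xi)\le C\xi^2$ for all $\xi$. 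Hence the deficit is at most $CR^{-2}\int\zeta^2|\widehat\chi|^2\,d\zeta=O(R^{-2})$. The Schwartz decay of $\widehat\chi$ makes the moment finite.

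For the right-hand side, $P_0\widehat w_R=\widehat w_R$ and $\|\phi_{s,\tau}\|_{H_{s,\tau}}=1$. Therefore the control term equals $\int|\widehat\chi(\zeta)|^2\min\{|\zeta/R|^\alpha,1\}\,d\zeta$. For $R\ge1$ this is at least $R^{-\alpha}\int_{|\zeta|\le1}|\zeta|^\alpha|\widehat\chi(\zeta)|^2\,d\zeta=c_\chi R^{-\alpha}$, and $c_\chi>0$ because $\widehat\chi\not\equiv0$ is analytic. The ratio of the deficit to the control term is then $O(R^{\alpha-2})\to0$ as $R\to\infty$, since $\alpha<2$, which rules out any $c>0$.

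The only delicate step is the approximation by genuine test functions. Lemma~\ref{lem:log-cutoff} already shows that the core cutoffs converge in the relevant sense for fixed $R$. For fixed $R$, moreover, $w_R^{(j)}\to w_R$ in $\mathscr X_s$ as $j\to\infty$, because $\mathfrak q_s$ of the difference is $\int R|\widehat\chi(R\xi)|^2\mathfrak b_{s,\xi}[\psi_j-\phi_{s,\tau}]\,d\xi$. By \eqref{eq:fiber-growth} this is bounded by $C_R\|\psi_j-\phi_{s,\tau}\|_{\mathscr Y_s}^2$. So for each $R$ one chooses $j$ large enough that both sides are within a factor of two of their $w_R$ values. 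This yields core test functions whose ratio tends to zero.
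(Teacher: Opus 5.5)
Your proposal is correct and follows essentially the paper's argument. You test with a low-frequency packet $a(t)\phi_{s,\tau}(\theta)$ at scale $\varepsilon=1/R$, whose deficit $\int\rho_{s,\tau}|\widehat a|^2$ is $O(\varepsilon^2)$ while the $\alpha$-multiplier integral is of order $\varepsilon^\alpha$, and then approximate from the core. The differences are only cosmetic. The paper takes the packet compactly supported in frequency and uses the expansion \eqref{eq:rho-expansion} together with generic core density with an $o(\varepsilon^2)$ error. You take it compactly supported in $t$ and use a global bound $\rho_{s,\tau}(\xi)\le C\xi^2$; this bound also follows at once from $2(1-\cos x)\le x^2$, which gives $\rho_{s,\tau}(\xi)\le\kappa_{s,\tau}\xi^2$. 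As a result, you only need to approximate $\phi_{s,\tau}$ in the angular variable.
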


\begin{proof}
Take
\[
 \widehat w_\varepsilon(\xi,\theta)=
 \varepsilon^{-1/2}\eta(\xi/\varepsilon)
 \phi_{s,\tau}(\theta),
\]
where $\eta\in C_c^\infty((-1,1))$ and $\|\eta\|_2=1$.
Equation \eqref{eq:rho-expansion} gives
\[
 \mathfrak q_s[w_\varepsilon]
 -\Lambda_{s,\tau}\|w_\varepsilon\|_{L^2(\mathbb R;H_{s,\tau})}^2
 =\kappa_{s,\tau}\varepsilon^2
 \int \zeta^2|\eta(\zeta)|^2\,d\zeta+O(\varepsilon^4).
\]
A direct change of variables gives
\[
 \int\min\{|\xi|^\alpha,1\}
 \|P_0\widehat w_\varepsilon(\xi)\|^2\,d\xi
 =\varepsilon^\alpha\int|\zeta|^\alpha|\eta(\zeta)|^2\,d\zeta
\]
for small $\varepsilon$. Since $\alpha<2$, this is larger in order than
the preceding deficit. No positive uniform coefficient is possible. The
displayed functions belong to the completed form domain. For each
$\varepsilon$, core density gives $v_\varepsilon\in\mathscr C$ such that
\[
 \|v_\varepsilon-w_\varepsilon\|_{\mathscr X_s}=o(\varepsilon^2).
\]
The form Cauchy--Schwarz inequality and the uniform boundedness of
$\|w_\varepsilon\|_{\mathscr X_s}$ show that the deficit of
$v_\varepsilon$ differs from that of $w_\varepsilon$ by
$o(\varepsilon^2)$. The multiplier $\min\{|\xi|^\alpha,1\}$ is bounded by
one. Plancherel's theorem therefore shows that the corresponding
multiplier integrals differ by $o(\varepsilon^2)=o(\varepsilon^\alpha)$.
Their quotient still tends to zero. The inverse logarithmic polar
transform of $v_\varepsilon$ belongs to
$C_c^\infty(\mathbb R^2_+)$. This proves the assertion on the test core.
\end{proof}

\subsection*{Energy decomposition and extremizing sequences}

Fix $\delta>0$ such that the analytic projection $P(\xi)$ and the
normalized eigenfunction $\phi(\xi)$ are defined for $|\xi|<4\delta$.
Let $\chi_0\in C_c^\infty((-2\delta,2\delta))$ be even and satisfy
$0\leq\chi_0\leq1$ and $\chi_0=1$ on $[-\delta,\delta]$. For
$w\in\mathscr X_s$, define
\begin{equation}\label{eq:canonical-decomposition}
 \begin{split}
 c(\xi)&=\langle\widehat w(\xi,\cdot),\phi(\xi)\rangle_{H_{s,\tau}},
 \qquad |\xi|<2\delta,\\
 c(\xi)&=0,\qquad |\xi|\geq2\delta,\\
 \widehat a(\xi)&=\chi_0(\xi)c(\xi),\\
 \widehat r(\xi,\theta)&=\widehat w(\xi,\theta)
 -\widehat a(\xi)\phi_{s,\tau}(\theta).
 \end{split}
\end{equation}
The cutoff restricts the moving projection to a neighborhood of zero.

\begin{proposition}[Two-sided energy decomposition]
\label{lem:energy-decomposition}
There is $C\geq1$, depending on $s$, $\tau$, and the fixed cutoff, such
that for every $w\in\mathscr X_s$,
\begin{equation}\label{eq:energy-decomposition}
 \begin{split}
 C^{-1}\left(\|r\|_{\mathscr X_s}^2+
 \int_{\mathbb R}\rho_{s,\tau}(\xi)|\widehat a(\xi)|^2\,d\xi\right)
 &\leq\mathfrak q_s[w]-\Lambda_{s,\tau}\|w\|_{L^2(m_{s,\tau})}^2\\
 &\leq C\left(\|r\|_{\mathscr X_s}^2+
 \int_{\mathbb R}\rho_{s,\tau}(\xi)|\widehat a(\xi)|^2\,d\xi\right).
 \end{split}
\end{equation}
If $\mathfrak q_s[w]\leq C_0$ and $\|w\|_{L^2(m_{s,\tau})}=1$, then
$\|a\|_2+\|r\|_{\mathscr X_s}$ is bounded by a constant depending only
on $C_0$ and the fixed parameters.
\end{proposition}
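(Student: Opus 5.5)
The plan is to work fiberwise. By the closed direct-integral identity \eqref{eq:closed-direct-integral} and Plancherel,
\[
\mathfrak q_s[w]-\Lambda_{s,\tau}\|w\|^2=\int_{\mathbb R}\mathfrak d_\xi[\widehat w(\xi)]\,d\xi,
\qquad
\|r\|_{\mathscr X_s}^2\asymp\int_{\mathbb R}\bigl(\mathfrak b_{s,\xi}[\widehat r(\xi)]+\|\widehat r(\xi)\|_H^2\bigr)\,d\xi .
\]
Here $\mathfrak d_\xi=\mathfrak b_{s,\xi}-\Lambda_{s,\tau}\|\cdot\|_H^2$. Using \eqref{eq:fiber-positive}, the base norm can be dropped in the second expression, up to constants. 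So it suffices to prove, for almost every $\xi$ and with $f=\widehat w(\xi)$, $a=\widehat a(\xi)$, $\rho=\rho_{s,\tau}(\xi)$, the pointwise two-sided estimate
\[
\mathfrak d_\xi[f]\asymp \mathfrak b_{s,\xi}[f-a\phi_{s,\tau}]+\rho|a|^2,
\]
with constants uniform in $\xi$. I would split into three frequency regions: $|\xi|\ge 2\delta$, $|\xi|\le\delta$, and the transition band $\delta\le|\xi|\le2\delta$.

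\emph{High frequencies, $|\xi|\ge2\delta$.} Here $a=0$, so $r=f$. The lower bound $\mathfrak d_\xi[f]\ge c(2\delta)\|f\|_H^2$ from Lemma~\ref{lem:away-gap} holds. Combining it with $\mathfrak d_\xi\le\mathfrak b_{s,\xi}$, we get $\mathfrak b_{s,\xi}[f]=\mathfrak d_\xi[f]+\Lambda\|f\|^2\le(1+\Lambda/c)\mathfrak d_\xi[f]$. Together these give the equivalence in this region.

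\emph{Small frequencies, $|\xi|\le\delta$.} Here $\chi_0=1$, so $a\phi(\xi)=P(\xi)f$. Write $f=a\phi(\xi)+g$ with $g=(I-P(\xi))f$. The spectral decomposition gives exactly
\[
\mathfrak d_\xi[f]=(\lambda_1(\xi)-\Lambda)|a|^2+\mathfrak d_\xi[g].
\]
The gap $\lambda_2(\xi)-\Lambda\ge g_0/2$ makes $\mathfrak d_\xi[g]\asymp\mathfrak b_{s,\xi}[g]$ by the same absorption as in the high-frequency case.

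For the first term I need $\lambda_1(\xi)-\Lambda\asymp\rho(\xi)$ near zero. Both quantities equal $\kappa\xi^2(1+O(\xi^2))$, by Proposition~\ref{prop:dispersion} and \eqref{eq:rho-expansion}, and $\kappa>0$. The upper comparison $\lambda_1-\Lambda\le\rho$ also follows directly from testing with $\phi_{s,\tau}$.

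It then remains to replace $g$ by $r=f-a\phi_{s,\tau}=g+a(\phi(\xi)-\phi_{s,\tau})$. By Lemma~\ref{lem:threshold-estimate}, $\|\phi(\xi)-\phi_{s,\tau}\|_{\mathscr Y_s}\le C\xi^2$. Hence
\[
\mathfrak b_{s,\xi}[r-g]\le C\xi^4|a|^2\le C\xi^2\rho|a|^2,
\]
using uniform equivalence of the fiber forms on $|\xi|\le1$ via \eqref{eq:uniform-graph-equivalence}. This perturbation is absorbed on both sides by the triangle inequality, with constants independent of $\delta$ once $\delta$ is small.

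\emph{Transition band, $\delta\le|\xi|\le2\delta$.} This is the main obstacle, because $a=\chi_0 c$ is only a fraction of the projection coefficient, so neither of the exact decompositions above applies. I would handle it by brute compactness of constants. In this band Lemma~\ref{lem:away-gap} gives $\mathfrak d_\xi[f]\ge c\|f\|_H^2$, hence $\mathfrak d_\xi[f]\asymp\mathfrak b_{s,\xi}[f]\asymp\|f\|_{\mathscr Y_s}^2$. Moreover $|a|\le\|f\|_H$ and $\rho\le C$ on the band, while $\rho\ge c_\delta>0$ there since $\rho>0$ off zero and is continuous. For the upper bound on the deficit, I would use
\[
\|f\|_{\mathscr Y}\le\|r\|_{\mathscr Y}+|a|\,\|\phi_{s,\tau}\|_{\mathscr Y}
\quad\text{and}\quad
|a|^2\le\rho|a|^2/c_\delta .
\]
For the lower bound, I would use $\|r\|_{\mathscr Y}\le\|f\|_{\mathscr Y}+C\|f\|_H$ and $\rho|a|^2\le C\|f\|_H^2$. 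The constants then depend on the fixed $\delta$, which the statement allows.

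\emph{Conclusion and boundedness claim.} Integrating the three pointwise estimates in $\xi$ gives \eqref{eq:energy-decomposition}. For the final boundedness claim, Plancherel and $|\widehat a(\xi)|\le\|\widehat w(\xi)\|_H$ give $\|a\|_2\le\|w\|_{L^2(m_{s,\tau})}=1$. The bound on $\|r\|_{\mathscr X_s}$ then follows either from the left inequality of \eqref{eq:energy-decomposition}, since the deficit is at most $C_0$, or directly from the triangle inequality.

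The measurability of $c(\xi)$, which is needed so that $a\in L^2$ and $r\in\mathscr X_s$, follows from the analyticity of $\phi(\xi)$. The fact that $r(\xi)\in\mathscr Y_s$ with finite integrated energy comes from the pointwise bounds themselves.
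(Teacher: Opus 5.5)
Your proposal is correct. For the lower bound it follows the paper's proof essentially step for step. It uses the same three frequency ranges. On $|\xi|<\delta$ it uses the same exact spectral splitting $\mathfrak d_\xi[f]=(\lambda_1(\xi)-\Lambda_{s,\tau})|c(\xi)|^2+\mathfrak d_\xi[(I-P(\xi))f]$, with the $O(\xi^2)$ eigenvector correction from Lemma~\ref{lem:threshold-estimate}. On the transition band and beyond it uses the same away-from-zero gap.

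Where you differ is the upper bound. You prove it fiberwise. This forces you to use three extra facts: $\lambda_1(\xi)-\Lambda_{s,\tau}\leq\rho_{s,\tau}(\xi)$; the comparison $\rho_{s,\tau}(\xi)\gtrsim\xi^2$ near zero, needed to absorb the term $\xi^4|\widehat a(\xi)|^2$; and $\inf_{\delta\leq|\xi|\leq2\delta}\rho_{s,\tau}>0$ on the transition band.

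The paper instead argues globally with the nonnegative form $E_{s,\tau}[v]=\mathfrak q_s[v]-\Lambda_{s,\tau}\|v\|_{L^2(m_{s,\tau})}^2$. The inequality $E_{s,\tau}[r+a\phi_{s,\tau}]\leq2E_{s,\tau}[r]+2E_{s,\tau}[a\phi_{s,\tau}]$ and the trivial bound $E_{s,\tau}[r]\leq\|r\|_{\mathscr X_s}^2$ are combined with the exact identity $E_{s,\tau}[a\phi_{s,\tau}]=\int\rho_{s,\tau}|\widehat a|^2\,d\xi$. That identity is valid because $\widehat a$ has bounded frequency support. This gives the upper bound with constant $2$, independent of the cutoff, and needs no transition-band analysis.

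Your route costs more but yields a stronger, pointwise-in-$\xi$ two-sided equivalence $\mathfrak d_\xi[\widehat w(\xi)]\asymp\mathfrak b_{s,\xi}[\widehat r(\xi)]+\rho_{s,\tau}(\xi)|\widehat a(\xi)|^2$. It also automatically gives the membership $r\in\mathscr X_s$ through the closed direct-integral domain. In addition, your observation $|\widehat a(\xi)|\leq\|\widehat w(\xi)\|_{H_{s,\tau}}$ gives $\|a\|_2\leq1$ directly in the final boundedness claim, slightly more cleanly than the paper's appeal to $w=a\phi_{s,\tau}+r$.
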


\begin{proof}
Write
\[
 D_\xi[f]=\mathfrak b_{s,\xi}[f]
 -\Lambda_{s,\tau}\|f\|_{H_{s,\tau}}^2.
\]
We divide the proof into three frequency ranges.

If $|\xi|<\delta$, then
\[
 \widehat w=c(\xi)\phi(\xi)+z(\xi),\qquad
 z(\xi)=(I-P(\xi))\widehat w.
\]
The two terms are orthogonal for both the Hilbert norm and the fiber
operator. Hence
\begin{equation}\label{eq:exact-low-decomposition}
 D_\xi[\widehat w]
 =(\lambda_1(\xi)-\Lambda_{s,\tau})|c(\xi)|^2+D_\xi[z(\xi)].
\end{equation}
The spectral gap above $\lambda_1(\xi)$ gives
\[
 \mathfrak b_{s,\xi}[z]\leq C D_\xi[z].
\]
Equations \eqref{eq:projection-close} and
\eqref{eq:intro-dispersion} give
\[
 \|\phi(\xi)-\phi_{s,\tau}\|_{\mathscr Y_s}\leq C\xi^2,
 \qquad \xi^2|c(\xi)|^2\leq C D_\xi[\widehat w].
\]
Since
$\widehat r=z+c(\phi(\xi)-\phi_{s,\tau})$ in this region, the form
triangle inequality gives
\[
 \mathfrak b_{s,\xi}[\widehat r]^{1/2}
 \leq\mathfrak b_{s,\xi}[z]^{1/2}
 +|c(\xi)|\mathfrak b_{s,\xi}
 [\phi(\xi)-\phi_{s,\tau}]^{1/2}.
\]
Uniform graph equivalence and \eqref{eq:projection-close} imply
\[
 \mathfrak b_{s,\xi}[\phi(\xi)-\phi_{s,\tau}]
 \leq C\xi^4.
\]
Combining these estimates with $\rho_{s,\tau}(\xi)\leq C\xi^2$
and \eqref{eq:exact-low-decomposition}, we obtain
\begin{equation}\label{eq:low-r-control}
 \mathfrak b_{s,\xi}[\widehat r]
 +\rho_{s,\tau}(\xi)|\widehat a(\xi)|^2
 \leq C D_\xi[\widehat w].
\end{equation}

On $\delta\leq|\xi|\leq2\delta$, Lemma~\ref{lem:away-gap} gives
\[
 \|\widehat w\|_{H_{s,\tau}}^2\leq C D_\xi[\widehat w],
 \qquad
 \mathfrak b_{s,\xi}[\widehat w]\leq C D_\xi[\widehat w].
\]
By Cauchy--Schwarz and $\|\phi(\xi)\|_{H_{s,\tau}}=1$,
\[
 |\widehat a(\xi)|\leq\|\widehat w(\xi)\|_{H_{s,\tau}}.
\]
Moreover, $\mathfrak b_{s,\xi}[\phi_{s,\tau}]$ and
$\rho_{s,\tau}(\xi)$ are bounded on this compact interval. Thus
\[
 \begin{split}
 \mathfrak b_{s,\xi}[\widehat r]
 &\leq2\mathfrak b_{s,\xi}[\widehat w]
 +2|\widehat a|^2\mathfrak b_{s,\xi}[\phi_{s,\tau}]
 \leq C D_\xi[\widehat w],\\
 \rho_{s,\tau}(\xi)|\widehat a|^2
 &\leq C\|\widehat w\|_{H_{s,\tau}}^2
 \leq C D_\xi[\widehat w].
 \end{split}
\]
This proves \eqref{eq:low-r-control} in the middle range.

If $|\xi|>2\delta$, then $\widehat a=0$ and
$\widehat r=\widehat w$. Lemma~\ref{lem:away-gap} again gives
\[
 \mathfrak b_{s,\xi}[\widehat r]
 =D_\xi[\widehat w]+\Lambda_{s,\tau}
 \|\widehat w\|_{H_{s,\tau}}^2
 \leq C D_\xi[\widehat w].
\]
Integrate the three estimates and use
\eqref{eq:closed-direct-integral}. Uniform fiber coercivity adds
$\|r\|_{L^2(\mathbb R;H_{s,\tau})}^2$ to the left-hand side. This proves
the lower bound in \eqref{eq:energy-decomposition}.
For the upper bound, use the nonnegative quadratic form
\[
 E_{s,\tau}[v]=\mathfrak q_s[v]
 -\Lambda_{s,\tau}\|v\|_{L^2(m_{s,\tau})}^2.
\]
The canonical $\widehat a$ is supported in a fixed bounded frequency
interval. Hence $a\phi_{s,\tau}\in\mathscr X_s$, and the direct-integral
identity gives
\[
 E_{s,\tau}[a\phi_{s,\tau}]
 =\int_{\mathbb R}\rho_{s,\tau}(\xi)|\widehat a(\xi)|^2\,d\xi.
\]
Cauchy--Schwarz for $E_{s,\tau}$ yields
\[
 E_{s,\tau}[w]\leq2E_{s,\tau}[r]+2E_{s,\tau}[a\phi_{s,\tau}]
 \leq2\|r\|_{\mathscr X_s}^2+
 2\int_{\mathbb R}\rho_{s,\tau}(\xi)|\widehat a(\xi)|^2\,d\xi.
\]
This proves the upper bound. The boundedness assertion follows
from the lower bound, Plancherel's identity, and
$w=a\phi_{s,\tau}+r$.
\end{proof}

\begin{proof}[Proof of Theorem~\ref{thm:C}]
The two-sided estimate is Proposition~\ref{lem:energy-decomposition}.
Assume (1). The transformed sequence is normalized in
$L^2(m_{s,\tau})$, and its deficit tends to zero. Apply
Proposition~\ref{lem:energy-decomposition}. We obtain
\eqref{eq:intro-rn} and \eqref{eq:intro-an}. Since
$w_n=a_n\phi_{s,\tau}+r_n$ and $\|\phi_{s,\tau}\|_{H_{s,\tau}}=1$,
\[
 \left|\|a_n\|_2-1\right|
 \leq\|r_n\|_{L^2(\mathbb R;H_{s,\tau})}.
\]
Hence $\|a_n\|_2\to1$. This proves (2).

Conversely, assume (2). Since
$a_n\phi_{s,\tau}=w_n-r_n\in\mathscr X_s$, the direct-integral identity gives
\begin{equation}\label{eq:pure-channel}
 \mathfrak q_s[a_n\phi_{s,\tau}]
 -\Lambda_{s,\tau}\|a_n\|_2^2
 =\int_{\mathbb R}\rho_{s,\tau}(\xi)
 |\widehat a_n(\xi)|^2\,d\xi.
\end{equation}
By \eqref{eq:intro-an} and $\|a_n\|_2\to1$, the functions
$a_n\phi_{s,\tau}$ are bounded in $\mathscr X_s$. The form
Cauchy--Schwarz inequality gives
\[
 |\mathfrak q_s[w_n]-\mathfrak q_s[a_n\phi_{s,\tau}]|
 \leq\mathfrak q_s[r_n]
 +2\mathfrak q_s[a_n\phi_{s,\tau}]^{1/2}
 \mathfrak q_s[r_n]^{1/2}=o(1).
\]
Equation \eqref{eq:pure-channel} now proves (1).
\end{proof}

\begin{corollary}[No compact extremizing sequence]
Let $\{u_n\}$ be normalized and extremizing, and let $w_n$ be its
logarithmic polar transform. For every $R>0$,
\[
 \sup_{t_0\in\mathbb R}
 \int_{t_0-R}^{t_0+R}\int_0^\pi
 m_{s,\tau}(\theta)|w_n(t,\theta)|^2\,d\theta\,dt
 \longrightarrow0.
\]
Consequently, no choice of normalized dilations of $\{u_n\}$ admits a
strongly convergent subsequence in the remainder norm.
\end{corollary}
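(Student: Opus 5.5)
The plan is to combine the energy decomposition of Theorem~\ref{thm:C} with a low-frequency concentration argument. Normalize $\mathcal R_{s,\tau}[u_n]=1$, so that $\|w_n\|_{L^2(\mathbb R;H_{s,\tau})}=1$. Theorem~\ref{thm:C} gives $w_n=a_n\phi_{s,\tau}+r_n$ with $\|r_n\|_{\mathscr X_s}\to0$, $\|a_n\|_2\to1$, and $\int\rho_{s,\tau}|\widehat a_n|^2\to0$. The contribution of $r_n$ to every window integral is at most $\|r_n\|_{L^2(\mathbb R;H_{s,\tau})}^2\to0$, uniformly in $t_0$. Since $\|\phi_{s,\tau}\|_{H_{s,\tau}}=1$, it therefore suffices to prove
\[
 \sup_{t_0}\int_{t_0-R}^{t_0+R}|a_n(t)|^2\,dt\longrightarrow0.
\]
By the triangle inequality in $L^2$ of the window, the cross terms are controlled by these two pieces.

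First, I show that $\widehat a_n$ concentrates at $\xi=0$. By \eqref{eq:rho-positive-growth} and the expansion $\rho_{s,\tau}(\xi)=\kappa_{s,\tau}\xi^2+O(\xi^4)$ with $\kappa_{s,\tau}>0$, there is $c>0$ with $\rho_{s,\tau}(\xi)\geq c\min\{\xi^2,1\}$. Continuity and positivity cover the compact middle range, and the growth bound covers the tail. Hence for every $\varepsilon>0$, $\int_{|\xi|>\varepsilon}|\widehat a_n|^2\to0$. In fact the canonical $\widehat a_n$ is supported in $|\xi|<2\delta$, but this is not needed.

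Second, I estimate the window mass of a low-frequency function. Split $a_n=b_n+e_n$, where $\widehat b_n=\widehat a_n\mathbf 1_{|\xi|\leq\varepsilon}$ and $\|e_n\|_2\to0$. Cauchy--Schwarz on the Fourier side gives
\[
 \|b_n\|_{L^\infty}\leq(2\pi)^{-1/2}\|\widehat b_n\|_{L^1}\leq(2\pi)^{-1/2}(2\varepsilon)^{1/2}\|a_n\|_2.
\]
Thus $\int_{t_0-R}^{t_0+R}|b_n|^2\leq 2R\cdot\pi^{-1}\varepsilon\|a_n\|_2^2$ uniformly in $t_0$. Letting $n\to\infty$ and then $\varepsilon\to0$ proves the displayed uniform vanishing.

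For the consequence, recall that normalized dilations act as translations $w\mapsto w(\cdot+\log\rho_n,\cdot)$, which preserve both $\mathcal H_s$ and $\mathcal R_{s,\tau}$, so the dilated sequence is again normalized and extremizing. Suppose a subsequence converges strongly in the remainder norm, that is in $L^2(\mathbb R;H_{s,\tau})$, to some $w$. Then $\|w\|=1$, so some window $[-R,R]$ carries mass at least $1/2$ of $w$. Strong convergence would then force the window mass of the translated $w_n$ to be at least $1/4$ eventually. This contradicts the uniform vanishing, which is invariant under translation since the supremum is taken over $t_0$. Convergence in the energy norm is stronger and is therefore also excluded.

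The only delicate point is the lower bound $\rho_{s,\tau}\geq c\min\{\xi^2,1\}$ globally. It follows from three facts already established: positivity for $\xi\neq0$, continuity, and the growth bound \eqref{eq:rho-positive-growth}.
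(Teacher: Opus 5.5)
Your proposal is correct and follows essentially the same route as the paper: the decomposition from Theorem~\ref{thm:C}, a low/high frequency split of $\widehat a_n$ with the Cauchy--Schwarz $L^\infty$ bound on the low part and positivity plus coercivity of $\rho_{s,\tau}$ on the high part, and the fact that normalized dilations act as translations in $t$. Your global bound $\rho_{s,\tau}\geq c\min\{\xi^2,1\}$ is more than is needed, since only $\inf_{|\xi|\geq\varepsilon}\rho_{s,\tau}>0$ is used, but it follows from the same facts.
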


\begin{proof}
Use the decomposition $w_n=a_n\phi_{s,\tau}+r_n$ from
Theorem~\ref{thm:C}. We have $\|r_n\|_{L^2(\mathbb R;H_{s,\tau})}\to0$
and $\|a_n\|_2\to1$. For $\delta>0$, split
$\widehat a_n=\boldsymbol 1_{\{|\xi|\leq\delta\}}\widehat a_n+
\boldsymbol 1_{\{|\xi|>\delta\}}\widehat a_n$ and denote the inverse
transforms by $a_n^{\rm low}$ and $a_n^{\rm high}$. Cauchy--Schwarz in
frequency gives
\[
 \|a_n^{\rm low}\|_\infty^2\leq C\delta\|a_n\|_2^2.
\]
Since $\rho_{s,\tau}$ is continuous, positive away from zero, and
coercive at infinity,
\[
 \|a_n^{\rm high}\|_2^2
 \leq\left(\inf_{|\xi|\geq\delta}\rho_{s,\tau}(\xi)\right)^{-1}
 \int_{\mathbb R}\rho_{s,\tau}(\xi)|\widehat a_n(\xi)|^2\,d\xi
 \longrightarrow0.
\]
It follows that
\[
 \limsup_{n\to\infty}\sup_{t_0\in\mathbb R}
 \int_{t_0-R}^{t_0+R}|a_n(t)|^2\,dt
 \leq CR\delta.
\]
Let $\delta\downarrow0$ and use $\|\phi_{s,\tau}\|_{H_{s,\tau}}=1$
and $\|r_n\|_{L^2(\mathbb R;H_{s,\tau})}\to0$. This proves local
vanishing. Normalized dilations translate $t$. A nonzero strong limit
after such translations would have positive mass on a bounded interval,
which is impossible.
\end{proof}

\begin{remark}
Theorem~\ref{thm:C} fixes the limiting angular profile and controls the
error in the energy norm. The remaining loss of compactness lies in the
logarithmic radial amplitude.
\end{remark}

\section{Higher-dimensional splitting and extremizing sequences}
\label{sec:dimension}

We now set $p=2$ and retain the notation $x=(z,X)$ and $n=d-2$. Let
$\widehat u(\zeta,X)$ denote the unitary Fourier transform in $z$. For
$f\in C_c^\infty(\mathbb R^2_+)$, set
\begin{equation}\label{eq:transverse-fiber}
 \begin{split}
 \mathcal E_{d,s,\zeta}[f]
 =\int_{\mathbb R^2_+}\int_{\mathbb R^2_+}\int_{\mathbb R^n}
 \frac{|f(X)-e^{-i\zeta\cdot h}f(Y)|^2}
 {(|X-Y|^2+|h|^2)^{(d+2s)/2}}\,dh\,dX\,dY.
 \end{split}
\end{equation}

\begin{lemma}[Transverse Fourier identity]\label{lem:transverse-identity}
For every $\zeta\in\mathbb R^n$, there is a nonnegative quadratic form
$\mathcal T_{d,s,\zeta}$ such that
\begin{equation}\label{eq:transverse-splitting}
 \mathcal E_{d,s,\zeta}[f]
 =A_{d,s}[f]_{s,+}^2+\mathcal T_{d,s,\zeta}[f].
\end{equation}
It satisfies
\begin{equation}\label{eq:transverse-bound}
 0\leq\mathcal T_{d,s,\zeta}[f]
 \leq C_{d,s}|\zeta|^{2s}\|f\|_{L^2(\mathbb R^2_+)}^2.
\end{equation}
If $\zeta\neq0$, then
$\mathcal T_{d,s,\zeta}[f]=0$ only when $f=0$.
Moreover,
\begin{equation}\label{eq:dimension-form-splitting}
 \mathcal H_{d,s}[u]
 =\int_{\mathbb R^n}\left\{
 A_{d,s}\mathcal H_s[\widehat u(\zeta,\cdot)]
 +\mathcal T_{d,s,\zeta}[\widehat u(\zeta,\cdot)]
 \right\}\,d\zeta.
\end{equation}
\end{lemma}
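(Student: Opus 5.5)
The plan is to define $\mathcal T_{d,s,\zeta}$ explicitly by expanding the square in \eqref{eq:transverse-fiber}. Since $|f(X)-e^{-i\zeta\cdot h}f(Y)|^2=|f(X)-f(Y)|^2+2\operatorname{Re}\bigl((1-e^{-i\zeta\cdot h})f(X)\overline{f(Y)}\bigr)$ and the kernel is even in $h$, we will set
\[
 \mathcal T_{d,s,\zeta}[f]=2\int_{\mathbb R^2_+}\int_{\mathbb R^2_+}G_\zeta(X-Y)\operatorname{Re}\bigl(f(X)\overline{f(Y)}\bigr)\,dX\,dY,
\qquad
 G_\zeta(Z)=\int_{\mathbb R^n}\frac{1-\cos(\zeta\cdot h)}{(|Z|^2+|h|^2)^{(d+2s)/2}}\,dh .
\]
The $|f(X)-f(Y)|^2$ part will then give $A_{d,s}[f]_{s,+}^2$ by \eqref{eq:lp-transverse-kernel} with $p=2$, which is \eqref{eq:transverse-splitting}. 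To make the expansion legitimate for $f\in C_c^\infty$, I would first truncate to $|X-Y|>\varepsilon$ and then let $\varepsilon\downarrow0$. This is harmless: $G_\zeta\geq0$ and $G_\zeta(Z)\leq C|\zeta|^2|Z|^{-2-2s+2}$ near zero is integrable in the plane, and in fact $G_\zeta\in L^1(\mathbb R^2)$.

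For nonnegativity and the upper bound, the natural route is to extend $f$ by zero to $\mathbb R^2$ and use Plancherel in $X$. Then $\mathcal T_{d,s,\zeta}[f]=2(2\pi)\int_{\mathbb R^2}\widehat G_\zeta(\eta)|\widehat f(\eta)|^2\,d\eta$, up to the normalization constant. To compute $\widehat G_\zeta$, note that $(|Z|^2+|h|^2)^{-(d+2s)/2}$ is the restriction of the $d$-dimensional Riesz-type kernel. Hence
\[
 \int_{\mathbb R^2}\int_{\mathbb R^n}\frac{1-\cos(\zeta\cdot h+\eta\cdot Z)}{(|Z|^2+|h|^2)^{(d+2s)/2}}\,dh\,dZ=c_{d,s}(|\zeta|^2+|\eta|^2)^s,
\]
while the same expression with $\zeta=0$ equals $c_{d,s}|\eta|^2{}^{\,s}$. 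Taking the difference yields
\[
 \widehat G_\zeta(\eta)\propto(|\zeta|^2+|\eta|^2)^s-|\eta|^{2s},
\]
which lies between $0$ and $|\zeta|^{2s}$ and is strictly positive when $\zeta\neq0$. The difference of the two $1-\cos$ integrals is exactly $\int G_\zeta(Z)\cos(\eta\cdot Z)\,dZ$, since $\cos(\zeta\cdot h+\eta\cdot Z)$ averages to $\cos(\zeta\cdot h)\cos(\eta\cdot Z)$ by evenness in $h$. Plancherel then gives $0\leq\mathcal T\leq C|\zeta|^{2s}\|f\|_2^2$, and strict positivity of the symbol gives $\mathcal T_{d,s,\zeta}[f]=0\Rightarrow\widehat f=0\Rightarrow f=0$. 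Since $\mathcal T$ is bounded on $L^2$, all statements extend to $L^2(\mathbb R^2_+)$, and in particular to the fibers $\widehat u(\zeta,\cdot)$.

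For \eqref{eq:dimension-form-splitting}, I would write $[u]_{s,+}^2$ with $h=z-z'$ and apply Plancherel in $z$ for each fixed $(X,Y,h)$. Because $u(z-h,Y)$ has transform $e^{-i\zeta\cdot h}\widehat u(\zeta,Y)$, this gives
\[
 [u]_{s,+}^2=\int_{\mathbb R^n}\mathcal E_{d,s,\zeta}[\widehat u(\zeta,\cdot)]\,d\zeta .
\]
The Hardy term also transforms fiberwise: $\int x_d^{-2s}|u|^2=\int d\zeta\int X_2^{-2s}|\widehat u(\zeta,X)|^2\,dX$. Combining these with \eqref{eq:transverse-splitting} and $\mathcal D_{d,s,2}=A_{d,s}\mathcal D_{2,s,2}$ from \eqref{eq:hardy-dimension-factor} yields the identity. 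The fibers $\widehat u(\zeta,\cdot)$ are smooth and compactly supported in $X$, with Schwartz decay in $\zeta$, so $\mathcal H_s$ applies to them; approximating their real and imaginary parts within a fixed compact set suffices.

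I expect the main obstacle to be justifying the Fubini and Plancherel steps, together with the kernel Fourier formula. The difference of the two cosine integrals is not absolutely convergent termwise near the origin, so I would handle it with a cutoff $|Z|^2+|h|^2>\varepsilon^2$ that is symmetric in both variables, and then pass to the limit using the quadratic cancellation of $1-\cos$.
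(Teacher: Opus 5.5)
Your proposal is correct and follows essentially the paper's proof: the same kernel $G_\zeta=L_\zeta=\ell_{|Z|}(0)-\ell_{|Z|}(\zeta)\in L^1(\mathbb R^2)$, the same $|X-Y|>\varepsilon$ cutoff to justify expanding the square, Plancherel in $X$ with symbol $(|\eta|^2+|\zeta|^2)^s-|\eta|^{2s}$ together with the concavity bound and strict positivity, and Plancherel in $z$ plus $\mathcal D_{d,s,2}=A_{d,s}\mathcal D_{2,s,2}$ for the splitting. The only difference is how the symbol is obtained. You read it off from the $d$-dimensional identity $\int_{\mathbb R^d}(1-\cos(k\cdot y))|y|^{-d-2s}\,dy=c_{d,s}|k|^{2s}$ with $k=(\zeta,\eta)$, whereas the paper computes it from the Laplace--Gaussian representation of $\ell_a(\zeta)$. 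Your route is shorter, and the limit you flagged as the main obstacle is harmless: both $1-\cos(\zeta\cdot h+\eta\cdot Z)$ and the odd term $\sin(\zeta\cdot h)\sin(\eta\cdot Z)$ are absolutely integrable against $|y|^{-d-2s}$, so no cutoff is needed there.
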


\begin{proof}
Put $\nu=(d+2s)/2$. For $a>0$, define
\[
 \ell_a(\zeta)=\int_{\mathbb R^n}
 \frac{e^{-ih\cdot\zeta}}{(|h|^2+a^2)^\nu}\,dh.
\]
The Laplace representation and the Gaussian Fourier transform give
\begin{equation}\label{eq:ell-laplace}
 \ell_a(\zeta)=\frac{\pi^{n/2}}{\Gamma(\nu)}
 \int_0^\infty t^s
 \exp\left(-a^2t-\frac{|\zeta|^2}{4t}\right)\,dt.
\end{equation}
In particular,
\[
 \ell_a(0)=
 \frac{\pi^{n/2}\Gamma(1+s)}{\Gamma(\nu)}a^{-2-2s}
 =A_{d,s}a^{-2-2s}.
\]
Plancherel's theorem applied to the $z$ variable gives
\[
 \int_{\mathbb R^d_+}\int_{\mathbb R^d_+}
 \frac{|u(x)-u(y)|^2}{|x-y|^{d+2s}}\,dx\,dy
 =\int_{\mathbb R^n}
 \mathcal E_{d,s,\zeta}[\widehat u(\zeta,\cdot)]\,d\zeta.
\]
To justify this identity, first restrict to $|X-Y|>\varepsilon$. The
kernel in $h$ is then integrable, and Fubini and Plancherel apply.
Monotone convergence as $\varepsilon\downarrow0$ gives the full positive
energy.

Expansion of the square in \eqref{eq:transverse-fiber} yields
\[
 \begin{split}
 \mathcal E_{d,s,\zeta}[f]
 =\int_{\mathbb R^2_+}\int_{\mathbb R^2_+}
 \bigl\{&\ell_{|X-Y|}(0)(|f(X)|^2+|f(Y)|^2)-2\ell_{|X-Y|}(\zeta)
 \operatorname{Re}(f(X)\overline{f(Y)})\bigr\}\,dX\,dY.
 \end{split}
\]
With the same cutoff and
$\ell_{|X-Y|}(0)=A_{d,s}|X-Y|^{-2-2s}$, subtraction of the zero fiber gives
\begin{equation}\label{eq:T-kernel}
 \mathcal T_{d,s,\zeta}[f]
 =2\int_{\mathbb R^2_+}\int_{\mathbb R^2_+}
 L_\zeta(X-Y)\operatorname{Re}(f(X)\overline{f(Y)})\,dX\,dY,
\end{equation}
where $L_\zeta(Z)=\ell_{|Z|}(0)-\ell_{|Z|}(\zeta)$.

By \eqref{eq:ell-laplace}, $L_\zeta\geq0$. The bound
$1-\cos t\leq\min\{t^2/2,2\}$ gives
\begin{equation*}
 0\leq L_\zeta(Z)\leq C_{d,s}
 \min\{|\zeta|^2|Z|^{-2s},|Z|^{-2-2s}\}.
\end{equation*}
For this estimate, scale $h=|Z|r$ in the defining integral; both
comparison integrals are finite. Since $0<s<1$, we have
$L_\zeta\in L^1(\mathbb R^2)$. Thus \eqref{eq:T-kernel} is absolutely
convergent for $f\in L^2(\mathbb R^2_+)$ and is the limit of the cutoff
calculation.

Extend $f$ by zero to $\mathbb R^2$. The multiplier can be computed
directly from \eqref{eq:ell-laplace}. With the unitary Fourier transform
in two variables,
\[
 \mathcal F_{Z\to\kappa}(e^{-t|Z|^2})
 =\frac1{2t}e^{-|\kappa|^2/(4t)}.
\]
For $a\geq0$ and $b>0$, the change of variables $r=1/t$ and integration
by parts give
\[
 \begin{split}
 \int_0^\infty t^{s-1}(e^{-a/t}-e^{-(a+b)/t})\,dt
 &=\int_0^\infty r^{-s-1}(e^{-ar}-e^{-(a+b)r})\,dr\\
 &=\frac{\Gamma(1-s)}s\{(a+b)^s-a^s\}.
 \end{split}
\]
The boundary terms vanish: the difference is $O(r)$ at zero and is
bounded at infinity. The formula remains valid at $a=0$ because
$0<s<1$. The $L^1$ bound for $L_\zeta$ permits Fubini in the Gaussian
representation. Taking $a=|\kappa|^2/4$ and $b=|\zeta|^2/4$ gives
\begin{equation}\label{eq:L-symbol}
 \mathcal F_{Z\to\kappa}L_\zeta
 =c'_{d,s}\left((|\kappa|^2+|\zeta|^2)^s
 -|\kappa|^{2s}\right),\qquad c'_{d,s}>0.
\end{equation}
The cutoff calculation gives \eqref{eq:L-symbol} in distributions. Since
$L_\zeta\in L^1$, Plancherel in $X$ gives
\[
 \mathcal T_{d,s,\zeta}[f]
 =c''_{d,s}\int_{\mathbb R^2}
 \left((|\kappa|^2+|\zeta|^2)^s-|\kappa|^{2s}\right)
 |\mathcal F_X\widetilde f(\kappa)|^2\,d\kappa,
\]
where $\widetilde f$ is the zero extension. The multiplier is nonnegative.
It is strictly positive if $\zeta\neq0$. The concavity inequality
$(a+b)^s-a^s\leq b^s$ proves \eqref{eq:transverse-bound}. It also proves
the strict zero statement and \eqref{eq:transverse-splitting}.

By \eqref{eq:hardy-dimension-factor},
$\mathcal D_{d,s,2}=A_{d,s}\mathcal D_{2,s,2}$. The Hardy term also
splits by Plancherel. Subtracting it and integrating
\eqref{eq:transverse-splitting} in $\zeta$ proves
\eqref{eq:dimension-form-splitting}.
\end{proof}

\begin{proof}[Proof of Theorem~\ref{thm:dimension-reduction}]
Lemma~\ref{lem:lp-dimension} proves
\eqref{eq:intro-dimensional-reduction-p} for $d\geq3$.
For $d=2$ the identity is immediate. Fix $p>1$ and $\tau>0$.
Choose
\[
 0<\sigma<\min\{\tau,\tau_*(s,p)\},
 \qquad
 \tau_*(s,p)=
 \begin{cases}
 2\min\{sp,1\},&1<p<2,\\
 p(sp+1)/(2(p-1)),&p\geq2.
 \end{cases}
\]
The theorem of Kijaczko and Szczukiewicz gives
$\Lambda_{d,s,p,\sigma}>0$. Since
$\mathcal R_{d,s,p,\tau}\leq\mathcal R_{d,s,p,\sigma}$, one has
$\Lambda_{d,s,p,\tau}\geq\Lambda_{d,s,p,\sigma}>0$.
For $p=2$, the sharp identity follows from the same reduction. The planar
non-attainment is Theorem~\ref{thm:A}. Suppose $d\geq3$. Plancherel gives
\begin{equation}\label{eq:dimension-remainder-splitting}
 \mathcal R_{d,s,\tau}[u]
 =\int_{\mathbb R^{d-2}}
 \mathcal R_{s,\tau}[\widehat u(\zeta,\cdot)]\,d\zeta.
\end{equation}
We first show that the energy completion embeds into the weighted
remainder space. Put
$\rho(x)=X_2^{\tau-2s}|X|^{-\tau}$, $\omega(x)=x_d^{s-1/2}$, and
$d\nu=|x-y|^{-d-2s}\,dx\,dy$ on
$\mathbb R^d_+\times\mathbb R^d_+$. Define on the core
\[
 (Su)(x,y)=\sqrt{\omega(x)\omega(y)}
 \left(\frac{u(x)}{\omega(x)}-\frac{u(y)}{\omega(y)}\right).
\]
The $p=2$ identity in Lemma~\ref{lem:nonlinear-Hardy-ground-state},
applied to real and imaginary parts, gives
$\mathcal H_{d,s}[u]=\|Su\|_{L^2(d\nu)}^2$.
Suppose $u_j\to0$ in $L^2(\rho\,dx)$ and $Su_j\to F$ in
$L^2(d\nu)$. Since $\rho$ is positive and finite almost everywhere,
subsequences give $u_j\to0$ Lebesgue-almost everywhere and
$Su_j\to F$ $d\nu$-almost everywhere. The measure $d\nu$ is
$\sigma$-finite and absolutely continuous off the null diagonal, so
the displayed difference formula gives $F=0$. Thus $S$ is closable,
and the completion under
$(\mathcal H_{d,s}+\mathcal R_{d,s,\tau})^{1/2}$ embeds faithfully into
$L^2(\rho\,dx)$.

We now extend \eqref{eq:dimension-remainder-splitting} and
\eqref{eq:dimension-form-splitting} to this closed energy space.
Let $\{u_j\}$ be a Cauchy sequence in the core. Apply
\eqref{eq:dimension-form-splitting} and
\eqref{eq:dimension-remainder-splitting} to $u_j-u_k$. Since every term is
nonnegative,
\[
 \begin{split}
 &\min\{A_{d,s},1\}\int_{\mathbb R^{d-2}}
 \bigl(\mathcal H_s[\widehat u_j-\widehat u_k]
 +\mathcal R_{s,\tau}[\widehat u_j-\widehat u_k]\bigr)\,d\zeta\\
 &\quad+\int_{\mathbb R^{d-2}}
 \mathcal T_{d,s,\zeta}[\widehat u_j-\widehat u_k]\,d\zeta
 \leq
 \mathcal H_{d,s}[u_j-u_k]+\mathcal R_{d,s,\tau}[u_j-u_k].
 \end{split}
\]
The Fourier slices converge in the direct integral of the planar closed
energy space, and the transverse terms converge in their seminorm. The
partial Fourier transform is unitary on
\[
 L^2\left(\mathbb R^{d-2};
 L^2\left(\mathbb R^2_+,
 X_2^{\tau-2s}|X|^{-\tau}\,dX\right)\right),
\]
which is the remainder space. It identifies the slice limit with the
limit of $u_j$. Both splitting identities pass to the limit; the
transverse term is defined by seminorm completion.

Suppose that a nonzero completed energy function attains the sharp
coefficient. Subtract the sharp right-hand side from
\eqref{eq:dimension-form-splitting}. Equations
\eqref{eq:dimension-remainder-splitting} and Theorem~\ref{thm:A} give
\[
 \int_{\mathbb R^{d-2}}\left\{
 A_{d,s}\bigl(\mathcal H_s[\widehat u(\zeta)]
 -\Lambda_{s,\tau}\mathcal R_{s,\tau}[\widehat u(\zeta)]\bigr)
 +\mathcal T_{d,s,\zeta}[\widehat u(\zeta)]\right\}\,d\zeta=0.
\]
Both summands are nonnegative. By Theorem~\ref{thm:A}, the first can
vanish only for the zero planar slice. Hence
$\widehat u(\zeta,\cdot)=0$ almost everywhere. Plancherel in the
remainder space gives $u=0$, a contradiction.
\end{proof}

\begin{corollary}[Lifted stability]\label{cor:lifted-stability}
Let $d\geq3$. For every $u\in C_c^\infty(\mathbb R^d_+)$,
\[
 \begin{split}
 &\mathcal H_{d,s}[u]-\Lambda_{d,s,\tau}
 \mathcal R_{d,s,\tau}[u]\geq A_{d,s}\gamma_{s,\tau}
 \int_{\mathbb R^{d-2}}
 \operatorname{Dist}_{s,\tau}(\widehat u(\zeta,\cdot))^2\,d\zeta
 +\int_{\mathbb R^{d-2}}
 \mathcal T_{d,s,\zeta}[\widehat u(\zeta,\cdot)]\,d\zeta.
 \end{split}
\]
The estimate extends to the closed energy space, with the transverse
term defined by completion as in the proof of
Theorem~\ref{thm:dimension-reduction}.
\end{corollary}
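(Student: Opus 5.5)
The plan is to lift the planar estimate of Theorem~\ref{thm:B} fiberwise through the transverse Fourier splitting of Lemma~\ref{lem:transverse-identity}. For $u\in C_c^\infty(\mathbb R^d_+)$, write $\widehat u(\zeta,\cdot)$ for the partial Fourier transform in $z\in\mathbb R^{d-2}$. Equations \eqref{eq:dimension-form-splitting} and \eqref{eq:dimension-remainder-splitting} then give
\[
 \mathcal H_{d,s}[u]-\Lambda_{d,s,\tau}\mathcal R_{d,s,\tau}[u]
 =\int_{\mathbb R^{d-2}}\Bigl\{A_{d,s}\mathcal H_s[\widehat u(\zeta,\cdot)]
 +\mathcal T_{d,s,\zeta}[\widehat u(\zeta,\cdot)]
 -\Lambda_{d,s,\tau}\mathcal R_{s,\tau}[\widehat u(\zeta,\cdot)]\Bigr\}\,d\zeta.
\]
Since Theorem~\ref{thm:dimension-reduction} gives $\Lambda_{d,s,\tau}=A_{d,s}\Lambda_{s,\tau}$, the integrand equals
\[
 A_{d,s}\bigl(\mathcal H_s[\widehat u(\zeta,\cdot)]-\Lambda_{s,\tau}\mathcal R_{s,\tau}[\widehat u(\zeta,\cdot)]\bigr)
 +\mathcal T_{d,s,\zeta}[\widehat u(\zeta,\cdot)].
\]
Applying \eqref{eq:intro-mainB} to each slice and integrating in $\zeta$ then gives the claimed inequality.

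The main obstacle is regularity of the slices. The function $\widehat u(\zeta,\cdot)$ is smooth and compactly supported in $X$, so it lies in $C_c^\infty(\mathbb R^2_+)$ with complex values; it is not a real core function. I would handle this in two ways. First, Theorem~\ref{thm:B} holds for complex core functions: the quadratic forms and $\operatorname{Dist}_{s,\tau}^2$ split additively into real and imaginary parts, because the logarithmic polar transform and $P_0$ are real operators and the Fourier transform in $t$ is applied to each part with Plancherel. Alternatively, one can simply invoke the extension of \eqref{eq:intro-mainB} to $\mathscr X_s$, which the paper states explicitly. Second, measurability in $\zeta$ of $\operatorname{Dist}_{s,\tau}(\widehat u(\zeta,\cdot))^2$ follows from continuity of $\zeta\mapsto\widehat u(\zeta,\cdot)$ in $C_c^\infty$ with a fixed support. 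Both slice integrals are finite because the identities hold for core $u$.

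For the extension to the closed energy space, take core $u_j\to u$ in the completed norm. The proof of Theorem~\ref{thm:dimension-reduction} shows that the slices converge in $L^2(\mathbb R^{d-2};\,\text{planar energy space})$ and that the transverse terms converge in their completed seminorm, so the left side and the $\mathcal T$-integral pass to the limit. The distance is bounded by $C\,\mathcal R_{s,\tau}$ on each slice, since the multiplier is at most one and $P_0$ is a contraction. Hence $\int\operatorname{Dist}_{s,\tau}(\widehat u_j)^2\,d\zeta$ converges by the triangle inequality in $L^2(d\zeta;H)$, where $H$ is the relevant weighted $L^2$ space. Alternatively, lower semicontinuity via Fatou along an a.e.\ convergent subsequence suffices, since only a lower bound on the left is needed.
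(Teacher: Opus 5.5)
Your proposal is correct and follows the paper's proof. Like the paper, you combine the transverse splitting \eqref{eq:dimension-form-splitting}, \eqref{eq:dimension-remainder-splitting} with $\Lambda_{d,s,\tau}=A_{d,s}\Lambda_{s,\tau}$, apply Theorem~\ref{thm:B} on each complex-valued Fourier slice, integrate in $\zeta$, and pass to the closed space using continuity of the distance and of the completed transverse seminorm. The only differences are minor: you bound the distance by $\mathcal R_{s,\tau}$ where the paper uses the planar energy, and your remarks on complex slices and measurability make explicit what the paper leaves implicit.
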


\begin{proof}
Use \eqref{eq:intro-dimensional-reduction} and
\eqref{eq:dimension-form-splitting}, and integrate Theorem~\ref{thm:B}
over the planar Fourier slices. The distance seminorm is bounded by the
planar energy, so it is continuous under the direct-integral convergence
above. The transverse seminorm is continuous by its definition. This
also proves the estimate on the closed space.
\end{proof}

\begin{theorem}[Joint Fourier--Mellin energy decomposition]
\label{thm:joint-energy-decomposition}
Let $d\geq3$, $0<s<1$, and $\tau>0$. For a function in the closed
energy space, let $w_\zeta(t,\theta)$ be the planar logarithmic polar
transform of $\widehat u(\zeta,X)$. Apply the fixed decomposition
\eqref{eq:canonical-decomposition} to each slice:
\[
 w_\zeta(t,\theta)=a_\zeta(t)\phi_{s,\tau}(\theta)
 +r_\zeta(t,\theta).
\]
Define
\[
 \begin{split}
 \mathcal S_{d,s,\tau}[u]
 =\int_{\mathbb R^{d-2}}
 \left\{\|r_\zeta\|_{\mathscr X_s}^2+
 \int_{\mathbb R}\rho_{s,\tau}(\xi)
 |\widehat a_\zeta(\xi)|^2\,d\xi\right\}\,d\zeta+\int_{\mathbb R^{d-2}}
 \mathcal T_{d,s,\zeta}[\widehat u(\zeta,\cdot)]\,d\zeta.
 \end{split}
\]
There are $c,C>0$ such that
\begin{equation}\label{eq:joint-energy-equivalence}
 c\mathcal S_{d,s,\tau}[u]
 \leq\mathcal H_{d,s}[u]
 -\Lambda_{d,s,\tau}\mathcal R_{d,s,\tau}[u]
 \leq C\mathcal S_{d,s,\tau}[u].
\end{equation}
Consequently, a sequence with $\mathcal R_{d,s,\tau}[u_n]=1$ is
extremizing if and only if all three nonnegative terms defining
$\mathcal S_{d,s,\tau}[u_n]$ tend to zero. In that case,
\[
 \int_{\mathbb R^{d-2}}\|a_{n,\zeta}\|_{L^2(\mathbb R)}^2\,d\zeta
 \longrightarrow1.
\]
\end{theorem}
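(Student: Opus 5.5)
The plan is to combine the transverse splitting of Lemma~\ref{lem:transverse-identity} with the planar two-sided estimate of Proposition~\ref{lem:energy-decomposition}, applied slice by slice in $\zeta$. First, on the core, subtract $\Lambda_{d,s,\tau}\mathcal R_{d,s,\tau}[u]$ from \eqref{eq:dimension-form-splitting}. Use $\Lambda_{d,s,\tau}=A_{d,s}\Lambda_{s,\tau}$ from Theorem~\ref{thm:dimension-reduction} and the remainder splitting \eqref{eq:dimension-remainder-splitting}. This gives the exact identity
\[
 \mathcal H_{d,s}[u]-\Lambda_{d,s,\tau}\mathcal R_{d,s,\tau}[u]
 =\int_{\mathbb R^{d-2}}\Bigl\{A_{d,s}E_{s,\tau}[w_\zeta]
 +\mathcal T_{d,s,\zeta}[\widehat u(\zeta,\cdot)]\Bigr\}\,d\zeta ,
\]
where $E_{s,\tau}[w]=\mathfrak q_s[w]-\Lambda_{s,\tau}\|w\|_{L^2(m_{s,\tau})}^2$ is the planar sharp deficit. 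Here $\mathcal H_s[\widehat u(\zeta,\cdot)]=\mathfrak q_s[w_\zeta]$ by Lemma~\ref{lem:exact-transform}, and the planar remainder equals $\|w_\zeta\|^2_{L^2(m_{s,\tau})}$.

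Next, apply Proposition~\ref{lem:energy-decomposition} to each slice $w_\zeta$, with the same cutoff $\chi_0$ and the same moving projection, so the constant $C$ does not depend on $\zeta$. This bounds $A_{d,s}E_{s,\tau}[w_\zeta]$ above and below by $\|r_\zeta\|^2_{\mathscr X_s}+\int\rho_{s,\tau}|\widehat a_\zeta|^2$, with constants depending only on $s,\tau,d$. The transverse term appears identically on both sides. Integrating in $\zeta$ then gives \eqref{eq:joint-energy-equivalence} on the core, with $c=\min\{A_{d,s}C^{-1},1\}$ and the upper constant $\max\{A_{d,s}C,1\}$.

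The main obstacle is extending this to the closed energy space. I would use the approximation argument from the proof of Theorem~\ref{thm:dimension-reduction}. For a core Cauchy sequence $u_j$, the slices $w_{j,\zeta}$ converge in $L^2(\mathbb R^{d-2};\mathscr X_s)$, and the transverse parts converge in their completed seminorm. The map $w\mapsto(a,r)$ in \eqref{eq:canonical-decomposition} is linear. The lower bound of Proposition~\ref{lem:energy-decomposition} shows it is bounded from $\mathscr X_s$ into the space normed by $\|r\|^2_{\mathscr X_s}+\int\rho|\widehat a|^2$, and it is also bounded into $\|a\|_2$. So $\mathcal S_{d,s,\tau}^{1/2}$ is a seminorm that is continuous under this convergence. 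Measurability in $\zeta$ follows because the decomposition is built from $\chi_0$, $\phi(\xi)$ and the Fourier transform, all measurable. Both sides of \eqref{eq:joint-energy-equivalence} pass to the limit, the left-hand side being interpreted as the closed sharp deficit.

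For the extremizing criterion, assume $\mathcal R_{d,s,\tau}[u_n]=1$. Then the sequence is extremizing exactly when the deficit tends to zero, which by \eqref{eq:joint-energy-equivalence} happens exactly when $\mathcal S_{d,s,\tau}[u_n]\to0$. Since $\mathcal S_{d,s,\tau}$ is a sum of three nonnegative integrals, this is the same as each of the three tending to zero. For the mass statement, write $w_{n,\zeta}=a_{n,\zeta}\phi_{s,\tau}+r_{n,\zeta}$ with $\|\phi_{s,\tau}\|_{H_{s,\tau}}=1$. The triangle inequality in $L^2(\mathbb R^{d-2}\times\mathbb R;H_{s,\tau})$ gives
\[
\Bigl|\Bigl(\int\|a_{n,\zeta}\|_2^2\,d\zeta\Bigr)^{1/2}-1\Bigr|\leq\Bigl(\int\|r_{n,\zeta}\|^2_{L^2(\mathbb R;H_{s,\tau})}\,d\zeta\Bigr)^{1/2}.
\]
The right-hand side is dominated by $\int\|r_{n,\zeta}\|_{\mathscr X_s}^2\,d\zeta\to0$, so $\int_{\mathbb R^{d-2}}\|a_{n,\zeta}\|_2^2\,d\zeta\to1$.
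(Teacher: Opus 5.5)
Your proposal is correct and follows essentially the same route as the paper. The paper likewise subtracts the sharp remainder from \eqref{eq:dimension-form-splitting} using \eqref{eq:dimension-remainder-splitting} and $\Lambda_{d,s,\tau}=A_{d,s}\Lambda_{s,\tau}$, integrates the slice-uniform planar estimate of Proposition~\ref{lem:energy-decomposition} in $\zeta$, passes to the closed space through the boundedness and measurability of the linear maps $w\mapsto a$ and $w\mapsto r$, and gets the mass statement from the same triangle inequality in $L^2(\mathbb R^{d-2}\times\mathbb R;H_{s,\tau})$.
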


\begin{proof}
Subtract the sharp remainder in \eqref{eq:dimension-form-splitting}
and use \eqref{eq:dimension-remainder-splitting}. The result is
\[
 \begin{split}
 &\mathcal H_{d,s}[u]-\Lambda_{d,s,\tau}\mathcal R_{d,s,\tau}[u]\\
 &\quad=A_{d,s}\int_{\mathbb R^{d-2}}
 \left\{\mathfrak q_s[w_\zeta]
 -\Lambda_{s,\tau}\|w_\zeta\|_{L^2(m_{s,\tau})}^2\right\}\,d\zeta
 +\int_{\mathbb R^{d-2}}\mathcal T_{d,s,\zeta}
 [\widehat u(\zeta,\cdot)]\,d\zeta.
 \end{split}
\]
Integrate Proposition~\ref{lem:energy-decomposition} in $\zeta$.
The canonical maps $w\mapsto a$ and $w\mapsto r$ are bounded linear
maps on the relevant form spaces by its proof; their low-frequency
coefficients are measurable. They therefore preserve strong
measurability of the Fourier slices. The two-sided planar estimate and
the exact transverse splitting give \eqref{eq:joint-energy-equivalence},
first on the core and then by the completed identities.
Nonnegativity proves the sequence criterion. Finally, in the Hilbert
space $L^2(\mathbb R^{d-2}\times\mathbb R;H_{s,\tau})$,
\[
 \left|\left(\int\|a_{n,\zeta}\|_2^2\,d\zeta\right)^{1/2}-1\right|
 \leq\left(\int\|r_{n,\zeta}\|_{L^2(m_{s,\tau})}^2\,d\zeta\right)^{1/2}
 \longrightarrow0.
\]
This proves the last assertion.
\end{proof}

\begin{remark}[The higher-dimensional angular singularity]
\label{rem:dimension-scope}
For the original two-plane remainder, the spherical weight is
\[
 m_{d,s,\tau}(\omega)=\omega_d^{\tau-2s}
 (\omega_{d-1}^2+\omega_d^2)^{-\tau/2},
 \qquad \omega\in\mathbb S^{d-1}_+.
\]
Put $r=(\omega_{d-1}^2+\omega_d^2)^{1/2}$ and
$\omega_d=r\sin\vartheta$. Then
\begin{equation*}
 m_{d,s,\tau}(\omega)=r^{-2s}(\sin\vartheta)^{\tau-2s}.
\end{equation*}
The set
\[
 \Sigma=\{\omega\in\mathbb S^{d-1}:
 \omega_{d-1}=\omega_d=0\}\simeq\mathbb S^{d-3}
\]
has codimension two. In a local chart near $\Sigma$, write
$Z=(z,y)$, with $y\in\mathbb R^2_+$ normal and $z$ tangential to
$\Sigma$. The chart metric and density are uniformly comparable to the
Euclidean ones. Choose nonzero smooth functions $\chi$ and $\eta$ with
compact support and
\[
 \operatorname{supp}\chi\subset
 \{y:1<|y|<2,\ y_2>c|y|\},\qquad c>0.
\]
Set
\[
 \psi_\varepsilon(z,y)=\varepsilon^{s-1}
 \eta(z)\chi(y/\varepsilon).
\]
For $d=3$, the tangential factor and all tangential integrations below
are omitted. Direct scaling gives
\[
 \int m_{d,s,\tau}|\psi_\varepsilon|^2\,d\omega\asymp1,
 \qquad \|\psi_\varepsilon\|_2^2=O(\varepsilon^{2s}).
\]
Indeed, the first scaling factor is
$\varepsilon^{2s-2}\varepsilon^{2-2s}=1$. Writing the Fourier variables
as $(k,\xi)$, we have
\[
 \widehat\psi_\varepsilon(k,\xi)=
 \varepsilon^{s+1}\widehat\eta(k)\widehat\chi(\varepsilon\xi).
\]
The change of variables $\nu=\varepsilon\xi$ yields
\[
 \begin{split}
 \|\psi_\varepsilon\|_{\dot H^s}^2
 &=C\int
 (\varepsilon^2|k|^2+|\nu|^2)^s
 |\widehat\eta(k)|^2|\widehat\chi(\nu)|^2\,dk\,d\nu\leq C.
 \end{split}
\]
Here $\dot H^s$ is the homogeneous Gagliardo seminorm in the chart.
Extend the functions by zero outside it. Since
$\omega_d\asymp|y|\asymp\varepsilon$ on their support, the quantities
\[
 \|\psi_\varepsilon\|_{\dot H^s}^2
 +\int_{\mathbb S^{d-1}_+}\omega_d^{-2s}
 |\psi_\varepsilon|^2\,d\omega
 +\|\psi_\varepsilon\|_2^2
\]
are bounded. However, $\psi_\varepsilon\to0$ almost everywhere and their
$L^2(m_{d,s,\tau}d\omega)$ norms stay away from zero. Thus these local
regularity and Hardy bounds do not give a compact weighted embedding.
Moreover,
\[
 \frac{m_{d,s,\tau}}{m_{d,s,\tau_0}}
 =(\sin\vartheta)^{\tau-\tau_0}
\]
provides no small factor as $r\to0$, unlike the planar endpoint estimate
\eqref{eq:endpoint-tightness}. The planar compactness argument therefore
does not extend to this spherical weight. The transverse Fourier splitting
in Corollary~\ref{cor:lifted-stability} retains this additional channel.

Replacing the two-plane radius by $|x|$ gives the weaker weight
$\omega_d^{\tau-2s}$ and removes the factor $r^{-2s}$.
For the original remainder, Theorem~\ref{thm:joint-energy-decomposition}
gives the sequence criterion in joint Fourier--Mellin energy variables.
A physical-space profile decomposition would additionally have to resolve
translations along the edge and their scales relative to planar
dilations.
\end{remark}
\section{The local limit}
\label{sec:local-limit}

For $d\geq2$ and $p>1$, put
\[
 K_{d,p}=\frac1p\int_{\mathbb S^{d-1}}|\omega_d|^p\,d\omega
 =\frac{2\pi^{(d-1)/2}\Gamma((p+1)/2)}
 {p\Gamma((d+p)/2)}.
\]
Define
\[
 \begin{split}
 \mathcal H_{d,1,p}[u]
 & =\int_{\mathbb R^d_+}|\nabla u|^p\,dx
 -\left(\frac{p-1}{p}\right)^p
 \int_{\mathbb R^d_+}\frac{|u|^p}{x_d^p}\,dx,\\
 \mathcal R_{d,1,p,\tau}[u]
 & =\int_{\mathbb R^d_+}
 \frac{x_d^{\tau-p}|u(x)|^p}
 {(x_{d-1}^2+x_d^2)^{\tau/2}}\,dx.
 \end{split}
\]

\begin{proposition}[Limit on the smooth core]
\label{prop:local-core-limit}
Let $d\geq2$, $p>1$, and $\tau>0$. For every
$u\in C_c^\infty(\mathbb R^d_+)$,
\[
 \lim_{s\uparrow1}(1-s)\mathcal H_{d,s,p}[u]
 =K_{d,p}\mathcal H_{d,1,p}[u]
\]
and
\[
 \lim_{s\uparrow1}\mathcal R_{d,s,p,\tau}[u]
 =\mathcal R_{d,1,p,\tau}[u].
\]
Consequently,
\[
 \limsup_{s\uparrow1}
 \frac{(1-s)\Lambda_{d,s,p,\tau}}{K_{d,p}}
 \leq
 \inf_{u\in C_c^\infty(\mathbb R^d_+)\setminus\{0\}}
 \frac{\mathcal H_{d,1,p}[u]}
 {\mathcal R_{d,1,p,\tau}[u]}.
\]
\end{proposition}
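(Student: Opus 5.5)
The plan has three parts: the Bourgain--Brezis--Mironescu limit for the Gagliardo energy, a separate explicit limit for the Hardy constant, and dominated convergence for the remainder. Fix $u\in C_c^\infty(\mathbb R^d_+)$ with support $K$ at distance $2\eta>0$ from $\partial\mathbb R^d_+$.

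\emph{Energy term.} Split the double integral in $[u]_{s,p,+}^p$ into the near region $|x-y|<\eta$ and the far region. On the far region the integrand is bounded by $C|u(x)|^p|x-y|^{-d-sp}$ (plus the symmetric term), which is integrable uniformly for $s$ near one. After multiplication by $(1-s)$ it tends to zero. In the near region at least one point lies in $K$. If one point is in $K$ and the other is not, then, since $u$ vanishes on $\{x_d<2\eta\}$ and $|x-y|<\eta$, both points lie in the $\eta$-neighbourhood of $K$, which is contained in $\mathbb R^d_+$. Hence the near part equals the full-space near integral $\int\int_{|h|<\eta}|u(x+h)-u(x)|^p|h|^{-d-sp}\,dh\,dx$. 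For this I will use the standard computation. Write $u(x+h)-u(x)=\nabla u(x)\cdot h+O(|h|^2)$, integrate in polar coordinates $h=\rho\omega$, and use $(1-s)\int_0^\eta\rho^{p-1-sp}\,d\rho=(1-s)\eta^{p(1-s)}/(p(1-s))\to 1/p$. Rotation invariance gives $\int_{\mathbb S^{d-1}}|\nabla u\cdot\omega|^p\,d\omega=|\nabla u|^p\int|\omega_d|^p\,d\omega$. The $O(|h|^2)$ error contributes $O((1-s)\int_0^\eta\rho^{2p-1-sp}\,d\rho)\to0$. Since both points of the ordered pair range over $\mathbb R^d_+$, the pair is counted once, and the limit is $K_{d,p}\int|\nabla u|^p$.

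\emph{Hardy term.} Here I need $(1-s)\mathcal D_{d,s,p}\to K_{d,p}((p-1)/p)^p$. In \eqref{eq:hardy-general-p} the prefactor tends to $2\pi^{(d-1)/2}\Gamma((1+p)/2)/\Gamma((d+p)/2)=pK_{d,p}$. It therefore suffices to show that $(1-s)\int_0^1|1-t^{(sp-1)/p}|^p(1-t)^{-1-sp}\,dt\to p^{-1}((p-1)/p)^p$. Near $t=1$ one has $1-t^{\beta}\sim\beta(1-t)$ with $\beta=(sp-1)/p\to(p-1)/p$. The integral is then $\beta^p\int(1-t)^{p-1-sp}\,dt\,(1+o(1))$, whose singular part is $\beta^p/(p(1-s))$. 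Away from $t=1$ the integral stays bounded, because $t^{\beta p}$ is integrable near $t=0$ as $\beta p>-1$. Uniformity of the Taylor remainder on $[1-\epsilon,1]$, followed by letting $\epsilon\to0$, makes this rigorous. The weighted integral $\int x_d^{-sp}|u|^p$ converges to $\int x_d^{-p}|u|^p$ by dominated convergence on $K$.

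\emph{Remainder term and constant.} On $K$ the remainder weight $x_d^{\tau-sp}(x_{d-1}^2+x_d^2)^{-\tau/2}$ is bounded uniformly in $s$ and converges pointwise, so dominated convergence gives $\mathcal R_{d,s,p,\tau}[u]\to\mathcal R_{d,1,p,\tau}[u]$. For the final inequality, use $\Lambda_{d,s,p,\tau}\le\mathcal H_{d,s,p}[u]/\mathcal R_{d,s,p,\tau}[u]$, multiply by $(1-s)/K_{d,p}$, take $\limsup$, and then the infimum over $u$. The main obstacle I expect is the Hardy constant asymptotics: the careful isolation of the $1/(1-s)$ singularity with the exponent $\beta$ depending on $s$.
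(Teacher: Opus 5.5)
Your proposal is correct and follows essentially the same route as the paper. The paper cites the Bourgain--Brezis--Mironescu formula and remarks that the off-diagonal regional part vanishes after multiplication by $1-s$; your near/far split plus direct Taylor computation verifies exactly this, and your error bound $O\bigl((1-s)\int_0^\eta\rho^{2p-1-sp}\,d\rho\bigr)$ is the one Minkowski's inequality in $L^p$ gives, so no separate cross term is needed. Your isolation of the $1/(1-s)$ singularity of the Hardy integral near $t=1$ is the paper's Abelian limit after the substitution $r=1-t$, and your dominated-convergence and fixed-test-function steps coincide with the paper's.
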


\begin{proof}
The Bourgain--Brezis--Mironescu formula
\cite{BourgainBrezisMironescu} (see also
\cite[formula~(3)]{Ponce}) gives
\[
 \lim_{s\uparrow1}(1-s)[u]_{s,p,+}^p
 =K_{d,p}\int_{\mathbb R^d_+}|\nabla u|^p\,dx.
\]
The support of $u$ has positive distance from the boundary. Thus the
regional form has the same local contribution; its off-diagonal part
vanishes after multiplication by $1-s$.

For the Hardy constant, set in \eqref{eq:hardy-general-p}
\[
 b_s=\frac{sp-1}{p},\qquad \varepsilon_s=p(1-s).
\]
After $r=1-t$, its integral is
\[
 \int_0^1
 \left|\frac{1-(1-r)^{b_s}}r\right|^p
 r^{-1+\varepsilon_s}\,dr.
\]
The expression in absolute values is uniformly bounded for $s$ near
one and tends to $(p-1)/p$ as $(s,r)\to(1,0)$. The Abelian limit
\[
 \lim_{\varepsilon\downarrow0}
 \varepsilon\int_0^1F(r)r^{-1+\varepsilon}\,dr=F(0)
\]
follows by splitting at $\delta$, using continuity at zero on
$(0,\delta)$ and boundedness on $(\delta,1)$. For the present family, put
$F_s(r)=|[1-(1-r)^{b_s}]/r|^p$ and $F_s(0)=|b_s|^p$.
For every fixed $\delta>0$,
\[
 \begin{split}
 \left|\varepsilon_s\int_0^1F_s(r)r^{-1+\varepsilon_s}\,dr
 -F_s(0)\right|
 \leq{}&\sup_{0<r<\delta}|F_s(r)-F_s(0)|\\
 &+2\sup_{0<r<1}|F_s(r)|(1-\delta^{\varepsilon_s}).
 \end{split}
\]
The second term tends to zero as $s\uparrow1$. Taylor expansion at
$r=0$, uniformly for $b_s$ in a compact subset of $(0,\infty)$, makes
the first term tend to zero as $\delta\downarrow0$. Since
$\varepsilon_s=p(1-s)$, this gives
\[
 \lim_{s\uparrow1}(1-s)
 \int_0^1\frac{|1-t^{(sp-1)/p}|^p}{(1-t)^{1+sp}}\,dt
 =\frac1p\left(\frac{p-1}{p}\right)^p.
\]
Also,
\[
 2\pi^{(d-1)/2}\frac{\Gamma((1+p)/2)}
 {\Gamma((d+p)/2)}
 =\int_{\mathbb S^{d-1}}|\omega_d|^p\,d\omega.
\]
Thus
\[
 \lim_{s\uparrow1}(1-s)\mathcal D_{d,s,p}
 =K_{d,p}\left(\frac{p-1}{p}\right)^p.
\]
Dominated convergence on $\operatorname{supp}u$ proves both weighted
limits. Test the optimal quotient with a fixed $u$ and then take the
infimum to obtain the upper bound.
\end{proof}

For $p=2$, we also prove convergence of the sharp constants. We first
take $d=2$, so $K_{2,2}=\pi/2$. Set
\[
 \begin{split}
 \mathcal H_1[u]
 &=\int_{\mathbb R^2_+}|\nabla u|^2\,dx
 -\frac14\int_{\mathbb R^2_+}\frac{|u|^2}{x_2^2}\,dx,\\
 \mathcal R_{1,\tau}[u]
 &=\int_{\mathbb R^2_+}
 \frac{x_2^{\tau-2}|u(x)|^2}{|x|^\tau}\,dx,
 \end{split}
\]
and let $\Lambda_{1,\tau}$ be the infimum of
$\mathcal H_1[u]/\mathcal R_{1,\tau}[u]$ over nonzero functions in
$C_c^\infty(\mathbb R^2_+)$. Put
\[
 H_{1,\tau}=L^2((0,\pi),(\sin\theta)^{\tau-1}\,d\theta)
\]
and, on $C_c^\infty(0,\pi)$, define
\[
 \mathfrak l_{1,\xi}[y]
 =\int_0^\pi\sin\theta
 \left\{|y'|^2+\left(\xi^2+\frac14\right)|y|^2\right\}\,d\theta.
\]
Let $\mathscr V_\tau$ be the closure of the angular core under
$(\mathfrak l_{1,0}[y]+\|y\|_{H_{1,\tau}}^2)^{1/2}$.

\begin{lemma}[The local angular form]
\label{lem:local-angular-form}
For every $\tau>0$, the embedding
$\mathscr V_\tau\hookrightarrow H_{1,\tau}$ is compact. Moreover,
\[
 \Lambda_{1,\tau}=
 \min_{y\in\mathscr V_\tau\setminus\{0\}}
 \frac{\mathfrak l_{1,0}[y]}{\|y\|_{H_{1,\tau}}^2}.
\]
The minimum is positive and is attained by a strictly positive function,
unique up to a constant factor.
\end{lemma}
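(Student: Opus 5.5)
The plan has four parts: compactness of $\mathscr V_\tau\hookrightarrow H_{1,\tau}$, reduction of $\Lambda_{1,\tau}$ to the zero-frequency angular quotient, attainment, and positivity with simplicity. Each part mirrors the fractional argument of Sections~\ref{sec:fibers}--\ref{sec:sharp}, with the logarithmic polar transform replaced by its local analogue.

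For $u\in C_c^\infty(\mathbb R^2_+)$ put $w(t,\theta)=u(e^t\cos\theta,e^t\sin\theta)$ and $y=(\sin\theta)^{-1/2}w$. The local analogue of \eqref{eq:positive-cylinder} comes from the one-dimensional ground state $x_2^{1/2}$. Write $u=x_2^{1/2}v$ and integrate by parts:
\[
 \mathcal H_1[u]=\int_{\mathbb R^2_+}x_2|\nabla v|^2\,dx
\]
on the core. In polar coordinates, with $v=r^{-1/2}\tilde v$ and $r=e^t$, this becomes
\[
 \int_{\mathbb R}\int_0^\pi \sin\theta\bigl(|\partial_t y|^2+|\partial_\theta y|^2+\tfrac14|y|^2\bigr)\,d\theta\,dt,
\]
where the factor $\frac14$ comes from the cross term $\partial_t(e^{-t/2})$ after integrating by parts in $t$. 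The remainder transforms as
\[
 \mathcal R_{1,\tau}[u]=\int\!\!\int(\sin\theta)^{\tau-1}|y|^2\,d\theta\,dt.
\]
Fourier transform in $t$ then gives the direct integral $\int\mathfrak l_{1,\xi}[\widehat y(\xi)]\,d\xi$. Since $\mathfrak l_{1,\xi}\geq\mathfrak l_{1,0}$, this yields $\Lambda_{1,\tau}\geq\inf\mathfrak l_{1,0}/\|\cdot\|_{H_{1,\tau}}^2$. For the reverse inequality I would use logarithmic cutoffs $R^{-1/2}\chi(t/R)y(\theta)$ exactly as in Lemma~\ref{lem:log-cutoff}; here the error is explicitly $R^{-2}\|\chi'\|_2^2\int\sin\theta|y|^2$, so no kernel estimates are needed.

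Compactness is the key step. Interior $H^1$ bounds on $[\delta,\pi-\delta]$ come directly from $\mathfrak l_{1,0}$, and Rellich gives interior convergence. What remains is endpoint tightness: I need a weighted Hardy estimate
\[
 \int_0^\pi(\sin\theta)^{\tau_0-1}|y|^2\,d\theta\leq C\,\mathfrak l_{1,0}[y]
\]
for some $0<\tau_0<\tau$, and then I would argue as in \eqref{eq:endpoint-tightness}. Near $\theta=0$ the form is $\int\theta(|y'|^2+\frac14|y|^2)$, which is the critical two-dimensional radial Dirichlet energy, so no pure power gain is available. Instead, for compactly supported core functions, I would use $\int_0^1\theta|y'|^2\geq c\int_0^1\theta^{\tau_0-1}|y|^2$ minus a boundary term at $\theta=1$, and that boundary term is controlled by the interior $H^1$ norm.

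To justify this estimate I would first try Cauchy--Schwarz. Writing $|y(\theta)|\leq|y(1)|+\int_\theta^1|y'|$ gives $\int_\theta^1|y'|\leq(\log(1/\theta))^{1/2}(\int\sigma|y'|^2)^{1/2}$, and $\int_0^1\theta^{\tau_0-1}\log(1/\theta)\,d\theta<\infty$ for every $\tau_0>0$. Hence $\int_0^1\theta^{\tau_0-1}|y|^2\leq C(\mathfrak l_{1,0}[y]+|y(1)|^2)$. This is the main obstacle, and I expect it to work precisely because the weight exponent $\tau_0-1>-1$ absorbs the logarithm. It also shows that $\mathscr V_\tau$ is independent of $\tau$ and that the quotient is positive, since $\mathfrak l_{1,0}\geq\frac14\|y\|_{L^2(\sin\theta)}^2$ and the interior trace $|y(1)|^2$ is bounded by the $H^1$ norm on $[1/2,1]$, which is in turn controlled by $\mathfrak l_{1,0}$ because $\sin\theta$ is bounded below there.

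Attainment then follows from the direct method: take a minimizing sequence, use the compact embedding, and apply weak lower semicontinuity. Positivity comes from $\mathfrak l_{1,0}[|y|]=\mathfrak l_{1,0}[y]$, which holds because $|y|\in\mathscr V_\tau$ by the $H^1$ chain rule. A minimizer solves the ODE
\[
 -(\sin\theta\,y')'+\tfrac14\sin\theta\,y=\Lambda_{1,\tau}(\sin\theta)^{\tau-1}y
\]
on $(0,\pi)$, so a nonnegative nonzero minimizer is strictly positive by the ODE strong maximum principle, or by uniqueness for the Cauchy problem at any interior zero. For simplicity, two linearly independent minimizers would have a combination vanishing at an interior point with nonzero derivative there. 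That combination would itself be a minimizer that changes sign, contradicting positivity of minimizers.
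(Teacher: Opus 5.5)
Your proposal is correct. Its first three parts are essentially the paper's argument: the transform to $\int\mathfrak l_{1,\xi}[\widehat y(\xi,\cdot)]\,d\xi$, the logarithmic cutoff, and compactness followed by the direct method. There are two cosmetic differences in those parts. You derive the cylinder form from the physical ground state $x_2^{1/2}$, whereas the paper uses the angular identity for $\psi=(\sin\theta)^{1/2}y$. You also run the logarithmic Cauchy--Schwarz bound in $\theta$, whereas the paper works in $t=\log\tan(\theta/2)$, where the same bound reads $|Y(t)|^2\le C(1+|t|)\{\int|Y'|^2+\int\operatorname{sech}^2t\,|Y|^2\}$. The paper's change of variables additionally yields an explicit description of $\mathscr V_\tau$. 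It reuses that description later, in the Mosco lower bound and to place the Legendre polynomials in the form domain.

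The genuine difference is the simplicity step. The paper writes the Euler--Lagrange equation in $t$ and observes that the Wronskian of two minimizers is constant. It then makes the Wronskian vanish along a sequence $t_n\to\infty$, using only $Y_j'\in L^2$ and $|Y_j|\le C(1+|t|)^{1/2}$. Positivity plays no role in that computation. You instead use that the first eigenspace is linear and that every minimizer $y$ has no interior zero, because $|y|$ is again a minimizer and is strictly positive by Cauchy uniqueness. Hence a nonzero combination vanishing at an interior point is impossible. Your route is more elementary and needs no endpoint asymptotics.

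It does rest on two points you state too quickly. First, you need $|y|\in\mathscr V_\tau$ with $\mathfrak l_{1,0}[|y|]\le\mathfrak l_{1,0}[y]$; equality holds only for real $y$. Since $\mathscr V_\tau$ is an abstract closure, this must be proved. One way is through a domain description like the paper's. Another is core approximation plus weak lower semicontinuity, as in Lemma~\ref{lem:modulus-domain}. Second, complex minimizers should first be reduced to real ones by taking real and imaginary parts, which is allowed because the form and weight are real.
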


\begin{proof}
Write $\psi(\theta)=(\sin\theta)^{1/2}y(\theta)$ for an angular core
function. Direct differentiation and one integration by parts give
\[
 \int_0^\pi\left(|\psi'|^2-
 \frac{|\psi|^2}{4\sin^2\theta}\right)d\theta
 =\int_0^\pi\sin\theta
 \left(|y'|^2+\frac14|y|^2\right)d\theta.
\]
Also,
\[
 \int_0^\pi(\sin\theta)^{\tau-2}|\psi|^2\,d\theta
 =\|y\|_{H_{1,\tau}}^2.
\]

For compactness, use
\[
 t=\log\tan\frac\theta2,\qquad
 \sin\theta=\operatorname{sech}t,\qquad
 d\theta=\operatorname{sech}t\,dt.
\]
If $Y(t)=y(\theta(t))$, then
\[
 \int_0^\pi\sin\theta|y'|^2\,d\theta=\int_{\mathbb R}|Y'|^2\,dt,
 \quad
 \int_0^\pi\sin\theta|y|^2\,d\theta
 =\int_{\mathbb R}\operatorname{sech}^2t|Y|^2\,dt,
\]
whereas
\[
 \|y\|_{H_{1,\tau}}^2
 =\int_{\mathbb R}\operatorname{sech}^{\tau}t|Y|^2\,dt.
\]
Identifying $y$ with $Y$ under this change of variables, the form domain is
\[
 \mathscr V_\tau=\left\{Y\in H^1_{\rm loc}(\mathbb R):
 Y'\in L^2(\mathbb R),\quad
 \operatorname{sech}t\,Y\in L^2(\mathbb R),\quad
 \operatorname{sech}^{\tau/2}t\,Y\in L^2(\mathbb R)\right\}.
\]
For the reverse inclusion, let $Y$ belong to the set on the right.
Since $Y'\in L^2(\mathbb R)$,
\[
 Y(t)=o(|t|^{1/2})\qquad (|t|\to\infty).
\]
Indeed, for $t>T>0$,
\[
 |Y(t)|\leq |Y(T)|+(t-T)^{1/2}
 \|Y'\|_{L^2(T,\infty)}.
\]
Divide by $t^{1/2}$ and let first $t\to\infty$, then $T\to\infty$.
The other end is identical. Choose
$\chi\in C_c^\infty(\mathbb R)$ satisfying $\chi=1$ on $[-1,1]$ and
$\operatorname {supp}\chi\subset[-2,2]$. Put
$Y_R(t)=\chi(t/R)Y(t)$. The two weighted $L^2$ errors tend to zero by
dominated convergence. The derivative error away from the cutoff annulus
also tends to zero. On that annulus,
\[
 \int_{R<|t|<2R}\frac{|Y(t)|^2}{R^2}\,dt=o(1).
\]
Thus $Y_R\to Y$ in the graph norm. Mollification proves density of
the transformed core and the domain description.

The trace inequality on $[-1,1]$ gives
\[
 |Y(0)|^2\leq C\left\{
 \int_{\mathbb R}|Y'|^2\,dt+
 \int_{\mathbb R}\operatorname{sech}^2t\,|Y|^2\,dt
 \right\}.
\]
The fundamental theorem of calculus and Cauchy--Schwarz give
\[
 |Y(t)|^2\leq C(1+|t|)
 \left\{\int_{\mathbb R}|Y'|^2dt+
 \int_{\mathbb R}\operatorname{sech}^2t|Y|^2dt\right\}.
\]
Integration against $\operatorname{sech}^{\tau}t$ also gives
\[
 \|y\|_{H_{1,\tau}}^2\leq C_\tau\mathfrak l_{1,0}[y].
\]
In particular,
\[
 \int_{|t|>T}\operatorname{sech}^{\tau}t|Y(t)|^2\,dt
 \leq C_\tau(1+T)e^{-\tau T}\mathfrak l_{1,0}[y].
\]
For a form-bounded sequence, Rellich compactness gives a subsequence
convergent in $L^2(-T,T)$ for each integer $T$. The displayed tail bound
makes this subsequence Cauchy in $L^2(\operatorname{sech}^{\tau}t\,dt)$.
This proves compactness in $H_{1,\tau}$.

For $x=e^t(\cos\theta,\sin\theta)$, put
$w(t,\theta)=u(e^t\cos\theta,e^t\sin\theta)$. The local polar identities
and Fourier transformation in $t$ give
\[
 \mathcal H_1[u]=\int_{\mathbb R}\mathfrak l_{1,\xi}
 [y(\xi,\cdot)]\,d\xi,
 \qquad
 \mathcal R_{1,\tau}[u]=\int_{\mathbb R}
 \|y(\xi,\cdot)\|_{H_{1,\tau}}^2\,d\xi,
\]
where $\widehat w(\xi,\theta)=(\sin\theta)^{1/2}y(\xi,\theta)$.
Since
\[
 \mathfrak l_{1,\xi}[y]=\mathfrak l_{1,0}[y]
 +\xi^2\int_0^\pi\sin\theta|y|^2\,d\theta,
\]
the quotient is bounded below by the zero-fiber minimum. A logarithmic
cutoff, as in Lemma~\ref{lem:log-cutoff}, gives equality.

Compactness gives a minimizer. Taking its modulus and applying the
one-dimensional maximum principle gives positivity. For simplicity,
let $Y_1,Y_2$ be two positive minimizers in the $t$ variable. They satisfy
\[
 -Y_j''+\frac14\operatorname{sech}^2t\,Y_j
 =\Lambda_{1,\tau}\operatorname{sech}^{\tau}t\,Y_j
 \quad\text{on }\mathbb R.
\]
Their Wronskian
\[
 W(t)=Y_1'(t)Y_2(t)-Y_1(t)Y_2'(t)
\]
is constant. The form bounds give $Y_j'\in L^2(\mathbb R)$ and
$|Y_j(t)|\leq C(1+|t|)^{1/2}$. There is a sequence $t_n\to+\infty$
such that
\[
 t_n\{|Y_1'(t_n)|^2+|Y_2'(t_n)|^2\}\longrightarrow0.
\]
Otherwise their squared derivatives would dominate $c/t$ at infinity,
contrary to integrability. Hence $W(t_n)\to0$, so $W=0$ and
$Y_1/Y_2$ is constant.
\end{proof}

\begin{proposition}[The local benchmark at $\tau=2$]
\label{prop:local-benchmark}
For $\tau=2$, the local angular eigenvalues are
\begin{equation}\label{eq:local-legendre-spectrum}
 \lambda_j^{(1)}(\xi)=(j-1)j+\frac14+\xi^2,
 \qquad j\geq1.
\end{equation}
The positive normalized zero-fiber ground state in $H_{1,2}$ is
$y_{1,2}=2^{-1/2}$. Hence
\[
 \Lambda_{1,2}=\frac14,\qquad
 \lambda_2^{(1)}(0)-\lambda_1^{(1)}(0)=2,
 \qquad\int_0^\pi\sin\theta\,y_{1,2}^2\,d\theta=1.
\]
For the local cylinder form and the same multiplier
$h(\xi)=\min\{\xi^2,1\}$, the optimal stability coefficient is $1$
and is attained by nonzero finite-energy functions.
\end{proposition}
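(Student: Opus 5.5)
The plan is to diagonalize the local angular form at $\tau=2$ explicitly by Legendre polynomials, and then read off every assertion from this diagonalization. At $\tau=2$ we have $H_{1,2}=L^2((0,\pi),\sin\theta\,d\theta)$, which is exactly the weight appearing in $\mathfrak l_{1,\xi}$. Formally, the operator associated with $\mathfrak l_{1,\xi}$ is
\[
 -\frac1{\sin\theta}\frac{d}{d\theta}\Bigl(\sin\theta\,\frac{dy}{d\theta}\Bigr)+\Bigl(\xi^2+\frac14\Bigr)y,
\]
which is the Legendre operator shifted by $\xi^2+\frac14$. Put $e_n(\theta)=\sqrt{(2n+1)/2}\,P_n(\cos\theta)$ for $n\geq0$. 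These functions form an orthonormal basis of $H_{1,2}$, since polynomials in $\cos\theta$ are dense in $L^2((-1,1),dx)$ after the substitution $x=\cos\theta$.

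\emph{Membership and the integration by parts.} First we check that each $e_n$ belongs to $\mathscr V_2$. I would use the description of $\mathscr V_\tau$ in the $t=\log\tan(\theta/2)$ variable from the proof of Lemma~\ref{lem:local-angular-form}. There $Y=P_n(\cos\theta(t))$ is bounded, and $Y'(t)=-P_n'(\cos\theta)\sin\theta\,\operatorname{sech}t=-P_n'(\cos\theta)\operatorname{sech}^2 t$, which lies in $L^2(\mathbb R)$. Hence $Y\in\mathscr V_2$.

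Next we prove the weak identity
\[
 \int_0^\pi\sin\theta\,y'\,\overline{e_n'}\,d\theta
 =n(n+1)\int_0^\pi\sin\theta\,y\,\overline{e_n}\,d\theta
 \qquad\text{for all }y\in\mathscr V_2 .
\]
For $y\in C_c^\infty(0,\pi)$ this follows from one integration by parts and the Legendre equation. It extends to all of $\mathscr V_2$ because both sides are continuous in the graph norm and the core is dense.

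\emph{Diagonalization of the form.} The weak identity shows that $e_n$ is an eigenvector of the Friedrichs operator with eigenvalue $n(n+1)+\frac14+\xi^2$. Expanding $y=\sum_n c_ne_n$ gives
\[
 \mathfrak l_{1,\xi}[y]=\sum_{n\ge0}\Bigl(n(n+1)+\tfrac14+\xi^2\Bigr)|c_n|^2
\]
on $\mathscr V_2$. Since the eigenvectors are complete in $H_{1,2}$, these numbers are the entire spectrum, counted with multiplicity. Setting $j=n+1$ yields \eqref{eq:local-legendre-spectrum}.

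\emph{The explicit benchmark values.} The positive ground state is the constant function $e_0=2^{-1/2}$, because $\int_0^\pi\sin\theta\,d\theta=2$. By Lemma~\ref{lem:local-angular-form}, $\Lambda_{1,2}=\lambda_1^{(1)}(0)=\frac14$. The gap is $\lambda_2^{(1)}(0)-\lambda_1^{(1)}(0)=2-0=2$. Finally, $\int_0^\pi\sin\theta\cdot\frac12\,d\theta=1$.

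\emph{The local stability coefficient.} Use the local cylinder direct-integral identity from the proof of Lemma~\ref{lem:local-angular-form}, with $\widehat w=(\sin\theta)^{1/2}y(\xi,\cdot)$. The deficit becomes
\[
 \int_{\mathbb R}\sum_n\bigl(n(n+1)+\xi^2\bigr)|c_n(\xi)|^2\,d\xi .
\]
The local distance, with $P_0$ the projection onto $e_0$, is
\[
 \int_{\mathbb R}\Bigl(\sum_{n\ge1}|c_n(\xi)|^2+h(\xi)|c_0(\xi)|^2\Bigr)d\xi .
\]
We compare the two integrands mode by mode.
\begin{itemize}
 \item For $n\ge1$ the fiber ratio is at least $n(n+1)\geq2$.
 \item For $n=0$ the ratio is $\xi^2/h(\xi)=\max\{1,\xi^2\}\ge1$, with equality exactly when $|\xi|\le1$.
\end{itemize}
Hence the coefficient is at least $1$. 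It is attained by any nonzero $w$ with $\widehat w(\xi,\theta)=g(\xi)(\sin\theta)^{1/2}e_0$, where $g\in L^2$ is supported in $[-1,1]$; for such $w$ the deficit and the distance both equal $\int\xi^2|g|^2\,d\xi$. This $w$ has finite energy because $\mathfrak l_{1,\xi}[e_0]=\frac14+\xi^2$ is bounded on the support of $g$.

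\emph{Expected main obstacle.} The only delicate point is confirming that the Friedrichs closure $\mathscr V_2$ imposes no boundary condition that would exclude the Legendre polynomials, or admit extra eigenfunctions beyond them. The explicit domain description in Lemma~\ref{lem:local-angular-form}, together with completeness of the Legendre basis, settles this.
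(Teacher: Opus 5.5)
Your proposal is correct and follows essentially the same route as the paper: you diagonalize $\mathfrak l_{1,\xi}$ at $\tau=2$ by the normalized Legendre polynomials $P_n(\cos\theta)$, verify their membership in the closed domain through the $t=\log\tan(\theta/2)$ description, and use completeness in $L^2(-1,1)$ to obtain the full spectrum. You then compare the deficit and the $h$-distance mode by mode, with equality exactly when $c_n=0$ for $n\geq1$ and $c_0$ is supported in $[-1,1]$, and your explicit extension of the weak Legendre identity from the core to all of $\mathscr V_2$ fills in a step the paper states in one line.
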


\begin{proof}
Put $z=\cos\theta$ and $y(\theta)=Y(z)$. Then
\[
 \|y\|_{H_{1,2}}^2=\int_{-1}^1|Y|^2\,dz,
 \qquad
 \mathfrak l_{1,\xi}[y]=\int_{-1}^1
 \left\{(1-z^2)|Y'|^2+
 \left(\frac14+\xi^2\right)|Y|^2\right\}\,dz.
\]
The Legendre polynomials belong to the closed form domain: under
$t=\log\tan(\theta/2)$ they are bounded with square-integrable
derivative, so the cutoff argument in
Lemma~\ref{lem:local-angular-form} applies. Their differential equation
and integration by parts show that their eigenvalues are
$n(n+1)+1/4+\xi^2$, $n\geq0$. Completeness of the Legendre basis in
$L^2(-1,1)$ proves \eqref{eq:local-legendre-spectrum}.

To specify the local stability statement, write $\widehat y(\xi,\theta)$
in this orthonormal basis of $H_{1,2}$ and let $c_n(\xi)$ be its
coefficients. The local sharp deficit and distance are respectively
\[
 \int_{\mathbb R}\sum_{n\geq0}
 \{n(n+1)+\xi^2\}|c_n(\xi)|^2\,d\xi
\]
and
\[
 \int_{\mathbb R}\left\{
 h(\xi)|c_0(\xi)|^2+\sum_{n\geq1}|c_n(\xi)|^2\right\}\,d\xi.
\]
The first is at least the second. Equality holds precisely when
$c_n=0$ for $n\geq1$ and $c_0$ vanishes outside $[-1,1]$, up to null
sets. Any nonzero $c_0\in L^2(-1,1)$ defines such a finite-energy
function. Thus the coefficient $1$ is sharp and attained.
\end{proof}

For $s<1$, use the ground-state conjugation
\[
 \psi(\theta)=(\sin\theta)^{s-1/2}y(\theta).
\]
It gives the exact identity
\[
 \|\psi\|_{H_{s,\tau}}^2=\|y\|_{H_{1,\tau}}^2.
\]
Define
\[
 \mathfrak a_{s,\xi}[y]=\frac{2(1-s)}\pi
 \mathfrak b_{s,\xi}[(\sin\theta)^{s-1/2}y].
\]
We use its closure in $H_{1,\tau}$, with domain
\[
 \mathcal D(\mathfrak a_{s,\xi})
 =\{y\in H_{1,\tau}:
 (\sin\theta)^{s-1/2}y\in\mathscr Y_s\}.
\]
The conjugation is unitary from $H_{1,\tau}$ onto $H_{s,\tau}$.
Thus the form is closed with compact resolvent. By the positive fiber
representation,
\[
 \begin{split}
 \mathfrak a_{s,\xi}[y]
 ={}&\frac{2(1-s)}\pi
 \int_0^\pi\int_0^\pi\int_{\mathbb R}
 K_s(q,\theta,\varphi)(\sin\theta\sin\varphi)^{s-1/2}\\
 &\quad\times|e^{-q/4}y(\theta)
 -e^{q/4-i\xi q}y(\varphi)|^2
 \,dq\,d\theta\,d\varphi.
 \end{split}
\]

\begin{lemma}[Uniform endpoint estimate]\label{lem:uniform-endpoint-limit}
Fix $s_0\in(1/2,1)$.
For every $\sigma>0$, there are $C_{s_0,\sigma,\tau}>0$ and
$\delta_0\in(0,1/8)$, depending only on $s_0$, $\sigma$, and $\tau$,
such that, for $s_0<s<1$, $0<\delta<\delta_0$, and
$y\in\mathcal D(\mathfrak a_{s,0})$,
\begin{equation}\label{eq:uniform-endpoint-local}
 \begin{split}
 \int_{(0,\delta)\cup(\pi-\delta,\pi)}
 (\sin\theta)^{\sigma-1}|y(\theta)|^2\,d\theta\leq
 C_{s_0,\sigma,\tau}\delta^{\sigma/2}
 \{\mathfrak a_{s,0}[y]+\|y\|_{H_{1,\tau}}^2\}.
 \end{split}
\end{equation}
The constants are independent of $s$, $\delta$, and $y$.
Here $\sigma$ may differ from the base-space exponent $\tau$.

\end{lemma}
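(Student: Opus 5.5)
The plan is to mimic, uniformly in $s$, the local argument in the proof of Lemma~\ref{lem:local-angular-form}. There, in the variable $t=\log\tan(\theta/2)$, the bound $|Y(t)|^2\leq C(1+|t|)\{\int|Y'|^2+\int\operatorname{sech}^2t\,|Y|^2\}$ gives
\[
 \int_{|t|>T}\operatorname{sech}^\sigma t\,|Y|^2\,dt\leq C(1+T)e^{-\sigma T}.
\]
With $\delta\sim e^{-T}$ this is $C\delta^\sigma\log(1/\delta)\leq C\delta^{\sigma/2}$; the loss from $\delta^\sigma$ to $\delta^{\sigma/2}$ is exactly room for the logarithm. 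The nonlocal replacement for $Y'\in L^2$ is a dyadic chain of averages. By symmetry $\theta\mapsto\pi-\theta$ it suffices to treat $(0,\delta)$, and by density of the core and lower semicontinuity it suffices to take $y$ with $(\sin\theta)^{s-1/2}y\in C_c^\infty(0,\pi)$.

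\emph{Step 1: restriction to adjacent shells.} Let $S_k=(2^{-k-1},2^{-k}]$, let $m_k$ be the mean of $y$ on $S_k$, and let $S_k^*=S_k\cup S_{k+1}$. In the positive representation of $\mathfrak a_{s,0}$ displayed before the lemma, keep only $|q|\leq 2^{-k}$ and $\theta,\varphi\in S_k^*$. On this set $(\sin\theta\sin\varphi)^{s-1/2}\asymp2^{-k(2s-1)}$ with constants uniform for $s>s_0$. Lemma~\ref{lem:kernel-local} gives $\int_{|q|\leq2^{-k}}K_s\,dq\geq c|\theta-\varphi|^{-1-2s}$. The drift factors $e^{\mp q/4}$ are removed by the mean-value bound used in Lemma~\ref{lem:modulus-domain}. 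The resulting error is at most
\[
 C(1-s)\,2^{-k(2s-1)}\int_{S_k^*}|y|^2\!\int_{q^2+\alpha^2\lesssim4^{-k}}(q^2+\alpha^2)^{-s}\,dq\,d\alpha\,d\theta
 \leq C(1-s)\,4^{-k(1-s)}\int_{S_k^*}(\sin\theta)^{2s-1}|y|^2 .
\]
It carries the factor $1-s$ and is only weighted by the small power $(\sin\theta)^{2s-1}$, so it will be absorbed in Step 3. Call $E_k$ the retained positive energy on $S_k^*$; since the shells overlap boundedly, $\sum_kE_k\leq C\,\mathfrak a_{s,0}[y]$.

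\emph{Step 2: uniform fractional Poincaré inequality.} I would invoke the Bourgain--Brezis--Mironescu/Ponce form of the Poincaré inequality on an interval $Q$, which holds with a constant uniform for $s\in(s_0,1)$:
\[
 \int_Q|f-f_Q|^2\leq C|Q|^{2s}(1-s)\int_Q\int_Q\frac{|f(\theta)-f(\varphi)|^2}{|\theta-\varphi|^{1+2s}}\,d\theta\,d\varphi .
\]
With $Q=S_k^*$ and $|Q|\asymp2^{-k}$, the powers of $2^{-k}$ cancel: $2^{-k}|m_k-m_{k+1}|^2+\int_{S_k}|y-m_k|^2\leq C\,2^{-k}E_k$. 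This is the main obstacle. Everything depends on the $(1-s)$-normalised nonlocal energy controlling the local oscillation with constants independent of $s$. It is also why convergence on the smooth core is insufficient.

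\emph{Step 3: chaining and summation.} Fix $k_0$ depending only on $s_0,\tau$. On the fixed interior shell $S_{k_0}$, $|m_{k_0}|^2\leq C\|y\|_{H_{1,\tau}}^2$. Cauchy--Schwarz along the chain gives
\[
 |m_k|^2\leq2|m_{k_0}|^2+2k\sum_j|m_{j+1}-m_j|^2\leq Ck\{\mathfrak a_{s,0}[y]+\|y\|_{H_{1,\tau}}^2\}.
\]
Then $\int_{S_k}(\sin\theta)^{\sigma-1}|y|^2\leq C2^{-k\sigma}\bigl(|m_k|^2+E_k\bigr)\leq Ck2^{-k\sigma}\{\cdots\}$. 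Summing over $2^{-k}\leq\delta$ gives $C\delta^\sigma\log(1/\delta)\leq C\delta^{\sigma/2}\{\cdots\}$ for $\delta<\delta_0$. Finally the Step 1 error is bounded by the same dyadic estimate with $\sigma$ replaced by $2s>1$, multiplied by $1-s$. It can therefore be absorbed into the left-hand side, possibly after shrinking $\delta_0$ and running the estimate first with $\sigma=1$.
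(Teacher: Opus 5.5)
Your route is the paper's. It uses dyadic shells at the endpoint and the lower bound $\int_{|q|\le|\alpha|}K_s\,dq\geq c|\alpha|^{-1-2s}$. It then needs a fractional Poincar\'e inequality, uniform in $s\in(s_0,1)$, under which the powers of $2^{-k}$ cancel. A chain of shell means gives $|m_k|^2\lesssim(1+k)\{\mathfrak a_{s,0}[y]+\|y\|_{H_{1,\tau}}^2\}$, and the summation $\delta^\sigma\log(1/\delta)\lesssim\delta^{\sigma/2}$ finishes. Where you cite the uniform Bourgain--Brezis--Mironescu/Ponce Poincar\'e inequality, the paper proves a truncated version on $J=(1/4,1)$ by contradiction: Rellich if $s_n\to\bar s<1$, Ponce's Theorem~1.3 if $s_n\to1$. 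A precise citation is acceptable. Your cutoff $|q|\le2^{-k}$ even lets you use the untruncated seminorm on $S_k^*$.

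The step that is wrong is the size of the drift error, and your absorption argument rests on it. That error carries no factor $1-s$. Since
\[
 \int_{q^2+\alpha^2<r_0^2}(q^2+\alpha^2)^{-s}\,dq\,d\alpha=\frac{\pi r_0^{2-2s}}{1-s},
\]
the $1/(1-s)$ cancels the normalization $2(1-s)/\pi$. The per-shell error is therefore $\asymp\int_{S_k^*}\sin\theta\,|y|^2$, of order one in $s$. This has to be so: the drift $e^{\mp q/4}$ is exactly what produces the term $\tfrac14|y|^2$ in $\mathfrak l_{1,\xi}$. For $s$ close to $s_0$, ``multiplied by $1-s$, hence absorbed'' does not work.

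What does work is the geometric factor. Put $v_k=|S_k|^{-1}\int_{S_k}|y-m_k|^2$. The correct shell estimate is
\[
 v_k+v_{k+1}+|m_{k+1}-m_k|^2\leq CE_k+C4^{-k}\bigl(v_k+v_{k+1}+|m_k|^2+|m_{k+1}|^2\bigr),
\]
with $C$ independent of $k$, $k_0$ and $s$. First absorb the variances for $k\geq k_0$ and insert the rest into the chain. Then
\[
 M=\max_{k_0\le k\le N}\frac{|m_k|^2}{1+k}\leq C_0(k_0,\tau)\bigl\{\mathfrak a_{s,0}[y]+\|y\|_{H_{1,\tau}}^2\bigr\}+C_1M\sum_{j\ge k_0}4^{-j}(1+j).
\]
Choose $k_0$ after $C_1$, so that the last coefficient is at most $\tfrac12$. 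The finite range $N$ (or the compact support of core functions) supplies the a priori finiteness the absorption needs. The index $k_0$ enters the constant only through $|m_{k_0}|^2\le C_0(k_0,\tau)\|y\|_{H_{1,\tau}}^2$.

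Your remark about shrinking $\delta_0$ points to this mechanism, but it must replace the $1-s$ argument rather than supplement it. With this correction your proof closes and coincides with the paper's.
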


\begin{proof}
It is enough to work near zero. Set
\[
 I_k=(2^{-k-1},2^{-k}),\qquad k\geq k_0.
\]
Let $E_{s,k}[y]$ be the contribution to
$\mathfrak a_{s,0}[y]$ from
\[
 \theta,\varphi\in I_k\cup I_{k+1},
 \qquad |q|<2^{-k-3}.
\]
These regions have bounded overlap. Hence
\[
 \sum_{k\geq k_0}E_{s,k}[y]\leq C\mathfrak a_{s,0}[y].
\]
On the region defining $E_{s,k}$,
\[
 K_s(q,\theta,\varphi)
 \asymp(q^2+|\theta-\varphi|^2)^{-1-s}
\]
and
\[
 (\sin\theta\sin\varphi)^{s-1/2}
 \asymp2^{-k(2s-1)}.
\]
Make the changes
\[
 q=2^{-k}Q,\qquad
 \theta=2^{-k}\Theta,\qquad
 \varphi=2^{-k}\Phi.
\]
All dyadic powers cancel. Put $J=(1/4,1)$ and
$f_J=|J|^{-1}\int_Jf$. The needed estimate is
\[
 \frac1{|J|}\int_J|f-f_J|^2\,d\Theta
 \leq C(1-s)\int_J\int_J\int_{|Q|<1/8}
 \frac{|f(\Theta)-f(\Phi)|^2}
 {(Q^2+|\Theta-\Phi|^2)^{1+s}}\,dQ\,d\Theta\,d\Phi,
\]
with $C$ independent of $s\in(s_0,1)$. In fact, if
$|\Theta-\Phi|<1/16$, integration over
$|Q|<|\Theta-\Phi|$ gives
\[
 \int_{|Q|<1/8}
 \frac{dQ}{(Q^2+|\Theta-\Phi|^2)^{1+s}}
 \geq c|\Theta-\Phi|^{-1-2s}.
\]
The constant $c$ is uniform. If the Poincar\'e bound fails, choose
$s_n\in(s_0,1)$ and $f_n$ with $(f_n)_J=0$, $\|f_n\|_2=1$, and
normalized truncated seminorm tending to zero. For $s_n\to\bar s<1$, one has $1-s_n\geq c>0$ for large $n$.
Since $s_n>s_0$, the truncated $H^{s_0}$ seminorm is bounded by the
truncated $H^{s_n}$ seminorm up to a fixed constant. Distances larger
than $1/16$ contribute at most $C\|f_n\|_2^2$. Rellich compactness gives
$f_n\to f$ in $L^2(J)$ and, along a subsequence, almost everywhere.
Fatou's lemma gives
\[
 \int_{\substack{\Theta,\Phi\in J\\|\Theta-\Phi|<1/16}}
 \frac{|f(\Theta)-f(\Phi)|^2}
 {|\Theta-\Phi|^{1+2\bar s}}\,d\Theta\,d\Phi=0.
\]
Thus $f$ is constant on each interval of length less than $1/16$;
overlapping such intervals shows that it is constant on $J$.
Strong convergence gives $f_J=0$ and $\|f\|_2=1$, a contradiction.

For the second case, put $r_*=1/16$ and
\[
 \rho_s(h)=\frac{1-s}{r_*^{2-2s}}|h|^{1-2s}
 \boldsymbol 1_{\{|h|<r_*\}}.
\]
Direct integration gives
\[
 \int_{\mathbb R}\rho_s(h)\,dh=1,\qquad
 \int_{|h|>\eta}\rho_s(h)\,dh
 =1-\left(\frac{\eta}{r_*}\right)^{2-2s}\longrightarrow0
 \quad(0<\eta<r_*).
\]
Since $s>s_0>1/2$, it is nonincreasing in $|h|$. Hence
\[
 \inf_{1/2\leq t\leq1}\rho_s(th)=\rho_s(h),
 \qquad
 \int_{\mathbb R}\inf_{1/2\leq t\leq1}\rho_s(th)\,dh=1.
\]
The additional one-dimensional condition in
\cite[Theorem~1.3]{Ponce} is thus satisfied. The normalized
truncated seminorm above bounds
\[
 \int_J\int_J
 \frac{|f_n(\Theta)-f_n(\Phi)|^2}{|\Theta-\Phi|^2}
 \rho_{s_n}(\Theta-\Phi)\,d\Theta\,d\Phi,
\]
because $r_*^{2-2s_n}$ stays bounded away from zero.
After rescaling $J$, that theorem gives $f_n\to f$ strongly in
$L^2(J)$ and $f'=0$, again contradicting the normalization.

Let
\[
 a_k=\frac1{|I_k|}\int_{I_k}y(\theta)\,d\theta,
 \qquad
 v_k=\frac1{|I_k|}\int_{I_k}|y-a_k|^2\,d\theta.
\]
For the drift factors, use
\[
 \begin{split}
 |y(\theta)-y(\varphi)|^2
 \leq{}&2|e^{-q/4}y(\theta)-e^{q/4}y(\varphi)|^2\\
 &+Cq^2(|y(\theta)|^2+|y(\varphi)|^2).
 \end{split}
\]
On the fixed interval $J$, the normalized integral of the drift kernel
is uniformly bounded:
\[
 \sup_{s_0<s<1}(1-s)
 \int_J\int_J\int_{|Q|<1/8}
 \frac{Q^2}{(Q^2+|\Theta-\Phi|^2)^{1+s}}
 \,dQ\,d\Theta\,d\Phi<\infty.
\]
Its integral operator is also uniformly bounded on $L^2(J)$, by
Schur's test. Rescaling therefore bounds the drift term by
$C4^{-k}(v_k+v_{k+1}+|a_k|^2+|a_{k+1}|^2)$.
Let $b_k$ be the average of $y$ on $I_k\cup I_{k+1}$.
The variance identity gives
\[
 \begin{split}
 \int_{I_k\cup I_{k+1}}|y-b_k|^2
 ={}&|I_k|v_k+|I_{k+1}|v_{k+1}\\
 &+\frac{|I_k||I_{k+1}|}{|I_k|+|I_{k+1}|}
 |a_k-a_{k+1}|^2.
 \end{split}
\]
Divide by $|I_k|+|I_{k+1}|$. The two length ratios are $2/3$ and $1/3$.
The rescaled Poincar\'e estimate therefore controls
$v_k+v_{k+1}+|a_{k+1}-a_k|^2$.
After increasing $k_0$,
the two variance terms are absorbed into the left-hand side, and we obtain
\[
 v_k+|a_{k+1}-a_k|^2
 \leq CE_{s,k}[y]
 +C4^{-k}(|a_k|^2+|a_{k+1}|^2).
\]
These constants are independent of $k_0$. For fixed $k_0$,
\[
 |a_{k_0}|^2\leq C_0(k_0,\tau)\|y\|_{H_{1,\tau}}^2,
\]
since the weight is bounded above and below on $I_{k_0}$.

Put
\[
 E_s[y]=\mathfrak a_{s,0}[y]+\|y\|_{H_{1,\tau}}^2.
\]
Telescoping the means and applying Cauchy--Schwarz give, for
$k_0\leq k\leq N$,
\[
 \frac{|a_k|^2}{1+k}
 \leq C_0(k_0,\tau)E_s[y]
 +C_1\sum_{j=k_0}^{k-1}
 \left\{E_{s,j}[y]+4^{-j}
 (|a_j|^2+|a_{j+1}|^2)\right\}.
\]
Here $C_1$ is independent of $s$, $k_0$, and $N$. Set
\[
 M_N=\max_{k_0\leq k\leq N}\frac{|a_k|^2}{1+k}.
\]
Using $\sum_{j\geq k_0}E_{s,j}[y]\leq CE_s[y]$, and increasing the
constants without introducing $k_0$ dependence in $C_1$, we obtain
\[
 M_N\leq C_0(k_0,\tau)E_s[y]
 +C_1M_N\sum_{j\geq k_0}4^{-j}(1+j).
\]
Choose $k_0$ after fixing $C_1$, so large that
\[
 C_1\sum_{j\geq k_0}4^{-j}(1+j)\leq\frac12.
\]
Then
\[
 M_N\leq2C_0(k_0,\tau)E_s[y]
\]
uniformly in $N$. Consequently,
\[
 |a_k|^2+v_k\leq C(1+k)E_s[y],
 \qquad k\geq k_0.
\]

Let $2^{-K-1}<\delta\leq2^{-K}$. For every $\sigma>0$,
\[
 \begin{split}
 \int_0^\delta\theta^{\sigma-1}|y|^2\,d\theta
 \leq C\sum_{k\geq K}2^{-k\sigma}(|a_k|^2+v_k) \leq CE_s[y]\sum_{k\geq K}2^{-k\sigma}(1+k) \leq C_{s_0,\sigma,\tau}\delta^{\sigma/2}E_s[y].
 \end{split}
\]
For the other endpoint put $\widetilde y(\theta)=y(\pi-\theta)$.
The sine weight and $K_s(q,\theta,\varphi)$ are invariant under simultaneous
reflection, so the same estimate applies to $\widetilde y$.
For a fixed $s$, take core approximants $y_j\to y$ in the graph norm
of $\mathfrak a_{s,0}$. Applying the estimate to $y_j-y_l$ shows that
they are Cauchy in the endpoint weighted norm for every $\sigma>0$.
Their local $L^2$ limits identify that weighted limit with $y$.
Letting $j\to\infty$ proves the estimate on the closed domain.

\end{proof}

\begin{lemma}[Interior compactness and the covariant lower bound]
\label{lem:covariant-local-limit}
Let $s_n\uparrow1$ and let $\xi_n$ be bounded. If
$\mathfrak a_{s_n,\xi_n}[y_n]+\|y_n\|_{H_{1,\tau}}^2$ is bounded,
then $y_n$ has a subsequence converging in $L^2(J)$ for every
$J\Subset(0,\pi)$. If also $\xi_n\to\xi$ and
$y_n\to y$ in $L^2(J')$, where $J\Subset J'\Subset(0,\pi)$, then
the localized lower bound \eqref{eq:localized-covariant-bbm} holds.
\end{lemma}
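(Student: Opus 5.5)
The plan is to treat the two assertions separately. Both rest on one pointwise comparison that removes the drift and the phase from the positive fiber representation of $\mathfrak a_{s,\xi}$. For $\theta,\varphi\in J'$ and $|q|<1$, the mean-value argument of Lemma~\ref{lem:kernel-local} and $|1-e^{-i\xi q}|\le|\xi||q|$ give
\[
 |y(\theta)-y(\varphi)|^2\le 2\left|e^{-q/4}y(\theta)-e^{q/4-i\xi q}y(\varphi)\right|^2
 +C(1+\xi^2)q^2\bigl(|y(\theta)|^2+|y(\varphi)|^2\bigr).
\]
Here $C$ is uniform for bounded $\xi$. On $J'\times J'$ the factor $(\sin\theta\sin\varphi)^{s-1/2}$ is bounded above and below uniformly in $s\in(s_0,1)$. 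For $|q|<1$ and $|\theta-\varphi|\le1$, the kernel is comparable to $(q^2+|\theta-\varphi|^2)^{-1-s}$ by \eqref{eq:kernel-comparison}. As in Lemma~\ref{lem:uniform-endpoint-limit}, the normalized drift kernel $(1-s)q^2(q^2+\alpha^2)^{-1-s}$ has uniformly bounded integral and, by Schur's test, defines a uniformly bounded operator. Integrating first over $|q|<|\theta-\varphi|$ then gives
\[
 (1-s_n)\int_{J'}\int_{J'}\frac{|y_n(\theta)-y_n(\varphi)|^2}{|\theta-\varphi|^{1+2s_n}}\,d\theta\,d\varphi
 \le C\bigl(\mathfrak a_{s_n,\xi_n}[y_n]+\|y_n\|_{L^2(J')}^2\bigr)\le C .
\]
The last bound holds because the $H_{1,\tau}$ weight is bounded below on $J'$.

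\emph{Compactness.} This is exactly the hypothesis of the Bourgain--Brezis--Mironescu compactness theorem in Ponce's form \cite[Theorem~1.3]{Ponce}. I would apply it with the radial kernels $\rho_s(h)=(1-s)r_*^{2s-2}|h|^{1-2s}\boldsymbol 1_{\{|h|<r_*\}}$ already used in the proof of Lemma~\ref{lem:uniform-endpoint-limit}, whose monotonicity condition was verified there for $s>s_0>1/2$. This yields precompactness in $L^2(J)$ for each $J\Subset J'$. A diagonal argument over an exhaustion of $(0,\pi)$ by intervals gives one subsequence that converges in $L^2(J)$ for every $J\Subset(0,\pi)$.

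\emph{Lower bound.} I read \eqref{eq:localized-covariant-bbm} as the localized covariant inequality
\[
 \liminf_{n\to\infty}\mathfrak a_{s_n,\xi_n}^{J}[y_n]\ge\int_J\sin\theta\Bigl\{|y'|^2+\Bigl(\xi^2+\frac14\Bigr)|y|^2\Bigr\}\,d\theta,
\]
where $\mathfrak a^J$ denotes the positive integrand restricted to $\theta,\varphi\in J$ and $|q|<\delta$. The expected value comes from a first-order expansion. With $\alpha=\theta-\varphi$,
\[
 e^{-q/4}y(\theta)-e^{q/4-i\xi q}y(\varphi)\approx y'(\theta)\alpha-q\bigl(\tfrac12+i\xi\bigr)y(\theta).
\]
Averaging its square over directions in the $(q,\alpha)$-plane against $\frac{2(1-s)}{\pi}(q^2+\alpha^2)^{-1-s}$ kills the odd cross term. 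It leaves $|y'|^2+(\tfrac14+\xi^2)|y|^2$, since the normalization gives total weight one on each quadratic monomial. The factor $(\sin\theta)^{2s-1}\to\sin\theta$ supplies the weight.

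The argument would have three steps. First, fix $\varepsilon>0$ and replace $y_n$ by $y_n*\eta_\varepsilon$, after a localization to a slightly larger interval. The positive integrand is a convex function of the pair $(y(\theta),y(\varphi))$, and its coefficients are smooth on $J'$. Jensen's inequality under the translation average therefore shows that mollification increases the localized energy only by an error that tends to zero as $\delta,\varepsilon\to0$, uniformly in $n$. Second, for fixed $\varepsilon$ the functions $y_n*\eta_\varepsilon$ converge in $C^2(\bar J)$ to $y*\eta_\varepsilon$, by the $L^2(J')$ convergence. For such smooth functions, Taylor expansion to second order and the computation above, which is the core-limit argument of Proposition~\ref{prop:local-core-limit} with frozen coefficients, give convergence of the localized energies. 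The error terms are $O((1-s_n)\int_0^\delta r^{1-2s_n}\,dr)$, which vanishes. Third, let $\varepsilon\to0$ using lower semicontinuity of the local Dirichlet integral.

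I expect the first step to be the main obstacle. The Jensen/mollifier comparison must be carried out uniformly in $s_n$, and it must cope with both the variable weight $(\sin\theta\sin\varphi)^{s-1/2}$ and the drift and phase factors, which are not translation invariant. The first thing I would try is to freeze all coefficients on subintervals of length $\ll\delta$ and to control the commutator errors by $C\delta$ times the uniformly bounded normalized energy from the first display.
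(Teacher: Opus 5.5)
Your proposal is correct. The compactness half is the paper's argument. You use the same pointwise comparison removing drift and phase, the same uniform bound on the normalized drift term, the restriction $|q|<|\theta-\varphi|$ giving the lower bound $c(1-s)|\theta-\varphi|^{-1-2s}$, and Ponce's one-dimensional theorem with the kernels $\rho_s$.

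\textbf{The lower bound.} Here you take a genuinely different route.

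\emph{The paper's route (duality).}
\begin{itemize}
\item It writes the restricted energy in polar variables $(q,h)=r\omega$ with the measure $2(1-s_n)r^{1-2s_n}\,dr$.
\item It tests against $F\in C_c^\infty(J\times\mathbb S^1)$ through $|V|^2\ge2\operatorname{Re}(V\overline F)-|F|^2$.
\item It moves the difference quotient onto $F$ by the shift $\theta+r\omega_2\mapsto\theta$. Only $L^2(J')$ convergence of $y_n$ is then needed to pass to the limit in a linear functional.
\item Taking the supremum over $F$ yields both $y'\in L^2(J)$ and the quadratic bound.
\end{itemize}

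\emph{Your route.} You mollify, compute the limit explicitly for $C^2$-convergent smooth functions, and finish by lower semicontinuity of the local Dirichlet integral.

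\emph{What each buys.} Your argument is more elementary and proves a localized inequality that implies \eqref{eq:localized-covariant-bbm}; the paper's left side is the full form. The duality argument avoids any approximation and obtains the regularity of $y$ directly from boundedness of a linear functional.

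\textbf{The step you flag as the main obstacle is easy.} $K_s(q,\theta,\varphi)$ depends only on $(q,\theta-\varphi)$, and the drift and phase factors depend only on $q$. So a simultaneous angular shift $(\theta,\varphi)\mapsto(\theta-z,\varphi-z)$ changes only the sine weight. On $J'$ its ratio is $1+O(\varepsilon)$ for $|z|<\varepsilon$, uniformly in $s\in(1/2,1)$. Let $J_{-\varepsilon}$ denote $J$ shrunk by $\varepsilon$. Jensen's inequality for $\eta_\varepsilon\ge0$, together with Fubini, gives directly
\[
 \mathfrak a^{J_{-\varepsilon}}_{s_n,\xi_n}[y_n*\eta_\varepsilon]\le(1+C\varepsilon)\,\mathfrak a^{J}_{s_n,\xi_n}[y_n],
\]
with no commutator terms and no coefficient freezing. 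Then restrict to $\theta\in J_{-\varepsilon-\delta}$, so that full discs $r<\delta$ fit inside $J_{-\varepsilon}$. Let $n\to\infty$, and only then $\delta,\varepsilon\downarrow0$, using monotone convergence in the interval.

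\textbf{A slip in Step 2.} The quantity $(1-s_n)\int_0^\delta r^{1-2s_n}\,dr=\delta^{2-2s_n}/2\to1/2$ is the mass carrying the main term, not an error. The genuine errors are:
\begin{itemize}
\item the $O(r^3)$ Taylor remainder in $|V|^2$;
\item the $O(r)$ relative variation of the sine weight;
\item the $O(r^2)$ relative correction in the kernel.
\end{itemize}
Their total contribution is $O\bigl((1-s_n)\int_0^\delta r^{2-2s_n}\,dr\bigr)$, and this does tend to zero. The sign of $i\xi$ in your linearization is off, but only $|\tfrac12\pm i\xi|^2$ enters, so this is harmless.
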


\begin{proof}
For compactness, let $J\Subset J'\Subset(0,\pi)$ and $|\xi|\leq M$. For $|q|+|\theta-\varphi|$ small and
$\theta,\varphi\in J'$,
\[
 \begin{split}
 |y(\theta)-y(\varphi)|^2\leq{}&
 2|e^{-q/4}y(\theta)-e^{q/4-i\xi q}y(\varphi)|^2 +C_Mq^2(|y(\theta)|^2+|y(\varphi)|^2).
 \end{split}
\]
After multiplication by the normalized kernel and integration, the last
term is bounded by $C_{J',M}\|y\|_{L^2(J')}^2$. Indeed, write
$h=\varphi-\theta$ and fix
$0<R<\operatorname {dist}(J,\partial J')$. Then
\[
 (1-s)\int_{q^2+h^2<R^2}
 q^2(q^2+h^2)^{-1-s}\,dq\,dh\leq C.
\]
For $|h|<R/4$, restrict the $q$ integration to $|q|<|h|$.
The resulting kernel is bounded below by
$c(1-s)|h|^{-1-2s}$. The one-dimensional compactness theorem applies with radius $R/4$:
the normalized kernels have unit mass, concentrate at zero, and satisfy
the dilation condition. It gives compactness in $L^2(J)$. This proves interior compactness.

For the lower limit, let
$J\Subset J'\Subset(0,\pi)$, let $s_n\uparrow1$ and $\xi_n\to\xi$, and
suppose that $y_n\to y$ in $L^2(J')$. Then
\begin{equation}\label{eq:localized-covariant-bbm}
 \liminf_{n\to\infty}\mathfrak a_{s_n,\xi_n}[y_n]
 \geq \int_J\sin\theta\left\{|y'|^2+
 \left(\xi^2+\frac14\right)|y|^2\right\}\,d\theta.
\end{equation}
Here the right-hand side is $+\infty$ unless
$y\in H^1(J)$.

Write
$(q,h)=r\omega$, where
$\omega=(\omega_1,\omega_2)\in\mathbb S^1$. Set
\[
 d\nu_n(r)=2(1-s_n)r^{1-2s_n}
 \boldsymbol 1_{(0,R)}(r)\,dr
\]
and
\[
 \begin{split}
 A_n(\theta,r,\omega)={}&r^{2+2s_n}
 K_{s_n}(r\omega_1,\theta,\theta+r\omega_2)
 (\sin\theta\sin(\theta+r\omega_2))^{s_n-1/2},\\
 V_n(\theta,r,\omega)={}&\frac1r\left\{
 e^{-r\omega_1/4}y_n(\theta)
 -e^{r\omega_1/4-i\xi_n r\omega_1}
 y_n(\theta+r\omega_2)\right\}.
 \end{split}
\]
Restricting the positive integral defining
$\mathfrak a_{s_n,\xi_n}$ to
$\theta,\theta+r\omega_2\in J'$ and $0<r<R$ gives
\begin{equation}\label{eq:localized-covariant-energy}
 \mathfrak a_{s_n,\xi_n}[y_n]
 \geq \frac1\pi\int_0^R\int_{\mathbb S^1}\int_{J'}
 \boldsymbol 1_{J'}(\theta+r\omega_2)A_n|V_n|^2
 \,d\theta\,d\omega\,d\nu_n(r).
\end{equation}
The local kernel expansion gives, uniformly on compact angular
subintervals,
\[
 r^{2+2s_n}K_{s_n}(r\omega_1,\theta,\theta+r\omega_2)
 =1+O(r^2).
\]
Moreover,
$\nu_n((0,R))=R^{2-2s_n}\to1$ and
$\nu_n((\eta,R))\to0$ for each $\eta>0$. Thus $\nu_n$ concentrates at
zero.

Let $F\in C_c^\infty(J\times\mathbb S^1)$ and extend it by zero in the
angular variable. From
$|V|^2\geq2\operatorname {Re}(V\overline F)-|F|^2$ and
\eqref{eq:localized-covariant-energy},
\begin{equation}\label{eq:covariant-duality}
 \begin{split}
 \liminf_{n\to\infty}\mathfrak a_{s_n,\xi_n}[y_n]
 \geq\frac1\pi\lim_{n\to\infty}
 \int_0^R\int_{\mathbb S^1}\int_{J'}A_n
 \{2\operatorname {Re}(V_n\overline F)-|F|^2\}
 \,d\theta\,d\omega\,d\nu_n(r).
 \end{split}
\end{equation}
On the support of $F$, the indicator is one for small $r$.
The part away from zero has vanishing $\nu_n$-mass.

The $|F|^2$ term converges to
\[
 \int_{\mathbb S^1}\int_J\sin\theta\,|F(\theta,\omega)|^2
 \,d\theta\,d\omega.
\]
For the linear term, change $\theta+r\omega_2$ to $\theta$ in its second
summand. The coefficient of $y_n(\theta)$ is
\[
 \begin{split}
 \frac1r\big[&e^{-r\omega_1/4}A_n(\theta,r,\omega)
 \overline{F(\theta,\omega)}-e^{r\omega_1/4-i\xi_n r\omega_1}
 A_n(\theta-r\omega_2,r,\omega)
 \overline{F(\theta-r\omega_2,\omega)}\big].
 \end{split}
\]
Taylor expansion of the smooth test factor, together with $s_n\to1$,
gives, uniformly on its support as $r\downarrow0$,
\[
 \omega_2\partial_\theta(\sin\theta\,\overline F)
 +\left(-\frac12+i\xi\right)\omega_1
 \sin\theta\,\overline F+o(1).
\]
To see the first coefficient explicitly, put $\beta_n=s_n-1/2$. Then
\[
 \begin{split}
 &(\sin\theta\sin(\theta+r\omega_2))^{\beta_n} =\sin^{2\beta_n}\theta+
 \beta_n r\omega_2\sin^{2\beta_n-1}\theta\cos\theta+O(r^2).
 \end{split}
\]
The same factor based at $\theta-r\omega_2$ has the opposite linear
term. The part $r\geq\eta$ is harmless because
$\nu_n((\eta,R))\to0$. Hence the strong $L^2(J')$ convergence of $y_n$
implies that the linear term in \eqref{eq:covariant-duality} converges,
in distributional notation, to
\[
 \int_{\mathbb S^1}\int_J\sin\theta
 \left\{-\omega_2y'+
 \left(-\frac12+i\xi\right)\omega_1y\right\}
 \overline F\,d\theta\,d\omega.
\]
To justify the regularity implicit in the duality step, write
\[
 L(\theta,\omega)=-\omega_2y'
 +\left(-\frac12+i\xi\right)\omega_1y
\]
in distributions. If the energy liminf is finite, optimizing the dual
inequality over scalar multiples of any test $F$ shows that $L$ defines
a bounded functional on $L^2(J\times\mathbb S^1,\sin\theta\,d\theta\,d\omega)$.
It is therefore represented by an $L^2$ function. Since
\[
 \int_{\mathbb S^1}\omega_2L(\theta,\omega)\,d\omega=-\pi y'
\]
and $\sin\theta$ is bounded below on $J$, one obtains $y'\in L^2(J)$.
Density of smooth tests and
$\sup_F(2\operatorname{Re}\langle L,F\rangle-\|F\|^2)=\|L\|^2$
now give
\[
 \begin{split}
 \liminf_{n\to\infty}\mathfrak a_{s_n,\xi_n}[y_n]
 \geq\frac1\pi\int_{\mathbb S^1}\int_J\sin\theta
 \left|-\omega_2y'+
 \left(-\frac12+i\xi\right)\omega_1y\right|^2
 \,d\theta\,d\omega.
 \end{split}
\]
Since
$\int_{\mathbb S^1}\omega_1\omega_2\,d\omega=0$ and
\[
 \int_{\mathbb S^1}\omega_1^2\,d\omega
 =\int_{\mathbb S^1}\omega_2^2\,d\omega=\pi,
\]
this is \eqref{eq:localized-covariant-bbm}.

\end{proof}

\begin{lemma}[Compact convergence of the angular fibers]
\label{lem:angular-local-mosco}
Fix $\tau>0$. Let $s_n\uparrow1$ and $\xi_n\to\xi$. The forms
$\mathfrak a_{s_n,\xi_n}$ converge to $\mathfrak l_{1,\xi}$ in the
compact Mosco sense on $H_{1,\tau}$. More precisely:
\begin{enumerate}
\item if
\[
 \sup_n\{\mathfrak a_{s_n,\xi_n}[y_n]
 +\|y_n\|_{H_{1,\tau}}^2\}<\infty,
\]
then a subsequence of $y_n$ converges strongly in $H_{1,\tau}$;
\item if $y_n\rightharpoonup y$ in $H_{1,\tau}$, then
\[
 \mathfrak l_{1,\xi}[y]\leq
 \liminf_{n\to\infty}\mathfrak a_{s_n,\xi_n}[y_n];
\]
\item for each $y\in\mathscr V_\tau$, there are $y_n\to y$ strongly in
$H_{1,\tau}$ such that
\[
 \limsup_{n\to\infty}\mathfrak a_{s_n,\xi_n}[y_n]
 \leq\mathfrak l_{1,\xi}[y].
\]
\end{enumerate}
\end{lemma}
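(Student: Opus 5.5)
We prove the three items in order. Each uses Lemma~\ref{lem:uniform-endpoint-limit} near the endpoints and Lemma~\ref{lem:covariant-local-limit} in the interior.

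\emph{Item (1).} Assume $\sup_n\{\mathfrak a_{s_n,\xi_n}[y_n]+\|y_n\|_{H_{1,\tau}}^2\}<\infty$. First pass to $\xi=0$: the diamagnetic inequality \eqref{eq:diamagnetic}, transported by the unitary conjugation $\psi=(\sin\theta)^{s-1/2}y$, gives $\mathfrak a_{s,0}[|y|]\leq\mathfrak a_{s,\xi}[y]$. Lemma~\ref{lem:uniform-endpoint-limit} with $\sigma=\tau$, applied to $|y_n|$, then gives
\[
 \int_{(0,\delta)\cup(\pi-\delta,\pi)}(\sin\theta)^{\tau-1}|y_n|^2\,d\theta\leq C\delta^{\tau/2}
\]
uniformly in $n$. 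By Lemma~\ref{lem:covariant-local-limit}, a diagonal subsequence converges in $L^2(J)$ for every $J\Subset(0,\pi)$, and on each such $J$ the weight $(\sin\theta)^{\tau-1}$ is bounded. The uniform endpoint tails and the interior convergence together make the subsequence Cauchy in $H_{1,\tau}$.

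\emph{Item (2).} Let $y_n\rightharpoonup y$ and assume $\liminf_n\mathfrak a_{s_n,\xi_n}[y_n]<\infty$; otherwise there is nothing to prove. Extract a subsequence realizing the liminf with bounded energy. By (1) it converges strongly in $H_{1,\tau}$, and the limit is $y$. Lemma~\ref{lem:covariant-local-limit} gives \eqref{eq:localized-covariant-bbm} on each $J\Subset(0,\pi)$. Let $J\uparrow(0,\pi)$; monotone convergence yields $\int_0^\pi\sin\theta\{|y'|^2+(\xi^2+\frac14)|y|^2\}\leq\liminf$. We must still show that $y\in\mathscr V_\tau$, so that this integral equals $\mathfrak l_{1,\xi}[y]$ on the closed domain. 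In the variable $t=\log\tan(\theta/2)$, the bound says $Y'\in L^2$ and $\operatorname{sech}t\,Y\in L^2$, and $y\in H_{1,\tau}$ gives $\operatorname{sech}^{\tau/2}t\,Y\in L^2$. This is exactly the characterization of $\mathscr V_\tau$ in Lemma~\ref{lem:local-angular-form}.

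\emph{Item (3).} Lemma~\ref{lem:local-angular-form} shows that $C_c^\infty(0,\pi)$ is a form core for $\mathfrak l_{1,\xi}$. The cross term $\xi^2\int\sin\theta|y|^2$ is controlled by the form, so a standard diagonal argument reduces (3) to core functions $y$. For these we take the constant recovery sequence $y_n=y$ and need $\mathfrak a_{s_n,\xi_n}[y]\to\mathfrak l_{1,\xi}[y]$. Use the positive representation of $\mathfrak a_{s,\xi}$ and split $r<R$ from $r\geq R$.
\begin{itemize}
\item For $r\geq R$ and for large $|q|$, the prefactor $1-s_n$ together with the exponential tails \eqref{eq:kernel-tail} sends the contribution to zero. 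The sine factors are integrable since $s-\frac12>-1$, and only one variable can leave the support of $y$.
\item For $r<R$, the same polar coordinates as in the proof of Lemma~\ref{lem:covariant-local-limit} apply. Now $V_n\to-\omega_2y'+(-\frac12+i\xi)\omega_1y$ uniformly because $y$ is smooth, $A_n\to\sin\theta$, and $\nu_n\to\delta_0$. These give the limit $\frac1\pi\int\int\sin\theta|L|^2=\mathfrak l_{1,\xi}[y]$.
\end{itemize}

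The main obstacle is the near-diagonal pieces in which exactly one angular variable leaves the support of $y$ while $r$ is small. These pieces are bounded by $(1-s_n)\int_0^R r^{1-2s_n}\,dr$ times a term controlled by the smoothness of $y$ and the distance of its support from the endpoints, and we expect this to vanish in the limit.
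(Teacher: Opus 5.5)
Your proposal is correct and follows the paper's proof essentially step by step. Item (1) uses the diamagnetic inequality, the uniform endpoint estimate with $\sigma=\tau$, and interior compactness from Lemma~\ref{lem:covariant-local-limit}. Item (2) uses strong convergence from (1), the localized lower bound with $J\uparrow(0,\pi)$, and the domain description of $\mathscr V_\tau$. Item (3) uses core density and a diagonal choice, which reduces it to the convergence $\mathfrak a_{s_n,\xi_n}[y]\to\mathfrak l_{1,\xi}[y]$ for $y\in C_c^\infty(0,\pi)$.

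The ``main obstacle'' you flag is not one. Take $R$ smaller than the distance from $\operatorname{supp}y$ to $\{0,\pi\}$. Then every pair with $r<R$ and nonzero integrand lies in a fixed compact subinterval, on which the zero extension of $y$ is smooth. Your uniform convergence of $V_n$ (the paper's uniform Taylor expansion) therefore already covers those pairs. By contrast, the bound you wrote, $(1-s_n)\int_0^R r^{1-2s_n}\,dr=R^{2-2s_n}/2\to\frac12$, does not vanish on its own.
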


\begin{proof}
Let
$y\in C_c^\infty(0,\pi)$ and put $\alpha=\varphi-\theta$. Uniformly on
compact angular subintervals,
\[
 K_s(q,\theta,\theta+\alpha)
 =(q^2+\alpha^2)^{-1-s}
 \bigl(1+O(q^2+\alpha^2)\bigr).
\]
Also,
$(\sin\theta\sin(\theta+\alpha))^{s-1/2}\to\sin\theta$.
Taylor's formula gives
\[
 \begin{split}
 e^{-q/4}y(\theta)-e^{q/4-i\xi q}y(\theta+\alpha)
 =-\alpha y'(\theta)+\left(-\frac12+i\xi\right)qy(\theta)
 +O(q^2+\alpha^2).
 \end{split}
\]
The mixed $q\alpha$ term is odd. Furthermore,
\[
 \lim_{s\uparrow1}\frac{2(1-s)}\pi
 \int_{q^2+\alpha^2<\delta^2}
 \frac{q^2}{(q^2+\alpha^2)^{1+s}}\,dq\,d\alpha=1.
\]
The same formula holds with $q^2$ replaced by $\alpha^2$. The normalized
integral outside a fixed neighborhood of the origin tends to zero. The
exponential $q$-tail is uniform for $s$ near one. Hence
\[
 \mathfrak a_{s,\xi}[y]\longrightarrow\mathfrak l_{1,\xi}[y]
\]
locally uniformly for bounded $\xi$.

For part~(1), the diamagnetic inequality and
Lemma~\ref{lem:modulus-domain} give
$\mathfrak a_{s_n,0}[|y_n|]\leq\mathfrak a_{s_n,\xi_n}[y_n]$.
Apply Lemma~\ref{lem:uniform-endpoint-limit} with $\sigma=\tau$.
The weighted endpoint mass tends to zero uniformly as $\delta\downarrow0$.
Lemma~\ref{lem:covariant-local-limit} gives compactness on every interior
interval. A diagonal argument proves strong compactness in $H_{1,\tau}$.

For part~(2), take a subsequence attaining the finite liminf; the
infinite case is immediate. Weak convergence bounds the norms, so
part~(1) gives strong convergence in $H_{1,\tau}$, hence locally in
$L^2(0,\pi)$, to the same limit $y$.
Apply Lemma~\ref{lem:covariant-local-limit} with
$J=(\delta,\pi-\delta)$ and let $\delta\downarrow0$. The finite bound
gives $y'\in L^2((0,\pi),\sin\theta\,d\theta)$ and
$y\in L^2((0,\pi),\sin\theta\,d\theta)$. The weak limit already belongs
to $H_{1,\tau}$. The closed-form domain description in the proof of
Lemma~\ref{lem:local-angular-form} therefore places $y$ in
$\mathscr V_\tau$. It also gives
\[
 \mathfrak l_{1,\xi}[y]\leq
 \liminf_{n\to\infty}\mathfrak a_{s_n,\xi_n}[y_n].
\]
To prove part~(3), approximate $y\in\mathscr V_\tau$ in the local graph
norm by $y^{(j)}\in C_c^\infty(0,\pi)$. For fixed $j$, the core
calculation gives
\[
 \mathfrak a_{s_n,\xi_n}[y^{(j)}]\longrightarrow
 \mathfrak l_{1,\xi}[y^{(j)}]
\]
as $n\to\infty$. Choose strictly increasing integers $N_j\geq j$ such that
\[
 |\mathfrak a_{s_n,\xi_n}[y^{(j)}]
 -\mathfrak l_{1,\xi}[y^{(j)}]|\leq1/j
 \quad\text{for all }n\geq N_j.
\]
Set $j(n)=\max\{j:N_j\leq n\}$ for $n\geq N_1$ and
$y_n=y^{(j(n))}$; define the finitely many earlier terms arbitrarily
in the core. Then $j(n)\to\infty$. Then
$y_n\to y$ strongly in $H_{1,\tau}$ and
\[
 \limsup_{n\to\infty}\mathfrak a_{s_n,\xi_n}[y_n]
 \leq\mathfrak l_{1,\xi}[y].
\]
This proves part~(3).
\end{proof}

\begin{lemma}[Convergence of the second-moment form]
\label{lem:second-moment-local-limit}
Fix $s_0\in(1/2,1)$. For $s_0<s<1$ and
$y,z\in C_c^\infty(0,\pi)$, define
\[
 \begin{split}
 \mathfrak t_s[y,z]
 ={}&\frac{2(1-s)}\pi
 \int_0^\pi\int_0^\pi\int_{\mathbb R}
 q^2K_s(q,\theta,\varphi)
 (\sin\theta\sin\varphi)^{s-1/2}
 y(\theta)\overline{z(\varphi)}
 \,dq\,d\theta\,d\varphi.
 \end{split}
\]
There is $C=C(s_0)>0$ such that
\[
 |\mathfrak t_s[y,z]|
 \leq C
 \left(\int_0^\pi\sin\theta\,|y|^2\,d\theta\right)^{1/2}
 \left(\int_0^\pi\sin\theta\,|z|^2\,d\theta\right)^{1/2}.
\]
Thus $\mathfrak t_s$ extends uniquely to a bounded form on
$L^2((0,\pi),\sin\theta\,d\theta)$.
Suppose that $s_n\uparrow1$, that $y_n\to y$ in
$L^2_{\rm loc}(0,\pi)$, and that
\[
 \sup_n\{\mathfrak a_{s_n,0}[y_n]
 +\|y_n\|_{H_{1,\tau}}^2\}<\infty.
\]
Then
\[
 \mathfrak t_{s_n}[y_n,y_n]
 \longrightarrow
 \int_0^\pi\sin\theta\,|y(\theta)|^2\,d\theta.
\]
\end{lemma}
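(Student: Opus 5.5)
The proof has two parts: a uniform boundedness estimate, and then convergence, obtained by splitting the kernel into a near-diagonal region and its complement. For the bound, write $\alpha=\varphi-\theta$ and put $W_s(\theta,\varphi)=(\sin\theta\sin\varphi)^{s-1/2}$. Then
\[
 |\mathfrak t_s[y,z]|\leq\frac{2(1-s)}\pi\int\int M_s(\theta,\varphi)|y(\theta)||z(\varphi)|\,d\theta\,d\varphi,\qquad M_s=W_s\int_{\mathbb R}q^2K_s(q,\theta,\varphi)\,dq .
\]
By \eqref{eq:q-moment-integral} with $j=1$ and the exponential tail, $\int q^2K_s\,dq\leq C(1+|\alpha|^{1-2s})$ uniformly for $s\in(s_0,1)$. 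The factor $(1-s)$ does not help here, but since $1-2s>-1$ uniformly this kernel is uniformly integrable in $\alpha$.

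I apply Schur's test to $M_s(\theta,\varphi)/\sqrt{\sin\theta\sin\varphi}$ in unweighted $L^2$, which is the same as testing in $L^2(\sin\theta\,d\theta)$. The symmetric kernel is $(\sin\theta\sin\varphi)^{s-1}(1+|\alpha|^{1-2s})$ times a bounded factor. Take the test function $(\sin\varphi)^{\beta}$ with $-1/2<\beta<0$ chosen so that $\beta+s-1>-1$; for instance $\beta=-1/4$, which works because $s>1/2$. One needs $\int(\sin\varphi)^{s-1+\beta}(1+|\theta-\varphi|^{1-2s})\,d\varphi\leq C(\sin\theta)^{1-s+\beta}$. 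Near the endpoint $\theta\to0$ the left side stays bounded, since the exponents $s-1+\beta$ and $1-2s$ are both $>-1$ and their sum is $-s+\beta>-2$. The splitting in \eqref{eq:q-moment-integral} gives $\int_{|\alpha|<\theta/2}\ldots\sim\theta^{s-1+\beta}\theta^{2-2s}$, which is fine. I expect the exponent bookkeeping to close with $C=C(s_0)$. If Schur with this power fails for $s$ near $1/2$, I would compare with the case $s_0$ fixed, because the exponents are monotone in $s$. This gives the uniform bound, and the unique extension follows by density of $C_c^\infty(0,\pi)$ in $L^2(\sin\theta\,d\theta)$.

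For convergence, split $\mathfrak t_{s_n}[y_n,y_n]$ into three pieces.
\begin{enumerate}
\item The endpoint region $(0,\delta)\cup(\pi-\delta,\pi)$. Lemma~\ref{lem:uniform-endpoint-limit} with $\sigma=2$ gives $\int_{E_\delta}\sin\theta|y_n|^2\leq C\delta\,E_{s_n}[y_n]$ uniformly in $n$. Combined with the uniform bound above, every term involving $\boldsymbol 1_{E_\delta}y_n$ is $O(\delta^{1/2})$ uniformly in $n$, and the same holds for $y$ by Fatou.
\item The interior with $q^2+\alpha^2>\eta^2$. This piece carries the factor $(1-s_n)$ against a kernel bounded uniformly in $n$, so it tends to zero.
\item The interior with $q^2+\alpha^2<\eta^2$. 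This is the only piece that contributes in the limit.
\end{enumerate}

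For the interior near-diagonal piece (3), use $K_s=(q^2+\alpha^2)^{-1-s}(1+O(q^2+\alpha^2))$ and $W_s\to\sin\theta$ uniformly on $[\delta,\pi-\delta]^2$. The normalized measure $\frac{2(1-s)}\pi q^2(q^2+\alpha^2)^{-1-s}\,dq$ has $\alpha$-marginal $\rho_n(\alpha)$ of mass tending to $1$; this is the limit computed in the proof of Lemma~\ref{lem:angular-local-mosco}, restricted to the small disk. This marginal is an approximate identity concentrating at $\alpha=0$. The piece is therefore $\int\int\sin\theta\,y_n(\theta)\overline{y_n(\theta+\alpha)}\rho_n(\alpha)\,d\alpha\,d\theta+o(1)$. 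Since $y_n\to y$ in $L^2(\delta,\pi-\delta)$ and translation is continuous in $L^2$ — for the limit $y$ this suffices, and the $y_n-y$ error is bounded using the marginals' uniform $L^1$ mass and Young's inequality — this tends to $\int_\delta^{\pi-\delta}\sin\theta|y|^2$. Letting $\delta\downarrow0$ gives the claim.

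The main obstacle is step (1) together with the uniform Schur bound. Mass near the endpoints must be negligible uniformly in $s$, which is exactly why the dyadic endpoint estimate is needed, and the cross terms between the endpoint region and the interior must be controlled by the uniform operator bound rather than by $L^1$ kernel bounds alone.
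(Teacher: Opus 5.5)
\textbf{The uniform bound is argued incorrectly.} Your overall architecture is right: a Schur test in $L^2(\sin\theta\,d\theta)$ for the bound, and Lemma~\ref{lem:uniform-endpoint-limit} with $\sigma=2$ for endpoint tightness. The problem is the bound itself, which is the real content of the lemma. You claim that $\int_{\mathbb R}q^2K_s\,dq\le C(1+|\alpha|^{1-2s})$ is uniformly integrable in $\alpha$ because ``$1-2s>-1$ uniformly''. This is false: $1-2s\downarrow-1$ as $s\uparrow1$, and $\int_{-\pi}^{\pi}|\alpha|^{1-2s}\,d\alpha=2\pi^{2-2s}/(2-2s)$.

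The bare kernel is also bounded below by $c|\alpha|^{1-2s}$ near the diagonal, uniformly for $s$ near one, so its $L^1$ norm is of order $(1-s)^{-1}$. The prefactor $2(1-s)/\pi$ is therefore indispensable. It is exactly what makes $L_s(\alpha)=\frac{2(1-s)}\pi\int q^2K_s\,dq$ bounded in $L^1$ uniformly, with mass tending to one. Your own step (3) relies on this, and it contradicts your bound step: if the bare kernel were uniformly integrable, $\mathfrak t_s[v,v]$ would tend to $0$ for every core $v$.

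The same omission appears in your near-diagonal Schur estimate. There $\int_{|\alpha|<\theta/2}|\alpha|^{1-2s}\,d\alpha=(\theta/2)^{2-2s}/(1-s)$, so the bound you wrote as ``fine'' carries a factor $(1-s)^{-1}$. Uniformity comes only from pairing it with $1-s$, namely $(1-s)\theta^{2s-2}\int_{|h|<\theta}|h|^{1-2s}\,dh\le C$.

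\textbf{The Schur weight is also wrong on part of the range.} With $\mathcal L_s=L_s(\theta-\varphi)(\sin\theta\sin\varphi)^{s-1}$ and test function $(\sin\varphi)^\beta$, you need
$\int_0^\pi L_s(\theta-\varphi)(\sin\varphi)^{s-1+\beta}\,d\varphi\le C(\sin\theta)^{1-s+\beta}$.
This is not an integrability question, so ``the left side stays bounded'' is not the relevant check.
\begin{itemize}
\item The separated part of $L_s$ has size $\asymp1-s>0$, so for fixed $s$ the left side does not tend to zero as $\theta\to0$. This forces $\beta\le s-1$.
\item The range $0<\varphi<\theta/2$ forces $\beta>-s$.
\end{itemize}
For $\beta=-1/4$ the test therefore fails whenever $s<3/4$. ``Comparing with the case $s_0$'' is not an argument, since the kernel is not monotone in $s$. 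The paper's weight $(\sin\theta)^{-1/2}$ lies in $(-s,s-1]$ for every $s>1/2$. Combined with the $(1-s)$ normalization, it closes the bound uniformly, with off-diagonal contributions of size $C(1-s)/(s-1/2)$.

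Once the bound is repaired this way, your convergence argument is valid, and it takes a different route from the paper. You compute the interior limit directly, by an approximate-identity argument with the moving $y_n$. The paper instead upgrades $y_n\to y$ to convergence in $L^2(\sin\theta\,d\theta)$. It then reduces to a fixed core function $v$, using the uniform bound and the core moment limit from Lemma~\ref{lem:angular-local-mosco}.
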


\begin{proof}
Put
\[
 L_s(\alpha)=\frac{2(1-s)}\pi
 \int_{\mathbb R}q^2K_s(q,\alpha)\,dq.
\]
The local kernel comparison and the exponential tail give
\[
 0\leq L_s(\alpha)
 \leq C(1-s)(|\alpha|^{1-2s}+1),
 \qquad 0<|\alpha|<\pi,
\]
with a constant uniform for $s_0<s<1$. Indeed, for
$0<|\alpha|<1$, scale $q=|\alpha|r$ in the local part. The remaining
$q$-integral is bounded by a constant times $|\alpha|^{1-2s}$.
The tail is uniform. Consequently,
\[
 \sup_{s_0<s<1}\|L_s\|_{L^1(-\pi,\pi)}<\infty.
\]

Set
\[
 f(\theta)=(\sin\theta)^{1/2}y(\theta),\qquad
 g(\theta)=(\sin\theta)^{1/2}z(\theta).
\]
The kernel acting on unweighted $L^2(0,\pi)$ is
\[
 \mathcal L_s(\theta,\varphi)
 =L_s(\theta-\varphi)
 (\sin\theta\sin\varphi)^{s-1}.
\]
We apply the weighted Schur test with
$p_*(\theta)=(\sin\theta)^{-1/2}$. Let $0<\theta<\varepsilon/4$,
where $0<\varepsilon<1/8$ is fixed. If
$\theta/2<\varphi<2\theta$, both sine factors are comparable to
$\theta$. Since
\[
 (1-s)\theta^{2s-2}
 \int_{|h|<\theta}|h|^{1-2s}\,dh\leq C,
\]
the kernel estimate gives
\[
 \int_{\theta/2}^{2\theta}
 \mathcal L_s(\theta,\varphi)
 \frac{p_*(\varphi)}{p_*(\theta)}\,d\varphi\leq C.
\]
If $0<\varphi\leq\theta/2$, then
$|\theta-\varphi|\asymp\theta$, and the singular part is bounded by
\[
 C(1-s)\theta^{1/2-s}\varphi^{s-3/2}.
\]
Its integral is at most $C(1-s)/(s-1/2)$. If
$2\theta\leq\varphi$ and both variables are near zero, the singular
part is bounded by
\[
 C(1-s)\theta^{s-1/2}\varphi^{-s-1/2}.
\]
Its integral is bounded by $C(1-s)/(s-1/2)$. For the nonsingular
term in $L_s$, the entire Schur integral is bounded by
\[
 C(1-s)(\sin\theta)^{s-1/2}
 \int_0^\pi(\sin\varphi)^{s-3/2}\,d\varphi
 \leq C_{s_0}.
\]
Indeed, $s>s_0>1/2$ and $0<\sin\theta\leq1$. For
$\varepsilon/2\leq\varphi\leq\pi-\varepsilon/2$, the angles are
separated and $L_s(\theta-\varphi)\leq C_\varepsilon(1-s)$.
For $\pi-\varepsilon/2<\varphi<\pi$, they are also separated, and
\[
 \begin{split}
 &\int_{\pi-\varepsilon/2}^\pi
 \mathcal L_s(\theta,\varphi)
 \frac{p_*(\varphi)}{p_*(\theta)}\,d\varphi \leq C_\varepsilon(1-s)
 (\sin\theta)^{s-1/2}
 \int_{\pi-\varepsilon/2}^\pi
 (\sin\varphi)^{s-3/2}\,d\varphi\leq C_\varepsilon.
 \end{split}
\]
The range between $2\theta$ and $\varepsilon/2$ is covered by the
preceding near-zero estimate. Reflection treats $\theta$ near $\pi$.
For $\theta$ in a fixed middle interval, the near-diagonal term is
bounded by $\|L_s\|_1$. On its complement the kernel is at most
$C_\varepsilon(1-s)$; the endpoint integral is finite because
$s_0>1/2$. Thus
\[
 \sup_\theta\frac1{p_*(\theta)}
 \int_0^\pi\mathcal L_s(\theta,\varphi)
 p_*(\varphi)\,d\varphi\leq C.
\]
The kernel is symmetric. Thus the transposed Schur bound also holds.
Schur's test proves
\[
 |\mathfrak t_s[y,z]|
 \leq C\|y\|_{L^2(\sin\theta\,d\theta)}
 \|z\|_{L^2(\sin\theta\,d\theta)}.
\]

The endpoint estimate \eqref{eq:uniform-endpoint-local}, used with
$\sigma=2$, gives
\[
 \int_{(0,\delta)\cup(\pi-\delta,\pi)}
 \sin\theta\,|y_n|^2\,d\theta\leq C\delta.
\]
Together with local convergence, this proves
\[
 y_n\longrightarrow y
 \quad\text{in }L^2((0,\pi),\sin\theta\,d\theta).
\]
For a fixed $v\in C_c^\infty(0,\pi)$, the moment computation in the core
part of Lemma~\ref{lem:angular-local-mosco} gives
\[
 \mathfrak t_s[v,v]\longrightarrow
 \int_0^\pi\sin\theta\,|v|^2\,d\theta.
\]
Write $\|\cdot\|_*=\|\cdot\|_{L^2(\sin\theta\,d\theta)}$.
For a fixed core function $v$, the uniform Schur bound gives
\[
 \begin{split}
 |\mathfrak t_{s_n}[y_n,y_n]-\mathfrak t_{s_n}[v,v]|
 &\leq C\|y_n-v\|_*(\|y_n\|_*+\|v\|_*),\\
 |\|y\|_*^2-\|v\|_*^2|
 &\leq\|y-v\|_*(\|y\|_*+\|v\|_*).
 \end{split}
\]
First let $n\to\infty$, using $y_n\to y$ in this norm and the core
moment limit for $v$. Then choose $v$ with $\|y-v\|_*$ arbitrarily
small. The two bounds prove the asserted limit.
\end{proof}

\begin{proof}[Proof of Theorem~\ref{thm:sharp-local-limit}]
After the ground-state conjugation, the denominator of the zero-fiber
Rayleigh quotient is the fixed $H_{1,\tau}$ norm. Hence
\[
 \frac{2(1-s)}\pi\Lambda_{s,\tau}
 =\inf_{y\neq0}\frac{\mathfrak a_{s,0}[y]}
 {\|y\|_{H_{1,\tau}}^2}.
\]
A recovery sequence for the local minimizer gives
\[
 \limsup_{s\uparrow1}\frac{2(1-s)}\pi\Lambda_{s,\tau}
 \leq\Lambda_{1,\tau}.
\]
For the opposite inequality, choose a sequence $s_n\uparrow1$ such that
\[
 \frac{2(1-s_n)}\pi\Lambda_{s_n,\tau}
 \longrightarrow
 \liminf_{s\uparrow1}\frac{2(1-s)}\pi\Lambda_{s,\tau},
\]
and put $y_n=(\sin\theta)^{1/2-s_n}\phi_{s_n,\tau}$. The functions
$y_n$ are normalized in $H_{1,\tau}$. The preceding upper bound makes
$\mathfrak a_{s_n,0}[y_n]$ bounded. Compact Mosco convergence therefore
gives, after passage to a subsequence,
$y_n\to y$ strongly in $H_{1,\tau}$. Thus $\|y\|_{H_{1,\tau}}=1$, and
the Mosco lower bound gives
\[
 \liminf_{s\uparrow1}\frac{2(1-s)}\pi\Lambda_{s,\tau}
 =\lim_{n\to\infty}\mathfrak a_{s_n,0}[y_n]
 \geq\mathfrak l_{1,0}[y]\geq\Lambda_{1,\tau}.
\]
This proves convergence of the first eigenvalue. Finally, take an
arbitrary sequence $s_n\uparrow1$. Every subsequence of the corresponding
positive normalized $y_n$ has a further subsequence converging strongly to
a nonnegative normalized local minimizer. The simplicity of the local
first eigenvalue identifies that limit with $y_{1,\tau}$. Hence the whole
family converges strongly to $y_{1,\tau}$.

For the higher eigenvalues, we use the min--max principle. For
the upper bound, fix $\varepsilon>0$. Approximate an orthonormal basis of
the first $j$ local eigenspaces in the local graph norm by angular core
functions. Their span $E_\varepsilon$ has dimension $j$ and largest local
Rayleigh quotient at most $\lambda_j^{(1)}(\xi)+\varepsilon$. Core convergence is uniform on its unit sphere. The min--max principle,
followed by $\varepsilon\downarrow0$, gives the upper bound.

For the lower bound, write $\lambda_j^{(s)}(\xi)$ for the nonlocal
eigenvalue in this paragraph. Choose $s_n\uparrow1$ so that
\[
 L:=\lim_{n\to\infty}\frac{2(1-s_n)}\pi
 \lambda_j^{(s_n)}(\xi)
 =\liminf_{s\uparrow1}\frac{2(1-s)}\pi\lambda_j^{(s)}(\xi).
\]
For $1\leq k\leq j$, let $e_{k,n}$ be orthonormal eigenfunctions of
$\mathfrak a_{s_n,\xi}$ associated with its first $j$ eigenvalues. The
upper bound just proved makes their form values bounded. By compactness
and a diagonal extraction, $e_{k,n}\to e_k$ strongly in $H_{1,\tau}$
for every $k\leq j$; the limits are orthonormal. For every
$c=(c_1,\ldots,c_j)$ with rational real and imaginary parts, the Mosco
lower bound and the diagonal spectral representation give
\[
 \mathfrak l_{1,\xi}\!\left[\sum_{k=1}^jc_ke_k\right]
 \leq\liminf_{n\to\infty}
 \sum_{k=1}^j\frac{2(1-s_n)}\pi
 \lambda_k^{(s_n)}(\xi)|c_k|^2
 \leq L\sum_{k=1}^j|c_k|^2.
\]
By finite-dimensional density the inequality holds for all
$c\in\mathbb C^j$. The min--max principle gives
$\lambda_j^{(1)}(\xi)\leq L$.

For the last assertion, put
\[
 y_s=(\sin\theta)^{1/2-s}\phi_{s,\tau}.
\]
The preceding part of the proof gives
$y_s\to y_{1,\tau}$ locally in $L^2(0,\pi)$ and
\[
 \sup_{s\text{ near }1}
 \{\mathfrak a_{s,0}[y_s]+\|y_s\|_{H_{1,\tau}}^2\}<\infty.
\]
Moreover, the definition of $\kappa_{s,\tau}$ gives
\[
 \frac{2(1-s)}\pi\kappa_{s,\tau}
 =\mathfrak t_s[y_s,y_s].
\]
Lemma~\ref{lem:second-moment-local-limit} now yields
\[
 \lim_{s\uparrow1}\frac{2(1-s)}\pi\kappa_{s,\tau}
 =\int_0^\pi\sin\theta\,y_{1,\tau}^2\,d\theta.
\]
\end{proof}

\begin{corollary}[Limit of the sharp deficit]
\label{cor:sharp-deficit-local-limit}
For every $\tau>0$ and $u\in C_c^\infty(\mathbb R^2_+)$,
\[
 \begin{split}
 &\lim_{s\uparrow1}\frac{2(1-s)}\pi
 \{\mathcal H_s[u]-\Lambda_{s,\tau}\mathcal R_{s,\tau}[u]\} =\mathcal H_1[u]
 -\Lambda_{1,\tau}\mathcal R_{1,\tau}[u].
 \end{split}
\]
For every $d\geq2$,
\[
 \lim_{s\uparrow1}
 \frac{(1-s)\Lambda_{d,s,\tau}}{K_{d,2}}
 =\Lambda_{1,\tau}.
\]
\end{corollary}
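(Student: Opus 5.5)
The first assertion reduces to the deficit limit on a fixed core function. By Proposition~\ref{prop:local-core-limit} with $d=p=2$ and $K_{2,2}=\pi/2$, we have $\frac{2(1-s)}{\pi}\mathcal H_s[u]\to\mathcal H_1[u]$ and $\mathcal R_{s,\tau}[u]\to\mathcal R_{1,\tau}[u]$ for $u\in C_c^\infty(\mathbb R^2_+)$. Theorem~\ref{thm:sharp-local-limit} gives $\frac{2(1-s)}{\pi}\Lambda_{s,\tau}\to\Lambda_{1,\tau}$. The product of the convergent scalar $\frac{2(1-s)}{\pi}\Lambda_{s,\tau}$ with the convergent $\mathcal R_{s,\tau}[u]$ converges to $\Lambda_{1,\tau}\mathcal R_{1,\tau}[u]$. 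Subtracting this from the first limit proves the planar identity. The only check needed is that the normalization $\frac{2(1-s)}{\pi}=(1-s)/K_{2,2}$ used in Proposition~\ref{prop:local-core-limit} agrees with the one in Theorem~\ref{thm:sharp-local-limit}, which holds because $K_{2,2}=\frac12\int_{\mathbb S^1}\omega_2^2\,d\omega=\pi/2$.

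For the higher-dimensional statement, start from Theorem~\ref{thm:dimension-reduction}, which gives $\Lambda_{d,s,\tau}=A_{d,s}\Lambda_{s,\tau}$, so that
\[
 \frac{(1-s)\Lambda_{d,s,\tau}}{K_{d,2}}
 =\frac{A_{d,s}K_{2,2}}{K_{d,2}}\cdot
 \frac{(1-s)\Lambda_{s,\tau}}{K_{2,2}}.
\]
The second factor tends to $\Lambda_{1,\tau}$ by Theorem~\ref{thm:sharp-local-limit}. It therefore suffices to show that $A_{d,1}K_{2,2}=K_{d,2}$. The factor $A_{d,s}$ is continuous in $s$ up to $s=1$, and $A_{d,1}=\pi^{(d-2)/2}\Gamma(2)/\Gamma((d+2)/2)$. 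The explicit formula for $K_{d,p}$ gives
\[
 K_{d,2}=\frac{\pi^{(d-1)/2}\Gamma(3/2)}{\Gamma((d+2)/2)}
 =\frac{\pi^{d/2}}{2\,\Gamma((d+2)/2)},
\]
and then $A_{d,1}\cdot\frac{\pi}{2}=\frac{\pi^{d/2}}{2\Gamma((d+2)/2)}=K_{d,2}$, so the constant ratio is exactly one. Equivalently, this is the relation \eqref{eq:hardy-dimension-factor} passed to the limit.

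None of these steps is a real obstacle, because the hard analysis, namely the convergence of the sharp constants, is already contained in Theorem~\ref{thm:sharp-local-limit}. The only delicate point is bookkeeping. We use the planar limit only on a fixed core function, where the pointwise limits of Proposition~\ref{prop:local-core-limit} apply, and we do not claim any uniformity in $u$. The Gamma-function identity is elementary; it is the one place where an arithmetic mistake could creep in, so it should be checked carefully.
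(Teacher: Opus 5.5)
Your proposal is correct and follows essentially the same route as the paper. The paper also obtains the first assertion from Proposition~\ref{prop:local-core-limit} combined with Theorem~\ref{thm:sharp-local-limit}. For the second assertion, it uses $\Lambda_{d,s,\tau}=A_{d,s}\Lambda_{s,\tau}$ together with $A_{d,s}\to K_{d,2}/K_{2,2}$ and $K_{2,2}=\pi/2$. Your Gamma-function computation of this identity is correct.
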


\begin{proof}
The first assertion follows from Proposition~\ref{prop:local-core-limit}
and Theorem~\ref{thm:sharp-local-limit}. For the second one, use
$\Lambda_{d,s,\tau}=A_{d,s}\Lambda_{s,\tau}$ and
\[
 A_{d,s}\longrightarrow
 \frac{\pi^{(d-2)/2}}{\Gamma((d+2)/2)}
 =\frac{K_{d,2}}{K_{2,2}}.
\]
Since $K_{2,2}=\pi/2$, the conclusion follows.
\end{proof}

\begin{remark}
For $\tau=1$, Lemma~\ref{lem:local-angular-form} gives the
Maz'ya--Shaposhnikova eigenvalue
\[
 \Lambda_{1,1}=\min_{y\neq0}
 \frac{\displaystyle\int_0^\pi\sin\theta
 (|y'|^2+|y|^2/4)\,d\theta}
 {\displaystyle\int_0^\pi|y|^2\,d\theta}.
\]
For $\tau=2$, Proposition~\ref{prop:local-benchmark} gives the full
local spectrum and an attained local stability coefficient. In contrast,
Theorem~\ref{thm:optimal-hp} proves non-attainment for every $s<1$;
both statements use the same fixed multiplier $h$.

Convergence of $\Gamma^{\rm hp}_{s,\tau}$ would require both
large-frequency coercivity uniform in $s$ and uniform control of the
generalized bands at zero. The fixed-$s$ estimates above do not give
these bounds.
\end{remark}
\appendix

\section{The spectrally periodized cylinder}
\label{sec:periodic}

Fix $0<s<1$ and $\tau>0$. For $L>0$, let
$\mathcal C_L=(\mathbb R/L\mathbb Z)\times(0,\pi)$. Its form is defined
by the discrete fibers $\mathfrak b_{s,2\pi k/L}$. Let
$\mathcal M_L$ be the one-dimensional space of functions
$c\phi_{s,\tau}(\theta)$. The optimal stability constant for the
spectrally periodized cylinder is
\begin{equation}\label{eq:intro-periodic-gap}
 G_{s,\tau}(L)=\inf_{w\notin\mathcal M_L}
 \frac{\mathfrak q_{s,L}[w]-\Lambda_{s,\tau}
 \|w\|_{L^2(\mathcal C_L,m_{s,\tau})}^2}
 {\operatorname{dist}_{L^2(\mathcal C_L,m_{s,\tau})}
 (w,\mathcal M_L)^2}.
\end{equation}

Let $L>0$. A function on $\mathcal C_L$ has the expansion
\[
 w(t,\theta)=L^{-1/2}\sum_{k\in\mathbb Z}
 e^{2\pi ikt/L}\psi_k(\theta).
\]
Define
\begin{equation}\label{eq:periodic-form}
 \mathfrak q_{s,L}[w]=
 \sum_{k\in\mathbb Z}\mathfrak b_{s,2\pi k/L}[\psi_k]
\end{equation}
and
\[
 \|w\|_{L^2(\mathcal C_L,m_{s,\tau})}^2
 =\sum_{k\in\mathbb Z}\|\psi_k\|_{L^2(m_{s,\tau})}^2.
\]
The periodic form domain is
\[
 \mathscr X_{s,L}=
 \left\{w:\psi_k\in\mathscr Y_s,\quad
 \sum_{k\in\mathbb Z}\left(
 \mathfrak b_{s,2\pi k/L}[\psi_k]
 +\|\psi_k\|_{H_{s,\tau}}^2\right)<\infty\right\}.
\]
The graph norm makes this a Hilbert space. Finite Fourier sums with
coefficients in $C_c^\infty(0,\pi)$ form a core.

\begin{proposition}[Periodic extremal]
The sharp spectrally periodized quotient is
\[
 \inf_{w\neq0}
 \frac{\mathfrak q_{s,L}[w]}
 {\|w\|_{L^2(\mathcal C_L,m_{s,\tau})}^2}
 =\Lambda_{s,\tau}.
\]
It is attained precisely by
$w(t,\theta)=c\phi_{s,\tau}(\theta)$.
\end{proposition}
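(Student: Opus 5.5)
The plan is to work fiberwise, using the discrete Fourier expansion that defines $\mathfrak q_{s,L}$ in \eqref{eq:periodic-form}. For any $w\in\mathscr X_{s,L}$ with coefficients $\psi_k\in\mathscr Y_s$, the variational characterization of $\lambda_1(2\pi k/L)$ gives
\[
 \mathfrak b_{s,2\pi k/L}[\psi_k]\geq\lambda_1(2\pi k/L)\|\psi_k\|_{H_{s,\tau}}^2 .
\]
The diamagnetic comparison \eqref{eq:fiber-minimum} together with Theorem~\ref{thm:A} gives $\lambda_1(2\pi k/L)\geq\lambda_1(0)=\Lambda_{s,\tau}$. Summing over $k$ yields $\mathfrak q_{s,L}[w]\geq\Lambda_{s,\tau}\|w\|^2_{L^2(\mathcal C_L,m_{s,\tau})}$. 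Hence the infimum is at least $\Lambda_{s,\tau}$.

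For the matching upper bound, I take $w=L^{-1/2}\cdot L^{1/2}\phi_{s,\tau}$, that is, $\psi_0=L^{1/2}\phi_{s,\tau}$ and $\psi_k=0$ for $k\neq0$. This function lies in $\mathscr X_{s,L}$ because $\phi_{s,\tau}\in\mathscr Y_s$, and since only one fiber is nonzero the sum is finite. Its quotient is $\mathfrak b_{s,0}[\phi_{s,\tau}]/\|\phi_{s,\tau}\|^2=\lambda_1(0)=\Lambda_{s,\tau}$. So the infimum equals $\Lambda_{s,\tau}$, is attained, and every $c\phi_{s,\tau}$ is an extremal. Unlike the line case of Theorem~\ref{thm:A}, there is no cutoff issue: the constant mode is square-integrable on the compact circle $\mathbb R/L\mathbb Z$.

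For the characterization of all extremals, suppose $w\neq0$ attains the value. Then
\[
 \sum_k\bigl(\mathfrak b_{s,2\pi k/L}[\psi_k]-\Lambda_{s,\tau}\|\psi_k\|^2\bigr)=0,
\]
and each summand is nonnegative. For $k\neq0$, the strict inequality $\lambda_1(2\pi k/L)>\lambda_1(0)$ from Lemma~\ref{lem:diamagnetic} forces $\psi_k=0$. For $k=0$, the summand vanishes only when $\psi_0$ is a first eigenfunction of $B_s(0)$, and by simplicity (Lemma~\ref{lem:positive-simple}) this means $\psi_0\in\operatorname{span}\{\phi_{s,\tau}\}$. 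Consequently $w=L^{-1/2}\psi_0=c\phi_{s,\tau}$.

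The only delicate point is the equality case in the $k=0$ fiber: an element of $\mathscr Y_s$ with Rayleigh quotient equal to $\lambda_1(0)$ must be an eigenfunction. This follows from the spectral decomposition of $B_s(0)$, which has compact resolvent by Lemma~\ref{lem:closed-fibers}, exactly as in Proposition~\ref{prop:angular-stability}. With the earlier lemmas available, everything else is bookkeeping.
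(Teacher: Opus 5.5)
Your proposal is correct and follows the paper's own argument: a fiberwise lower bound by $\lambda_1(2\pi k/L)$, the diamagnetic comparison with strict inequality for $k\neq0$, and simplicity of $\lambda_1(0)$ in the zero fiber. You also spell out two points the paper leaves implicit: that $\psi_0=L^{1/2}\phi_{s,\tau}$ attains the value, and that equality in the zero fiber forces an eigenfunction.
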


\begin{proof}
Each summand in \eqref{eq:periodic-form} is bounded below by
$\lambda_1(2\pi k/L)\|\psi_k\|^2$.
Lemma~\ref{lem:diamagnetic} gives
$\lambda_1(2\pi k/L)\geq\lambda_1(0)$. Equality is possible only when
$k=0$. In that fiber, Lemma~\ref{lem:positive-simple} shows that equality is
possible only for a multiple of $\phi_{s,\tau}$.
\end{proof}

\begin{theorem}[Attainment on the spectrally periodized cylinder]
\label{thm:D}
Let $0<s<1$, $\tau>0$, and $L>0$. The constant $G_{s,\tau}(L)$ is positive
and is attained.
If $\lambda_2(0)$ denotes the second eigenvalue of the zero fiber, then
\begin{equation}\label{eq:intro-mainD}
 G_{s,\tau}(L)=\min\left\{\lambda_2(0)-\lambda_1(0),
 \inf_{k\in\mathbb Z\setminus\{0\}}
 [\lambda_1(2\pi k/L)-\lambda_1(0)]\right\}.
\end{equation}
Moreover,
\begin{equation}\label{eq:intro-periodic-asymptotic}
 G_{s,\tau}(L)=\frac{4\pi^2\kappa_{s,\tau}}{L^2}+O(L^{-4})
 \quad\text{as }L\to\infty.
\end{equation}
For all sufficiently large $L$, the orthogonal component
$w-P_{\mathcal M_L}w$ of every extremizer is supported in the modes
$k=\pm1$. A complex representative orthogonal to $\mathcal M_L$ may use
one mode. A real representative has conjugate coefficients.
\end{theorem}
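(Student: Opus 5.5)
The proof diagonalizes the periodic form in the Fourier modes $k$ and reduces everything to fiber spectral data. For $w\in\mathscr X_{s,L}$ with coefficients $\psi_k$, the projection onto $\mathcal M_L$ changes only the zero mode. If we set $L^{-1/2}\psi_0=c\phi_{s,\tau}$ plus an orthogonal part, then
\[
 \operatorname{dist}(w,\mathcal M_L)^2=\|Q_0\psi_0\|_{H_{s,\tau}}^2+\sum_{k\neq0}\|\psi_k\|_{H_{s,\tau}}^2 .
\]
By the Euler--Lagrange equation \eqref{eq:EL}, the numerator of \eqref{eq:intro-periodic-gap} equals
\[
 \bigl(\mathfrak b_{s,0}[Q_0\psi_0]-\Lambda_{s,\tau}\|Q_0\psi_0\|^2\bigr)+\sum_{k\neq0}\bigl(\mathfrak b_{s,2\pi k/L}[\psi_k]-\Lambda_{s,\tau}\|\psi_k\|^2\bigr).
\]
The cross terms in the zero mode vanish, as in the proof of Proposition~\ref{prop:angular-stability}. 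Proposition~\ref{prop:angular-stability} bounds the first bracket below by $(\lambda_2(0)-\lambda_1(0))\|Q_0\psi_0\|^2$. The variational definition of $\lambda_1(\xi)$ bounds each remaining term below by $(\lambda_1(2\pi k/L)-\lambda_1(0))\|\psi_k\|^2$. Summing gives $G_{s,\tau}(L)\geq$ the right side of \eqref{eq:intro-mainD}. Conversely, test functions concentrated in a single mode show the two quantities are equal: take $e_2$ in the zero mode, or a first eigenfunction of the fiber at $2\pi k/L$ in mode $k$.

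Next comes attainment and positivity. By Lemma~\ref{lem:diamagnetic}, $\lambda_1(2\pi k/L)-\lambda_1(0)>0$ for each $k\neq0$. Lemma~\ref{lem:large-frequency} gives $\lambda_1(2\pi k/L)\to\infty$ as $|k|\to\infty$. Hence the infimum over $k\neq0$ is a minimum over finitely many candidate modes, and it is positive. The gap $\lambda_2(0)-\lambda_1(0)$ is positive by Lemma~\ref{lem:positive-simple}. So $G_{s,\tau}(L)$ is a minimum of finitely many positive numbers, each attained by an eigenfunction, since the resolvents are compact by Lemma~\ref{lem:closed-fibers}. The corresponding single-mode function (or $e_2$) is an extremizer.

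For the asymptotics \eqref{eq:intro-periodic-asymptotic} we use Proposition~\ref{prop:dispersion}: $\lambda_1(\xi)-\lambda_1(0)=\kappa_{s,\tau}\xi^2+O(\xi^4)$ near zero, with $\kappa_{s,\tau}>0$. Lemma~\ref{lem:away-gap} gives a uniform positive lower bound $c(\xi_0)$ on $|\xi|\geq\xi_0$. Fix $\xi_0$ small so that on $(0,\xi_0]$ the function $\lambda_1(\xi)-\lambda_1(0)$ is strictly increasing in $|\xi|$, with derivative $2\kappa\xi+O(\xi^3)>0$ by evenness and analyticity. For $L$ large, the modes with $2\pi|k|/L\geq\xi_0$ contribute at least $c(\xi_0)$. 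Among the remaining modes, the minimum is attained uniquely at $k=\pm1$, with value $4\pi^2\kappa_{s,\tau}/L^2+O(L^{-4})$. This value is smaller than both $c(\xi_0)$ and $\lambda_2(0)-\lambda_1(0)$ once $L$ is large. This proves \eqref{eq:intro-periodic-asymptotic}.

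The support statement then follows from the equality analysis of the first paragraph. For an extremizer, each nonnegative defect term must vanish. Any mode whose fiber constant is strictly larger than $G_{s,\tau}(L)$ must therefore carry zero mass, and this includes the $Q_0$ part of the zero mode and all $|k|\geq2$. Hence only $k=\pm1$ survive. Within those modes, $\psi_{\pm1}$ must be first eigenvectors at $\pm2\pi/L$. Since $B_s(-\xi)=B_s(\xi)$, a complex representative may use a single mode. For a real $w$, the relation $\psi_{-k}=\overline{\psi_k}$ forces conjugate coefficients.

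The main obstacle is the strict monotonicity step: excluding $|k|\geq2$ with $2\pi|k|/L<\xi_0$ requires a \emph{uniform} comparison $\lambda_1(2\pi k/L)>\lambda_1(2\pi/L)$. I would get it from the quadratic expansion with an explicit $O(\xi^4)$ constant, choosing $\xi_0$ so that the error is at most $\kappa\xi^2/10$. That yields $\lambda_1(4\pi/L)-\lambda_1(0)\geq 3.6\,\kappa(2\pi/L)^2>\lambda_1(2\pi/L)-\lambda_1(0)$, and more generally monotonicity on $(0,\xi_0]$.
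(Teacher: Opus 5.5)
Your proposal is correct and follows the paper's proof essentially step for step: the Euler--Lagrange cancellation in the zero mode, the fiber lower bounds from Proposition~\ref{prop:angular-stability} and the variational definition of $\lambda_1$, attainment from $\lambda_1(\xi)\to\infty$, and the asymptotics and support statement from Proposition~\ref{prop:dispersion}, Lemma~\ref{lem:away-gap}, and monotonicity of $\lambda_1$ near zero. One small remark on your last paragraph: the two-sided bound $0.9\kappa_{s,\tau}\xi^2\le\lambda_1(\xi)-\lambda_1(0)\le1.1\kappa_{s,\tau}\xi^2$ already excludes $|k|\ge2$ directly, but it does not by itself give monotonicity on $(0,\xi_0]$; monotonicity comes from the derivative expansion $\lambda_1'(\xi)=2\kappa_{s,\tau}\xi+O(\xi^3)$, which you state earlier and which is what the paper uses.
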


\begin{proof}[Proof of Theorem~\ref{thm:D}]
Write $w=P_{\mathcal M_L}w+w^\perp$. Since the zero-mode ground state
solves \eqref{eq:EL}, the numerator in
\eqref{eq:intro-periodic-gap} is unchanged when
$P_{\mathcal M_L}w$ is removed. The denominator is
$\|w^\perp\|^2$. Hence it is enough to take
$w\perp\mathcal M_L$. Then
$\psi_0\perp\phi_{s,\tau}$. The spectral theorem gives
\[
 \mathfrak b_{s,0}[\psi_0]-\Lambda_{s,\tau}\|\psi_0\|^2
 \geq(\lambda_2(0)-\lambda_1(0))\|\psi_0\|^2.
\]
For $k\neq0$,
\[
 \mathfrak b_{s,2\pi k/L}[\psi_k]
 -\Lambda_{s,\tau}\|\psi_k\|^2
 \geq(\lambda_1(2\pi k/L)-\lambda_1(0))\|\psi_k\|^2.
\]
Summation proves the lower bound in \eqref{eq:intro-mainD}. Equality is
obtained by a second eigenfunction of the zero fiber or by a first
eigenfunction of a minimizing nonzero fiber. Such a fiber exists because
$\lambda_1(2\pi k/L)\to\infty$ as $|k|\to\infty$. Thus the constant is
positive and attained.

Proposition~\ref{prop:dispersion} gives
\begin{equation}\label{eq:k1-asymptotic}
 \lambda_1(2\pi/L)-\lambda_1(0)
 =\frac{4\pi^2\kappa_{s,\tau}}{L^2}+O(L^{-4}).
\end{equation}
Differentiating the even analytic expansion gives
\[
 \lambda_1'(\xi)=2\kappa_{s,\tau}\xi+O(\xi^3).
\]
Choose $\delta>0$ so that $\lambda_1'(\xi)>0$ for $0<\xi<\delta$.
By Lemma~\ref{lem:away-gap}, there is $c_\delta>0$ with
$\lambda_1(\xi)-\lambda_1(0)\geq c_\delta$ for $|\xi|\geq\delta$.
For sufficiently large $L$,
\[
 \frac{2\pi}{L}<\delta,\qquad
 \lambda_1(2\pi/L)-\lambda_1(0)
 <\min\{c_\delta,\lambda_2(0)-\lambda_1(0)\}.
\]
If $|k|\geq2$ and $2\pi|k|/L<\delta$, strict monotonicity gives a
larger gap than at $|k|=1$. If $2\pi|k|/L\geq\delta$, the gap is at
least $c_\delta$. Thus only $k=\pm1$ minimize the nonzero modal gap. Equations \eqref{eq:intro-mainD} and
\eqref{eq:k1-asymptotic} prove \eqref{eq:intro-periodic-asymptotic}.
For the equality cases, subtract
$G_{s,\tau}(L)\|w^\perp\|^2$ from the numerator. Each modal term is
nonnegative. For large $L$, equality forces all modes except $k=\pm1$
to vanish, and each remaining coefficient belongs to the corresponding
first eigenspace. The two complex coefficients are independent; for real
functions they are complex conjugates. An arbitrary element of
$\mathcal M_L$ may be added without changing the quotient.
\end{proof}

\section{Angular Ritz approximation}\label{sec:ritz}

Dima and Ignat \cite{DimaIgnat} obtain sharp finite element error rates
for the full-space fractional Hardy constant. The compact angular problem
here has the following Ritz convergence; no mesh error rate is claimed.

\begin{proposition}[Angular Ritz approximation]\label{prop:ritz-convergence}
Let $Y_N\subset\mathscr Y_s$ be nested finite-dimensional spaces of
dimension at least two whose union is form-dense. Let $\Lambda_N$ and
$\lambda_{2,N}$ be their first two Ritz eigenvalues, and let $\phi_N$ be
a normalized first Ritz eigenvector with
$\langle\phi_N,\phi_{s,\tau}\rangle_{H_{s,\tau}}\geq0$. Then
\[
 \Lambda_N\downarrow\Lambda_{s,\tau},\qquad
 \lambda_{2,N}\downarrow\lambda_2(0),\qquad
 \phi_N\to\phi_{s,\tau}\quad\text{in }\mathscr Y_s.
\]
Consequently,
$\lambda_{2,N}-\Lambda_N\to\Gamma^{\rm ang}_{s,\tau}$.
If $\kappa_N$ is obtained from \eqref{eq:intro-kappa} by replacing
$\phi_{s,\tau}(\theta)\phi_{s,\tau}(\varphi)$ with
$\phi_N(\theta)\overline{\phi_N(\varphi)}$, then
$\kappa_N\to\kappa_{s,\tau}$.
\end{proposition}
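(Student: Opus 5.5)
The plan is the standard Ritz/min--max argument for a closed semibounded form with compact resolvent; here $B_s(0)$ has compact resolvent by Lemma~\ref{lem:closed-fibers}.

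\textbf{Step 1: monotonicity and lower bounds.} Since the $Y_N$ are nested subspaces of $\mathscr Y_s$, the min--max principle restricted to $Y_N$ gives that $\Lambda_N$ and $\lambda_{2,N}$ are nonincreasing in $N$. It also gives $\Lambda_N\geq\lambda_1(0)=\Lambda_{s,\tau}$ and $\lambda_{2,N}\geq\lambda_2(0)$.

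\textbf{Step 2: convergence of the eigenvalues.} Fix $\varepsilon>0$ and let $e_1,e_2$ be orthonormal eigenvectors for $\lambda_1(0),\lambda_2(0)$. Form-density of $\bigcup_N Y_N$ gives $v_{j,N}\in Y_N$ with $v_{j,N}\to e_j$ in $\mathscr Y_s$. Form continuity and continuity of the $H_{s,\tau}$ Gram matrix then give the following: for large $N$, $\operatorname{span}\{v_{1,N},v_{2,N}\}$ is two-dimensional, and its maximal Rayleigh quotient is at most $\lambda_2(0)+\varepsilon$. Similarly, the quotient of $v_{1,N}$ is at most $\lambda_1(0)+\varepsilon$. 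Min--max therefore gives the limits $\Lambda_N\downarrow\Lambda_{s,\tau}$ and $\lambda_{2,N}\downarrow\lambda_2(0)$, and hence $\lambda_{2,N}-\Lambda_N\to\Gamma^{\rm ang}_{s,\tau}$ by Proposition~\ref{prop:angular-stability}.

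\textbf{Step 3: convergence of the eigenvectors.} Decompose $\phi_N=c_N\phi_{s,\tau}+\phi_N^\perp$ with $\phi_N^\perp\perp\phi_{s,\tau}$ in $H_{s,\tau}$. By the Euler--Lagrange equation \eqref{eq:EL} and Proposition~\ref{prop:angular-stability},
\[
 \Lambda_N-\Lambda_{s,\tau}
 =\mathfrak b_{s,0}[\phi_N]-\Lambda_{s,\tau}\|\phi_N\|^2
 =\mathfrak b_{s,0}[\phi_N^\perp]-\Lambda_{s,\tau}\|\phi_N^\perp\|^2
 \geq(\lambda_2(0)-\lambda_1(0))\|\phi_N^\perp\|^2 .
\]
Thus $\|\phi_N^\perp\|_{H_{s,\tau}}\to0$. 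Combining this with the sign condition $c_N\geq0$ and normalization gives $c_N\to1$. Moreover,
\[
 \mathfrak b_{s,0}[\phi_N^\perp]
 \leq(\Lambda_N-\Lambda_{s,\tau})+\Lambda_{s,\tau}\|\phi_N^\perp\|^2\to0,
\]
so $\phi_N^\perp\to0$ in $\mathscr Y_s$. Hence $\phi_N\to\phi_{s,\tau}$ in $\mathscr Y_s$. This step is direct and needs no compactness beyond the spectral gap.

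\textbf{Step 4: convergence of the moment integral.} Define the sesquilinear moment form
\[
 M(\psi,\chi)=\int\!\!\int\!\!\int q^2K_s\,\psi(\theta)\overline{\chi(\varphi)}\,dq\,d\theta\,d\varphi .
\]
Then $\kappa_N=M(\phi_N,\phi_N)$ and $\kappa_{s,\tau}=M(\phi_{s,\tau},\phi_{s,\tau})$. By Lemma~\ref{lem:kernel-moments} with $j=1$, $|M(\psi,\chi)|$ is bounded by the moment form applied to $|\psi|,|\chi|$. That bound, together with Young's inequality and \eqref{eq:unweighted-control}, gives $|M(\psi,\chi)|\leq C\|\psi\|_{\mathscr Y_s}\|\chi\|_{\mathscr Y_s}$. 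Here the $\alpha$-kernel is integrable in $L^1(-\pi,\pi)$, exactly as in that lemma's proof.

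Writing $M(\phi_N,\phi_N)-M(\phi,\phi)=M(\phi_N-\phi,\phi_N)+M(\phi,\phi_N-\phi)$ and applying Step 3 gives $\kappa_N\to\kappa_{s,\tau}$. The only point needing care, and the main obstacle, is this bilinear boundedness of $M$ on $\mathscr Y_s$, which is not stated explicitly in Lemma~\ref{lem:kernel-moments}. I would obtain it from the $L^1$ kernel bound \eqref{eq:q-moment-integral} combined with the exponential tail, as in that lemma.
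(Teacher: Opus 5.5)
Your proposal is correct. Steps 1, 2 and 4 match the paper's proof: nested min--max, approximation of the first two eigenvectors with Gram and form matrices tending to $I_2$ and $\operatorname{diag}(\lambda_1(0),\lambda_2(0))$, and Young's inequality with the integrable second-moment kernel from \eqref{eq:q-moment-integral} plus the exponential tail. The ``obstacle'' you flag in Step 4 is immediate. The paper in fact uses the stronger bound $|M(\psi,\chi)|\leq C\|\psi\|_{L^2(0,\pi)}\|\chi\|_{L^2(0,\pi)}$, so only $L^2(0,\pi)$ convergence of $\phi_N$ is needed, and that follows from \eqref{eq:unweighted-control}.

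The real difference is Step 3. The paper argues qualitatively. It uses compactness of $\mathscr Y_s\hookrightarrow H_{s,\tau}$ and simplicity to identify every weak limit of $\phi_N$ with $\phi_{s,\tau}$. It then upgrades weak to strong convergence because $\mathfrak b_{s,0}[\phi_N]=\Lambda_N\to\Lambda_{s,\tau}$ gives convergence of the form norms. Finally, the phase normalization passes from subsequences to the whole sequence. You instead insert the orthogonal decomposition into the Euler--Lagrange identity \eqref{eq:EL} and use the gap $\lambda_2(0)-\lambda_1(0)$. Together with $0\leq 1-c_N\leq\|\phi_N^\perp\|^2$, this gives the quantitative bound $\|\phi_N-\phi_{s,\tau}\|_{\mathscr Y_s}^2\leq C(\Lambda_N-\Lambda_{s,\tau})$, with no subsequence extraction.

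Your route yields the standard Ritz rate, in which the squared eigenvector error is controlled by the eigenvalue error. The paper's route only needs that a weak limit is a normalized minimizer and gives no rate. Both rest on the same compactness and simplicity, since the gap you use is derived from them.
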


\begin{proof}
Nesting and the min--max principle give monotone eigenvalue bounds.
Approximate the first two angular eigenfunctions in the form norm by
vectors in $Y_N$. Their Gram and form matrices tend to $I_2$ and
$\operatorname{diag}(\lambda_1(0),\lambda_2(0))$, proving the eigenvalue
limits. Compactness and simplicity identify every weak form limit of
$\phi_N$ with $\phi_{s,\tau}$. Moreover,
\[
 \mathfrak b_{s,0}[\phi_N]+1=\Lambda_N+1
 \longrightarrow\mathfrak b_{s,0}[\phi_{s,\tau}]+1.
\]
The form norm is induced by
$\mathfrak b_{s,0}(\cdot,\cdot)+\langle\cdot,\cdot\rangle_{H_{s,\tau}}$.
Weak convergence and convergence of these squared norms give
\[
 \begin{split}
 \|\phi_N-\phi_{s,\tau}\|_{\mathscr Y_s}^2
 ={}&\|\phi_N\|_{\mathscr Y_s}^2
 +\|\phi_{s,\tau}\|_{\mathscr Y_s}^2\\
 &-2\operatorname{Re}\langle\phi_N,\phi_{s,\tau}\rangle_{\mathscr Y_s}
 \longrightarrow0.
 \end{split}
\]
Every subsequence has the same limit by the phase normalization, so this
holds for the whole sequence. Subtraction of the eigenvalue limits gives
the gap limit.
By \eqref{eq:unweighted-control} and the integrable second-moment kernel
in \eqref{eq:q-moment-integral}, Young's inequality gives
\[
 |\kappa_N-\kappa_{s,\tau}|
 \leq C(\|\phi_N\|_2+\|\phi_{s,\tau}\|_2)
 \|\phi_N-\phi_{s,\tau}\|_2\longrightarrow0.
\]
\end{proof}

\section{Subcritical nonlinear coercivity}\label{sec:nonlinear}

Throughout this appendix, $p\geq2$ and all functions are real-valued. Put
\[
 \omega_{s,p}(x)=x_d^{(sp-1)/p}.
\]
For $u\in C_c^\infty(\mathbb R^d_+)$, define
\[
 \begin{split}
 \mathcal G_{d,s,p}[u]
 ={}&\int_{\mathbb R^d_+}\int_{\mathbb R^d_+}
 \left|\frac{u(x)}{\omega_{s,p}(x)}
 -\frac{u(y)}{\omega_{s,p}(y)}\right|^p 
 \frac{\omega_{s,p}(x)^{p/2}\omega_{s,p}(y)^{p/2}}
 {|x-y|^{d+sp}}\,dx\,dy.
 \end{split}
\]
The functional is $p$-homogeneous. If $\mathcal G_{d,s,p}[u]=0$,
Fubini's theorem gives $u/\omega_{s,p}=c$ almost everywhere.
Since $u$ has compact support, $c=0$. Thus its kernel on the test core
is trivial.

\begin{lemma}[Nonlinear Hardy ground-state remainder]
\label{lem:nonlinear-Hardy-ground-state}
Let $d\geq2$, $0<s<1$, and $p\geq2$. Then
\[
 \mathcal H_{d,s,p}[u]\geq c_p\mathcal G_{d,s,p}[u]
 \quad\text{for }u\in C_c^\infty(\mathbb R^d_+),
\]
where
\[
 c_p=\min_{0<t<1/2}
 \{(1-t)^p-t^p+pt^{p-1}\}>0.
\]
For $p=2$, one has $c_2=1$, and the inequality is an identity.
\end{lemma}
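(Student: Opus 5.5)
We follow the Frank--Seiringer nonlinear ground-state method. Write $\omega=\omega_{s,p}$ and $v=u/\omega$, so that $u=\omega v$ with $v$ compactly supported in the interior. The first step is the $L^p$ analogue of \eqref{eq:ground-state-equation}:
\begin{equation*}
 2\,\operatorname{p.v.}\int_{\mathbb R^d_+}
 \frac{|\omega(x)-\omega(y)|^{p-2}(\omega(x)-\omega(y))}{|x-y|^{d+sp}}\,dy
 =\mathcal D_{d,s,p}\,x_d^{-sp}\omega(x)^{p-1}.
\end{equation*}
We prove it exactly as in the $p=2$ case. Integrating out the $d-1$ tangential variables gives the constant $2\pi^{(d-1)/2}\Gamma((1+sp)/2)/\Gamma((d+sp)/2)$ times a one-dimensional principal value in $r=y_d/x_d$. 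Splitting at $r=1$ and inverting on $(1,\infty)$, with exponent $b=(sp-1)/p$, produces the integral in \eqref{eq:hardy-general-p}. The symmetric-cutoff justification is the same Taylor argument already used there.

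The second step is a pointwise inequality. For $a,b>0$ and real $\alpha,\beta$, put $t$-type variables via $\alpha=a\cdot v(x)$ and $\beta=b\cdot v(y)$. We aim at
\begin{equation*}
 |a\alpha-b\beta|^p\geq (a-b)|a-b|^{p-2}\bigl(a|\alpha|^p-b|\beta|^p\bigr)a^{1-p}b^{0}\cdots
\end{equation*}
More precisely, the standard form is
\begin{equation*}
 |\omega(x)v(x)-\omega(y)v(y)|^p
 \geq |\omega(x)-\omega(y)|^{p-2}(\omega(x)-\omega(y))\bigl(\omega(x)|v(x)|^p-\omega(y)|v(y)|^p\bigr)
 +c_p(\omega(x)\omega(y))^{p/2}|v(x)-v(y)|^p.
\end{equation*}
By homogeneity, this reduces to the one-variable inequality $|1-t|^p\geq(1-t)^{p-1}+\dots$ from \cite{FrankSeiringer}. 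There one normalizes the larger of the two relevant quantities to one and writes the other as $t$. For $p\geq2$, the constant coming out of the elementary inequality $|1-t|^p\geq 1-pt+c_p|t|^p$ (and its variant on $0<t<1/2$) is exactly $c_p=\min_{0<t<1/2}\{(1-t)^p-t^p+pt^{p-1}\}$. Positivity of $c_p$ follows because the expression equals $1$ at $t=0$, equals $p2^{1-p}$ at $t=1/2$, and is continuous and positive in between. We expect this step to be the main obstacle: it requires reproducing the Frank--Seiringer lemma with this specific constant and handling the sign cases ($v(x)v(y)<0$ and the like) carefully.

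The final step is integration. We integrate the pointwise inequality against $|x-y|^{-d-sp}$ with the cutoff $|x-y|>\varepsilon$ and symmetrize the middle term in $x\leftrightarrow y$. This turns it into
\begin{equation*}
 2\int|v(x)|^p\omega(x)\int_{|x-y|>\varepsilon}|\omega(x)-\omega(y)|^{p-2}(\omega(x)-\omega(y))|x-y|^{-d-sp}\,dy\,dx.
\end{equation*}
The truncated potentials converge uniformly on $\operatorname{supp}u$, by the same estimates as in the $p=2$ ground-state lemma. Since $|\omega(x)-\omega(y)|^{p-2}$ is $C^{1}$-regular enough near the diagonal for $p\geq2$, the first-order Taylor cancellation still gives an $O(r^{2-sp}\cdot)$-integrable error. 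The first step then turns the potential term into $\mathcal D_{d,s,p}\int x_d^{-sp}|u|^p$, while monotone convergence handles the two positive energies. For $p=2$, the pointwise inequality is the algebraic identity already used in \eqref{eq:ground-state} with $c_2=1$, so equality holds.
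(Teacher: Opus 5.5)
Your plan is correct and follows the same Frank--Seiringer ground-state route as the paper. The paper simply cites \cite[Theorem~1.2]{FrankSeiringerHalf} (resting on \cite[Proposition~2.3 and Lemma~2.6]{FrankSeiringer}) and handles $sp=1$ separately. You instead rederive the $L^p$ ground-state equation for $x_d^{(sp-1)/p}$ and the cutoff integration directly. This also covers $sp=1$ automatically, because there $\omega\equiv1$ and the pointwise bound reduces to $c_p\leq1$.

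Two small corrections to your sketch:
\begin{itemize}
\item The pointwise step is not a one-variable inequality. It is the two-parameter bound of \cite[Lemma~2.6]{FrankSeiringer},
\[
 |a-t|^p\geq(1-t)^{p-1}(|a|^p-t)+c_pt^{p/2}|a-1|^p,\qquad 0\leq t\leq1,
\]
applied with $t=\min\{\omega(x),\omega(y)\}/\max\{\omega(x),\omega(y)\}$ and $a$ the ratio of the value of $v$ at the point with larger $\omega$ to its value at the other point.
\item The near-diagonal error is not $O(r^{2-sp})$. After you remove the odd term $\Phi(-\nabla\omega(x)\cdot h)$, with $\Phi(r)=|r|^{p-2}r$, the error is $O(|h|^p)$ by the $C^1$ regularity of $\Phi$ for $p\geq2$. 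It is therefore integrable against $|h|^{-d-sp}$, since $p>sp$.
\end{itemize}
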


\begin{proof}
For $sp\neq1$, this is \cite[Theorem~1.2]{FrankSeiringerHalf},
based on the ground-state representation
\cite[Proposition~2.3 and Lemma~2.6]{FrankSeiringer}.
The half-space ground state there is $x_d^{(sp-1)/p}$, and its Hardy
constant agrees with \eqref{eq:hardy-general-p} in our
ordered-double-integral normalization. Every test-core function belongs
to the energy space of that theorem. Its remainder is finite: near the
diagonal, smoothness of $u/\omega_{s,p}$ gives an integrable power because
$p-sp>0$; away from the support, tangential integration reduces the
boundary and infinity bounds to the integrable powers
$y_d^{(sp-1)/2}$ and $y_d^{-(sp+3)/2}$, respectively.
For $sp=1$, one has $\omega_{s,p}=1$ and
$\mathcal D_{d,s,p}=0$, so
$\mathcal H_{d,s,p}[u]=\mathcal G_{d,s,p}[u]$.
Since $0<c_p\leq1$ by \cite[Lemma~2.6]{FrankSeiringer}, the asserted
inequality also follows in this case. The cited representation is an
identity for $p=2$, with $c_2=1$.
\end{proof}

\begin{corollary}[Subcritical nonlinear coercivity]
\label{cor:nonlinear-subcritical-stability}
\label{cor:nonlinear-two-remainder}
Let $d\geq2$, $0<s<1$, $p\geq2$, $\tau>0$, and
$0\leq\lambda<\Lambda_{d,s,p,\tau}$. Then
\[
 \mathcal H_{d,s,p}[u]-\lambda\mathcal R_{d,s,p,\tau}[u]
 \geq c_p\left(1-\frac{\lambda}{\Lambda_{d,s,p,\tau}}\right)
 \mathcal G_{d,s,p}[u]
\]
for $u\in C_c^\infty(\mathbb R^d_+)$. Together with the sharp
inequality, this gives
\[
 \begin{split}
 \mathcal H_{d,s,p}[u]-\lambda\mathcal R_{d,s,p,\tau}[u]\geq\frac{\Lambda_{d,s,p,\tau}-\lambda}{2}
 \left\{\mathcal R_{d,s,p,\tau}[u]
 +\frac{c_p}{\Lambda_{d,s,p,\tau}}\mathcal G_{d,s,p}[u]\right\}.
 \end{split}
\]
The first bound is equivalently
\[
 \mathcal H_{d,s,p}[u]\geq
 (1-\delta)\Lambda_{d,s,p,\tau}\mathcal R_{d,s,p,\tau}[u]
 +\delta c_p\mathcal G_{d,s,p}[u],\qquad 0<\delta\leq1,
\]
with $\lambda=(1-\delta)\Lambda_{d,s,p,\tau}$.
Taking $\lambda=\Lambda_{d,s,p,\tau}-\varepsilon$ in the
two-remainder bound gives the corresponding estimate for the regularized
sharp deficit, for every $0<\varepsilon\leq\Lambda_{d,s,p,\tau}$.
\end{corollary}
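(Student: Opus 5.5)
The first bound follows by convex interpolation between the Frank--Seiringer remainder of Lemma~\ref{lem:nonlinear-Hardy-ground-state} and the sharp inequality defining $\Lambda_{d,s,p,\tau}$. The only inputs are these two inequalities on the test core, and both sides are $p$-homogeneous functionals of $u$.

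Fix $u\in C_c^\infty(\mathbb R^d_+)$ and write $\mathcal H=\mathcal H_{d,s,p}[u]$, $\mathcal R=\mathcal R_{d,s,p,\tau}[u]$, $\mathcal G=\mathcal G_{d,s,p}[u]$ and $\Lambda=\Lambda_{d,s,p,\tau}$. We have $\Lambda>0$ by Theorem~\ref{thm:dimension-reduction}. Set $\delta=1-\lambda/\Lambda\in(0,1]$ and split
\[
 \mathcal H=(1-\delta)\mathcal H+\delta\mathcal H.
\]
Apply the sharp inequality $\mathcal H\geq\Lambda\mathcal R$ to the first piece and Lemma~\ref{lem:nonlinear-Hardy-ground-state} to the second. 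This gives
\[
 \mathcal H\geq(1-\delta)\Lambda\mathcal R+\delta c_p\mathcal G=\lambda\mathcal R+c_p\Bigl(1-\frac{\lambda}{\Lambda}\Bigr)\mathcal G.
\]
This is both the first bound and its reformulation with $\lambda=(1-\delta)\Lambda$. The case $\delta=1$, that is $\lambda=0$, is the lemma itself.

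For the two-remainder bound, average two lower bounds for $\mathcal H-\lambda\mathcal R$. The first is the sharp inequality in the form
\[
 \mathcal H-\lambda\mathcal R\geq(\Lambda-\lambda)\mathcal R.
\]
The second is the bound just proved, which reads
\[
 \mathcal H-\lambda\mathcal R\geq\frac{\Lambda-\lambda}{\Lambda}c_p\mathcal G.
\]
Half the sum of these two is exactly the displayed estimate. Taking $\lambda=\Lambda-\varepsilon$ with $0<\varepsilon\leq\Lambda$ keeps $\lambda\in[0,\Lambda)$, which gives the regularized sharp-deficit statement.

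No step is a real obstacle; all the analytic content sits in Lemma~\ref{lem:nonlinear-Hardy-ground-state} and in the positivity of $\Lambda$. Two points need care. The weights in the convex combination must be nonnegative, which holds because $0\leq\lambda<\Lambda$. All three functionals must also be finite on the test core: this is shown in the proof of the lemma for $\mathcal G$ and is immediate for $\mathcal H$ and $\mathcal R$, so the additions and rearrangements involve no $\infty-\infty$.
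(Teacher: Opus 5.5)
Your proof is correct and matches the paper's argument. Your split $\mathcal H=(1-\delta)\mathcal H+\delta\mathcal H$ with $(1-\delta)\Lambda=\lambda$ is exactly the paper's decomposition $\mathcal H-\lambda\mathcal R=(1-\delta)(\mathcal H-\Lambda\mathcal R)+\delta\mathcal H$, and you then average with $\mathcal H-\lambda\mathcal R\geq(\Lambda-\lambda)\mathcal R$ just as the paper does.
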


\begin{proof}
Write $\Lambda=\Lambda_{d,s,p,\tau}$ and
$\delta=1-\lambda/\Lambda\in(0,1]$. The exact decomposition is
\[
 \mathcal H_{d,s,p}[u]-\lambda\mathcal R_{d,s,p,\tau}[u]
 =(1-\delta)(\mathcal H_{d,s,p}[u]
 -\Lambda\mathcal R_{d,s,p,\tau}[u])
 +\delta\mathcal H_{d,s,p}[u].
\]
The first summand is nonnegative. Applying
Lemma~\ref{lem:nonlinear-Hardy-ground-state} to the second gives
$\delta c_p\mathcal G_{d,s,p}[u]$. Average this estimate with
$\mathcal H_{d,s,p}[u]-\lambda\mathcal R_{d,s,p,\tau}[u]
\geq(\Lambda_{d,s,p,\tau}-\lambda)\mathcal R_{d,s,p,\tau}[u]$
to obtain the two-remainder estimate.
\end{proof}

\begin{remark}[Scope of the nonlinear estimate]
For $p>2$, the coefficient
$c_p(1-\lambda/\Lambda_{d,s,p,\tau})$ vanishes at the sharp level.
The estimate therefore gives subcritical stability. Fischer's
two-sided representation \cite{Fischer} is proved for weighted graphs.
To apply a corresponding ground-state argument here, one would need a
positive ground state for the full Hardy--Maz'ya potential.
\end{remark}

\medskip

\noindent{\bf Data availability statement}

No datasets were generated or analysed during the present study.

\noindent{\bf Conflict of interest}

The authors declare that they have no conflict of interest.

\end{document}